\documentclass[10pt,oneside]{article}
\usepackage[T1]{fontenc}
\usepackage[english]{babel}

\usepackage{graphicx,accents}

\usepackage[cal=cm]{mathalfa}

\usepackage[T1]{fontenc}
\usepackage{titlesec}

\usepackage{hhline}

\titleformat
{\chapter} 
[display] 
{\bfseries\Large\itshape} 
{Story No. \ \thechapter} 
{0.5ex} 
{
    \rule{\textwidth}{1pt}
    \vspace{1ex}
    \centering
} 
[
\vspace{-0.5ex}%
\rule{\textwidth}{0.3pt}
] 

\titleformat{\section}[wrap]
{\normalfont\bfseries}
{\thesection.}{0.5em}{}

\titlespacing{\section}{12pc}{1.5ex plus .1ex minus .2ex}{1pc}

\usepackage{tocloft} 

\usepackage{titlesec}
\titleformat{\subsection}[runin]
       {\normalfont\bfseries}
       {\thesubsection}
       {0.5em}
       {}
       [.]

\makeatletter
\@ifundefined{dddot}{}{}
\usepackage{mathdots}  
\makeatother

\makeatletter
\@ifundefined{ddddot}{}{}
\usepackage{yhmath}
\makeatother

\usepackage{geometry} 
\usepackage{xcolor} 
\usepackage{tikz} 
\usepackage[draft=false]{hyperref} 
\usepackage{mathtools}
\usepackage{amsmath,amsthm,amssymb}
\usepackage[all,cmtip]{xy}

\usepackage{fullpage}

\usepackage[capitalise]{cleveref}  
\newcommand{\clevertheorem}[3]{%
	\newtheorem{#1}[thm]{#2}
	\crefname{#1}{#2}{#3}
}
\numberwithin{equation}{section} 
\numberwithin{figure}{section} 

\theoremstyle{plain} 
\newtheorem{thm}{Theorem}[subsection]
\crefname{thm}{Theorem}{Theorems}
\newtheorem*{thm*}{Theorem}
\clevertheorem{proposition}{Proposition}{Propositions}
\clevertheorem{prop}{Proposition}{Propositions}
\newtheorem*{prop*}{Proposition}
\clevertheorem{theorem}{Theorem}{Theorems}
\clevertheorem{lemma}{Lemma}{Lemmas}
\clevertheorem{corollary}{Corollary}{Corollaries}
\clevertheorem{cor}{Corollary}{Corollaries}
\clevertheorem{conj}{Conjecture}{Conjectures}
\clevertheorem{questn}{Question}{Questions}

\theoremstyle{definition} 
\clevertheorem{definition}{Definition}{Definitions}
\clevertheorem{assumption}{Assumption}{Assumptions}
\clevertheorem{defn}{Definition}{Definitions}
\clevertheorem{notn}{Notation}{Notations}
\clevertheorem{conv}{Convention}{Conventions}

\clevertheorem{remark}{Remark}{Remarks}
\clevertheorem{rmk}{Remark}{Remarks}
\clevertheorem{warn}{Warning}{Warnings}
\clevertheorem{example}{Example}{Examples}
\clevertheorem{summ}{Summary}{Summaries}

\usepackage{amsmath,amsthm,amssymb,amsfonts,extarrows}
\usepackage{enumerate,amscd,amsxtra,MnSymbol}
\usepackage{mathrsfs}
\usepackage{color}
\usepackage[cmtip,all]{xy}
\usepackage{eucal}
\usepackage{bbold}
\usepackage{upgreek}
\usepackage{paralist}
\usepackage{tikz-cd}
\usepackage{dsfont}
\usepackage{bbm}
\usepackage[bbgreekl]{mathbbol}

\DeclareMathSymbol\bbDelta \mathord{bbold}{"01}
\DeclareMathSymbol\bDelta \mathord{bbold}{"01}

\newtheorem{remark*}{Remark}

\newtheorem{construction}[thm]{Construction}

\newtheorem{notation}[thm]{Notation}

\newtheorem{conjecture}[thm]{Conjecture}

\newcommand{\bD}{{\mathbb D}}

\newcommand{\bN}{{\mathbb N}}

\newcommand{\mA}{{\mathcal A}}
\newcommand{\mB}{{\mathcal B}}
\newcommand{\mC}{{\mathcal C}}
\newcommand{\mD}{{\mathcal D}}
\newcommand{\mE}{{\mathcal E}}
\newcommand{\mF}{{\mathcal F}}

\newcommand{\mO}{{\mathcal O}}
\newcommand{\mP}{{\mathcal P}}

\newcommand{\mS}{{\mathcal S}}

\newcommand{\mV}{{\mathcal V}}
\newcommand{\mW}{{\mathcal W}}

\newcommand{\A}{A}
\newcommand{\B}{B}
\newcommand{\C}{C}

\newcommand{\F}{{F}}
\newcommand{\G}{{G}}

\renewcommand{\L}{{\mathrm L}}

\newcommand{\N}{{\mathrm N}}

\newcommand{\R}{{\mathrm R}}

\newcommand{\T}{{T}}

\newcommand{\X}{X}
\newcommand{\Y}{Y}
\newcommand{\Z}{Z}

\newcommand{\bj}{{j}}
\newcommand{\bi}{{i}}
\newcommand{\m}{{m}}
\newcommand{\bk}{{k}}

\newcommand{\p}{{p}}
\newcommand{\q}{{q}}

\newcommand{\n}{{n}}
\newcommand{\br}{{r}}

\newcommand{\op}{\mathrm{op}}

\newcommand{\Grp}{\mathrm{Grp}}

\newcommand{\colim}{\mathrm{colim}}

\newcommand{\rev}{{\mathrm{rev}}}

\newcommand{\ot}{\otimes}

\newcommand{\co}{\mathrm{co}}

\newcommand{\univ}{\mathrm{univ}}
\newcommand{\strict}{\mathrm{strict}}

\newcommand{\Gpd}{\mathrm{Gpd}}

\newcommand{\id}{\mathrm{id}}
\newcommand{\Cat}{\mathrm{Cat}}

\newcommand{\Set}{\mathrm{Set}}

\newcommand{\Alg}{\mathrm{Alg}}

\newcommand{\Fun}{\mathrm{Fun}}

\newcommand{\lax}{{\mathrm{lax}}}
\newcommand{\oplax}{{\mathrm{oplax}}}

\newcommand{\tu}{{\mathbb 1}}

\newcommand{\ev}{{\mathrm{ev}}}

\newcommand{\Map}{{\mathrm{Map}}}

\newcommand{\bZ}{{\mathbb{Z}}}

\newcommand{\Mor}{{\mathrm{Mor}}}

\newcommand{\PrL}{\mathrm{Pr^L}}
\newcommand{\PrR}{\mathrm{Pr^R}}

\newcommand{\gaunt}{\mathrm{gaunt}}
\newcommand{\Steiner}{\mathrm{Steiner}}

\newcommand{\cube}{{\,\vline\negmedspace\square}}

\newcommand{\scat}{\mathcal{C}\mathit{at}}

\newcommand{\vertrule}[1][1ex]{\rule{.4pt}{#1}}
\newcommand{\bd}{{\,\,\vertrule{}\!\!\Diamond}}
\newcommand{\abd}{{\,\,\vertrule{}\!\!\bar{\Diamond}}}

\begin{document}

\title{\textsc{An Oriented Street--Roberts Conjecture}}

\author{David Gepner and Hadrian Heine}

\maketitle

\begin{abstract}
We formulate a notion of oriented polytope, including Street's oriented simplices and Gray's oriented cubes, and use this to prove an oriented version of the Street--Roberts conjecture, presenting $(\infty,\infty)$-categories as sheaves on suitable families of oriented polytopes, generalizing work of Campion.
This allows us to understand $(\infty, \infty)$-categories from a geometric perspective, as directed analogues of homotopy types.
These familes of oriented polytopes induce basic operations in higher category theory: for instance, the join, Gray tensor, and bicone arise from the geometry of the orientals, cubes, and orthoplexes, respectively.
We study the interaction of these operations and derive some geometric formulae, generalizing work of Ara--Maltsiniotis, Verity, and others.
\end{abstract}

\tableofcontents

\section{Introduction}

\subsection{Higher dimensional categories}
The purpose of this paper is to provide a geometric perspective on the theory of higher dimensional categories.
Our main result is a presentation of $\infty$-categories,\footnote{We henceforth refer to $(\infty,n)$-categories simply as $n$-categories, for all $0\leq n\leq\infty$.} as sheaves on categories of oriented polytopes, reflecting the inherent geometry of higher dimensional category theory.

The notion of dimension plays as crucial a role in higher dimensional category theory as it does in geometry, and these two notions of dimension tend to agree whenever it makes sense to compare them.
Sets are $0$-dimensional geometric objects, as are $\infty$-groupoids, since colimits of $0$-dimensional objects remain $0$-dimensional.
Nevertheless, higher dimensional categories can be built out of lower dimensional categories, provided one uses a version of the colimit which is compatible with categorical dimension.\footnote{In a subsequent paper we develop a theory of {\em oriented} colimits, generalizing the notion of lax colimits and the Gray tensor.}

In general, an $\infty$-category is $n$-dimensional if it is an $n$-category but not an $(n-1)$-category.\footnote{There has been a great deal on $n$-category theory, and also for $n=\infty$, by a number of authors, including  \cite{ara2014higher}, \cite{Ara2020ComparisonOT}, \cite{AraLafontMetayerOrient}, \cite{ayala2018flagged}, \cite{Barwicknrel}, \cite{barwickunicity}, \cite{batanin1998monoidal}, \cite{BERGER2002118}, \cite{campion2022cubesdenseinftyinftycategories}, \cite{campion2025cubical}, \cite{cheng2004higher}, \cite{cheng2019weak}, \cite{dean2024computads}, \cite{goldthorpe2023homotopy}, \cite{Harpaz2020}, \cite{kapranov1991combinatorial}, \cite{loubaton2024complicialmodelinftyomegacategories}, \cite{maehara2023orientals}, \cite{moser2022model}, \cite{MOSER2024107620},  \cite{ozornova2023quillen},   \cite{simpson2011homotopy}, \cite{stefanich2020presentable}, \cite{STREET1987283}, \cite{VERITY2}.}
The most basic example of a $0$-dimensional category, or an $\infty$-groupoid, is the $0$-disk $\bD^0$, the terminal $\infty$-category, playing the role of the point.
Higher dimensional disks are obtained by suspending lower dimensional disks:
for instance, the $1$-disk $\bD^1=\{0\to 1\}$ is the basic $1$-dimensional category, consisting of single nonidentity morphism between a source and target object.
If $\mD$ is an $n$-dimensional category, its {\em suspension} is the $(n+1)$-dimensional category $S(\mD)$ freely generated by a $\mD$-indexed family of $1$-cells with fixed source and target.
More precisely, $S(\mD)$ contains a pair $\{0,1\}$ of objects with morphism object $\Mor_{S(\mD)}(0,1)=\mD$, and no other nondegenerate morphisms (in particular, $\Mor_{S(\mD)}(1,0)=\emptyset$).

The $n$-dimensional disk $\bD^n$ is the $n$-fold suspension of $\bD^0$, and is therefore an $n$-category with a unique ordered pair of $m$-cells in all dimensions $m<n$ and a single nondegenerate $n$-cell.
The $n$-disk $\bD^n$ is the most basic $n$-dimensional category, as it is free on a single $n$-cell, and it turns out that any $n$-category can be built out of $\bD^n$ via iterated colimits.
An $n$-dimensional cell of an $\infty$-category is specified by a map from the $n$-disk, a restatement of the fact that the $n$-disk is the free $n$-cell.
In practice, it is essential to be able to perform the suspension operation. Hence we must work on the level of $\infty$-categories,
defined as the limit
\[
\infty\Cat=\lim\{\cdots\to (n+1)\Cat\to n\Cat\to (n-1)\Cat\to\cdots\to 0\Cat\}
\]
taken along the core functors, which forget the highest dimensional morphisms
\footnote{One could also take the limit along the truncation functors, which invert the top-dimensional morphisms. This results in the full subcategory of $\infty\Cat$ consisting of the Postnikov complete $\infty$-categories \cite{gepner2026homotopy}.
This full subcategory is not appropriate for some purposes since the infinite cobordism $\infty$-category and its classifying space have equivalent Postnikov completions.}.

A main subtlety of higher category theory is that dimension shifting operations, such as the suspension, fail to be functors of $\infty$-categories, which are functors enriched in the cartesian product of $\infty$-categories, but only are functors enriched in the Gray tensor product of $\infty$-categories \cite{gepner2025oriented}, a refined tensor product of $\infty$-categories that laxifies the cartesian product, which appears as a localization of the Gray tensor product.
This failure comes from the fact that the cartesian product is not compatible with categorical dimension: the product of an $n$-category and an $m$-category is an $\max\{n,m\}$-category.
On the other hand, the Gray tensor product behaves additive with respect to categorical dimension and so supports higher-categorical constructions: the Gray tensor product of an $n$-category and an $m$-category is an $n+m$-category.

A consequence of categorical additivity is that the Gray tensor product fails to be symmetric and happens to be antisymmetric in the following sense: 
for any two $\infty$-categories $X,Y$ there is a canonical equivalence 
\[
X\bar{\boxtimes} Y:=Y\boxtimes X\simeq (X^\co\boxtimes Y^\co)^\co
\]
where the ``co'' involution reverses all even-dimensional cells.

The Gray tensor product is significant for higher category theory since it encodes notions of lax and oplax natural transformations: 
like the cartesian product, the Gray tensor product preserves colimits in each variable and therefore admits right adjoints 
given by the $\infty$-categories of functors and (op)lax natural transformations.
More precisely, if $X$, $Y, Z$ are $\infty$-categories, there are natural equivalences of $\infty$-groupoids
\[
\Map_{\infty\Cat}(Y,\Fun^{\lax}(X,Z))\simeq\Map_{\infty\Cat}(X\boxtimes Y,Z)\simeq\Map_{\infty\Cat}(X,\Fun^{\oplax}(Y,Z)).
\]

\subsection{The geometry of $\infty$-categories}
In this paper we do not generally restrict to univalent $\infty$-categories, the full subcategory consisting of those $\infty$-categories in which every equivalence of any dimension is equivalent to an identity.
These are also called complete $\infty$-categories, in the sense of Rezk \cite{rezk2001model}, and arise from $\infty$-categories by forcing the fully faithful and essentially surjective functors to be equivalences.
At the other extreme, we have the strict $\infty$-categories, namely those $\infty$-categories whose underlying $n$-categories are strict in all dimensions $n$.
By design, the mapping categories in a strict $\infty$-category are again strict, and in particular the mapping spaces in a strict category are sets.

The intersection of the strict and the univalent $\infty$-categories the class of {\em gaunt} $\infty$-categories, the pullback
\[
\xymatrix{
& \infty\Cat^\gaunt\ar[ld]\ar[rd] &\\
\infty\Cat^\strict\ar[rd] & & \infty\Cat^\univ\ar[ld]\\
& \infty\Cat & .}
\]
By definition, an $\infty$-category is gaunt if it has no nontrivial automorphisms in any dimension.
It has been shown \cite{barwickunicity} that gaunt $\infty$-categories generate all $\infty$-categories under colimits; however, this is still a much too large and complicated class.
This motivates one to look for easier classes of $\infty$-categories, which are freely generated by a higher dimensional graph.

The $n$-disk generates $n\Cat$
under small colimits, in the sense that $n\Cat\subset\infty\Cat$ is the smallest full subcategory containing $\bD^n$ which is closed under colimits.
However, this is unsatisfactory, as there is no {\em canonical} way to express a general $n$-category as a colimit of disks.
Instead, we would like to find small full subcategories $\mD\subset\infty\Cat$ such that the restricted Yoneda embedding
\[
\N_\mD:\infty\Cat\to\Fun(\mD^{\op},\mS),
\]
given by $\N_\mD(X)=\Map_{\infty\Cat}(-,X)$ is fully faithful.
Equivalently, since the $\mD$-nerve $\N_\mD$ always admits a left adjoint, the $\mD$-realization functor $||-||_\mD$, we can ask that the counit transformation
\[
||\N_\mD(X)||_\mD\simeq\colim_{D\in\mD_{/X}} D\to X
\]
is an equivalence.
If so, $X$ is canonically the colimit in $\infty\Cat$ of the diagram of the objects of $\mD$ equipped with a map to $X$.

There are several previously known presentations of the category of $\infty$-categories.
The work of Roberts, Street, Verity, and others led to a presentation of $\infty\Cat$ as a localization of marked, or stratified, simplicial sets, or spaces.
While this is a very efficient model for many purposes, it arguably arose from a failure of a more direct presentation directly as a localization of a presheaf category.
Since marked simplicial spaces happens to be a presentable category, this does provide a presentation of $\infty\Cat$, but it is more indirect.

Direct presentations of $\infty\Cat$ were provided by Rezk, as a full subcategory of presheaves on Joyal's category $\Theta$, the smallest full subcategory of $\infty\Cat$ closed under wedges and suspensions and containing the point, and Barwick--Schommer-Pries, as a full subcategory of presheaves on the gaunt $\infty$-categories (one can restrict to the compact gaunt $\infty$-categories, which are necessarily $n$-categories for some $n\geq 0$).
Barwick and Schommer-Pries \cite{barwickunicity} also show that any automorphism of $\infty\Cat$ is a composite of the involutions which reverse the orientations of morphisms in some fixed dimensions $0<n<\infty$.
For instance, reversing all the even-dimensional cells (the ``co''-involution) produces anti-versions of various $\infty$-categories which arise geometrically.
More recently, work of Campion on the oriented cubes \cite{campion2022cubesdenseinftyinftycategories} and the Gray tensor product \cite{campion2023graytensorproductinftyncategories} shows that $\infty$-categories are a localization of presheaves of spaces on the cubes.

Street's {\em oriented simplices} \cite{STREET1987283} come in positive and negative oriented versions, which in dimension two are the $2$-categories
\[\begin{tikzcd}[row sep=3em]
& 1  \arrow{dr} & \\ 
0 \arrow{ur}  \arrow[""{name=foo}]{rr} && 
\arrow[Rightarrow, from=foo, to=1-2,shorten >=0.5ex] 2
\end{tikzcd}\qquad\qquad
\begin{tikzcd}[row sep=3em]
&1  \arrow{dr} \arrow[Leftarrow, from=foo, to=1-2,shorten >=0.5ex] & \\ 
0 \arrow{ur} \arrow[""{name=foo}]{rr} && 
 2
\end{tikzcd}
\]
which are related by reversing the direction of the $2$-cell.
We can systematically work with either the oriented or antioriented version by fixing an orientation of the Gray tensor product.
The tensor of the $1$-disk
$
\bD^1=\{0\to 1\}
$
with itself is the refinement of the cartesian product $\bD^1\times\bD^1$ in which the two maps from the first object $(0,0)$ to the last object $(1,1)$ are not equal, but only comparable, via the $2$-cell.
The standard orientation $\bD^1\boxtimes\bD^1$ and the reverse orientation $\bD^1\bar{\boxtimes}\bD^1$ are the $2$-categories
\[
\begin{tikzcd}[row sep=3em]
(0,0) \arrow[d, "\{0\}\boxtimes\bD^1"'] \arrow[r, "\bD^1\boxtimes\{0\}"] & (1,0) \arrow[d, "\{1\}\boxtimes\bD^1"] & \\ 
(0,1) \arrow[Rightarrow]{ur}\arrow[r, "\bD^1\boxtimes\{1\}"'] & (1,1)
\end{tikzcd}\qquad
\begin{tikzcd}[row sep=3em]
(0,0) \arrow[d, "\{0\}\bar{\boxtimes}\bD^1"'] \arrow[r, "\bD^1\bar{\boxtimes}\{0\}"] & (1,0)\arrow[Rightarrow]{ld} \arrow[d, "\{1\}\bar{\boxtimes}\bD^1"] & \\ 
(0,1) \arrow[r, "\bD^1\bar{\boxtimes}\{1\}"'] & (1,1)
\end{tikzcd}
\]
and one is obtained from the other by applying the $(-)^\co$ involution, reversing the even-dimensional cells.

These squares can be viewed as expressing transformations between the vertical arrows.
In the square on the left, which depicts the standard orientation of the Gray tensor product, this is an oplax transformation, and in the square on the right, which depicts the reverse orientation of the Gray tensor product, this is a lax transformation.
The $n$-fold Gray tensor product of $\bD^1$, using the standard and reverse orientation, respectively, define the oriented and antioriented cubes $\cube^n$ and $\bar{\cube}^n$, respectively.
Collectively, these cubes encode the combinatorics of oplax and lax transformations.

\subsection{Oriented polytopes}
Instead of searching for general dense subcategories of $\infty\Cat$, or pursuing the notion of higher test category in full generality, we will restrict to dense subcategories of a specific combinatorial geometric nature.
Specifically, are interested in families of polytopes equipped with orientations, compatible with the orientations of their lower dimensional faces.
For simplicity, we will suppose that any such family consists of a single $n$-dimensional polytope for each natural number $n$.

Polytopes are in particular cell complexes in the classical sense, and therefore have associated cellular chain complexes.
Similarly, oriented polytopes have underlying cell complexes, in which the zero-cells are the vertices, the one-cells are the edges, etc., and moreover these complexes carry a natural positivity structure\footnote{More precisely, they are acyclic directed complexes, and in fact Steiner complexes; see \cite{Steiner2004OmegacategoriesAC} or the appendix for the basics of this remarkable theory.} in which the signs appearing in the differentials are computed by the orientations of the cells.
Hence we may regard oriented polytopes as certain kinds of Steiner $\infty$-categories.
Forgetting these orientations loses essentially all interesting information, since the underlying polytopes, viewed as ordinary topological spaces, are contractible, or acyclic as non-directed cellular chain complexes.
In particular, an oriented space has an underlying unoriented space, which is an $\infty$-groupoid, or anima, or homotopy type, in the usual sense.

For our purposes, a family of oriented polytopes $\mO$ consists of Steiner $\infty$-categories $\mO^n$, $n\in\bN$, satisfying:
\begin{enumerate}[\normalfont(1)]\setlength{\itemsep}{-2pt}
\item For every $n \geq 0$ the $n$-category $\mO^n $ has a unique non-invertible $n$-morphism.
\item For every $n \geq 0$ the canonical functor
$$\underset{\mO^m \rightarrowtail\mO^n \mid m < n}{\colim} \mO^m \to \partial \mO^n:= \iota_{n-1}(\mO^n) $$
is an equivalence.
Here, the colimit is taken along the atomic inclusions.

\item For every $n \geq 0$ the topological cell complex $|\mO^n|$ is the cell structure on an unoriented polytope in which the $m$-cells are the $m$-dimensional faces and the attaching maps are piecewise linear injections. 
In particular, $|\mO^n|$ is a contractible topological space, and $\tau_{\leq 0}\mO^n$ is a contractible $\infty$-groupoid.
\end{enumerate}

Coxeter showed that there are three infinite families of regular polytopes; namely, the simplices, the cubes, and the orthoplexes.\footnote{The orthoplex was originally called the crossed polytope, until it was renamed by Conway.}
These infinite families lift to the setting of higher categories.
For instance, the oriented simplex $\bDelta^n$ is an oriented polytope, and in fact the category of oriented simplices $\bDelta$ is a dense family of oriented polytopes.
The {\em antioriented simplices} are another dense family.
The oriented cubes have already been constructed as a full subcategory of $\infty\Cat$, but the  oriented orthoplexes appear to be new.
Whereas the simplices are self-dual as polytopes, the cubes and the orthoplexes are dual to one another.

The oriented orthoplexes are more difficult to construct, possibly in part because we do not know of an associative binary operation encoded by the oriented orthoplexes.
However, orthoplex geometry does suggest a unary operation called the oriented bicone, which glues an antioriented cone onto an oriented cone along their common intersection.
Just like the oriented join and tensor operations, the oriented bicone corepresents a useful operation which we call the bislice.

\subsection{The Street--Roberts conjecture}

The fact that the simplicial nerve functor
$
\N_\Delta:\Cat\to\Fun(\Delta^{\op},\mS)
$
fully faithfully embeds categories into simplicial objects\footnote{Segal actually treated the case of topological categories, which are a model for nonunivalent categories.} goes back at least to Grothendieck \cite{grothendieckpursuing}, \cite{Grothendieck1958-1960} and Segal \cite{MR232393}.
The category of $1$-morphisms in $\mC$, together with all their composition relations, can be encoded as a {\em category object}; i.e., a simplicial object which satifies the {\em Segal condition}, which is to say that $n$-simplices correspond to composible $n$-tuples of $1$-simplices.

To extend this to higher dimensions, it is useful to regard $\Delta\subset\Cat$ as the smallest full subcategory containing the point which is closed under the 1-categorical join operation.
Thus, by analogy, we should instead consider the smallest full subcategory of $\infty\Cat$ containing the point which is closed under the $\infty$-categorical join operation.
This recovers Street's category of {\em oriented simplices} $\bDelta\subset\infty\Cat$ \cite{STREET1987283}.

The Street nerve is the simplicial object
\[
\N_\mathrm{Street}:\infty\Cat\to\Fun(\Delta^{\op},\infty\Gpd)
\]
corepresented by the oriented simplices, but restricted to the {\em atomic} maps between them.
The usual cosimplicial category $\Delta$ sits inside $\bDelta$ as the subcategory of the atomic morphisms, namely those $n$-cells which do not decompose as composites of nondegenerate $n$-cells.

Unfortunately, the Street nerve is not fully faithful.
To correct this defect, Roberts, Street, Verity and others \cite{VERITY2} \cite{VERITY1} introduced and studied complicial sets, which are simplicial sets equipped with a suitable collection of marked simplices, called {\em thin} simplicies, which correspond to those oriented simplices whose top cell is sent to an equivalence.
The fact that complicial sets or spaces form a model for the category of $\infty$-categories is a nontrivial result, originally due to Verity \cite{VERITY2} in the strict setting and Loubaton \cite{loubaton2024complicialmodelinftyomegacategories}, building on work of Ozornova--Rovelli \cite{ozornova2023quillen}, in the weak setting.

While this is a powerful model which is useful for many purposes, it has the usual drawback that homotopy invariant constructions (that is, constructions which are intrinsic to the underlying category of $\infty$-categories) are only obtained after fibrant replacement, a difficult and somewhat inexplitic procedure in general. This is unfortunately the case even for the basic generators, the simplices themselves, which are not actually complicial sets.
Instead, their fibrantly replacements correspond to the Street nerve of the orientals \cite{maehara2023orientals}.
Nevertheless, there has been a great deal of interesting work in $\infty$-category theory carried out from the perspective of complicial sets.

\subsection{Main results}
Grothendieck's philosophy of test categories posits that homotopical categorical structures can be modeled as presheaves on a suitable category of rigid and combinatorial objects, called a test category, and that there is generally significant flexibility in the choice of test category.
We introduce the concept of a dense family of oriented polytopes, and prove that $\infty$-categories can be modeled as presheaves on any dense family of oriented polytopes satisfying certain sheaf conditions reflecting the geometry of the oriented polytopes.
For any dense family of oriented polytopes $\mO=\{\mO^n\}_{n\in\bN}\subset\infty\Cat$, indexed by the natural numbers,
we consider a nerve functor 
$$\N_\mO:\infty\Cat\to\Fun(\mO^{\op},\infty\Gpd)$$
by restricting the Yoneda-embedding.
We prove the following nerve theorem:

\begin{theorem}[\cref{polytopnerv}]\label{thm:orientalsegal}
Let $\mO=\{\mO^n\}_{n\in\bN}\subset\infty\Cat$ be a dense family of oriented polytopes.
The oriented $\mO$-nerve functor $$\N_\mO:\infty\Cat\to\Fun(\mO^{\op},\infty\Gpd)$$
is a fully faithful right adjoint with essential image those presheaves $X:\mO^{\op}\to\infty\Gpd$ which are local for the following families of maps:
\begin{enumerate}[\normalfont(1)]\setlength{\itemsep}{-2pt}
\item The boundary decomposition
$\colim_{\mO^m \rightarrowtail\mO^n \mid m < n} \N_\mO(\mO^m) \to \N_\mO(\partial \mO^\n).$
\item The top cell decomposition
$\N_\mO(\bD^\n) \underset{{\partial\N_\mO(\bD^\n)}}{\coprod} \N_\mO(\partial \mO^\n) \to \N_\mO(\mO^\n).$
\item The globular decomposition
$\N_\mO(\bD^{i_0}) \!\!\!\underset{\N_\mO(\bD^{j_1})}{\coprod} \N_\mO(\bD^{i_1}) \!\!\!\underset{\N_\mO(\bD^{j_2})}{\coprod} \cdots \underset{\N_\mO(\bD^{j_n})}{\coprod} \!\!\!\N_\mO(\bD^{i_n}) \to \N_\mO(\bD^{i_0} \underset{\bD^{j_1}}{\coprod} \bD^{i_1} \underset{\bD^{j_2}}{\coprod} \cdots \underset{\bD^{j_n}}{\coprod} \bD^{i_n}).$
\end{enumerate}
Here, $n$, $i_0,\ldots, i_n$, and $j_1,\ldots, j_n$ are natural numbers, and 
$ \bD^{j_\ell} \rightarrowtail \bD^{i_\ell},\bD^{j_\ell} \rightarrowtail \bD^{i_{\ell-1}}$ are monomorphisms.
\end{theorem}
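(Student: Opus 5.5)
Since $\mO$ is assumed dense, $\N_\mO$ is fully faithful. It preserves limits and, assuming each $\mO^n$ is compact (it is a finite Steiner $\infty$-category), it also preserves filtered colimits, so it is an accessible right adjoint. Its essential image is therefore a reflective subcategory of $\Fun(\mO^{\op},\infty\Gpd)$ equivalent to $\infty\Cat$. What remains is to identify this image with the presheaves local for (1)--(3). The plan is to find a set $S$ of maps between presheaves such that:
\begin{enumerate}[\normalfont(a)]\setlength{\itemsep}{-2pt}
\item every nerve is $S$-local;
\item $S$-local presheaves are exactly the nerves.
\end{enumerate}

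For (a): each map in (1)--(3) is the comparison $\colim_i \N_\mO(A_i)\to \N_\mO(\colim_i A_i)$ for a colimit diagram $A_i$ in $\infty\Cat$. In (1) we use the boundary axiom (2) for oriented polytopes. In (2) we use the pushout $\mO^n\simeq \bD^n\coprod_{\partial\bD^n}\partial\mO^n$, which holds because $\mO^n$ is Steiner with a unique top cell. In (3) we use the definition of the globular sum. By the Yoneda lemma, a nerve $\N_\mO(X)$ sees such a map as the comparison map $\Map(\colim A_i,X)\to\lim\Map(A_i,X)$, which is an equivalence. So nerves are local.

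For (b), I would reduce to a known presentation, namely Rezk's $\Theta$-presentation, in which $\infty\Cat$ is the full subcategory of $\Fun(\Theta^{\op},\infty\Gpd)$ of Segal (globular) and complete presheaves. The key structural claim is that the localization $L_S$ of $\Fun(\mO^{\op},\infty\Gpd)$ at the maps (1)--(3) is generated by the disks and globular sums. More precisely:
\begin{itemize}
\item By induction on $n$, using (1) and then (2), every $\N_\mO(\mO^n)$ becomes, after $S$-localization, an iterated colimit of nerves of disks. The induction works because $\partial \mO^n$ is a colimit of lower-dimensional $\mO^m$.
\item Hence the full subcategory of $S$-local presheaves is generated under colimits by the $L_S\N_\mO(\bD^k)$.
\item Restricting an $S$-local presheaf $X$ along $\Theta\to \infty\Cat$, via $X(\theta):=\Map(L_S\N_\mO\theta, X)$, gives a $\Theta$-presheaf. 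It satisfies the globular Segal condition by (3), so it defines an $\infty$-category $Y$.
\end{itemize}

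One then checks that $X\simeq \N_\mO(Y)$. Both sides are $S$-local, and they agree on disks and globular sums by construction. Since each $\mO^n$ is reconstructed from these via (1) and (2), they agree on every $\mO^n$.

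The main obstacle is the completeness (univalence) issue. Rezk's presentation requires a completeness condition, whereas the statement includes no such locality. Since the paper works with all, possibly non-univalent, $\infty$-categories, I expect $\infty\Cat$ itself to be presented by the Segal condition on $\Theta$ alone, with $\Theta$ dense in $\infty\Cat$. I would invoke that version.

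The second delicate point is that $\N_\mO(\bD^k)$ is not representable, since $\bD^k\notin\mO$. It is therefore essential to show that the canonical map $L_S\N_\mO(\bD^k)\to \N_\mO(\bD^k)$ is an equivalence, i.e.\ that $\N_\mO$ of a disk is already built correctly from the polytopes via the decompositions. I would try to prove this by an induction on dimension that uses density. Density expresses $\bD^k$ as a colimit of polytopes over $\mO_{/\bD^k}$, and the polytopes mapping to $\bD^k$ factor through their top-cell decompositions (2). This step is where I expect the real work.
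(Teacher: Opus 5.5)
Your proposal is correct and is essentially the paper's proof. Nerves are local because the realization sends each map in (1)--(3) to a colimit decomposition that holds in $\infty\Cat$. Conversely, $\theta\mapsto\Map(\N_\mO(\theta),X)$ is a Segal $\Theta$-space by (3), hence of the form $\N_\Theta(Y)$ by \cref{theta}. Finally, a map of local presheaves that is an equivalence after mapping out of every $\N_\mO(\bD^k)$ is an equivalence at every $\mO^n$, by induction on $n$ using (2), (1) and the inductive definition of $\partial\N_\mO(\bD^{n})$.

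The step you flag as the real work is automatic. $\N_\mO(\bD^k)$ is a nerve, hence already $S$-local by your part (a), and the argument only ever maps out of $\N_\mO(\bD^k)$, so its representability is never needed.

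What you should add instead is an explicit comparison map $X\to\N_\mO(Y)$; ``they agree on disks'' needs a map to be meaningful. The paper obtains it as the unit $X\to i_*i^*X$ for $i\colon\Theta\subset\overline{\mO}$, the inclusion into the idempotent completion. Without that inclusion, you can define it by applying your restriction functor: $X(O)\simeq\Map(\N_\mO(O),X)\to\Map(\N_\Theta(O),\N_\Theta(Y))\simeq\Map(O,Y)$, using $\Map(\N_\mO(\theta),\N_\mO(O))\simeq\Map(\theta,O)$ by density.
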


\cref{thm:orientalsegal} is useful so far we have a good supply of dense families of oriented polytopes.
While there are more numerous families of regular (convex) polytopes in low dimensions, Coxeter showed that there are only three infinite families of regular polytopes: the simplices, which are self-dual, as well as the cubes and the orthoplexes (also known as the crossed complexes), which are dual to one another.
These fundamental geometric objects lift to the world of $\infty$-categories.
Indeed, the existence of the oriented simplices and the oriented cubes was already well-known, and are easily constructed using the machinery of Steiner theory.
The orthoplexes, on the other hand, are significantly more subtle.

\begin{theorem}(\cref{orientaldense}, \cref{cubicaldense}).
The three infinite families of regular polytopes, the simplices, cubes, and orthoplexes, admit canonical lifts to the category of $\infty$-categories.
More precisely, the oriented simplices, the oriented cubes, and the oriented orthoplexes are Steiner $\infty$-categories whose underlying cell structures coincide with standard triangulations of the simplices, the cubes, and the orthoplexes.
Moreover, the oriented simplices and the oriented cubes are dense families of oriented polytopes.
\end{theorem}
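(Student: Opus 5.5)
Three things have to be established: that each of the three families lifts to Steiner $\infty$-categories with the correct underlying cell complexes, that the oriented simplices form a dense family of oriented polytopes, and that the oriented cubes do as well. The plan is to work entirely in Steiner theory (see the appendix). Each family is described by an augmented directed chain complex with an explicit basis. It then suffices to check that the complex is a Steiner complex, meaning it is loop-free and has unital basis; Steiner's theorem then gives a fully faithful functor into strict, indeed gaunt, $\infty$-categories.

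For the simplices, take Street's orientals $\bDelta^n$, whose Steiner complex is the normalized chain complex of $\Delta^n$ with basis the faces $\{i_0<\dots<i_k\}$. The differential is split into positive and negative parts by the parity of the deleted index. For the cubes, take $\cube^n=(\bD^1)^{\boxtimes n}$. Its complex is the $n$-fold tensor product of $\bD^1$, and since the tensor product of Steiner complexes is Steiner, the cube needs no separate verification. For the orthoplexes, I would define $\mO^n$ inductively as the oriented bicone on $\mO^{n-1}$: glue a cone and an anticone along $\mO^{n-1}$ at the chain level. The basis is then indexed by the faces of the cross-polytope, i.e.\ signed subsets of $\{1,\dots,n\}$, with signs determined by the two cone directions. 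In every case the underlying unoriented complex, obtained by forgetting the splitting of the differential, is visibly the cellular chain complex of the standard simplex, cube, or cross-polytope. So axiom (3) of an oriented polytope holds, and the unique top basis element gives axiom (1).

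The main obstacle is loop-freeness for the orthoplexes. For simplices and cubes it is classical (Steiner, Street). For the bicone I would prove a general lemma: the oriented cone and anticone preserve strong loop-freeness, and gluing two Steiner complexes along a common subcomplex that is downward closed in both preserves it as well, provided the two orientations on the new top cells do not combine into a cycle. The first attempt would be an explicit linear "height" function on basis elements, built from the coordinates of the apex, which strictly increases along the relation $x<y$ whenever $x$ is in $\partial^-y$ or $y$ is in $\partial^+x$. Such a function makes loop-freeness immediate.

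For density, axiom (2) is the first step. The boundary $\iota_{n-1}\mO^n$ is the Steiner $\infty$-category on the truncated complex, and Steiner categories send unions of downward closed subcomplexes (pushouts along inclusions) to colimits in $\infty\Cat$. Hence $\partial\mO^n$ is the colimit of its proper faces; these faces are again orientals, respectively cubes, because faces of simplices are simplices and faces of $\cube^n$ are lower-dimensional cubes. Density itself, meaning that the restricted nerve $\N_\mO$ is fully faithful, I would reduce to the known density of $\Theta$ in $\infty\Cat$ (Rezk). By the top cell and globular decompositions, every disk $\bD^k$ and every globular sum is a colimit of objects of $\mO$ that is preserved by $\N_\mO$, so $\Theta$ lies in the colimit closure of $\mO$ via colimits detected by the nerve. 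For cubes this is Campion's theorem, which can simply be cited. For orientals I would instead exhibit $\bD^k$ as a retract of $\bDelta^k$, by collapsing all vertices except $0$ and $k$, and bootstrap from there.
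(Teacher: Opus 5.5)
Your setup for the orientals and cubes (normalized chains, Gray powers of $\lambda(\bD^1)$) and your reduction of density to that of $\Theta$ match the paper. The density step itself, however, fails.

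You claim that, via the top cell and globular decompositions, every disk and every globular sum is a colimit of objects of $\mO$ that is preserved by $\N_\mO$. Neither decomposition does this.
\begin{itemize}
\item The top cell decomposition $\mO^n\simeq\bD^n\coprod_{\partial\bD^n}\partial\mO^n$ runs in the wrong direction: it builds $\mO^n$ from $\bD^n$, not $\bD^n$ from polytopes.
\item The globular decompositions are \emph{not} preserved by $\N_\mO$. They are exactly the maps one must localize at in \cref{polytopnerv}, a theorem that moreover assumes density. Concretely, take $\mO=\bDelta$ and evaluate at $\bDelta^1$. The pushout $\N_{\bDelta}(\bD^1)\coprod_{\N_{\bDelta}(\bD^0)}\N_{\bDelta}(\bD^1)$ has five points, while $\N_{\bDelta}(\bD^1\vee\bD^1)(\bDelta^1)$ has six; the composite is missing.
\end{itemize}
Your fallback does not close the gap either. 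A retraction of $\bDelta^k$ onto $\bD^k$ only puts the globes into the idempotent completion of $\bDelta$, and the globes are not dense, since a map of underlying globular sets need not respect composition. The phrase ``bootstrap from there'' is where the entire content of the density theorem lies.

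What you need is that all of $\Theta$, not just the disks, lies in the idempotent completion of $\bDelta$. Retracts are absolute colimits, so they are preserved by $\N_{\bDelta}$. This is \cref{retract}, which shows that bipointed orientals are closed, up to retract, under two operations:
\begin{itemize}
\item Bipointed wedges: $\bDelta^n\vee\bDelta^m\to\bDelta^{n+m}$ has a left inverse (\cref{10}, an induction built on the truncation $\bDelta^2\to\bD^1\vee\bD^1$).
\item Suspensions: $\bDelta^{n+1}\to S(\bDelta^n)$ has a section (\cref{11}).
\end{itemize}
The suspension step is the hard one, and the paper reaches it through cubes. The map $\cube^{n+1}\to S(\cube^n)$ has a section constructed on Steiner complexes (\cref{hio}, \cref{6}). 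The map $\cube^n\to\bDelta^n$ has a section coming from a lax deformation of the square onto the triangle (\cref{Thh}). The same \cref{Thh} then yields density of the cubes from density of the orientals. Citing Campion for the cubes is fine on its own, but it cannot replace the oriental case: density passes from orientals to cubes because orientals are retracts of cubes, not conversely.

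Separately, your orthoplex, defined as the bicone on $\mO^{n-1}$, has the wrong cells. The bicone on $\bD^0$ is already $\bD^1\vee\bD^1$. In general, the bicone on a solid polytope has two top cells glued along an interior $(n-1)$-cell, so there is no unique top basis element and axiom (1) fails. As in the paper, you must take the bicone of the \emph{boundary} of the $(n-1)$-orthoplex to obtain the boundary of the $n$-orthoplex, and then attach a single $n$-cell along an attaching map. The paper only constructs the orthoplexes and never checks their Steiner property, so your loop-freeness lemma would be new, but it has to be applied to this corrected complex.
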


A main ingredient in the proof of this theorem is the following fundamental fact relating the oriented simplices and the oriented cubes:

\begin{theorem}\label{theorem:families}(\cref{Thh})
For each natural number $n$, the oriented simplex $\bDelta^n$ is a retract of the oriented cube $\cube^n$.
\end{theorem}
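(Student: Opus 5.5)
The plan is to work entirely in Steiner theory. Both $\bDelta^n$ and $\cube^n$ are Steiner $\infty$-categories, and Steiner's functor from augmented directed chain complexes with loop-free unital bases to strict $\infty$-categories is fully faithful on Steiner complexes. It therefore suffices to build maps of augmented directed chain complexes
\[
i\colon \mathcal{O}(n)\to C(\cube^n), \qquad r\colon C(\cube^n)\to \mathcal{O}(n),
\]
with $r\circ i=\id$. Here $\mathcal{O}(n)$ is the Street complex, with basis the nonempty subsets of $[n]$, and $C(\cube^n)=C(\bD^1)^{\otimes n}$ has basis the words in $\{0,1,\epsilon\}^n$. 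Each map must preserve the positivity submonoids (the degree-wise cones spanned by the bases) and the augmentation. Such maps then induce the desired section and retraction of $\infty$-categories.

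For the retraction $r$, I would use the standard combinatorial collapse of the cube onto the simplex (as in Ara--Maltsiniotis and Campion). Identify vertices of $\cube^n$ with subsets $S\subseteq\{1,\dots,n\}$, or with bit strings $x\in\{0,1\}^n$. The vertex map sends $x$ to the largest $k$ such that $x_1=\dots=x_k=1$, a monotone map $\{0,1\}^n\to[n]$. On a cell $w\in\{0,1,\epsilon\}^n$, set $r(w)=$ the simplex $\{k_0<\dots<k_m\}$ when $w$ has the form
\[
1^{k_0}\,\epsilon\,1^{k_1-k_0-1}\,\epsilon\cdots,
\]
that is, all letters before the last $\epsilon$ equal $1$ or $\epsilon$, and all letters after it are arbitrary. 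On every other (degenerate) cell, set $r(w)=0$. The candidate I would try first is slightly different: $r(w)\neq 0$ only if every letter preceding an $\epsilon$ is $1$ or $\epsilon$, and the letter following the last $\epsilon$ is $0$ (or $w$ ends there).

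For the section $i$, I would send the vertex $k$ to $1^k0^{n-k}$ and the simplex $\{k_0<\dots<k_m\}$ to the sum of the cube cells lying in the "staircase" interval. Concretely, this is the word with $1$ on positions $\le k_0$, then $\epsilon$ at position $k_0+1$, then $1$'s up to $k_1$, then $\epsilon$ at $k_1+1$, and so on, with $0$ after the last $\epsilon$. Using a single cube cell when possible would make $i$ a map of bases, which is the cleanest case.

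The key steps, in order, are as follows.
\begin{enumerate}
\item Write down $i$ and $r$ explicitly on bases.
\item Check that both are chain maps. This means comparing the alternating-sign boundary of the oriental with the Gray-tensor boundary of the cube, using the Koszul signs $\partial(\epsilon)=1-0$.
\item Check positivity and augmentation, which are immediate if each basis element goes to a basis element or $0$.
\item Verify $r\circ i=\id$ on bases.
\end{enumerate}

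I expect the sign and combinatorics bookkeeping in step 2 to be the main obstacle, specifically showing that $r$ commutes with $\partial$. Boundary faces of a cube cell that are killed by $r$ must cancel in pairs, or map to degenerate simplices, so that the surviving terms reproduce exactly the oriental's $\partial^{\pm}$ with the correct parity. If a basis-preserving $r$ fails this, I would fall back on the alternative route: realise $\bDelta^n$ as the iterated join $\bD^0\star\cdots$. Then construct $\cube^n\to\bDelta^n$ inductively from a natural map $X\boxtimes\bD^1\to X\star\bD^0$ (the cone as a quotient of the cylinder), and construct the section from $\bDelta^{n}\simeq\bDelta^{n-1}\star\bD^0\to\bDelta^{n-1}\boxtimes\bD^1$. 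The identity $r\circ i=\id$ would then follow by induction on $n$.
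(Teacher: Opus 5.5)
The gap is in the section $i$, not in $r$, which you expected to be the hard part. A section that sends basis elements to basis elements cannot exist for $n\geq 2$. Since $C(\cube^n)_n=\bZ\,\epsilon^n$, positivity and $r\circ i=\id$ force $i(\{0,\dots,n\})=\epsilon^n$. The images of the $n+1$ faces must therefore add up, with signs, to $\partial(\epsilon^n)$, which has $2n$ distinct basis terms, whereas a basis map applied to $n+1$ faces yields at most $n+1$ terms. Concretely, for $n=2$ your words give $i(\partial\{0,1,2\})=1\epsilon-\epsilon1+\epsilon0$, whereas $\partial(\epsilon\epsilon)=1\epsilon+\epsilon0-0\epsilon-\epsilon1$. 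The long edge $\{0,2\}$ must instead go to the two-step path $0\epsilon+\epsilon1$ through the vertex $01$. That is the other side of the square from your vertex chain $00\to10\to11$, and one of its two cells is killed by $r$. In general $\{a<b\}$ must go to $\sum_{j=0}^{b-a-1}1^a0^j\epsilon1^{b-a-1-j}0^{n-b}$, and higher simplices to still larger sums. You never specify these sums, and this, not the chain-map property of $r$, is where the content lies. Your first description of $r$ is fine: it is the chain map of the iterated collapse $\bD^1\boxtimes\cube^{n-1}\to\bD^1\boxtimes\bDelta^{n-1}\to\bD^0\star\bDelta^{n-1}$, so it is automatically a chain map. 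The variant you would try first is not a chain map, since it kills $\epsilon1$ although $r(01)=0\neq2=r(11)$.

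Your fallback is exactly the paper's skeleton: $\xi_{n+1}=p_{\bDelta^n}\circ(\xi_n\boxtimes\bD^1)$, reducing everything to a section of the cone quotient. But it merely asserts the one non-formal step. The quotient $X\boxtimes\bD^1\to X\star\bD^0$ has no section in general; for $X=\partial\bD^1$ the cone point would have to lie in both components of $X\boxtimes\bD^1$. One needs that $\bDelta^n$ is itself a cone. The paper proves this step (Corollary~\ref{sec}) using the lax homotopy $h=s\circ q$ on $\cube^2$, where $s\colon\bDelta^2\to\cube^2$ sends $\{0,2\}$ to the two-step path. This $h$ is tensored with $X$ and descended twice along the defining pushouts, first to $e_X$ and then to the section $s_X$.

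At the chain level, in your first-coordinate convention, you can make this explicit. Let $\eta$ be the collapse $\bD^1\boxtimes\bDelta^{n-1}\to\bD^0\star\bDelta^{n-1}$, let $\ast$ be the cone point, and let $x$ range over simplices of $\bDelta^{n-1}$. Set $\sigma(x)=1\otimes x$, $\sigma(\ast)=0\otimes\{0\}$ and $\sigma(\ast\star x)=\epsilon\otimes x+0\otimes c(x)$, where $c(x)=\{0\}\cup x$ if $0\notin x$ and $c(x)=0$ otherwise. Then $\sigma$ is a positive augmented chain map precisely because $\partial c+c\partial=\id$ in positive degrees and $\partial c(\{j\})=\{j\}-\{0\}$, and $\eta\circ\sigma=\id$. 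Putting $i_n=(\id\otimes i_{n-1})\circ\sigma$ and $r_n=\eta\circ(\id\otimes r_{n-1})$ gives $r_n\circ i_n=\id$ by induction, and unwinding $i_n$ produces the sums above.
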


As in ordinary geometry, the $n$-simplex is a deformation retraction of the $n$-cube.
Our proof of \cref{theorem:families} uses a suitable lax version of such a homotopy.

We also show that any object of the category $\Theta$ is a retract of an oriented simplex, and consequently also a retract of an oriented cube.
In  \cite{campion2022cubesdenseinftyinftycategories}, Campion proves that the oriented cubes form a dense subcategory of the category of $\infty$-categories.
We use a similar strategy to show that the oriented simplices in fact form a dense family of oriented polytopes.
Combined with the aforementioned retractions, this implies that the oriented cubes also form a dense family of oriented polytopes.
It seems harder to show that the family of oriented orthoplexes is dense, though this seems plausible.

The density of the oriented simplices is closely related to the Street--Roberts conjecture and the complicial approach to $\infty$-category theory.
The Street nerve is the restriction of the oriented simplicial nerve to the subcategory of oriented simplices and atomic maps thereof, which is equivalent to the usual simplex category $\Delta\subset\bDelta$.
However, the induced functor $\Delta\to\infty\Cat$ (which is not the usual embedding) is not dense, and consequently additional data are required in order the render the Street nerve fully faithful.
The complicial philosophy of Roberts and Street was to equip the sets or spaces with occur as the values of the Street nerve with markings, keeping track of the ``thin'' simplices.
This leads to a category of marked, or (pre)stratified, simplicial spaces, which can be described as presheaves on a category $\Delta^+$
containing $\Delta.$
They conjectured that this was enough to render the Street nerve fully faithful, which was later proved by Verity \cite[Theorem 266]{VERITY2} in the strict setting, and Loubaton \cite{loubaton2024complicialmodelinftyomegacategories} in the weak setting.

From another perspective, however, one might argue that the Street nerve ought to have been defined using the full subcategory of $\infty\Cat$ consisting of the orientals, instead of restricting only to the atomic morphisms.
The density of the oriented simplices might therefore be viewed as an ``unmarked'' and arguably more conceptually satisfying resolution of the Street--Roberts conjecture.
Specializing \cref{thm:orientalsegal} to the dense family of oriented simplices takes the following form, where we also simplify the boundary condition:

\begin{corollary}\label{cor:orientalsegal}
The oriented simplicial nerve functor
$$
\N_{\bDelta}:\infty\Cat\to \Fun(\bDelta^{\op},\infty\Gpd)
$$
is a fully faithful right adjoint embedding with image those presheaves $X:\bDelta^{\op}\to\infty\Gpd$ which are local for the following families of maps:
\begin{enumerate}[\normalfont(1)]\setlength{\itemsep}{-2pt}
\item[\em{(1)}] The boundary decomposition
$\mathrm{coeq}(\underset{0 \leq i <j \leq n}{\coprod}\N_\bDelta(\bbDelta^{n-2})\rightrightarrows \underset{0 \leq i \leq n}{\coprod} \N_\bDelta(\bbDelta^{n-1})) \to \N_\bDelta(\partial\bbDelta^\n).$
\item[\em{(2)}] The top cell decomposition $\N_\bDelta(\bD^\n) \underset{\partial\N_\bDelta(\bD^\n)}{\coprod} \N_\bDelta(\partial\bbDelta^\n) \to \N_\bDelta(\bbDelta^\n).$
\item[\em{(3)}] The globular decomposition
$\N_\bDelta(\bD^{i_0}) \!\!\underset{\N_\bDelta(\bD^{j_1})}{\coprod} \!\!\N_\bDelta(\bD^{i_1}) \underset{\N_\bDelta(\bD^{j_2})}{\coprod} \cdots \underset{\N_\bDelta(\bD^{j_n})}{\coprod} \!\!\N_\bDelta(\bD^{i_n}) \to \N_\bDelta(\bD^{i_0} \underset{\bD^{j_1}}{\coprod} \bD^{i_1} \underset{\bD^{j_2}}{\coprod} \cdots \underset{\bD^{j_n}}{\coprod} \bD^{i_n}).$
\end{enumerate}
Here, $n$, $i_0,\ldots, i_n$, and $j_1,\ldots, j_n$ are natural numbers, and 
$ \bD^{j_\ell} \rightarrowtail \bD^{i_\ell},\bD^{j_\ell} \rightarrowtail \bD^{i_{\ell-1}}$ are monomorphisms.
\end{corollary}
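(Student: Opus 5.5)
This follows by specializing \cref{thm:orientalsegal} to the family $\mO=\bDelta$. The oriented simplices form a dense family of oriented polytopes by \cref{orientaldense}. So \cref{thm:orientalsegal} already shows that $\N_\bDelta$ is a fully faithful right adjoint. It also identifies the essential image as the presheaves local for three classes of maps: the general boundary decomposition, the top cell decomposition, and the globular decomposition. The top cell and globular decompositions in \cref{cor:orientalsegal} are literally the same maps with $\mO^n=\bbDelta^n$. The only thing left is to show that the general boundary decomposition
$$\colim_{\bbDelta^m\rightarrowtail\bbDelta^n\mid m<n}\N_\bDelta(\bbDelta^m)\to\N_\bDelta(\partial\bbDelta^n)$$
may be replaced by the coequalizer version. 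Given the other conditions, the two should generate the same class of local objects.

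The plan is to show that the colimit over the poset of proper atomic faces is already computed by the coequalizer. The atomic inclusions $\bbDelta^m\rightarrowtail\bbDelta^n$ correspond to nonempty subsets of $[n]$, as the subcategory of atomic maps is $\Delta$. So the indexing category is the poset $P$ of proper nonempty subsets of $[n]$. Its codimension-one elements $d_i=[n]\setminus\{i\}$ cover it, and pairwise intersections $d_i\cap d_j$ are the codimension-two faces. Any face is the intersection of the $d_i$ containing it.

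A standard cofinality/descent argument, the same one giving $\partial\Delta^n$ as a coequalizer in simplicial sets, then shows that the colimit of any diagram $P\to\mC$ built from a Yoneda-type functor of faces reduces to the \v{C}ech-type coequalizer. The precise statement is: restricting along the inclusion of the codimension-$\le 2$ part, together with the fact that intersections of faces are faces, identifies the two colimits. One route is to note that $P$ is the poset of simplices of $\partial\Delta^n$, and to write $\colim_P F$ via an iterated pushout or Reedy decomposition.

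The main obstacle is that in $\infty$-groupoid-valued presheaves a colimit over $P$ is homotopy-coherent, so a plain 1-categorical coequalizer does not obviously agree with it. Higher intersections (triple overlaps) could contribute. I would handle this in one of two ways.

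The first option is to argue that the coequalizer in the corollary is interpreted as the colimit over the full subposet $P_{\ge n-2}$ of faces of dimension $n-1$ and $n-2$. I would then prove that the left Kan extension of $F|_{P_{\geq n-2}}$ to $P$ agrees with $F$ on faces of dimension $\le n-3$, so the colimits coincide. Pointwise, this amounts to showing that the colimit of $\N_\bDelta(\bbDelta^{\sigma})$ over the codimension-$\le2$ faces containing $\sigma$ recovers $\N_\bDelta(\bbDelta^{\sigma})$ after localization. The indexing category here is $\partial$ of a simplex of dimension $n-|\sigma|$ truncated to its top two layers, which is weakly contractible when $\dim\sigma\le n-3$, giving the constant colimit.

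The second, if that fails, is to compare both maps after applying realization. Both become equivalences onto $\partial\bbDelta^n$ in $\infty\Cat$, using \cref{orientaldense} and condition (2) on families of oriented polytopes. Since the target condition is locality and both sides are built from representables, each map lies in the strongly saturated class generated by the other together with the globular and top cell maps. Hence the local objects agree.
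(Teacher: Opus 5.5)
Your reduction is the paper's: specialize \cref{thm:orientalsegal} to the orientals (dense by \cref{orientaldense}), note that (2) and (3) are literally the same maps, and replace the boundary condition (1) by its codimension-$\le 2$ version, which the paper does by citing \cref{orientdecom}(2)--(3). The problem is that this last step is false for $n\ge 3$, and both of your routes break on it.

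Let $P$ be the poset of proper faces of $[n]$ and $P_{\ge n-2}\subset P$ the faces of dimension $n-1$ and $n-2$. You need $P_{\ge n-2}\subset P$ to be cofinal. That means: for each face $\tau$, the poset of faces of dimension $n-1,n-2$ containing $\tau$ must be weakly contractible. (The left Kan extension you write down uses the other comma poset, $\{\sigma\in P_{\ge n-2}:\sigma\subseteq\tau\}$, which is empty for small $\tau$.)

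If $\tau$ has codimension $k$, that poset is the vertex--edge incidence poset of a $(k-1)$-simplex. Its nerve is the subdivided complete graph on $k$ vertices, which has $H_1\neq 0$ for $k\ge 3$. For the vertex $\{0\}$ of $\bbDelta^3$, the six faces $01\subset 012\supset 02\subset 023\supset 03\subset 013\supset 01$ form a hexagon. So the triple overlaps you worried about do contribute.

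Your second route fails for the same reason: the realization of the coequalizer is not $\partial\bbDelta^n$. The functor $\tau_0\colon\infty\Cat\to\mS$ preserves colimits and $\tau_0\bbDelta^k\simeq *$. So $\tau_0$ of the coequalizer in $\infty\Cat$ is the homotopy coequalizer of finite sets $\{i<j\}\rightrightarrows\{i\}$, i.e.\ the complete graph on $n+1$ vertices ($S^1\vee S^1\vee S^1$ for $n=3$). On the other hand, applying $\tau_0$ to the pushout of \cref{cello} shows that $\tau_0(\partial\bbDelta^3)$ has the homology of $S^2$. Axiom (2) for families concerns the colimit over all of $P$, so it says nothing about the truncated diagram. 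Also, two maps having equivalent realizations would not by itself put each in the saturated class generated by the other.

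This gap cannot be filled. Take the $\infty$-groupoid $S^1$. Full faithfulness of the nerve gives $\pi_0\Map(\N_\bDelta(\partial\bbDelta^3),\N_\bDelta(S^1))\cong H^1(\tau_0\partial\bbDelta^3;\mathbb{Z})=0$. Maps out of the coequalizer, however, form the homotopy equalizer of $(S^1)^4\rightrightarrows(S^1)^6$, i.e.\ $\Map(S^1\vee S^1\vee S^1,S^1)$, whose $\pi_0$ is $\mathbb{Z}^3$. Hence $\N_\bDelta(S^1)$ is not local for the map in (1) at $n=3$, and the statement with a homotopy coequalizer of $\infty\Gpd$-valued presheaves fails for $n\ge 3$.

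The paper's own justification has the same defect. The proof of \cref{orientdecom} asserts that $P_{\ge n-2}\subset P$ is ``trivially cofinal'' and that $P$ is weakly contractible, whereas $|P|\simeq S^{n-1}$. What does hold is the version with the colimit over all proper faces, i.e.\ \cref{thm:orientalsegal} itself. The coequalizer suffices only $1$-categorically, since the comma posets above are nonempty and connected — for example for set-valued presheaves — or when $n\le 2$.
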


Campion uses the density of the oriented cubes to obtain a model-independent construction of the oplax Gray tensor product of $\infty$-categories \cite{campion2023graytensorproductinftyncategories}.
Specializing \cref{thm:orientalsegal} to the dense family of oriented cubes results in the following analogue:

\begin{corollary}\label{cor:cubicalsegal}
The oriented cubical nerve functor
$$
\N_{\cube}:\infty\Cat\to\Fun(\cube^{\op},\infty\Gpd)
$$
is a fully faithful right adjoint embedding with image those presheaves $X:\cube^{\op}\to\infty\Gpd$ which are local for the following families of maps:
\begin{enumerate}[\normalfont(1)]\setlength{\itemsep}{-2pt}
\item[\em{(1)}] The boundary decomposition
$\mathrm{coeq}(\underset{0 \leq i <j \leq n}{\coprod} 4 \times \N_\cube(\cube^{n-2})\rightrightarrows \underset{0 \leq i \leq n}{\coprod} 2 \times \N_\cube(\cube^{n-1})) \to \N_\cube(\partial\cube^\n).$
\item[\em{(2)}] The top cell decomposition $\N_\cube(\bD^\n) \underset{\partial\N_\cube(\bD^\n)}{\coprod} \N_\cube(\partial\cube^\n) \to \N_\cube(\cube^\n).$
\item[\em{(3)}] The globular decomposition
$\N_\cube(\bD^{i_0}) \!\!\underset{\N_\cube(\bD^{j_1})}{\coprod} \!\!\N_\cube(\bD^{i_1}) \underset{\N_\cube(\bD^{j_2})}{\coprod} \cdots \underset{\N_\cube(\bD^{j_n})}{\coprod} \!\!\N_\cube(\bD^{i_n}) \to \N_\cube(\bD^{i_0} \underset{\bD^{j_1}}{\coprod} \bD^{i_1} \underset{\bD^{j_2}}{\coprod} \cdots \underset{\bD^{j_n}}{\coprod} \bD^{i_n}).$
\end{enumerate}
Here, $n$, $i_0,\ldots, i_n$, and $j_1,\ldots, j_n$ are natural numbers, and 
$ \bD^{j_\ell} \rightarrowtail \bD^{i_\ell},\bD^{j_\ell} \rightarrowtail \bD^{i_{\ell-1}}$ are monomorphisms.
\end{corollary}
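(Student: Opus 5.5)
This follows from \cref{thm:orientalsegal} applied to the family $\mO=\{\cube^n\}_{n\in\bN}$. It is a dense family of oriented polytopes by \cref{cubicaldense}. Conditions (2) and (3) of the general theorem then specialize verbatim to conditions (2) and (3) here. So the only real work is to identify the boundary decomposition of \cref{thm:orientalsegal}(1) with the coequalizer in (1), which indexes the boundary by codimension-one and codimension-two faces only.

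For the colimit over atomic inclusions $\cube^m\rightarrowtail\cube^n$ with $m<n$, I will use the combinatorics of the cube. A face of $\cube^n$ amounts to a choice, for each coordinate, of $0$, $1$, or ``free''; this can be read off from the Steiner complex of $\bD^1\boxtimes\cdots\boxtimes\bD^1$. The atomic inclusions are exactly these face inclusions, and each factors uniquely through faces of intermediate dimension. So the indexing category is the poset $P$ of proper faces of the $n$-cube.

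I will then show that the full subposet $P'\subset P$ of faces of codimension $1$ and $2$ is final for the purpose of computing colimits of presheaves. Since $P$ is a poset, the colimit of the diagram restricted to $P'$ is the coequalizer displayed in (1). The codimension-one faces are the $2n$ faces $\{x_i=\epsilon\}$, which give the summands $2\times\N_\cube(\cube^{n-1})$. The codimension-two faces are the $4\binom{n}{2}$ faces $\{x_i=\epsilon,x_j=\delta\}$ with $i<j$, which give the summands $4\times\N_\cube(\cube^{n-2})$. The two parallel maps are the two inclusions into codimension-one faces. The indexing in the displayed formula should be adjusted to the coordinates $1\le i<j\le n$.

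The main obstacle is the finality claim: for each proper face $F$, the poset of codimension-$\le 2$ faces containing $F$ must be weakly contractible. This is the face poset of a link, and it is connected and contractible for polytopes. Since the diagram takes values in presheaves of spaces, connectedness alone is not enough, and I need genuine contractibility. For $F$ of codimension $k\ge 3$, this poset is the poset of nonempty subsets of size at most $2$ of the $k$ constrained coordinates. Its nerve is the barycentric subdivision of the complete graph on $k$ vertices, which is not contractible once $k\ge3$. So plain finality fails.

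Instead I will argue inductively, in the style of a Segal-type reduction. The full colimit over $P$ and the codimension-$\le2$ coequalizer both compute the boundary presheaf modulo the locality conditions already imposed for lower $n$. By induction on $n$, each lower-dimensional face $\cube^m$ is itself equivalent, after localization, to the colimit over its own boundary and top cell. Hence the two localizations agree: the saturated class generated by the maps (1)–(3) of this corollary contains the boundary maps of \cref{thm:orientalsegal}(1), and conversely. The paper does the same for the simplices in \cref{cor:orientalsegal}. I expect this saturation argument, rather than the face combinatorics, to be the delicate step.
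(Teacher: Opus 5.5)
Your reduction to \cref{thm:orientalsegal} is the paper's route, and your diagnosis of condition (1) is correct. It is in fact sharper than the paper's, which obtains (1) from \cref{cubedecom}(3) by asserting that the faces of codimension $\leq 2$ are ``trivially cofinal'' among all proper faces. As you observe, the undercategory at a face of codimension $k\geq 3$ is the subdivided complete graph on $k$ vertices. It is connected but not contractible, so this finality holds only $1$-categorically: enough in $\infty\Cat^\strict$ or for $\Set$-valued presheaves, but not for presheaves of spaces. Your correction of the indexing to $1\leq i<j\leq n$ is also right.

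The gap is your final saturation step, and it cannot be filled. All nerves are local for a map $f$ in $\mP(\cube)$ exactly when the realization $L\colon\mP(\cube)\to\infty\Cat$ inverts $f$, since $\Map(X,\N_\cube C)\simeq\Map(LX,C)$. Now compose $L$ with $\tau_0$, which preserves colimits and sends every cube to a point. This turns the coequalizer in (1) into the realization of a graph with $2n$ vertices (the facets) and $4\binom{n}{2}$ edges (the ridges). For $n=3$ this graph is the $1$-skeleton of the octahedron, which is $\simeq\bigvee^7 S^1$. On the other hand, $\tau_0(\partial\cube^3)\simeq S^2$: attach the six $2$-cells to the $1$-skeleton one at a time, using \cref{pasting1}.

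Concretely, $\N_\cube(B\mathbb{Z})$ is the constant presheaf at $B\mathbb{Z}$. Maps from $\N_\cube(\partial\cube^3)$ into it form the connected space $\Map(S^2,B\mathbb{Z})$, whereas maps from the coequalizer have $\pi_0\cong\mathbb{Z}^7$. So the nerve of an $\infty$-groupoid already fails condition (1). No induction over lower-dimensional faces can repair this, because the defect lies in the shape of the gluing diagram, not in its pieces.

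For space-valued presheaves, the characterization is therefore only correct with (1) replaced, for $n\geq 3$, by the colimit over all proper faces as in \cref{thm:orientalsegal}; that indexing poset has classifying space $S^{n-1}$. The coequalizer form is fine for $n\leq 2$, and your finality argument goes through in the strict, $\Set$-valued setting.
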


We expect that the family of oriented orthoplexes is a also dense family of oriented polytopes, but we reserve a more in-depth treatment of oriented polytopes for future work.

The connection between $\infty$-categories as presheaves on a dense family of oriented polytopes, versus $\infty$-categories as complicial spaces, remains somewhat indirect and mysterious.
For this reason we propose the following conjecture, which if true, it would provide a comparison between these two types of models.

\begin{conjecture}(\cref{localeq2})
The map 
$$(\Delta^n)^t \to \N(\tau_{n-1}(\bDelta^n)) $$ of
stratified simplicial spaces is local with respect to complicial spaces.
\end{conjecture}

The geometry of oriented polytopes give rise to a number of fundamental operations in the category of $\infty$-categories: the wedge-suspension operation reflects the geometry of the thetas, the join the geometry of the orientals, and the Gray tensor the geometry of the cubes.

Using our techniques, we show that the join formula holds in the setting of weak $\infty$-categories:

\begin{theorem}(\cref{locmon3}, \cref{cor:weakvsstrictjoin})
\begin{enumerate}[\normalfont(1)]\setlength{\itemsep}{-2pt}
\item
The convolution monoidal structure on $\mP(\bDelta^\triangleleft)$ of descends along the 
localization $$\mP(\bDelta) \rightleftarrows \infty\Cat.$$
Similarly, the convolution monoidal structure on $\mP(\bDelta^\triangleleft)$
descends along the localization $$ \mP(\bDelta)\otimes\Set \rightleftarrows \infty\Cat^{\mathrm{strict}}. $$
\item
For every pair of $\infty$-categories $X$ and $Y$ there is a pushout square in $\infty\Cat:$
\[
\xymatrix{
X \boxtimes Y \coprod X \boxtimes Y \ar[r]\ar[d] & X\boxtimes \bD^1 \boxtimes Y \ar[d]\\
X \coprod Y \ar[r] & X\star Y.
}
\]
\item
Let $\mC$ be a small $\infty$-category.
The oplax slice, or oplax over category, functor is the right adjoint $$ \infty\Cat_{\mC/ } \to \infty\Cat, \qquad F \mapsto \mC_{//^\oplax F}$$ of the functor $ (-) \star \mC: \infty\Cat \to \infty\Cat_{\mC/ }.$
The lax coslice, or lax under category, functor is the right adjoint $$ \infty\Cat_{\mC/ } \to \infty\Cat,\qquad F \mapsto \mC_{F//^\lax }$$ of the functor $ \mC \star (-): \infty\Cat \to \infty\Cat_{\mC/ }.$
\end{enumerate}
\end{theorem}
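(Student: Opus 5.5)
The plan is to prove the three parts in order, with part (1) doing the real work and parts (2) and (3) following from it by formal arguments.

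For (1), I will use Day's criterion for descending a convolution monoidal structure along a localization. Let $L:\mP(\bDelta)\to\infty\Cat$ denote the left adjoint to the nerve $\N_\bDelta$; by \cref{cor:orientalsegal} it is a localization at the boundary, top cell and globular decomposition maps. Regard $\bDelta^\triangleleft$ as $\bDelta$ with an initial object $\bbDelta^{-1}=\emptyset$ adjoined. The join restricts to a monoidal structure on $\bDelta^\triangleleft$, since $\bbDelta^n\star\bbDelta^m\simeq\bbDelta^{n+m+1}$. Day convolution therefore makes presheaves on $\bDelta^\triangleleft$ monoidal, with the tensor cocontinuous in each variable. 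For this structure to descend, it suffices that for every generating local equivalence $f$ and every representable $\bbDelta^k$, both $f\star\bbDelta^k$ and $\bbDelta^k\star f$ are again local equivalences.

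I will check this family by family. For boundary decompositions, joining with $\bbDelta^k$ sends the faces of $\bbDelta^n$ to faces of $\bbDelta^{n+k+1}$, so $\partial\bbDelta^n\star\bbDelta^k$ becomes a union of faces, computed by the boundary colimit. For top cell decompositions, I expect the join of $\bD^n\cup_{\partial\bD^n}\partial\bbDelta^n$ with a simplex to decompose through boundary and top cell pieces of a bigger simplex. This uses the Steiner description of joins: the Steiner complex of $X\star Y$ is the augmented tensor of the complexes, so cell decompositions of $X$ yield cell decompositions of $X\star Y$ compatibly. The globular maps are the hardest case. There I would show that $\bD^{i}\star\bbDelta^k$, and joins of pastings of disks, are still computed by pushouts of Steiner $\infty$-categories. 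The tool is that Steiner's functor preserves the relevant pushouts of complexes, namely pushouts along inclusions of subcomplexes that are loop-free.

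The strict case goes the same way after applying $\tau_{\le0}$ on mapping sets, since the generators, and hence the localization, are the same.

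The main obstacle is exactly this verification that join with a simplex preserves local equivalences. Steiner theory computes strict joins, while weak pushouts could differ from strict ones. To handle this I would first prove that the weak pushouts in question are already strict: they are pastings of Steiner $\infty$-categories along atomic subcomplexes, and for these the weak and strict colimits agree, by the same argument that underlies \cref{thm:orientalsegal}.

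For (2), both sides are colimit-preserving in $X$ and in $Y$ separately. The join descended from (1) is cocontinuous in each variable only as a functor to the appropriate slices, but the pushout formula involves $X\coprod Y$, so it can be checked variable by variable on representables. This reduces to $X=\bbDelta^n$ and $Y=\bbDelta^m$. In that case it becomes a statement about Steiner complexes, $C(\bbDelta^n)\otimes C(\bD^1)\otimes C(\bbDelta^m)$ modulo its two ends, which I would verify directly by comparing bases and differentials. One must confirm that all objects involved are loop-free, so that Steiner's functor preserves this pushout.

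For (3), $(-)\star\mC:\infty\Cat\to\infty\Cat_{\mC/}$ preserves colimits in the slice. Colimits there are computed as pushouts under $\mC$, and cocontinuity of the descended join gives this preservation. The adjoint functor theorem then supplies a right adjoint, which I name the oplax slice. That it deserves the name follows from (2): maps $X\to\mC_{//^\oplax F}$ correspond to maps $X\boxtimes\bD^1\boxtimes\mC\to\mathcal D$ restricting appropriately on the ends, that is, to oplax cones. The lax coslice is handled symmetrically.
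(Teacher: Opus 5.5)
Part (3) is fine and matches the paper. The gaps are in (1) and (2).

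\textbf{Part (1).} Your family-by-family check treats the localization of the Day convolution $\N_\bDelta(X)\star j(\bbDelta^k)$ as if it were the Steiner join $X\star\bbDelta^k$. That identification is justified when $\N_\bDelta(X)$ is a retract of a representable, as for disks and objects of $\Theta$ by \cref{retract}. It is not justified for the boundary and top-cell generators, which involve $\N_\bDelta(\partial\bbDelta^n)$. Here $\N_\bDelta(\partial\bbDelta^n)\star j(\bbDelta^k)$ is the colimit of $j(\bbDelta^{m+k+1})$ over \emph{all} maps $\bbDelta^m\to\partial\bbDelta^n$, and many of these factor through no face (e.g.\ the map $\bbDelta^1\to\partial\bbDelta^2$ picking the composite $0\to 1\to 2$). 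The claim that its localization is $\partial\bbDelta^n\star\bbDelta^k$ is essentially an instance of the descent you are proving. So the argument is circular exactly at these generators, and Steiner-complex bookkeeping for $\partial\bbDelta^n\star\bbDelta^k$ does not touch the issue.

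The missing idea is to change the dense subcategory. The paper first proves descent for $\infty\Cat^\Steiner_{\mathrm{fg}}$. There, by \cref{folkloc}, the generating local equivalences are cell attachments $j(\bD^n)\coprod_{j(\partial\bD^n)}j(A)\to j(\bD^n\coprod_{\partial\bD^n}A)$ between \emph{representables}. Joining with a representable $Z$ is then literally the Steiner join. The check reduces to two facts: the strict join preserves pushouts and the saturated class of boundary inclusions (Ara--Lucas), and Campion's \cref{pasting1} then makes the strict pushouts weak. The case $\bDelta^\triangleleft$ follows formally, because its localization factors as the monoidal $i_!$ of \cref{joinconv2} followed by the Steiner localization.

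Relatedly, you must pass to reduced presheaves, using $\mP(\bDelta)\simeq\mP_{\mathrm{red}}(\bDelta^\triangleleft)$ and \cref{joinconv}. On all of $\mP(\bDelta^\triangleleft)$, the map from the initial presheaf to $j(\emptyset)$ is a local equivalence, but joining it with $j(\bbDelta^k)$ gives the initial presheaf mapping to $j(\bbDelta^k)$, which is not. So Day convolution there does not descend. On reduced presheaves the tensor only preserves weakly contractible colimits in each variable. Your reduction to representables therefore needs the closure argument used in \cref{locmon3}.

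\textbf{Part (2).} Your reduction to $X=\bbDelta^n$, $Y=\bbDelta^m$ agrees with the paper. However, comparing Steiner complexes only produces the pushout in $\infty\Cat^\strict$, where it is essentially the definition of the join complex. The theorem asserts a pushout in $\infty\Cat$, and your weak-equals-strict principle does not reach this square. Its left vertical map is built from the projections $X\boxtimes Y\to X$ and $X\boxtimes Y\to Y$. These are neither inclusions nor in the saturated class generated by $\partial\bD^n\subset\bD^n$, and they do not preserve atomic generators. So neither \cref{pasting1} nor \cref{pasting2} applies.

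The paper gets around this in three steps:
\begin{enumerate}
\item It reduces to $X=\bD^0$ by associativity and then inducts on $m$.
\item It uses the inductive hypothesis on $\partial\bbDelta^m$, the skeletal decompositions of \cref{cell}, and the top-cell attachments of \cref{cello} for $\bD^1\boxtimes\bbDelta^m$ and $\bbDelta^{m+1}$.
\item It assembles the square by the pasting law.
\end{enumerate}
In this way the only pushouts ever compared with their strict versions are pushouts along boundary inclusions.
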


\subsection{Relation to other work}
Higher category has been developed extensively in the past few decades.
The first major development was the synthesis of homotopy theory and category theory. See for instance \cite{bergner2007three}, \cite{MR420609}, \cite{joyal2002quasi}, \cite{lurie.HTT}, \cite{rezk2001model}, \cite{simpson1997closed}, \cite{toen2005vers} for some foundational works in this area.
\footnote{Since the category of $1$-categories forms a $2$-category, in order to systematically study $1$-category theory, it is necessary to develop $2$-category and double category theory.
See for instance \cite{abellan2020relative}, \cite{abellan2024infty}, \cite{ayala2024stratified} \cite{gagna2022equivalence}, \cite{gaitsgory2019study}, \cite{gray1965sheaves}, \cite{Haugseng2020OnLT}, \cite{heine2026local}, \cite{loubaton2025squares}, \cite{lurie2009infinity}, \cite{riehl2020infinity}, \cite{moser2022model}, \cite{moser2023model}, \cite{stern20212}.}
The basic theory of $\infty$-categories, the Gray tensor product, fibrations, and limits and colimits have been worked out recently in various models.
Notably, in the strict setting, Verity developed the model of complicial sets \cite{VERITY1}, and Ara-Guetta \cite{ara2025lax} study the lax slice construction.
and its functoriality.
In the homotopical context, Loubaton adapted the approach of Verity to weak $\infty$-categories \cite{loubaton2024complicialmodelinftyomegacategories} and has written extensively about foundations from this perspective \cite{loubaton2024categorical}.

Our approach is influenced by the work of Campion \cite{campion2022cubesdenseinftyinftycategories}, who has used cubical models of $\infty$-categories to implement the Gray tensor product in the homotopical setting.
We expect the theory of oriented categories to have applications both internally to higher category theory, as well as externally, to mathematical physics, representation theory and non-commutative algebra.
For instance Johnson-Freyd-Reutter\cite{johnson2025build} apply the Gray tensor product to
construct Hopf algebras in braided monoidal categories from a suitable retract in a 3-category, which specializes to the classical Tannakian reconstruction of a Hopf algebra from a monoidal category with duals and a fiber functor.

\subsection{Notation and terminology}

We fix a hierarchy of set-theoretic universes whose objects we call small, large, very large, etc.
We call a space (equivalently, $\infty$-groupoid) $X$ small, large, etc. if for any choice of basepoint and natural number $n$ its homotopy sets $\pi_n X$ are small, large, etc.
We call an $\infty$-category small, large, etc. if its maximal subspace and all its mapping spaces are small, large, etc.

We refer to (not necessarily univalent) weak $(\infty,\n)$-categories for $0 \leq \n \leq \infty$ simply as $\n$-categories, and we refer to (not necessarily univalent) weak $(\n,\n)$-categories as $(\n,\n)$-categories.
In particular, we refer to (not necessarily univalent) $(\infty,1)$-categories as 1-categories, or simply categories.
We will sometimes want to work strictly, which can be viewed as a basechange along the colimit-preserving symmetric monoidal functor $\mS\to\Set$.
In this case, we will refer to strict $(\n,\n)$-categories simply as strict $\n$-categories.\footnote{Note that an $(n,n)$-category need not be a strict $(n,n)$-category if $n>2$.
For instance, the fundamental $\infty$-groupoid of the $2$-sphere $S^2\simeq \mathrm{B}^2\Omega^2 S^2$ is an $(\infty,0)$-category which is not strict, nor are its $n$-truncations for any $n>2$.}

\begin{notation}
We will make use of the following notation and terminology when discussing categories, in the sense of categories enriched in the monoidal category of $\infty$-groupoids under the cartesian product.
\begin{enumerate}[\normalfont(1)]\setlength{\itemsep}{-2pt}
\item We write $\mS$ for the category of spaces, by which we mean small $\infty$-groupoids, homotopy types, or anima, and $\Set$ for the category of small sets.
\item We write $\infty\Cat$ for the large category of small $\infty$-categories.

\item We write $\Delta$ for (a skeleton of) the category of finite, non-empty, partially ordered sets and order preserving maps, whose objects we denote by $[\n] = \{0 < ... < \n\}$ for $\n \geq 0$.\footnote{This should not be confused with the category $\bDelta$ of oriented simplices.}
\item We write $\Map_{\mC}(A,B)$ for the space of maps (equivalently, $1$-morphisms) from $A$ to $B$ in $\mC$, for any category $\mC$ containing an ordered pair of objects $(A,B)\in\mC$.

\item We write $\ast$ for the final object and
$\emptyset$ for the initial object in any category.

\item We call a map of spaces $X \to Y$ an embedding if its fibers are empty or contractible, or likewise if the induced map $X \to X \times_Y X$ is an equivalence, or likewise if it is of the form $X \to X \coprod X'$ for spaces $X,X'.$ 

\item We call a morphism $X \to Y$ in any category $\mC$ a monomorphism if for every $Z \in \mC$ the induced map of spaces $\Map_\mC(Z,X) \to \Map_\mC(Z,Y) $ is an embedding. So embeddings of spaces are precisely monomorphisms in $\mS.$ 

\item We call a fully faithful functor $\mC \to \mD$ an embedding generalizing the notation for spaces.

\item We call a functor an inclusion if it is a monomorphism in $\Cat$. A functor $\mC \to \mD$ is an inclusion if and only if it induces an embedding on maximal subspaces and on all mapping spaces.

\item For a diagram $X\to Z\leftarrow Y$ in a category $\mC$ we write $X\underset{Z}{\prod} Y$ or $X\underset{Z}{\times} Y$ for the pullback, and given a diagram $X\leftarrow W\to Y$ in a category $\mC$, we write $X\underset{W}{\coprod} Y$ or $X\underset{Z}{+} Y$ for the pushout.
\item If $\mC$ and $\mD$ are categories and $\mC\to\mD$ is a left adjoint functor with right adjoint $\mD\to\mC$, we often write $\mC\rightleftarrows\mD$ for this adjunction, where the left adjoint is understood to be the functor going from left to right.

\item 
We write $\mC_*$ or $\mC_{\ast/}$ for the category of pointed objects in a category $\mC$, i.e. the full subcategory of $\Fun([1],\mC)$ of arrows in $\mC$ whose source is a final object.

\item We write $\PrL$ and $\PrR$ for the subcategories of $\widehat{\Cat}$ spanned by the presentable categories and the left and right adjoint functors, respectively.
There is a canonical equivalence $\PrL \simeq (\PrR)^\op$
sending left to right adjoints.
\item We write $\otimes$ for the symmetric monoidal structure on $\PrL$ and $\PrR$.
More precisely, by \cite{lurie.higheralgebra}, $\PrL$ carries a closed symmetric monoidal structure such that the subcategory inclusion $\PrL \subset \widehat{\Cat}$ is a lax symmetric monoidal with respect to the the cartesian structure on $\widehat{\Cat}$.
\item We write $\infty\scat$ for the large $\infty$-category of small $\infty$-categories.\footnote{The morphism $\infty$-categories are formed via enrichment in the cartesian monoidal structure.}
\item We write $\Fun(\mD,\mC)$ for the $\infty$-category of functors from an $\infty$-category $\mD$ to an $\infty$-category $\mC$, the value at $\mC$ of the right adjoint to the functor $(-)\times\mC:\infty\Cat\to\infty\Cat$ for the cartesian product.

\item We write $\bD^1$ for the walking arrow, the category with two objects and a unique non-identity arrow.
\item We write $\partial\bD^1$ and $S^0$ for the maximal subspace in $\bD^1$, the set with two elements.

\item We write $\iota_{n}\mC$ for the $n$-category arising from an $\infty$-category $\mC$ by discarding all noninvertible morphisms above dimension $n.$
\item We write $\tau_{n}\mC$ for the $n$-category arising from an $\infty$-category $\mC$ by inverting all morphisms above dimension $n.$
\end{enumerate}
\end{notation}

\subsection*{Acknowledgements}
We thank Ben Antieau, Tim Campion, Rune Haugseng, Fan Huang, Felix Loubaton, Naruki Masuda, Thomas Nikolaus, Emily Riehl, Markus Spitzweck, Germ\'an Stefanich, and Dominic Verity for interesting conversations related to the subject of this paper.
We thank the MPIM for their hospitality while much of this work was carried out.

\vspace{.25cm}
\section{\mbox{Higher dimensional categories}}
\vspace{.25cm}

\subsection{Enriched categories}

We first recall the notion of homotopy coherent enrichment, as defined and studied in, for instance, \cite{MR3345192}, \cite{heine2024higher}, \cite{heine2024bienriched} \cite{heine2025equivalence}, \cite{HINICH2020107129}.
For every presentably monoidal category $\mV$ there is a presentable 2-category $${\mV}\mathrm{-}\Cat$$ of (not necessarily univalent) $\mV$-enriched categories and $\mV$-enriched functors and a forgetful functor $$ \iota: {\mV}\mathrm{-}\Cat \to \Cat$$ to the presentable 2-category $\Cat$ of (not necessarily univalent) categories,
which is an equivalence for $\mV= \mS$ the category of homotopy types \cite[Corollary 3.23.]{heine2024bienriched}. 
Let $${\mV}\mathrm{-}\Cat^\univ \subset {\mV}\mathrm{-}\Cat$$ be the reflexive full subcategory of univalent $\mV$-enriched categories.

\begin{notation}Let $\mV$ be a presentably monoidal category.
A $\mV$-enriched category $\mC$ has an underlying category $\iota(\mC)$, for every objects $X,Y \in \iota(\mC)$ a morphism object 
$$\Mor_\mC(X, Y) \in \mV$$
and for every objects $X,Y,Z \in \iota(\mC)$ a composition morphism in $\mV:$
$$\Mor_\mC(Y, Z) \ot \Mor_\mC(X, Y) \to \Mor_\mC(X, Z).$$
We write $\X \in \mC$ for $\X \in \iota(\mC)$ and usually notationally identify $\mC$ with $\iota(\mC).$
\end{notation}

The following is \cite[Example 2.134.]{heine2024bienriched}:

\begin{example}
Let $\mV$ be a presentably monoidal category. Every presentably left $\mV$-tensored category is a $\mV$-enriched category.
Every $\mV$-linear functor between presentably left $\mV$-tensored categories is a $\mV$-enriched functor.
In particular, $\mV$ is a $\mV$-enriched category.
\end{example}

For every enriched category there is an opposite one:

\begin{notation}
There is an involution $$(-)^\circ: {\mV}\mathrm{-}\Cat \simeq {\mV^\rev}\mathrm{-}\Cat$$ forming the opposite enriched category.
For every $\mC \in {\mV}\mathrm{-}\Cat$ and $X,Y \in \mC$
there are canonical equivalences
$ \iota(\mC^\circ) \simeq \iota(\mC)^\op$
and $$ \Mor_{\mC^\circ}(X,Y) \simeq \Mor_{\mC}(Y,X).$$
\end{notation}

The following is \cite[Proposition 3.72.]{heine2024bienriched}:

\begin{proposition}

Let $\mV,\mW$ be presentably monoidal categories and 
$\phi:\mV \to \mW$ a lax monoidal functor.
There is an induced functor
$\phi_!: \mV \mathrm{-}\Cat \to \mW \mathrm{-}\Cat$
that transfers the enrichment.
This functor descends to a functor
$\phi_!: \mV \mathrm{-}\Cat^\univ \to \mW \mathrm{-}\Cat^\univ.$
For every $\mV$-enriched category $\mC$ and $X,Y \in \iota(\mC)$
there is an equivalence
$$ \Mor_{\phi_!(\mC)}(X,Y) \simeq \phi(\Mor_\mC(X,Y)).$$
    
\end{proposition}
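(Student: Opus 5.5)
One natural way to prove this proposition is to construct $\phi_!$ at the level of the models underlying $\mV\mathrm{-}\Cat$. First, present $\mV$-enriched categories as $\mV$-enriched precategories: pairs consisting of a space of objects $S$ and an algebra over the associative operad $\mathrm{Ass}_S$ whose colors are pairs in $S\times S$, valued in $\mV$, followed by the localization forcing the underlying category to carry the correct homotopy type (for non-univalent categories this is the framework of \cite{heine2024bienriched}, \cite{HINICH2020107129}). A lax monoidal functor $\phi:\mV\to\mW$ is a map of $\infty$-operads $\mV^\otimes\to\mW^\otimes$. Postcomposition with it therefore sends $\mathrm{Ass}_S$-algebras in $\mV$ to $\mathrm{Ass}_S$-algebras in $\mW$, naturally in $S$. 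Because this is postcomposition, the morphism objects are computed colorwise, which immediately gives $\Mor_{\phi_!(\mC)}(X,Y)\simeq\phi(\Mor_\mC(X,Y))$.

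The next step is to check that postcomposition descends to the localized category $\mV\mathrm{-}\Cat$. Describe $\mV\mathrm{-}\Cat$ as the full subcategory of precategories that are local with respect to the relevant equivalences (or as the localization at essentially surjective fully faithful maps, depending on the chosen model). Postcomposition with $\phi$ preserves the local objects, and it preserves the maps that are inverted, because fully faithfulness is a condition on morphism objects and $\phi$ carries equivalences to equivalences. Hence the induced functor on precategories restricts or descends to $\phi_!:\mV\mathrm{-}\Cat\to\mW\mathrm{-}\Cat$.

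Finally, handle univalence. A $\mV$-enriched category is univalent when its space of objects is equivalent to its space of equivalences, with equivalences detected by maps from the walking equivalence $E$ enriched via the unit. One must show that $\phi_!$ preserves univalent objects. Since $\phi$ is lax monoidal, it induces a map $\tu_\mW\to\phi(\tu_\mV)$, and hence an enriched functor $\tu_!E\to\phi_!\tu_!E$. Equivalences in $\phi_!\mC$ therefore need not come from equivalences in $\mC$, so preservation of univalence is not automatic. This is the main obstacle. If $\phi_!$ does not preserve univalence, the statement must be read as defining the descended functor by composing with the univalent completion, that is, $\mV\mathrm{-}\Cat^\univ\subset\mV\mathrm{-}\Cat\xrightarrow{\phi_!}\mW\mathrm{-}\Cat\to\mW\mathrm{-}\Cat^\univ$. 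The real content then lies in the formula for morphism objects. Univalent completion is fully faithful, because it is a localization at fully faithful essentially surjective maps, so $\Mor$ is still $\phi(\Mor_\mC(X,Y))$ for objects in the image. I would adopt this interpretation and verify that the completion step does not alter morphism objects.
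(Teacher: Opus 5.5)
The paper gives no argument of its own here; it quotes \cite[Proposition 3.72]{heine2024bienriched}, so your proposal has to stand on its own. Your construction is the right one. Postcomposing $\mathrm{Ass}_S$-algebras with the map of $\infty$-operads $\mV^\otimes\to\mW^\otimes$, naturally in the space $S$, gives $\phi_!$, and the formula $\Mor_{\phi_!(\mC)}(X,Y)\simeq\phi(\Mor_\mC(X,Y))$ is immediate. However, with a \emph{space} of objects these categorical algebras already are the non-univalent enriched categories $\mV\mathrm{-}\Cat$. There is no further localization in their definition; localizing at fully faithful essentially surjective functors produces $\mV\mathrm{-}\Cat^{\univ}$ instead. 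This conflation derails your treatment of the second claim.

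The genuine gap is the descent step. In your second paragraph you assert that postcomposition preserves the local objects. For the localization at fully faithful essentially surjective functors, that means $\phi_!$ preserves univalence, which is false, as you notice yourself in the third paragraph: $\phi$ constant at $*$ on $(\mS,\times)$ sends $\bD^1$ to the chaotic category $J$. What \emph{descends} actually requires is that $\phi_!$ sends local equivalences to local equivalences, so that $L_\mW\circ\phi_!$ factors through $L_\mV$. Since these local equivalences are the fully faithful essentially surjective functors, you must show $\phi_!$ preserves both properties, and you only argue fully faithfulness. For essential surjectivity, use the lax unit $\tu_\mW\to\phi(\tu_\mV)$. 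It gives maps $\Map_\mV(\tu_\mV,\Mor_\mC(X,Y))\to\Map_\mW(\tu_\mW,\phi(\Mor_\mC(X,Y)))$ compatible with composition, hence a functor $\iota(\mC)\to\iota(\phi_!(\mC))$ that is the identity on objects, so equivalences of $\mC$ become equivalences of $\phi_!(\mC)$. Your map $\tu_!E\to\phi_!\tu_!E$ is exactly this input, but you use it only to explain why univalence fails. Your fallback, defining the descended functor as $L_\mW\circ\phi_!$ restricted to univalent objects, has the right values. But that composite exists for any functor and does not by itself give compatibility with $L_\mV$. Checking the single generator $\tu_!E\to *$ would not suffice either, since $\phi_!$ need not preserve colimits. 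Lastly, the unit of univalent completion being fully faithful is a separate theorem, not a formal consequence of being a localization at fully faithful essentially surjective maps. In any case the $\Mor$ formula concerns the non-univalent $\phi_!$ and does not need it.
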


\begin{proposition}\label{monochar}
Let $\mV$ be a presentably monoidal category.
A $\mV$-enriched functor $\phi: \mC \to \mD$ is a monomorphism in $\mV\mathrm{-}\Cat$ if and only it it induces an embedding $\iota(\mC) \to \iota(\mD)$ on underlying spaces and for every $A,B \in \mC$ the induced morphism 
$\Mor_\mC(A, B) \to \Mor_\mD(\phi(A), \phi(B))$ in $\mV$ is a monomorphism.
    
\end{proposition}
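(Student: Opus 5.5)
We characterize monomorphisms in $\mV\mathrm{-}\Cat$ by testing against a small family of generating enriched categories and reducing everything to mapping spaces. We use two facts: $\mV\mathrm{-}\Cat$ is presentable, and a morphism $\phi$ is a monomorphism if and only if its diagonal $\mC \to \mC\times_\mD \mC$ is an equivalence. So the plan is to compute the fiber product $\mC\times_\mD\mC$ in $\mV\mathrm{-}\Cat$ explicitly and compare it with $\mC$.

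\textbf{Step 1: limits are computed objectwise.} Limits in $\mV\mathrm{-}\Cat$ are preserved by the forgetful functor $\iota$ to $\Cat$, and hence by passage to underlying spaces of objects. On morphism objects they are computed in $\mV$. Concretely, for $A,B$ in $\mC\times_\mD\mC$ lying over objects $A_1,A_2$ and $B_1,B_2$ of $\mC$ with a common image in $\mD$,
\[
\Mor_{\mC\times_\mD\mC}(A,B)\simeq \Mor_\mC(A_1,B_1)\times_{\Mor_\mD(\phi A,\phi B)}\Mor_\mC(A_2,B_2).
\]
I would justify this from the description of $\mV$-enriched categories as algebras for a monad on $\mV$-valued graphs, in the sense of \cite{heine2024bienriched}. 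The forgetful functor to graphs, i.e.\ spaces $X$ with a functor $X\times X\to\mV$, creates limits. Limits of graphs are formed by taking the limit of the object spaces and forming fiber products of the morphism objects in $\mV$ along the resulting maps. This identification is the main technical point. The key care needed is that the underlying category $\iota(\mC)$ is not just a space of objects, since a non-univalent enriched category has its object space as extra data. We should therefore work with the space of objects rather than the category $\iota(\mC)$; the statement's phrase ``underlying spaces'' refers to exactly this.

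\textbf{Step 2: compare the diagonal.} A morphism of enriched categories is an equivalence if and only if it is an equivalence on object spaces and on all morphism objects. This again follows from the monadicity over graphs, since a conservative forgetful functor detects equivalences, and graph equivalences are detected fibrewise. Applying this to the diagonal $\mC\to\mC\times_\mD\mC$, we get two conditions. On object spaces, the map $X\to X\times_Y X$ must be an equivalence, which is precisely the statement that $\iota(\mC)\to\iota(\mD)$ is an embedding. On morphism objects, for $A,B\in\mC$ the map
\[
\Mor_\mC(A,B)\to \Mor_\mC(A,B)\times_{\Mor_\mD(\phi A,\phi B)}\Mor_\mC(A,B)
\]
must be an equivalence, which is precisely the statement that $\Mor_\mC(A,B)\to\Mor_\mD(\phi A,\phi B)$ is a monomorphism in $\mV$. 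In a presentable category, monomorphisms are the maps whose diagonal is an equivalence.

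\textbf{Step 3: check that these are all the hom-objects.} Since the object space is embedded, every pair of objects of $\mC\times_\mD\mC$ lies in the image of the diagonal. Hence the diagonal conditions only need checking at pairs of the form $(A,A),(B,B)$, and these are exactly the morphism objects considered in Step 2. Both directions of the equivalence then follow at once.
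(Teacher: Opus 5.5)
Your proposal is correct and follows essentially the same route as the paper, which likewise reduces to the square with identities (equivalently, your diagonal $\mC\to\mC\times_\mD\mC$) being a pullback, and uses that forming object spaces and morphism objects preserves pullbacks and jointly detects equivalences. Your appeal to monadicity over $\mV$-graphs only supplies a justification for these two facts, which the paper simply asserts. Your Step 3 is harmless but not needed, since the equivalence criterion only tests morphism objects between objects of the source.
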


\begin{proof}
The commutative square
$$\begin{xy}
\xymatrix{
\mC \ar[d]^{=} \ar[r]^=
& \mC \ar[d]^\phi 
\\ 
\mC \ar[r]^\phi & \mD}
\end{xy}$$
in $\mV\mathrm{-}\Cat$ is a pullback square if and only if
it induces a pullback square on underlying spaces and on morphism objects. This holds because forming underlying spaces and forming morphism objects preserves pullbacks, and a $\mV$-enriched functor is an equivalence if and only if it induces equivalences on underlying spaces and on morphism objects.   
\end{proof}

The following is \cite[Example 2.38.]{heine2024higher}:

\begin{proposition}\label{enrsub}

Let $\mV$ be a presentably monoidal category and $\mC$ a $\mV$-enriched category. For every $X,Y \in \mC$ let
$T_{X,Y} \to \Mor_\mC(X,Y)$ be a monomorphism in $\mV$.
The following are equivalent:
\begin{enumerate}
\item For every $X \in \mC$ the unit $\tu_\mV \to \Mor_\mC(X,X)$ in $\mV$ factors through $T_{X,X} \to \Mor_\mC(X,X)$
and for every $X,Y,Z \in \mC$ the composition morphism $\Mor_\mC(Y,Z) \ot \Mor_\mC(X,Y) \to \Mor_\mC(X,Z) $ in $\mV$ 
restricts to a morphism $T_{Y,Z} \ot T_{X,Y} \to T_{X,Z} $.


\item There is a monomorphism of $\mV$-enriched categories $\theta: \mB \to \mC$
such that for every $X,Y \in \mC$ the induced monomorphism 
$\Mor_\mB(X,Y) \to \Mor_\mC(\theta(X),\theta(Y))$ in $\mV$ identifies with 
$T_{\theta(X),\theta(Y)} \to \Mor_\mC(\theta(X),\theta(Y))$.

\end{enumerate}

\end{proposition}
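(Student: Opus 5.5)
The plan is to prove the two implications separately, working throughout with the theory of enriched categories as presented in \cite{heine2024higher}.

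For (2) $\Rightarrow$ (1), suppose $\theta:\mB\to\mC$ is a monomorphism whose morphism objects identify with the $T$'s. The unit of $\mB$ at $X$ is a map $\tu_\mV\to\Mor_\mB(X,X)\simeq T_{\theta(X),\theta(X)}$. Because $\theta$ is enriched, composing this with the inclusion recovers the unit of $\mC$, which is exactly the required factorization. In the same way, the composition of $\mB$ gives a morphism $T_{Y,Z}\ot T_{X,Y}\to T_{X,Z}$, and functoriality of $\theta$ makes it compatible with the composition of $\mC$. This covers all objects of $\mC$, since the statement of (2) implicitly has $\theta$ essentially surjective: we take $\iota(\mB)\simeq\iota(\mC)$ with $\theta$ the identity on underlying spaces, and \cref{monochar} then shows that such a $\theta$ is a monomorphism.

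The converse (1) $\Rightarrow$ (2) is the construction, and I would carry it out in the model of enriched categories as $\mV$-enriched precategories, that is, algebras over a generalized (non-symmetric, many-object) associative operad with object space $\iota(\mC)$.
\begin{enumerate}
\item View $\mC$ as an algebra $\mathrm{Alg}_{\mathrm{Ass}_{S}}(\mV)$ with $S=\iota(\mC)$. Its multiplication maps are $\Mor(X_{n-1},X_n)\ot\cdots\ot\Mor(X_0,X_1)\to\Mor(X_0,X_n)$, the unary maps are the identity, and the nullary maps are the units.
\item Consider the full subcategory of $\Fun(S\times S,\mV)_{/\Mor_\mC}$ consisting of the subobjects $T\to\Mor_\mC$. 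Since monomorphisms are closed under pullback, taking subobjects is functorial in the right way. Using that the inclusion of subobjects of a fixed object is fully faithful with mapping spaces empty or contractible, the algebra structure lifts to $T$ exactly when each structure map restricted to the $T$'s factors through $T$. Such factorizations are then unique, so all higher coherences are automatic.
\item By (1), the binary and nullary operations factor through $T$. Every $n$-ary operation is an iterated composite of binary ones (and the unit), so it factors as well. This gives an algebra $\mB$ together with a map $\mB\to\mC$ of algebras that is the identity on $S$.
\end{enumerate}

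The main obstacle is step 2: one has to justify that a lift along a monomorphism of algebras is a property rather than data. I would handle this by noting that $T\to\Mor_\mC$ being a monomorphism in $\Fun(S\times S,\mV)$ makes the tensor powers of $T$ map into the tensor powers of $\Mor_\mC$. The tensor of monomorphisms need not be a monomorphism, but this does not matter: a factorization of a map through a monomorphism is unique whatever the source is. Consequently the space of algebra structures on $T$ over $\mC$ is the limit of contractible or empty spaces, and it is contractible precisely under condition (1). Finally, \cref{monochar} shows that $\theta:\mB\to\mC$ is a monomorphism in $\mV\mathrm{-}\Cat$, since it is the identity on objects and a monomorphism on morphism objects.
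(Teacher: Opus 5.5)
Your argument is correct, and it supplies something the paper does not: the paper gives no proof of this proposition and simply imports it from \cite[Example 2.38]{heine2024higher}. Your route is the natural self-contained one, and it uses the same mechanism the paper uses in the proof of \cref{susp}. There, $\mV$-enriched categories with object space $S=\iota(\mC)$, together with the identity-on-objects functors between them, are identified with associative algebras in $\Fun(S\times S,\mV)$ under the matrix product. You add the observation that a monomorphism $T\to M:=\Mor_\mC$ is subterminal in the slice over $M$, so lifting the algebra structure is a property, and then conclude with \cref{monochar}. The citation buys brevity and leaves the operadic bookkeeping to Heine's framework; your argument shows why no coherence data arise.

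Two steps deserve a sharper formulation. First, replace ``a limit of contractible or empty spaces'' with the following:
\begin{itemize}
\item algebra structures on $T$ over $M$ are maps from the associative operad into the full suboperad of $\Fun(S\times S,\mV)_{/M}$ on the single object $T\to M$;
\item its multimorphism spaces $\Map_{/M}(T^{\ot n},T)$ are empty or contractible, and all are nonempty once arities $0$ and $2$ are;
\item so this suboperad is terminal and the space of lifts is contractible.
\end{itemize}
Second, the passage from (1) to the quiver-level factorizations uses three facts: the unit of $\Fun(S\times S,\mV)$ is the left Kan extension of $\tu$ along the diagonal; $(T\ot T)(x,z)\simeq\colim_{y\in S}T(y,z)\ot T(x,y)$; and a map out of a colimit factors through a monomorphism if and only if each component does.

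You also use one hypothesis silently, and it should be stated. You take $T$ to be a subobject of $\Mor_\mC$ in $\Fun(S\times S,\mV)$, not merely a family of pointwise monomorphisms $T_{X,Y}\to\Mor_\mC(X,Y)$ as the statement literally reads. Because $S$ is a space, this is a genuine extra condition, namely stability under transport along paths in $S$. Without it, (1)$\Rightarrow$(2) fails:
\begin{itemize}
\item Take $\mC=B(\bZ/2)$, the one-object groupoid with automorphism group $\bZ/2$. Then $\Mor_\mC(\ast,\ast)\simeq\bZ/2$, and $\pi_1(S\times S)\cong\bZ/2\times\bZ/2$ acts on it transitively by left and right translation.
\item The subset $\{e\}$ satisfies (1) pointwise.
\item For any identity-on-objects monomorphism $\mB\to\mC$, the map $\Mor_\mB\to\Mor_\mC$ is a monomorphism in $\Fun(S\times S,\mS)$. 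So $\Mor_\mB(\ast,\ast)$ is an invariant subset, hence $\emptyset$ or $\bZ/2$, never $\{e\}$.
\end{itemize}
With $T$ a subfunctor your proof goes through. Likewise, your reading of (2) with $\theta$ an equivalence on objects is the reading needed for (2)$\Rightarrow$(1).
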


\subsection{Suspension and wedge}Let $\mV$ be a presentably monoidal category.
In this subsection we construct the suspension functor $S:\mV\to {_\mV\Cat}$, which sends an object $A$ of $\mV$ to the $\mV$-enriched category $S(A)$ with two objects $\{0,1\}$ and $$\Mor_{S(A)}(0,0)=\Mor_{S(A)}(1,1)=\tu, \ \Mor_{S(A)}(0,1)=A, \Mor_{S(A)}(0,0)=\emptyset.$$

\begin{proposition}\label{susp}
Let $\mV$ be a presentably monoidal category, $n \geq 1$ and $\A_1, ..., \A_\n \in \mV$.
There is a $\mV$-enriched category $S(\A_1,...,\A_\n) $ satisfying the following properties:
\begin{enumerate}[\normalfont(1)]\setlength{\itemsep}{-2pt}
\item The space of objects of $S(\A_1,...,\A_\n) $ is the set $\{0,...,\n\}.$

\item For every $0 \leq \ell \leq \n$ the unit $\tu \to \Mor_{S(\A_1,...,\A_\n)}(\ell,\ell)$ is an equivalence.

\item For every $0 \leq \bk < \ell \leq \n$ the morphism object $\Mor_{S(\A_1,...,\A_\n)}(\ell,\bk)$ is initial.

\item For every $0 \leq \ell < \n$ there is an equivalence $ \Mor_{S(\A_1,...,\A_\n)}(\ell,\ell+1) \simeq \A_{\ell+1}$. 

\item For every $0 \leq \bk < \m \leq \n$ the following induced morphism is an equivalence $$\bigotimes_{\bk \leq \ell < \m}  \A_{\ell+1} \simeq \bigotimes_{\bk \leq \ell < \m} \Mor_{S(\A_1,...,\A_\n)}(\ell,\ell+1) \to  \Mor_{S(\A_1,...,\A_\n)}(\bk,\m).$$

\item For every $\mV$-enriched category $\mC $ and objects $\X_0, ..., \X_\n$ of $\mC$
the induced map is an equivalence $$ \Map_{\mV\mathrm{-}\Cat_{\coprod_{0 \leq \ell \leq \n} B(\tu)/}}(S(\A_1,...,\A_\n), (\mC, \X_0, ..., \X_\n)) \to \prod_{0 \leq \ell < \n} \Map_\mV(\A_{\ell+1}, \Mor_\mC(\X_\ell,\X_{\ell+1})).$$

\end{enumerate}

\end{proposition}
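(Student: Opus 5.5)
We construct $S(\A_1,\ldots,\A_\n)$ by first building a small ``free'' model over a discrete set of objects and then checking its universal property. The natural framework is the description of $\mV$-enriched categories with a fixed space of objects $T$ as algebras in the category $\Fun(T\times T,\mV)$ of $\mV$-valued $T$-graphs, equipped with the convolution (matrix multiplication) monoidal structure. Take $T=\{0,\ldots,\n\}$ and let $G$ be the graph with $G(\ell,\ell+1)=\A_{\ell+1}$ for $0\le\ell<\n$ and $G(\bk,\m)=\emptyset$ otherwise.

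We set $S(\A_1,\ldots,\A_\n)$ to be the enriched category associated to the free associative algebra on $G$. This free algebra is computed as $\coprod_{r\ge 0}G^{\ot r}$, which exists since $\mV$ is presentably monoidal, so $\ot$ preserves colimits in each variable. Its value at $(\bk,\m)$ is
\begin{itemize}
\item $\tu$ when $\bk=\m$,
\item $\bigotimes_{\bk\le\ell<\m}\A_{\ell+1}$ when $\bk<\m$,
\item $\emptyset$ when $\bk>\m$.
\end{itemize}
This holds because only one composable path of length $\m-\bk$ runs from $\bk$ to $\m$, and no path runs backwards; the summands of wrong length vanish since $\ot$ preserves the initial object in each variable. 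Properties (1)--(5) then follow by inspection, with composition given by the associativity equivalences of $\ot$.

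For (6), the key step is the universal property of the free algebra, combined with the fact that enriched functors which are fixed on objects correspond to algebra maps. A map $S(\A_1,\ldots,\A_\n)\to\mC$ under $\coprod_\ell B(\tu)$, i.e. with prescribed objects $\X_0,\ldots,\X_\n$, is the same as a map of $T$-graph algebras from the free algebra to the pulled-back algebra $(\bk,\m)\mapsto\Mor_\mC(\X_\bk,\X_\m)$. By freeness, this is the same as a map of graphs $G\to\Mor_\mC(\X_-,\X_-)$. Since $G$ is initial away from the pairs $(\ell,\ell+1)$, the space of such graph maps is $\prod_\ell\Map_\mV(\A_{\ell+1},\Mor_\mC(\X_\ell,\X_{\ell+1}))$.

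The main obstacle is justifying this identification in the homotopy coherent setting. Concretely, we need that the fiber of ${\mV}\mathrm{-}\Cat\to\mS$ over the object space $T$ is the category of algebras in $\Fun(T\times T,\mV)$, and that the mapping space in the slice under $\coprod_\ell B(\tu)$ is computed in that fiber after pulling $\mC$ back along $T\to\iota(\mC)$. We would invoke the Gepner--Haugseng/Hinich model of enriched categories, citing \cite{MR3345192} and \cite{heine2024higher}, in which ${\mV}\mathrm{-}\Cat$ is built from such algebras over varying object spaces and restriction along maps of object spaces gives a cartesian fibration. The slice mapping space then becomes the mapping space in the fiber, because the map on objects is fixed by the coslice data. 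Finally, we use that $T$ is a set, so $\coprod_\ell B(\tu)$ has object space $T$ and is the free enriched category on the empty graph.
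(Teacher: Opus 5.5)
Your proposal is correct and follows essentially the paper's proof. The paper likewise identifies $\mV$-enriched categories with object set $\{0,\ldots,n\}$ with associative algebras in $\Fun(\{0,\ldots,n\}\times\{0,\ldots,n\},\mV)$ under the matrix-multiplication convolution structure, though it cites \cite{HEINE2023108941} rather than Gepner--Haugseng/Hinich for this. It then defines $S(\A_1,\ldots,\A_n)$ as the free algebra on your graph $G$, realized as the left adjoint $\rho$ to restriction along $\ell\mapsto(\ell-1,\ell)$, computes it from the free algebra formula, and gets (6) by pulling $\mC$ back along $\{0,\ldots,n\}\to\iota(\mC)$ and applying freeness followed by the $\rho$-adjunction.
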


\begin{proof}

By \cite[Remark 4.22, Corollary 4.28]{HEINE2023108941} the category $\Fun(\{0,...,\n\} \times \{0,...,\n\} ,\mV)$ carries a presentably monoidal structure whose tensor unit $\bar{\tu} $ satisfies $$\bar{\tu}(\ell,\ell) \simeq \tu \qquad\mathrm{and}\qquad 
\bar{\tu}(\bk,\ell) \simeq \emptyset$$ 
for $0 \leq \bk \neq \ell \leq \n,$ and whose tensor product $\F_1, ...,  \ot \F_\m$ for
$\F_1, ...,\F_\m \in \Fun(\{0,...,\n\},\mV)$ and $\m \geq 2 $ satisfies $$ (\F_1 \ot ... \ot \F_\m)(\bk,\ell) \simeq \coprod_{0 \leq \bj_1, ..., \bj_{\m-1} \leq \n}\F_1(\bk,\bj_1)\ot \F_2(\bj_1,\bj_2) \ot \F_3(\bj_2,\bj_3) \ot ... \ot \F_\m(\bj_{\m-1},\ell).$$

By \cite[Theorem 6.7, Remark 4.22]{HEINE2023108941} an associative algebra for this monoidal structure is a left $\mV$-enriched category whose underlying space is $\{0,...,\n\}$. More precisely, there is a canonical equivalence $$ \{\{0,...,\n\}\}\times_\mS {{\mV\mathrm{-}\Cat}} \simeq \Alg(\Fun(\{0,...,\n\} \times \{0,...,\n\},\mV)) $$
sending $\mC$ to $\{\Mor_\mC(\bk, \ell)\}_{0\leq \bk, \ell \leq \n}.$

The functor $\theta: \{1,...,\n\} \to \{0,...,\n\} \times \{0,...,\n\}, \ell \mapsto (\ell-1, \ell)$ 
induces a functor $$\theta^* : \Fun(\{0,...,\n\} \times \{0,...,\n\},\mV) \to \Fun( \{1,...,\n\} ,\mV) \simeq \mV^{\times\n}, $$ which admits a fully faithful left adjoint $\rho$ such that for every
$\X_1, ...,\X_\n \in \mV$ and $ 0 \leq \bk, \ell \leq \n$ there are canonical equivalences $\rho(\X_1, ...,\X_\n)(\bk, \ell) \simeq \X_{\ell}$ if $\ell = \bk+1$ and $ \rho(\X_1, ...,\X_\n)(\bk, \ell) \simeq \emptyset$ otherwise. 

Let $$S(\A_1,...,\A_\n) \in \{\{0,...,\n\}\}\times_\mS {{\mV\mathrm{-}\Cat}} $$ be the free associative algebra 
on $\rho(\A_1,...,\A_\n) \in \Fun(\{0,...,\n\} \times \{0,...,\n\},\mV).$ 
For every $0 \leq \bk \leq \m \leq \n$ and $\br \geq 0$ there are equivalences $\rho(\A_1,...,\A_\n)^{\ot\br}(\bk,\m) \simeq \emptyset$ if $\br \neq \m-\bk$
and $\rho(\A_1,...,\A_\n)^{\ot\br}(\bk,\m) \simeq \bigotimes_{\bk \leq \ell < \m} \A_{\ell+1}$ if $\br = \m-\bk > 0 $ and $\rho(\A_1,...,\A_\n)^{\ot\br}(\bk,\m) \simeq \tu$ if $\br = \m-\bk =0.$ 
Hence the formula for the free associative algebra \cite[Theorem 1.1]{FreeAlgebras} 
provides an equivalence
$$ \Mor_{S(\A_1,...,\A_\n)}(\bk,\m) \simeq \bigotimes_{\bk \leq \ell < \m}  \A_{\ell+1}$$
for every $0 \leq \bk < \m \leq \n$ and $ \Mor_{S(\A_1,...,\A_\n)}(\bk,\m) \simeq \tu $
for every $0 \leq \bk = \m \leq \n$. 
This proves (1)-(5). It remains to prove (6). 
Let $\X$ be the space of objects of $\mC$ and $\varphi:  \{0,...,\n\} \to \X$ the map
corresponding to the family $(\A_1,...,\A_\n)$ of objects of $\mC.$
The map in (6) factors as 
$$ \Map_{\mV\mathrm{-}\Cat_{\coprod_{0 \leq \ell \leq \n} B(\tu)/}}(S(\A_1,...,\A_\n), \mC) \simeq \Map_{\{\{0,...,\n\}\}\times_\mS {{\mV\mathrm{-}\Cat}}}(S(\A_1,...,\A_\n), \varphi^*(\mC)) \simeq $$$$
\Map_{\Fun(\{0,...,\n\} \times \{0,...,\n\},\mV)}(\rho(\A_1,...,\A_\n),\varphi^*(\mC)) \simeq 
\prod_{0 \leq \ell < \n} \Map_\mV(\A_{\ell+1}, \Mor_\mC(\X_\ell,\X_{\ell+1})).$$	
\end{proof}

Specializing \cref{susp} to $n=1$ gives the following:

\begin{corollary}
Let $\mV$ be a presentably monoidal category and $\A \in \mV$.
There is a $\mV$-enriched category $S(\A) $, which we call the suspension of $A$, satisfying the following properties:
\begin{enumerate}[\normalfont(1)]\setlength{\itemsep}{-2pt}
\item The space of objects of $S(\A) $ is the set $\{0,1\}.$

\item For every $0 \leq \ell \leq 1$ the unit $\tu \to \Mor_{S(\A)}(\ell,\ell)$ is an equivalence.

\item The morphism object $\Mor_{S(\A)}(1,0)$ is initial.

\item There is an equivalence $ \Mor_{S(\A)}(0,1) \simeq \A$. 

\item For every $\mV$-enriched category $\mC $ and objects $\X,\Y$ of $\mC$
the induced map is an equivalence $$ \Map_{\mV\mathrm{-}\Cat_{B(\tu) \coprod B(\tu)/}}(S(\A), (\mC; \X,\Y)) \to \Map_\mV(\A, \Mor_\mC(\X,\Y)).$$

\end{enumerate}
    
\end{corollary}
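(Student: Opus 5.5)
This corollary is the special case $n=1$ of \cref{susp}, so the plan is to apply \cref{susp} with $n=1$ and $\A_1=\A$, set $S(\A):=S(\A_1)$, and read each property off the corresponding item of \cref{susp}. Property (1) is \cref{susp}(1) for $n=1$, which says the space of objects is $\{0,1\}$. Property (2) is \cref{susp}(2) for $\ell\in\{0,1\}$. Property (3) is \cref{susp}(3) for the only pair $\bk<\ell$, namely $(0,1)$; this gives that $\Mor_{S(\A)}(1,0)$ is initial. Property (4) is \cref{susp}(4) with $\ell=0$, which gives $\Mor_{S(\A)}(0,1)\simeq\A_1=\A$. Property (5) of \cref{susp} adds nothing new here: for $\bk=0,\m=1$ the tensor product has a single factor, so it just restates (4).

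For property (5) of the corollary, I would specialize \cref{susp}(6) to $n=1$. The coproduct $\coprod_{0\le\ell\le 1}B(\tu)$ is then $B(\tu)\coprod B(\tu)$. The right-hand product $\prod_{0\le \ell<1}$ has the single factor $\Map_\mV(\A,\Mor_\mC(\X_0,\X_1))$. Renaming $\X_0=\X$ and $\X_1=\Y$ gives exactly the stated equivalence.

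There is no real obstacle. The one point to check is that the pointing of $S(\A)$ under $B(\tu)\coprod B(\tu)$ is the one used in \cref{susp}(6), namely the inclusion of the two objects $0,1$. This holds because that is how the under-category structure was defined in the proof of \cref{susp}.
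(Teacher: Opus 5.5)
Your proposal is correct and is the paper's argument: the corollary is obtained by specializing \cref{susp} to $n=1$. Your item-by-item matching, with (1)--(4) coming from \cref{susp}(1)--(4) and (5) from \cref{susp}(6) after setting $X_0=X$ and $X_1=Y$, is accurate.
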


\begin{definition}

Let $\mV$ be a presentably monoidal category, $\mC, \mD$ be $\mV$-enriched categories and $X,Y \in \mC, Y,Z \in \mD$
objects. The bipointed wedge $$(\mC; X,Y) \vee (\mD; Y,Z) $$
is the pushout $ (\mC \coprod_{\{Y\}} \mD; X,Z).$
    
\end{definition}

\begin{corollary}
Let $\mV$ be a presentably monoidal category, $n \geq 1$ and $\A_1, ..., \A_\n \in \mV$.
\begin{enumerate}[\normalfont(1)]\setlength{\itemsep}{-2pt}
\item The space of objects of the $\mV$-enriched category $S(A_1) \vee ...\vee S(A_n)$ is the set $\{0,...,\n\}.$
\item For every $0 \leq \ell \leq \n$ the unit $\tu \to \Mor_{S(A_1) \vee ...\vee S(A_n)}(\ell,\ell)$ is an equivalence.
\item For every $0 \leq \bk < \ell \leq \n$ the morphism object $\Mor_{S(A_1) \vee ...\vee S(A_n)}(\ell,\bk)$ is initial.
\item For every $0 \leq \ell < \n$ there is an equivalence $ \Mor_{S(A_1) \vee ...\vee S(A_n)}(\ell,\ell+1) \simeq \A_{\ell+1}$. 
\item For every $0 \leq \bk < \m \leq \n$ the following induced morphism is an equivalence $$\bigotimes_{\bk \leq \ell < \m}  \A_{\ell+1} \simeq \bigotimes_{\bk \leq \ell < \m} \Mor_{S(A_1) \vee ...\vee S(A_n)}(\ell,\ell+1) \to  \Mor_{S(A_1) \vee ...\vee S(A_n)}(\bk,\m).$$
\end{enumerate}

\end{corollary}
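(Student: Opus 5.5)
The plan is to identify the iterated wedge $S(\A_1)\vee\dots\vee S(\A_\n)$ with the multi-suspension $S(\A_1,\dots,\A_\n)$ of \cref{susp}. Properties (1)--(5) then follow directly from the corresponding items of \cref{susp}. To produce the comparison, note that each $S(\A_\ell)$ maps to $S(\A_1,\dots,\A_\n)$: by the universal property of the single suspension (item (5) of the preceding corollary), a map $S(\A_\ell)\to S(\A_1,\dots,\A_\n)$ sending $0\mapsto \ell-1$ and $1\mapsto \ell$ is the same as a morphism $\A_\ell\to \Mor_{S(\A_1,\dots,\A_\n)}(\ell-1,\ell)$. We take the equivalence from \cref{susp}(4). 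These maps agree on the shared objects, so they assemble into a map
\[
S(\A_1)\vee\dots\vee S(\A_\n)\to S(\A_1,\dots,\A_\n)
\]
under $\coprod_{0\le \ell\le \n} B(\tu)$.

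We show this map is an equivalence by comparing corepresented functors. For any $\mC$ with objects $\X_0,\dots,\X_\n$, maps out of the iterated pushout under the objects are computed as an iterated fiber product. Each factor $S(\A_\ell)$ is glued to the next only along a point, and fixing the images of the objects trivializes those gluing conditions. So the mapping space out of the wedge, in $\mV\mathrm{-}\Cat_{\coprod_{\ell} B(\tu)/}$, is
\[
\prod_{0\le\ell<\n}\Map_{\mV\mathrm{-}\Cat_{B(\tu)\coprod B(\tu)/}}\bigl(S(\A_{\ell+1}),(\mC;\X_\ell,\X_{\ell+1})\bigr)\simeq\prod_{0\le\ell<\n}\Map_\mV(\A_{\ell+1},\Mor_\mC(\X_\ell,\X_{\ell+1})).
\]
By \cref{susp}(6), this is also the mapping space out of $S(\A_1,\dots,\A_\n)$, and the comparison map is compatible with these identifications.

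The main subtlety is that this argument only tests against objects under the fixed object map $\coprod_\ell B(\tu)\to(-)$. To conclude by Yoneda we therefore need both source and target to be objects of the slice under $\coprod_{0\le \ell\le\n} B(\tu)$, compared via maps in that slice. The target is in the slice by construction. For the source, we need the object space of the wedge to be exactly $\{0,\dots,\n\}$, which is item (1) for the wedge.

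To obtain (1), we use that the functor to underlying spaces $\mV\mathrm{-}\Cat\to\mS$ preserves colimits. It has a right adjoint sending a space $X$ to the chaotic enriched category on $X$, with all morphism objects terminal. Hence the object space of the wedge is the pushout of sets $\{0,1\}\coprod_{\mathrm{pt}}\cdots\coprod_{\mathrm{pt}}\{0,1\}=\{0,\dots,\n\}$.

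With (1) in hand, both objects lie in $\{\{0,\dots,\n\}\}\times_\mS\mV\mathrm{-}\Cat$, and the pointed mapping-space comparison above holds for every test object. Yoneda in this fiber then gives the equivalence, and (2)--(5) transfer from \cref{susp}.
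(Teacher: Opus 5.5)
Your proposal is correct and is essentially the paper's argument: the paper also obtains an equivalence $S(\A_1,\dots,\A_\n)\simeq S(\A_1)\vee\dots\vee S(\A_\n)$ from the universal property \cref{susp}(6) and then reads off the listed properties from \cref{susp}; you simply spell out the mapping-space computation for the wedge. Your separate proof of (1) via the chaotic right adjoint is valid but not needed: the wedge already lies in the undercategory $\mV\mathrm{-}\Cat_{\coprod_{\ell} B(\tu)/}$, so Yoneda there gives the equivalence directly, and (1) then follows from \cref{susp}(1).
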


\begin{proof}

By \cref{susp} (6) there is a canonical equivalence of left $\mV$-enriched categories
$$ S(\A_1,...,\A_\n) \simeq S(A_1) \vee ...\vee S(A_n). $$
Thus \cref{susp} (5) gives the result.
\end{proof}

\subsection{Cartesian enrichment}\label{int}
The universal property of the cartesian product allows one to simplify the theory of enrichment by viewing enriched categories 
as internal categories in which the object of objects is a space.

\begin{definition}
Let $\mV$ be a category that admits finite limits.
A {\em category object} in $\mV$ is a	functor $X: \Delta^\op \to \mV$ such that for every $n \geq 0$
the induced morphism $X_n \to X_1 \times_{X_0} ... \times_{X_0} X_1$ is an equivalence.
	
\end{definition}

\begin{notation}

Let $\mV$ be a category that admits finite limits and $\mW \subset \mV$ a full subcategory.
Let $\Cat(\mV) \subset \Fun(\Delta^\op,\mV)$ be the full subcategory of category objects.
Let $\Cat(\mV;\mW) \subset \Cat(\mV)$ be the full subcategory of category objects $X$ such that $X_0 \in \mV$ belongs to $\mW.$

\end{notation}

\begin{remark}
	
The colocalization $\mV \rightleftarrows \Fun(\Delta^\op,\mV): \ev_0$ whose left adjoint is the diagonal embedding and whose right adjoint evaluates at $[0] \in \Delta$, restricts to colocalizations
$\mV \rightleftarrows \Cat(\mV), \mW \rightleftarrows \Cat(\mV; \mW).$
	
\end{remark}

\begin{remark}\label{present}
Let $\mV$ be a presentable category.
The full subcategory $\Cat(\mV) \subset \Fun(\Delta^\op,\mV)$ is an accessible localization
with respect to the set of morphisms $$ \Map(-,[1]) \times X \coprod_{\Map(-,[0]) \times X}\cdots\coprod_{\Map(-,[0]) \times X}  \Map(-,[1]) \times X  \to  \Map(-,[n]) \times X  $$ for $n \geq 0$ and $X \in \mV.$
Since $\mV$ is presentable, this is of small generation, so $\Cat(\mV)$ is presentable, and $\Cat(\mV)$  is cartesian closed if $\mV$ is cartesian closed.
Since any (generating) local equivalence is inverted by the functor $ \Cat(\mV) \to \mV$ evaluating at zero, the localization $L: \Fun(\Delta^\op,\mV) \rightleftarrows \Cat(\mV)$ is relative to $\mV.$ 

Let $\mV \leftrightarrows \mW $ be an accessible localization.
Then
\[
\Cat(\mV) \rightleftarrows \Cat(\mV; \mW)
\]
is an accessible localization whose right adjoint is the canonical embedding, since the functor $\Cat(\mV) \to \mV$ evaluating at zero admits a left adjoint and a right adjoint. Hence $\Cat(\mV;\mW)$ is presentable, and $\Cat(\mV;\mW)$ is cartesian closed if $\mV$ is cartesian closed.
\end{remark}

Let $\mV$ be a presentable category. 
There is a unique left adjoint functor $\mS \to \mV$ preserving the final objects.
If this functor is fully faithful, we consider $\Cat(\mV;\mS)$ using the embedding $\mS \to \mV.$

In the cartesian monoidal case, $\mV$-enriched categories can sometimes be regarded as category objects relative to the subcategory of spaces.
The next result is \cite[5.6.1 Corollary]{HINICH2020107129}:

\begin{proposition}
Let $\mV$ be a presentable category such that the unique colimit preserving functor $\mS\to\mV$ which sends the point to the final object is a fully faithful right adjoint, and the functor $\mS^{\op}\to\Cat$ given by $X\mapsto \mV_{/X}$ preserves limits. Then there is a canonical equivalence
\[
\mV\mathrm{-}\Cat\simeq\Cat(\mV;\mS).
\]
\end{proposition}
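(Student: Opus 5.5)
The plan is to compare both sides through a common intermediate, namely category objects with a fixed space of objects. An enriched category with space of objects $X$ should be the same as an associative algebra in a category of ``$X\times X$-matrices'' in $\mV$. Under the hypotheses on $\mV$, such algebras should match Segal objects $\Delta^\op\to\mV$ whose zeroth term is the constant space $X$. I would carry this out fibrewise over $\mS$ and then glue the fibres together.

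\emph{Step 1: the enriched side, fibrewise.} As in the proof of \cref{susp}, I use the identification of $\{X\}\times_\mS\mV\mathrm{-}\Cat$ with $\Alg(\Fun(X\times X,\mV))$, where $\Fun(X\times X,\mV)$ carries the matrix-multiplication monoidal structure. That proof cites \cite{HEINE2023108941} for finite sets; I would use the general version for an arbitrary space $X$. The tensor product there is
\[
(F\ot G)(x,z)\simeq\colim_{y\in X}F(x,y)\times G(y,z).
\]

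\emph{Step 2: the internal side, fibrewise.} Since $\mS\to\mV$ is fully faithful, the constant object $X$ makes sense in $\mV$. The hypothesis that $X\mapsto\mV_{/X}$ sends colimits of spaces to limits is a descent statement. Applied to $X\times X\simeq\colim_{X\times X}\ast$, it gives an equivalence
\[
\mV_{/X\times X}\simeq\Fun(X\times X,\mV_{/\ast})\simeq\Fun(X\times X,\mV).
\]
Under this equivalence, fibre products over $X$ of spans $X\leftarrow A\to X$ go to matrix multiplication. The reason is that pullback along $X\to\ast$ commutes with colimits by descent, so $A\times_X B$ over $(x,z)$ becomes the colimit over $y$ of $A_{x,y}\times B_{y,z}$. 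Hence category objects $C$ in $\mV$ with $C_0\simeq X$ are the same as associative algebras in spans $X\leftarrow C_1\to X$ under composition by fibre product. That composition is exactly the matrix monoidal structure. The tool here is the standard description of Segal objects with fixed $C_0$ as algebras in the monoidal category of endo-spans, as in \cite{HINICH2020107129}.

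\emph{Step 3: globalizing.} Both $\mV\mathrm{-}\Cat\to\mS$ and $\Cat(\mV;\mS)\to\mS$, the latter given by $\ev_0$, are cartesian fibrations, with pullback of enrichment along maps of spaces of objects. I would show that the fibrewise equivalences of Steps 1 and 2 are natural in $X$, i.e.\ compatible with restriction along maps $X'\to X$. This assembles them into an equivalence of cartesian fibrations, hence of total categories.

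I expect this naturality to be the main obstacle. The monoidal equivalence $\mV_{/X\times X}\simeq\Fun(X\times X,\mV)$ must be made functorial in $X$ as a lax monoidal functor, and coherently so. I would first try to realize both sides as algebras over a single $\infty$-operad fibred over $\mS$, namely Hinich's $\mathrm{Ass}_X$. Then each side becomes $\mathrm{Ass}_X$-algebras in $\mV$, the cartesian case reducing via the cartesian structure to Segal objects, and naturality in $X$ is built in.
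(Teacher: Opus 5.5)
The paper gives no argument of its own for this statement: it is imported verbatim as \cite[5.6.1 Corollary]{HINICH2020107129}. So you are reconstructing the cited result rather than matching an in-paper proof, and your outline is the standard one.

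Fibrewise over the object space $X$, the enriched side is $\Alg(\Fun(X\times X,\mV))$ with the matrix product. The right-adjoint hypothesis on $\mS\to\mV$ makes $X\times X$ and fibre products of spaces in $\mV$ agree with those computed in $\mS$. Descent, applied to $X\times X\simeq\colim_{X\times X}\ast$, turns $\times_X$ into $\colim_y(-)\times(-)$. More precisely, the colimit over $y$ arises because pushing forward along $X^3\to X^2$ corresponds to left Kan extension, rather than from pullback along $X\to\ast$ commuting with colimits.

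Compared with the bare citation, your route shows where each hypothesis is used. But essentially all the content sits in what you defer:
\begin{itemize}
\item a coherent monoidal equivalence $(\mV_{/X\times X},\times_X)\simeq\Fun(X\times X,\mV)$, not just the binary formula you checked;
\item naturality of everything in $X$;
\item the identification of Segal objects with fixed $C_0$ with algebras in endo-spans. This is a separate theorem in its own right (Haugseng's identification of Segal objects with algebras in spans), not a formal consequence of the rest.
\end{itemize}

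The most economical way to discharge all three at once is the idea in your last paragraph, with one correction. For cartesian $\mV$, $\mathrm{Ass}_X$-algebras become Segal functors $F\colon\Delta^{\op}_X\to\mV$ on the left fibration $\Delta^{\op}_X\to\Delta^{\op}$ classifying $\mathrm{cosk}_0X\colon[n]\mapsto X^{n+1}$ (the Gepner--Haugseng model). They are not yet simplicial objects in $\mV$, and the passage to $\Delta^{\op}$ is exactly where descent enters.

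Left Kan extension along this left fibration is computed by fibrewise colimits. Combined with $\mV_{/K}\simeq\Fun(K,\mV)$, it gives $\Fun(\Delta^{\op}_X,\mV)\simeq\Fun(\Delta^{\op},\mV)_{/\mathrm{cosk}_0X}$, naturally in $X$. Under this equivalence:
\begin{itemize}
\item the monoid condition $F(x_0,\dots,x_n)\simeq\prod_iF(x_{i-1},x_i)$ becomes the Segal condition;
\item $F(x)\simeq\ast$ becomes $C_0\simeq X$.
\end{itemize}
This handles all fibres and their functoriality in $X$ simultaneously, and makes both the span theorem and the separate monoidal coherence unnecessary.
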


\subsection{$\infty$-categories}

In this subsection we inductively define $n$-categories for any $0 \leq n \leq \infty$ via the theory of homotopy coherent enrichment.

\begin{definition}
For every $\n \geq 0$ we inductively define the presentable cartesian closed category $\n\Cat$ of small (not necessarily univalent) $\n$-categories by setting
$$(\n+1)\Cat:= {\n\Cat}\mathrm{-}\Cat$$
starting with $0\Cat :=\mS$. 
\end{definition}

\begin{notation}
For every $\n \geq 0$ we inductively define colocalizations
$$\n\Cat \rightleftarrows (\n+1)\Cat: \iota_\n,$$
where both adjoints  preserve finite products and filtered colimits.
Let
$$0\Cat= \mS \rightleftarrows 1\Cat = {\mS}\mathrm{-}\Cat : \iota_0 $$ be the canonical colocalization whose right adjoint assigns the space of objects. Let $$(\n+1)\Cat= {\n\Cat}\mathrm{-}\Cat \rightleftarrows (\n+2)\Cat= {(\n+1)\Cat} \mathrm{-}\Cat:\iota_{\n+1}:= (\iota_\n)_! $$
be the induced adjunction.	
	
\end{notation}

\begin{definition}The presentable category $\infty\Cat$ of small (non-univalent) $\infty$-categories is the limit
$$\infty\Cat:= \lim(\cdots\xrightarrow{\iota_{\n}} \n \Cat \xrightarrow{\iota_{\n-1}}\cdots \xrightarrow{\iota_0} 0 \Cat) $$
of presentable categories and right adjoint functors.

\end{definition}

The next proposition follows from the fact that the functor ${_{(-)}\Cat}$
preserves small limits:

\begin{proposition}\label{fix}
	
There is a canonical equivalence
$$ \infty\Cat \simeq {\infty\Cat}\mathrm{-}\Cat. $$
	
\end{proposition}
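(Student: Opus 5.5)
The equivalence will come from the fact that enrichment commutes with limits of presentably monoidal categories along monoidal right adjoints. Write $\infty\Cat=\lim_\n \n\Cat$, with transition functors $\iota_\n$. Each $\iota_\n:(\n+1)\Cat\to\n\Cat$ preserves finite products, so it is a cartesian monoidal right adjoint between presentably (cartesian) monoidal categories. It therefore induces the transfer functor $(\iota_\n)_!:{(\n+1)\Cat}\mathrm{-}\Cat\to{\n\Cat}\mathrm{-}\Cat$ constructed above, and by definition this is $\iota_{\n+1}$. The limit $\infty\Cat$ inherits the pointwise cartesian structure, and it is presentably monoidal, since limits in $\PrR$ of presentable categories with product-preserving, filtered-colimit-preserving transitions are computed in $\widehat{\Cat}$ and remain presentable.

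I would proceed in three steps.

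\textbf{Step 1.} Show that the assignment $\mV\mapsto\mV\mathrm{-}\Cat$, viewed as a functor from presentably monoidal categories and lax monoidal right adjoints to categories, preserves limits of towers of this kind. Concretely, I would show that the canonical functor
\[
{\infty\Cat}\mathrm{-}\Cat\to\lim_\n {\n\Cat}\mathrm{-}\Cat
\]
is an equivalence. I would use the description of $\mV\mathrm{-}\Cat$ via the fibration over spaces of objects: for fixed space $X$, the fiber is $\Alg$ of the $X\times X$-matrix monoidal category $\Fun(X\times X,\mV)$ with convolution tensor, as in the proof of \cref{susp}. Both $\Fun(X\times X,-)$ and $\Alg(-)$ preserve limits of monoidal categories along monoidal functors. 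The functors $\iota_\n$ are strong monoidal for products, and passing to matrices they remain strong monoidal, because they preserve coproducts: they are left adjoints as well as right adjoints in this setting, being colocalizations preserving colimits. Gluing over the space of objects then gives the fiberwise statement, and one needs the object space to be constant along the tower. This holds because each $(\iota_\n)_!$ is the identity on underlying spaces.

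\textbf{Step 2.} Identify the tower $\lim_\n {\n\Cat}\mathrm{-}\Cat$ with a reindexing of the defining tower. We have ${\n\Cat}\mathrm{-}\Cat=(\n+1)\Cat$ with transitions $(\iota_\n)_!=\iota_{\n+1}$. Dropping the initial term does not change the limit, by cofinality, so this limit is $\infty\Cat$.

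\textbf{Step 3.} Compose the equivalences from Steps 1 and 2.

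The main obstacle is Step 1, specifically the fiberwise gluing. One must check that $\Fun(X\times X,\mV)$ with convolution tensor is functorial and limit-compatible in $\mV$ along the $\iota_\n$. This needs the $\iota_\n$ to preserve the coproducts appearing in the convolution formula, which holds since each $\iota_\n$ also preserves colimits. One must also check that the straightening over $\mS$ of the fibration ${\mV}\mathrm{-}\Cat\to\mS$ commutes with the limit. I would try to settle the latter by writing ${\mV}\mathrm{-}\Cat$ as the unstraightening of $X\mapsto\Alg(\Fun(X\times X,\mV))$, using \cite[Theorem 6.7]{HEINE2023108941}, and then invoking that limits commute with limits.
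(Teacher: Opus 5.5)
This is correct and is the paper's argument: the paper obtains the equivalence in one line from the fact that $\mV\mapsto\mV\mathrm{-}\Cat$ preserves small limits, applied to $\infty\Cat=\lim_n n\Cat$ and reindexed via $n\Cat\mathrm{-}\Cat=(n+1)\Cat$ with $(\iota_n)_!=\iota_{n+1}$. Your Step 1 sketches a proof of that limit-preservation, which the paper only asserts.

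One claim needs a different justification: that each $\iota_n$ preserves the space-indexed colimits in the convolution formula (not just coproducts, since the object space need not be discrete). This does not follow from $\iota_n$ being a colocalization. It holds because in this non-univalent setting $\iota_0$ has a right adjoint, the chaotic category (cf.\ \cref{present}), and this monoidal adjunction is transported along $(-)_!$ to every $\iota_n$.
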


\begin{notation}
Let $\partial\bD^1$ denote the two element set $\{0,1\}$.
\end{notation}

\begin{remark}
We will often regard $\partial\bD^1$ as a totally ordered set by declaring $0<1$.
However, it should not be confused with $\bD^1$, the category object corresponding to the $1$-simplex and represented by the ordinal $[1]$.
Rather, the ordering on $\partial\bD^1$ comes from the fact that it is a (not full) subcategory of $\bD^1$.
\end{remark}

\begin{notation}
Let $\Mor: \infty\Cat_{\partial\bD^1/} \to \infty\Cat$
be the canonical functor $$\infty\Cat_{\partial\bD^1/} \simeq \mS_{\partial\bD^1/}\times_\mS {\infty\Cat}\mathrm{-}\Cat \to \infty\Cat $$
sending $(\mC,\X,\Y)$ to $\Mor_\mC(\X,\Y).$
\end{notation}

\begin{remark}\label{homfil}
The functor $\Mor: \infty\Cat_{\partial\bD^1/} \to \infty\Cat$
preserves filtered colimits, limits, and univalent and strict $\infty$-categories.
 	
\end{remark}

\begin{definition}

A functor $X \to Y$ of $\infty$-categories is an inclusion -- or subcategory inclusion-- if is a monomorphism in the category $\infty\Cat,$ i.e. for every $\infty$-category $Z$ the induced map $$\Map_{\infty\Cat}(Z,X) \to \Map_{\infty\Cat}(Z,Y)$$ is an embedding.
In this case we also say that $X$ is a subcategory of $Y$.
 
\end{definition}

\cref{monochar} implies the following:

\begin{corollary}

A functor $\phi: X \to Y$ of $\infty$-categories is an inclusion if and only if it induces an embedding $\iota_0(X) \to \iota_0(Y)$ on underlying spaces and for every $A,B \in X$ the induced functor
$$\Mor_X(A, B) \to \Mor_Y(\phi(A), \phi(B))$$ is an inclusion.

\end{corollary}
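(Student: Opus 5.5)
The plan is to deduce this from \cref{monochar}, using \cref{fix} to regard $\infty$-categories as $\infty\Cat$-enriched categories. Under the equivalence $\infty\Cat \simeq {\infty\Cat}\mathrm{-}\Cat$ of \cref{fix}, a functor $\phi: X \to Y$ of $\infty$-categories corresponds to an $\infty\Cat$-enriched functor. Since the equivalence is an equivalence of categories, it preserves and reflects monomorphisms. So $\phi$ is an inclusion, meaning a monomorphism in $\infty\Cat$, if and only if the corresponding enriched functor is a monomorphism in ${\infty\Cat}\mathrm{-}\Cat$.

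Next I would apply \cref{monochar} with $\mV = \infty\Cat$, which is presentably monoidal under the cartesian product. It says that $\phi$ is a monomorphism if and only if two conditions hold. First, the map on underlying spaces $\iota(X) \to \iota(Y)$ must be an embedding. Second, for every $A, B$ the map $\Mor_X(A,B) \to \Mor_Y(\phi A, \phi B)$ must be a monomorphism in $\infty\Cat$, which by definition is an inclusion of $\infty$-categories.

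It remains to identify the underlying space $\iota$ of the enriched category with $\iota_0(X)$. This holds by the inductive construction: $\iota_0$ on $1\Cat = \mS\mathrm{-}\Cat$ takes the space of objects, and the transferred functors $\iota_{n}=(\iota_{n-1})_!$ do not change objects. Hence the object space of the enriched category attached to $X \in \infty\Cat$ is the $0$-th component of $X$ in the limit, namely $\iota_0(X)$.

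The only point needing care is this compatibility of the equivalence in \cref{fix} with the object space and with the morphism objects $\Mor$. I would check it levelwise in the limit defining $\infty\Cat$, using that the functor ${_{(-)}\Cat}$ commutes with the limit and that each level's forgetful functor to spaces is compatible with the transition maps.
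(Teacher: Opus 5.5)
Your proposal is correct and takes the same route as the paper. The paper deduces the corollary directly from \cref{monochar} with $\mV=\infty\Cat$, via the identification $\infty\Cat\simeq{\infty\Cat}\mathrm{-}\Cat$ of \cref{fix}; your check that the object space of the enriched category is $\iota_0(X)$ fills in a detail the paper leaves implicit.
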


\begin{definition}Let $ n \geq 1.$ By induction on $n$ we define $n$-univalent $\infty$-categories.
An $\infty$-category is 1-univalent if it is local with respect to the unique functor $\{0 \simeq 1 \} \to *$, where $\{0\simeq 1\}$ is the category obtained from $\bD^1$ by inverting the unique map from $0$ to $1$.
An $\infty$-category is $n+1$-univalent if it is 1-univalent and all morphism $\infty$-categories are $n$-univalent.
An $\infty$-category is univalent if it is $n$-univalent for every $n \geq 1.$
\end{definition}

\begin{definition}Let $ n \geq 0.$ By induction on $n$ we define $n$-strict $\infty$-categories.
An $\infty$-category is 0-strict if it is a set.
An $\infty$-category is $n+1$-strict if it is 0-strict and all morphism $\infty$-categories are $n$-strict.
An $\infty$-category is strict if it is $n$-strict for every $n \geq 0.$
\end{definition}

\begin{definition}
Let $ \infty\Cat^{\univ}, \infty\Cat^{\strict}\subset \infty\Cat$ 
be the respective full subcategories of univalent, strict $n$-categories.
\end{definition}

\begin{remark}
For every $\n \geq 0$ the colocalization
$\n\Cat \rightleftarrows (\n+1)\Cat: \iota_\n$
restricts to the respective full subcategories of univalent $n$-categories and strict $n$-categories.

\end{remark}

\begin{remark}
By definition there are canonical equivalences
\begin{align*}
0\Cat^{\univ}&\simeq\mS,\\
n\Cat^{\mathrm{univ}}&\simeq {(\n-1)\Cat^\univ} \mathrm{-}\Cat^\univ,\\
\infty\Cat^{\mathrm{univ}}&\simeq  \lim(\cdots\xrightarrow{\iota_{\n}} \n \Cat^{\mathrm{univ}}\xrightarrow{\iota_{\n-1}}\cdots \xrightarrow{\iota_0} 0 \Cat^{\mathrm{univ}}).
\end{align*}

\end{remark}

\begin{notation}
For every $0 \leq \n \leq \m$ the left adjoint embeddings $\n\Cat \leftrightarrows \m\Cat$ preserve small limits and thus induce left adjoint embeddings $\n\Cat \leftrightarrows \infty\Cat: \iota_\n$ that preserve small limits
and so admit left adjoints $\tau_\n: \infty\Cat \to \n\Cat$ by presentability.
Similarly, we obtain left adjoint embeddings $$\n\Cat^{\mathrm{univ}}\leftrightarrows \infty\Cat^{\mathrm{univ}}: \iota_\n,$$$$ \n\Cat^{\mathrm{strict}}\leftrightarrows \infty\Cat^{\mathrm{strict}}: \iota_\n$$ that preserve small limits and so admit left adjoints, which we also denote by $\tau_\n.$

\end{notation}

We have the following filtration of any $\infty$-category:

\begin{lemma}\label{decom}
Let $\mC$ be an $\infty$-category.
The sequential diagram $$\iota_0(\mC) \to \cdots \to \iota_\n(\mC) \to \iota_{\n+1}(\mC) \to \cdots \to \mC$$ exhibits $\mC$ as the colimit in $\infty\Cat$ of the diagram $\iota_0(\mC) \to \cdots \to \iota_\n(\mC) \to \iota_{\n+1}(\mC) \to \cdots$
\end{lemma}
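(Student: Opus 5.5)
We show that the canonical map $\colim_\n \iota_\n(\mC) \to \mC$ is an equivalence by testing against maps into an arbitrary $\infty$-category $\mD$. Write $\infty\Cat$ as the limit $\lim_\n \n\Cat$ along the functors $\iota_\n$. We then use that $\iota_\n:\infty\Cat\to\infty\Cat$ (landing in $\n\Cat$ and including back) is the composite of the right adjoint projection $\infty\Cat \to \n\Cat$ with the left adjoint embedding $\n\Cat\subset\infty\Cat$.

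The first step identifies $\Map_{\infty\Cat}(\iota_\n\mC,\mD)$. The inclusion $\n\Cat\hookrightarrow\infty\Cat$ is left adjoint to the projection $\mathrm{pr}_\n:\infty\Cat\to\n\Cat$, which sends $\mD$ to its $\n$-th component. Since $\iota_\n\mC$ is the image of $\mathrm{pr}_\n\mC$ under this inclusion, we get
\[
\Map_{\infty\Cat}(\iota_\n\mC,\mD)\simeq \Map_{\n\Cat}(\mathrm{pr}_\n\mC,\mathrm{pr}_\n\mD).
\]
We should double-check that this adjunction is the one compatible with the limit description. This holds by construction: the left adjoint embeddings $\n\Cat\to\m\Cat$ are compatible with the colocalizations $\iota$, and the induced embedding into the limit is left adjoint to the projection.

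The second step computes mapping spaces in the limit. For a limit of categories along functors, mapping spaces are the limits of mapping spaces. Hence
\[
\Map_{\infty\Cat}(\mC,\mD)\simeq \lim_\n \Map_{\n\Cat}(\mathrm{pr}_\n\mC,\mathrm{pr}_\n\mD),
\]
with transition maps induced by applying $\iota_{\n}$.

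The third step assembles these. We have
\[
\Map(\colim_\n\iota_\n\mC,\mD)\simeq\lim_\n\Map(\iota_\n\mC,\mD)\simeq\lim_\n\Map_{\n\Cat}(\mathrm{pr}_\n\mC,\mathrm{pr}_\n\mD)\simeq\Map(\mC,\mD).
\]
The Yoneda lemma then gives the result.

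The main obstacle is checking that the transition maps agree. We need the restriction along $\iota_\n\mC\to\iota_{\n+1}\mC$ to correspond, under the adjunction equivalences, to the map $\Map_{(\n+1)\Cat}\to\Map_{\n\Cat}$ given by applying $\iota_\n$. We also need the composite to be the restriction along the canonical maps $\iota_\n\mC\to\mC$. Both follow from naturality of the counits. Concretely, $\iota_\n\mC\to\iota_{\n+1}\mC$ is the counit of the colocalization $\n\Cat\rightleftarrows(\n+1)\Cat$ applied to $\mathrm{pr}_{\n+1}\mC$, using $\mathrm{pr}_\n=\iota_\n\circ\mathrm{pr}_{\n+1}$. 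Restriction along a counit corresponds under adjunction to applying the right adjoint. Hence the restriction maps agree with the maps obtained by applying $\iota_\n$, and the identification of the diagram of mapping spaces is coherent.
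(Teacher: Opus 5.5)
Your proposal is correct and is essentially the paper's argument: the paper likewise rewrites $\Map_{\infty\Cat}(\iota_n(\mC),\mD)$ as $\Map_{\infty\Cat}(\iota_n(\mC),\iota_n(\mD))$ via the colocalization, and concludes because mapping spaces in $\infty\Cat=\lim_n n\Cat$ are the limits of the componentwise mapping spaces. You also check, via naturality of the counit, that restriction along $\iota_n(\mC)\to\iota_{n+1}(\mC)$ matches the transition maps given by applying $\iota_n$; this fills in a compatibility that the paper leaves implicit.
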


\begin{proof}
For every $\infty$-category $\mD$ the induced map
$$ \Map_{\infty\Cat}(\mC,\mD) \to \lim(\cdots \to \Map_{\infty\Cat}(\iota_{\n+1}(\mC),\mD) \to \Map_{\infty\Cat}(\iota_\n(\mC),\mD)  \to \cdots \to \Map_{\infty\Cat}(\iota_0(\mC),\mD)) $$$$\simeq \lim(\cdots \to \Map_{\infty\Cat}(\iota_{\n+1}(\mC),\iota_{\n+1}(\mD)) \to \Map_{\infty\Cat}(\iota_\n(\mC),\iota_\n(\mD)) \to \cdots \to \Map_{\infty\Cat}(\iota_0(\mC),\iota_0(\mD))) $$
is an equivalence by the definition of $\infty\Cat$ as a limit.	
\end{proof}

\begin{lemma}\label{carclo}Let $ 0 \leq \n \leq \infty.$
The presentable category $\n\Cat$ is cartesian closed.

\end{lemma}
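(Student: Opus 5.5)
We argue by induction on $\n$, treating the finite case first and then passing to the limit for $\n=\infty$. For $\n=0$ the category $0\Cat=\mS$ is cartesian closed. For the inductive step, suppose $\n\Cat$ is presentable and cartesian closed. The plan is to show that $(\n+1)\Cat={\n\Cat}\mathrm{-}\Cat$ is cartesian closed, via the internal-category description of \cref{int}.

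The embedding $\mS\to\n\Cat$ is the left adjoint of $\iota_0$. It is fully faithful, and it is also a right adjoint, since it preserves limits: it is the composite of the limit-preserving embeddings $0\Cat\subset 1\Cat\subset\cdots$. The functor $X\mapsto \n\Cat_{/X}$ on $\mS^\op$ should preserve limits. We would verify this by induction as well, using that $\n\Cat$ is an $\infty$-topos-like category for colimits indexed by spaces. Concretely, for a space $X$ we expect $\n\Cat_{/X}\simeq\Fun(X,\n\Cat)$, which holds because objects over a space decompose fiberwise: colimits of spaces are universal in $\n\Cat$, and a functor to a space is determined by its fibers. Granting this, the cited result of Hinich gives $(\n+1)\Cat\simeq\Cat(\n\Cat;\mS)$. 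By \cref{present} this is presentable, and it is cartesian closed because $\Fun(\Delta^\op,\n\Cat)$ is cartesian closed (pointwise, using closedness of $\n\Cat$). The localizations to category objects, and then to those with $X_0\in\mS$, are compatible with products. That compatibility amounts to the class of local equivalences being stable under products with representables times objects of $\n\Cat$, which is what \cref{present} asserts.

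For $\n=\infty$, we write $\infty\Cat\simeq{\infty\Cat}\mathrm{-}\Cat$ as in \cref{fix}, or work directly with the limit. Since each $\iota_\n$ preserves finite products, products in $\infty\Cat$ are computed levelwise. To produce internal homs, we show $-\times Y$ preserves colimits. By \cref{decom} every object is the sequential colimit of its $\iota_\n$-truncations, and each inclusion $\n\Cat\subset\infty\Cat$ is a left adjoint preserving finite products. For a colimit diagram $X_\alpha$ we then get
\[
\colim_\alpha X_\alpha\times Y\simeq\colim_\alpha\colim_\n \iota_\n X_\alpha\times\iota_\n Y,
\]
using that filtered colimits commute with finite products in $\infty\Cat$ (they do levelwise, as $\iota_\n$ preserves filtered colimits). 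This reduces the claim to cartesian closedness of $\n\Cat$ together with the fact that $\tau_\n$ preserves finite products. Presentability then yields the right adjoint.

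The main obstacle is the descent-type hypothesis on $\mS^\op\to\Cat$, $X\mapsto\n\Cat_{/X}$, and the claim that $\tau_\n$ preserves finite products in the $\infty$ step. For the first, we would prove inductively that colimits in $\n\Cat$ indexed by spaces are universal and disjoint, by reducing to morphism objects. For the second, we would induct on $\n$ using $\tau_{\n+1}=(\tau_\n)_!$ on enriched categories, where $(\tau_\n)_!$ preserves products because $\tau_\n$ does.
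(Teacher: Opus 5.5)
Your proof is correct. For $\n=\infty$ it uses the same ingredients as the paper: filtered colimits and limits in $\infty\Cat$ are computed levelwise along the $\iota_\n$, \cref{decom} holds, and the finite stages are cartesian closed.

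The packaging differs. The paper forms the sequential colimit of the $\n\Cat$ in $\mathrm{CAlg}(\PrL)$ and checks that the resulting presentable tensor product agrees with $\times$. That also records the inclusions $\n\Cat\subset\infty\Cat$ as symmetric monoidal maps in $\PrL$. You instead show directly that $-\times Y$ preserves colimits, which is slightly more elementary.

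You do not need $\tau_\n$ to preserve finite products. Since $\n\Cat\subset\infty\Cat$ preserves colimits and finite products, set $A_\n:=\colim_\alpha\iota_\n X_\alpha$, computed in $\n\Cat$. Then
\[
\colim_\alpha(X_\alpha\times Y)\simeq\colim_\n\colim_\alpha(\iota_\n X_\alpha\times\iota_\n Y)\simeq\colim_\n(A_\n\times\iota_\n Y)\simeq\Bigl(\colim_\n A_\n\Bigr)\times Y\simeq\Bigl(\colim_\alpha X_\alpha\Bigr)\times Y .
\]
This uses only \cref{decom}, cartesian closedness of $\n\Cat$, and cofinality of the diagonal for sequential colimits, which commute with finite products.

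The finite case is where you genuinely diverge. The paper does not prove it in this lemma; cartesian closedness is built into the inductive definition of $\n\Cat$. That rests on the standard fact that $\mV\mathrm{-}\Cat$ is presentably cartesian closed when $\mV$ is.

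Your route through Hinich's model $\Cat(\n\Cat;\mS)$ and \cref{present} is valid. Its cost is the descent hypothesis, which you only sketch. That induction closes more cleanly inside the simplicial model than by ``reducing to morphism objects'':
\begin{enumerate}
\item Descent for $\n\Cat$ gives $(\n+1)\Cat\simeq\Cat(\n\Cat;\mS)$.
\item Slicing over the constant simplicial object $X$ gives $\Fun(\Delta^\op,\n\Cat)_{/X}\simeq\Fun(\Delta^\op,\n\Cat_{/X})\simeq\Fun(X,\Fun(\Delta^\op,\n\Cat))$.
\item Under this equivalence, both the Segal condition and the condition that the object of objects is a space can be checked fiberwise. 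For the second, this is because $\mS\subset\n\Cat$ is closed under limits and colimits.
\end{enumerate}
This yields descent for $(\n+1)\Cat$.
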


\begin{proof}
We prove that the product on $\infty\Cat$ preserves small colimits in each variable.
By definition $\infty\Cat$ is the limit of the diagram
$$\cdots\xrightarrow{\iota_{\n}} \n \Cat \xrightarrow{\iota_{\n-1}} \cdots \xrightarrow{\iota_0} 0 \Cat $$
of presentable categories and right adjoint functors.
Via the canonical equivalence $\Pr^\L\simeq (\Pr^\R)^\op$ the latter is the colimit of the diagram
$$0\Cat \subset \cdots \subset \n\Cat \subset \cdots $$
in $\PrL$, which underlies a diagram in $\mathrm{CAlg}(\PrL)$ of presentable cartesian symmetric monoidal categories, and so inherits a presentable symmetric monoidal structure.
The tensor unit of $\infty\Cat$ is the final object since the embedding $\Cat \subset \infty\Cat$ is symmetric monoidal and preserves finite products.
We need to see that for every $\infty$-categories $\mC$ and $\mD$ the canonical functor
$\mC \ot \mD \to \mC \times \mD$ is an equivalence.
The category $\infty\Cat$ is the limit of a diagram of presentable categories and
small filtered colimit and small limit preserving functors. Thus filtered colimits and small limits in $\infty\Cat$ are formed componentwise.
This guarantees that the product of $\infty\Cat$ preserves filtered colimits because for every $\n \geq 0$ the product of $\n\Cat$ preserves filtered colimits.
Consequently, in view of \cref{decom} when proving that the canonical functor $\mC \ot \mD \to \mC \times \mD$ is an equivalence we can assume that $\mC,\mD$ are $\n$-categories. 
In this case the functor $\mC \ot \mD \to \mC \times \mD$ is an equivalence because the embedding
$\n\Cat \subset \infty\Cat$ is symmetric monoidal and preserves finite products.
\end{proof}

\begin{notation}

For every $\infty$-category $\mC$ let 
$\Fun(\mC,-): \infty\Cat \to \infty\Cat$ be the right adjoint of the functor $(-) \times \mC:\infty\Cat \to \infty\Cat.$
    
\end{notation}

\begin{corollary}
The category $\infty\Cat$ refines to an $\infty$-category $\infty\scat$ such that, for any objects $\mC$ and $\mD$,
\[
\Mor_{\infty\scat}(\mC,\mD)=\Fun(\mC,\mD).
\]
\end{corollary}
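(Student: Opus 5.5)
The plan is to produce the $\infty\Cat$-enriched category $\infty\scat$ from the cartesian closed structure of \cref{carclo}, using the fact that a closed monoidal presentable category is canonically enriched over itself. By \cref{carclo}, $\infty\Cat$ with the cartesian product is a presentably monoidal category; in fact it is presentably symmetric monoidal, since it arises as a colimit in $\mathrm{CAlg}(\PrL)$. Regarded as a module over itself via the tensor action $\mC\times(-)$, it is therefore a presentably left $\infty\Cat$-tensored category. By the example quoted from \cite[Example 2.134.]{heine2024bienriched}, every presentably left $\mV$-tensored category is a $\mV$-enriched category; in particular $\mV=\infty\Cat$ is an $\infty\Cat$-enriched category. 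By \cref{fix}, $\infty\Cat$-enriched categories are the same as $\infty$-categories, so we obtain an object $\infty\scat\in\infty\Cat$. Strictly speaking this object lies in the large version $\widehat{\infty\Cat}$, obtained by rerunning the definitions one universe up, and \cref{fix} applies there as well.

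The next step is to identify the morphism objects. In the enrichment attached to a presentably left $\mV$-tensored category $\mM$, the morphism object $\Mor_\mM(X,Y)$ is characterized as the object of $\mV$ corepresenting $\Map_\mM(-\otimes X,Y)$ on $\mV$. Here this means $\Map_{\infty\Cat}(Z,\Mor(\mC,\mD))\simeq\Map_{\infty\Cat}(Z\times\mC,\mD)$, naturally in $Z$. By definition $\Fun(\mC,-)$ is right adjoint to $(-)\times\mC$, so Yoneda gives $\Mor_{\infty\scat}(\mC,\mD)\simeq\Fun(\mC,\mD)$.

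Finally, the underlying category of $\infty\scat$ should be $\infty\Cat$. Since $\iota$ is compatible with the enrichment transfer along the lax monoidal functor $\Map(*,-)\colon\infty\Cat\to\mS$, we have $\Map(*,\Fun(\mC,\mD))\simeq\Map(\mC,\mD)$. Hence the underlying $\mS$-enriched category is $\infty\Cat$, which is the sense in which $\infty\scat$ refines $\infty\Cat$.

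The main obstacle is the bookkeeping around size and the left/right convention. The tensoring $Z\otimes X=Z\times X$ must be taken on the correct side so that the morphism object is $\Fun(\mC,\mD)$ rather than its reversed variant. Because the product is symmetric, this is harmless, but I would check it against the explicit description of morphism objects in \cite{heine2024bienriched}. The size issue amounts to working in the next universe, where all the earlier results remain valid.
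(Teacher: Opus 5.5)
Your proposal is correct and matches the argument the paper intends. The paper states this corollary without proof, directly after \cref{carclo}. The chain is: cartesian closedness makes $\infty\Cat$ presentably monoidal, hence self-enriched by the quoted example on presentably tensored categories, with morphism objects the internal homs $\Fun(\mC,\mD)$; \cref{fix}, applied one universe up as you note, then turns this enriched category into an $\infty$-category.
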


\begin{definition}Let $\n \geq 1.$
\begin{enumerate}[\normalfont(1)]\setlength{\itemsep}{-2pt}
\item Every functor of $\infty$-categories is $-1$-full.
	
\item A functor of $\infty$-categories is $0$-full if it is essentially surjective.
	
\item A functor of $\infty$-categories is $\n$-full if it induces $n-1$-full functors on morphism $\infty$-categories.

\end{enumerate}
	
\end{definition}

\begin{definition}Let $ \n \geq 1.$
\begin{enumerate}[\normalfont(1)]\setlength{\itemsep}{-2pt}
\item A functor of $\infty$-categories is $-1$-faithful if it is an equivalence.
	
\item A functor of $\infty$-categories is $0$-faithful if it is fully faithful.
		
\item A functor of $\infty$-categories is $\n$-faithful if it induces $n-1$-faithful functors on morphism $\infty$-categories.

\end{enumerate}
	
\end{definition}

It is important to observe that a $0$-full and $0$-faithful functor of $\infty$-categories need {\em not} be an equivalence.
This is because we are not forcing our $\infty$-categories to be univalent, or complete in the sense of Rezk \cite{rezk2001model}.
For instance, the category with two isomorphic objects is not equivalent (in the sense of categorical isomorphism) to the terminal category, though there are fully faithful and essentially surjective functors from either one to the other.
However, the univalent completion of a $0$-full and $0$-faithful functor is always an equivalence.

\begin{proposition}\label{completion}Let $0 \leq \n \leq \infty$.
The embedding $ \n\Cat^{\mathrm{univ}}\subset \n\Cat$ of the full subcategory of univalent $n$-categories admits a left adjoint, which we call univalent completion and denote by $(-)^\wedge$, that preserves finite products.
For $ 0 \leq \n < \infty$ the local equivalences are precisely the $n-1$-faithful functors, which are $m$-full for every $m < n.$
\end{proposition}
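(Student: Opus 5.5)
The plan is to establish the characterization of local equivalences first, by induction on $n$, and then deduce the product-preservation from it. Existence of the left adjoint is formal. For finite $n$, a univalent $n$-category is by definition one local with respect to the small set of functors $S^{k}(\{0\simeq 1\})\to S^{k}(\ast)$ for $0\le k<n$, where $S^k$ denotes the $k$-fold suspension. This is a reformulation of the inductive definition via the universal property of suspension in \cref{susp}(6). Since $\n\Cat$ is presentable, $\n\Cat^{\univ}\subset\n\Cat$ is therefore an accessible localization. For $n=\infty$ the same argument applies with all $k\geq 0$, or alternatively one passes to the limit along the functors $\iota_\n$, which restrict to univalent objects.

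Write $W_n$ for the class of $(n-1)$-faithful functors that are $m$-full for all $m<n$. Unwinding the inductive definitions, a functor lies in $W_n$ if and only if it is essentially surjective and induces functors in $W_{n-1}$ on all morphism $(n-1)$-categories. For $n=1$, the statement is Rezk's theorem that the local equivalences for completeness of Segal spaces are exactly the fully faithful essentially surjective functors. For the inductive step I would use $\n\Cat={(\n-1)\Cat}\mathrm{-}\Cat$ and factor the completion of an $n$-category $\mC$ in two stages. First, transfer the enrichment along the localization $L=(-)^\wedge:(\n-1)\Cat\to(\n-1)\Cat^{\univ}$; by induction $L$ preserves finite products, so it is monoidal and induces $L_!$. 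Second, apply the enriched Rezk completion of $L_!\mC$ as a $(\n-1)\Cat^{\univ}$-enriched category, which is provided by the theory of univalent enriched categories.

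In this factorization, the map $\mC\to L_!\mC$ is bijective on objects and is the completion map, hence lies in $W_{n-1}$ by induction, on each morphism object. The enriched Rezk completion is essentially surjective and fully faithful. So $\mC\to\mC^\wedge$ lies in $W_n$. The class $W_n$ satisfies two-out-of-three. Moreover, a map in $W_n$ between univalent $n$-categories is an equivalence: on morphism objects it is a $W_{n-1}$-map between univalent $(n-1)$-categories, hence an equivalence by induction, and a fully faithful essentially surjective functor between univalent categories is an equivalence. It follows that $f$ is a local equivalence iff $f^\wedge$ is an equivalence iff $f\in W_n$.

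Finite products are then preserved for the following reason. The class $W_n$ is closed under finite products, since essential surjectivity and morphism objects are computed componentwise and $W_{n-1}$ is closed under products by induction. Products of univalent objects are univalent, because they are local objects. Hence $X\times Y\to X^\wedge\times Y^\wedge$ is a local equivalence with univalent target, and so it exhibits the completion of $X\times Y$. For $n=\infty$, the product statement follows by writing any $\infty$-category as the sequential colimit of the $\iota_\n$ as in \cref{decom}, using that the product on $\infty\Cat$ commutes with filtered colimits (\cref{carclo}); the characterization is claimed only for finite $n$.

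I expect the main obstacle to be the enriched step of the induction. One must identify the local equivalences for $(\n-1)\Cat^{\univ}$-enriched univalence with the fully faithful essentially surjective enriched functors, which requires an enriched Rezk-completion theorem. One must also check that transferring the enrichment along $L$ is compatible with the localization $\n\Cat\to\n\Cat^{\univ}$, that is, that $L_!$ followed by enriched completion really computes the left adjoint. I would cite the relevant results on univalent enriched categories from \cite{heine2024bienriched} and \cite{HINICH2020107129}.
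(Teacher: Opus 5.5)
Your proposal is correct and follows essentially the same route as the paper. Both argue by induction on $n$, computing the completion of an $n$-category as $(L_{n-1})_!$ (transfer of enrichment along the previous completion) followed by the enriched univalent completion. Both handle $n=\infty$ by writing $\mC$ as the sequential colimit of the $\iota_n(\mC)$ (\cref{decom}) and using that sequential colimits commute with finite products (\cref{carclo}). The points where you go further simply make explicit what the paper only asserts about this composite. These are existence of the left adjoint via locality with respect to $S^k(\{0\simeq 1\})\to S^k(\ast)$, the two-out-of-three argument identifying the local equivalences, and product preservation deduced from closure under finite products of the class of $(n-1)$-faithful, $m$-full functors.
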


\begin{proof}
We proceed by induction on $\n \geq 0.$
By definition the embedding $0\Cat^{\mathrm{univ}}\subset 0\Cat $ is the identity.
So the statement for $\n=0$ is trivial.
We assume that the embedding $ \n\Cat^{\mathrm{univ}}\subset \n\Cat$ admits a left adjoint $\mathrm{L}_n$ that preserves finite products and whose local equivalences are the 
$n-1$-faithful functors, which are $m$-full for every $m < n.$
Then the embedding $$(n+1)\Cat^{\mathrm{univ}}= {\n\Cat^\univ}\mathrm{-}\Cat^\univ \subset (\n+1)\Cat= {\n\Cat}\mathrm{-}\Cat $$ is right adjoint to the functor $$\L_{\n+1}: (\n+1)\Cat= {\n\Cat}\mathrm{-}\Cat  \xrightarrow{(L_n)_!}  (\n+1)\Cat= {\n\Cat^\univ}\mathrm{-}\Cat \to {\n\Cat^\univ}\mathrm{-}\Cat^\univ $$ that preserves finite products and whose local equivalences are the $n$-faithful functors, which are $m$-full for every $m \leq n.$
 
For every $\mC \in \infty\Cat$ the $\n+1$-category $\L_{\n+1}(\iota_\n(\mC))$ is an $\n$-category.
Hence the canonical functor $\iota_\n(\mC) \to \L_{\n+1}(\iota_\n(\mC))$ induces a functor
$\L_\n(\iota_\n(\mC)) \to \L_{\n+1}(\iota_\n(\mC))$.
We obtain a canonical functor $$\L_\n(\iota_\n(\mC))\to \L_{\n+1}(\iota_\n(\mC)) \to \L_{\n+1}(\iota_{\n+1}(\mC)).$$
By \cref{decom} the canonical functor $$\colim(\iota_0(\mC) \to \cdots \to \iota_\n(\mC)) \to \cdots) \to \mC$$ is an equivalence.
We obtain a functor $$\mC \simeq \colim(\iota_0(\mC) \to \cdots \to \iota_\n(\mC)\to\cdots) \to \L_\infty(\mC):= \colim(\L_0(\iota_0(\mC)) \to \cdots \to \L_\n(\iota_\n(\mC))\to\cdots)$$
that induces for every univalent $\infty$-category $\mD$ a map
$\Map_{\infty\Cat}(\L_\infty(\mC),\mD) \to \Map_{\infty\Cat^\univ}(\mC,\mD).$
The latter identifies with the
equivalence
\begin{align*}
&\lim(\cdots\to\Map_{\n\Cat}(\L_\n(\iota_\n(\mC)),\iota_\n(\mD)) \to ... \to \Map_{0\Cat}(\L_0(\iota_0(\mC)),\iota_0(\mD))) \to\\
&\lim(... \to\Map_{\n\Cat}(\iota_\n(\mC),\iota_\n(\mD)) \to ... \to \Map_{0\Cat}(\iota_0(\mC),\iota_0(\mD))).
\end{align*}
The left adjoint $\L$ preserves finite products since sequential colimits commute with finite products in $\infty\Cat$ by \cref{carclo}.
\end{proof}

\begin{definition}\label{ruik} Let $\n \geq 0.$
We inductively define involutions $$(-)^\op_\n, (-)^\co_\n: \n\Cat \to \n\Cat$$ by setting
$(-)^\co_0, (-)^\op_0:  0\Cat \to 0\Cat$ are the identities, $$(-)^\op_{\n+1}: {(\n+1)}\Cat \xrightarrow{(-)^\circ} {(\n+1)}\Cat \xrightarrow{((-)^\co_\n)_!}{(\n+1)}\Cat, $$ $$(-)^\co_{\n+1}:=((-)^\op_\n)_!: {(\n+1)}\Cat \xrightarrow{ }{(\n+1)}\Cat.$$
There are commutative squares:
$$\begin{xy}
\xymatrix{
{(\n+1)}\Cat \ar[d]^{\iota_\n}  \ar[r]^{(-)_{\n+1}^\op} & {(\n+1)}\Cat  \ar[d]^{\iota_\n}
\\ 
{\n}\Cat \ar[r]^{(-)_{\n}^\op} & {\n}\Cat
}
\end{xy}
\qquad
\begin{xy}
\xymatrix{
{(\n+1)}\Cat \ar[d]^{\iota_\n}  \ar[r]^{(-)_{\n+1}^\co} & {(\n+1)}\Cat  \ar[d]^{\iota_\n}
\\ 
{\n}\Cat \ar[r]^{(-)_{\n}^\co} & {\n}\Cat
}
\end{xy}
$$
and so induced involutions on the limit $$(-)^\op, (-)^\co: \infty\Cat \to \infty\Cat$$
that preserve strict and univalent $\infty$-categories.

\end{definition}

\begin{remark}\label{oppo} By \cref{ruik} there are commutative squares, where $\sigma$ permutes the distinguished objects:

$$\begin{xy}
\xymatrix{
\infty\Cat_{\partial\bD^1/} \ar[d]^{\Mor}  \ar[r]^{(-)^\co} & \infty\Cat_{\partial\bD^1/}  \ar[d]^{\Mor}
\\ 
\infty\Cat \ar[r]^{(-)^\op} & \infty\Cat
}
\end{xy}\qquad
\begin{xy}
\xymatrix{
\infty\Cat_{\partial\bD^1/} \ar[d]^{\Mor}  \ar[r]^{\sigma \circ (-)^\op} & \infty\Cat_{\partial\bD^1/}  \ar[d]^{\Mor}
\\ 
\infty\Cat \ar[r]^{(-)^\co} & \infty\Cat.
}
\end{xy}
$$

\end{remark}

Next we introduce the categorical suspension, which applied to the final and initial $\infty$-categories gives rise to the categorical disks and their boundaries, respectively.
\cref{susp} guarantees that there is an $\infty$-category satisfying the following definition: 

\begin{definition}\label{suspi}
Let $\mC$ be an $\infty$-category. The categorical suspension of $\mC$, denoted by $S(\mC),$
is the unique $\infty$-category whose underlying space is the two-element set $\{0,1\}$ and whose morphism $\infty$-categories are $$\Mor_{S(\mC)}(1,0)\simeq \emptyset, \ \Mor_{S(\mC)}(0,1)\simeq \mC, \ \Mor_{S(\mC)}(0,0)\simeq \Mor_{S(\mC)}(1,1) \simeq *$$ 
and such that for every $\infty$-category $\mD$ containing an ordered pair of objects $(\Y,\Z) \in \mD$ the induced map
\begin{equation*}
\Map_{\infty\Cat_{\partial\bD^1/}}((S(\mC); 0,1),(\mD; \Y,\Z) ) \to \Map_{\infty\Cat}(\mC, \Mor_\mD(\Y,\Z))
\end{equation*}
is an equivalence.

\end{definition}

\subsection{Gaunt $\infty$-categories}
The notion of a gaunt $\infty$-category goes back to Barwick--Schommer-Pries \cite{barwickunicity}.
In practice, gaunt $\infty$-categories are something like the skeleta upon which general $\infty$-categories are built.
Indeed, Barwick--Schommer-Pries present $n$-categories as a localization of presheaves of spaces on gaunt $n$-categories. 
We explore the geometry of gaunt $\infty$-categories in the this section, and establish that certain small families of gaunt $\infty$-categories are dense in $\infty\Cat$.

\begin{definition}Let $\n \geq 0$.
The $\n$-disk is the $n$-fold suspension $\bD^\n:= S^{\n}(*)$ of the terminal $\infty$-category $\ast$.
\end{definition}

The $n$-disk is sometimes referred to as the walking $\n$-cell or $n$-morphism, because $\bD^n$ corepresents the space of $n$-cells functor $\infty\Cat\to\mS$.
Hence it may be regarded as being freely generated by a single $n$-cell, with no relations.

\begin{definition}Let $\n \geq 0$.
The boundary of the $\n$-disk is the $n$-fold suspension $\partial\bD^\n:= S^{\n}(\emptyset)$ of the initial $\infty$-category $\emptyset$.
	
\end{definition}

\begin{remark}
The functor $\emptyset \subset *$ induces inclusions $\partial\bD^\n \subset \bD^\n$ for every $\n \geq 0.$
	
\end{remark}

\begin{example}
Then $\partial\bD^0=\ast, \partial\bD^1=S(\emptyset)=*\coprod*$ is the set with two elements.
Viewed as a subject of $\bD^1$, these elements acquire a natural ordering.
\end{example}

\begin{definition}
The bipointed wedge of an ordered pair $(\mC,\mD)$ of bipointed $\infty$-categories  $\infty\Cat_{/\partial\bD^1}$ and $\mC'\in\infty\Cat_{/\partial\bD^1}$ is the bipointed $\infty$-category obtained by taking the pushout along the common basepoint.
\end{definition}

\begin{definition}\label{Theta}
	
Let $\Theta' \subset \infty\Cat_{\partial\bD^1/}$ be the full subcategory generated by $\bD^0$ under suspensions and bipointed wedges and $\Theta \subset \infty\Cat$ the essential image of $\Theta'$ under the forgetful functor.	
	
\end{definition}

\begin{remark}

An $\infty$-category belongs to $\Theta$ if and only if it is of the form $$ \bD^{i_0} \coprod_{\bD^{j_1}} \bD^{i_1} \coprod_{\bD^{j_2}} \cdots \coprod_{\bD^{j_n}} \bD^{i_n} $$
for a sequence of natural numbers $n, i_0,\ldots, i_n, j_1,\ldots, j_n$ and monomorphisms 
$ \bD^{j_\ell} \rightarrowtail \bD^{i_\ell},\bD^{j_\ell} \rightarrowtail \bD^{i_{\ell-1}}$, $ 1 \leq \ell \leq n$.

\end{remark}

\begin{theorem}\label{theta}

The restricted Yoneda embeddings \begin{equation}\label{jopol}
\infty\Cat \to \Fun(\Theta^\op,\mS)\qquad\mathrm{and}\qquad \infty\Cat^\strict \to \Fun(\Theta^\op,\Set)\end{equation} are fully faithful and admit left adjoints that preserve finite products.
For every $ 0 \leq n \leq \infty$ the full subcategories $n\Cat \subset \infty\Cat, n\Cat^\strict \subset \infty\Cat^\strict$ are generated under small colimits by the disks of dimension smaller $n+1.$

A $\Theta$-space (set) is in the essential image of the restricted Yoneda embedding if and only if it satisfies the Segal condition, i.e. it is local with respect to the map 
$$ \N(\bD^{i_0}) \coprod_{\N(\bD^{j_1})} \N(\bD^{i_1})\coprod_{\N(\bD^{j_2})} \cdots \N(\coprod_{\bD^{j_n}}) \N(\bD^{i_n}) \to \N(\bD^{i_0} \coprod_{\bD^{j_1}} \bD^{i_1} \coprod_{\bD^{j_2}} \cdots \coprod_{\bD^{j_n}} \bD^{i_n}).$$

\end{theorem}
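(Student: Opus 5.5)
Our plan is to prove \cref{theta} by induction on dimension, using the enrichment description $(\n+1)\Cat = {\n\Cat}\mathrm{-}\Cat$ and the identification of enriched categories with Segal objects from \cref{int}. We first treat finite $\n$ and then pass to $\n=\infty$ using the limit definition of $\infty\Cat$ together with \cref{decom}.

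The first step is to produce a nerve description of $(\n+1)\Cat$ in terms of $\n\Cat$. By the Hinich result quoted in \cref{int}, $(\n+1)\Cat \simeq \Cat(\n\Cat;\mS)$, i.e.\ Segal objects $\Delta^\op \to \n\Cat$ whose value at $[0]$ is a space. By induction, $\n\Cat$ is a localization of $\Fun(\Theta_\n^\op,\mS)$ at the Segal maps, where $\Theta_\n \subset \Theta$ consists of objects of dimension $\le \n$. Combining these, $(\n+1)\Cat$ becomes a full subcategory of $\Fun(\Delta^\op \times \Theta_\n^\op,\mS)$ cut out by three conditions: Segal in the $\Delta$-direction, Segal in the $\Theta_\n$-direction, and constancy at $[0]$. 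Rezk's identification of this with $\Theta_{\n+1}$-spaces uses the wreath product description $\Theta_{\n+1} = \Delta \wr \Theta_\n$. In our language, the object $[m](\theta_1,\dots,\theta_m)$ is the bipointed wedge $S(\theta_1)\vee\cdots\vee S(\theta_m)$, and its mapping property is exactly \cref{susp}(6). We will use \cref{susp}(6) to check that for $X\in(\n+1)\Cat$, the space $\Map(S(\theta_1)\vee\cdots\vee S(\theta_m),X)$ is the space of objects $x_0,\dots,x_m$ together with points of $\prod_\ell \Map(\theta_\ell,\Mor_X(x_{\ell-1},x_\ell))$. From this, full faithfulness of the $\Theta_{\n+1}$-nerve reduces to full faithfulness of the $\Theta_\n$-nerve on morphism objects plus the $\Delta$-Segal reconstruction. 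We will identify the essential image as the presheaves local for the Segal maps displayed in \cref{theta}. The key input is that these maps generate the same localization as the iterated conditions, via the usual decomposition of a general $\Theta$-object as a globular pushout of disks.

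For $\n=\infty$ we argue as follows. A functor between $\infty$-categories is determined by its restrictions to the $\iota_\n$ (by \cref{decom} and the limit definition of $\infty\Cat$), and $\Map(\theta,X)=\Map(\theta,\iota_\n X)$ whenever $\dim\theta\le \n$. Together these give full faithfulness on $\Theta$ and the description of the image. The generation statement follows formally: a fully faithful nerve exhibits every object as the colimit of $\Theta_{/X}$, and by the Segal decompositions each object of $\Theta$ is an iterated pushout of disks. Finite product preservation of the left adjoint follows from the cartesian closedness of \cref{carclo} together with the fact that products of representables $\theta\times\theta'$ are themselves realized as Segal-local colimits of $\Theta$'s. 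This is standard, and inductively reduces to the product formula for $\Delta$, where the nerve preserves products. The strict case runs identically after base change along $\mS\to\Set$, using that $\Mor$ preserves strict objects (\cref{homfil}).

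The main obstacle we expect is the identification in the first step: that the image in $\Theta_{\n+1}$-presheaves is exactly cut out by the \emph{globular} Segal maps, rather than by the a priori different iterated (bisimplicial) Segal conditions. Our first attempt would be to show that each iterated Segal map is a composite of pushouts of the displayed globular maps, inducting on the wedge length and using that suspension commutes with the relevant colimits in the bipointed setting. That last point we would verify from the free-algebra formula in \cref{susp}(5).
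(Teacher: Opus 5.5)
Your overall architecture matches the paper's. You establish the nerve theorem for each finite $n$, pass to $n=\infty$ along the $\iota_n$ using $\Map(\theta,X)\simeq\Map(\theta,\iota_nX)$ for $\dim\theta\le n$, and get generation formally from density. The difference is that the paper never proves the finite-$n$ statement. It imports it from Haugseng's comparison of Segal $\Theta_n$-spaces with iterated Segal spaces (his Corollary 4.2), combined with Hinich's identification $\mV\mathrm{-}\Cat\simeq\Cat(\mV;\mS)$. You try to prove that step yourself, and the step you flag as the main obstacle is a genuine gap that your proposed fix does not close.

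Set $\delta\colon\Delta\times\Theta_n\to\Theta_{n+1}$, $([m],\theta)\mapsto S(\theta)\vee\cdots\vee S(\theta)$. Hinich, the inductive hypothesis, \cref{susp}(6) and descent over spaces do show that $\delta^*\circ\N_{\Theta_{n+1}}$ is fully faithful. Its image is the Segal objects in Segal $\Theta_n$-spaces with constant zeroth level. But to conclude that $\N_{\Theta_{n+1}}$ itself is fully faithful, and that its image is exactly the globular-Segal presheaves, you need $\delta^*$ to be fully faithful on globular-Segal $\Theta_{n+1}$-spaces. That is, such a presheaf must be recovered, with all its coherent functoriality, from its restriction along $\delta$.

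Conservativity of $\delta^*$ on Segal objects is easy: values at $S(\theta_1)\vee\cdots\vee S(\theta_m)$ are fibre products of values at the $S(\theta_i)$ over the value at $\bD^0$. Full faithfulness is not formal. $\Theta_{n+1}=\Delta\wr\Theta_n$ contains morphisms such as $S(\theta)\to S(\theta_1)\vee S(\theta_2)$, classified by $\theta\to\theta_1\times\theta_2$ with $\theta_1\times\theta_2\notin\Theta_n$, and these do not come from $\Delta\times\Theta_n$. Controlling their action coherently is precisely the content of Haugseng's theorem.

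Your suggested remedy is to write each iterated Segal map as a composite of pushouts of globular Segal maps. That only shows $\delta_!$ sends iterated-Segal local equivalences to globular-Segal local equivalences, i.e. that $\delta^*$ preserves local objects. This is the easy half. It says nothing about the unit of the induced adjunction $L\delta_!\dashv\delta^*$ on local objects being an equivalence. Likewise, ``the two classes generate the same localization'' has no meaning as stated, since the classes live in presheaf categories on non-equivalent indexing categories. Citing the comparison theorem, as the paper does, repairs this, and the rest of your plan then coincides with the paper's.

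On finite products your route is actually more economical than the paper's appeal to Rezk at each finite level. However, the correct justification is purely formal and needs no induction down to $\Delta$:
\begin{itemize}
\item Since $\N$ is fully faithful and preserves products, $\N(\theta)\times\N(\theta')\simeq\N(\theta\times\theta')$ is already local, so $L(\N(\theta)\times\N(\theta'))\simeq\theta\times\theta'$.
\item Both $L(X\times Y)$ and $LX\times LY$ preserve colimits in each variable, because presheaves and $\infty\Cat$ are cartesian closed (\cref{carclo}). So the comparison map is an equivalence for all $X,Y$.
\item Finally, $L(\ast)\simeq L\N(\ast)\simeq\ast$.
\end{itemize}
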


\begin{proof}
For every $n \geq 0$ let $\Theta_n \subset \Theta$ be the full subcategory of $(n,n)$-categories.
So $\Theta $ is the sequential colimit of the diagram $ \Theta_0 \subset ... \subset\Theta_n \subset .... $	
As a consequence of \cite[Corollary 4.2]{Haugseng_2017}, \cite[5.6.1 Corollary]{HINICH2020107129} the restricted Yoneda embeddings \begin{equation}\label{Yoned1}
n\Cat \to \Fun(\Theta_n^\op,\mS), \ n\Cat^\strict \to \Fun(\Theta_n^\op,\Set)\end{equation} are fully faithful and the essential images are precisely the Segal $\Theta_n$-spaces (sets).
The restricted Yoneda embeddings \ref{jopol} factor as
\begin{align*}
\infty\Cat =\lim(\cdots \to n\Cat\to\cdots) \to &\lim(\cdots \to \Fun(\Theta_n^\op,\mS)\to\cdots) \simeq \Fun(\Theta^\op,\mS)\\
\infty\Cat^\strict =\lim(\cdots\to n\Cat^\strict\to\cdots) \to &\lim(\cdots \to \Fun(\Theta_n^\op,\Set)\to\cdots) \simeq \Fun(\Theta^\op,\Set)
\end{align*}
respectively, and so are fully faithful as well.
By \cite[Proposition 7.21]{Cartesian}, for every $n \geq 0$, the restricted Yoneda embeddings \ref{Yoned1} admit left adjoints $L_n$ that preserve finite products.
Hence the restricted Yoneda embeddings
\begin{align*}
\infty\Cat =\lim(\cdots \to n\Cat\to\cdots) \to &\lim(\cdots \to \Fun(\Theta_n^\op,\mS)\to\cdots) \simeq \Fun(\Theta^\op,\mS)\\
\infty\Cat^\strict =\lim(\cdots\to n\Cat^\strict\to\cdots) \to &\lim(\cdots \to \Fun(\Theta_n^\op,\Set)\to\cdots) \simeq \Fun(\Theta^\op,\Set)
\end{align*}
admit left adjoints $L$ that send $X$ to the sequential colimit $\colim(L_0(X_0) \to L_1(X_1) \to ...)$
where $X_n$ is the image of $X$ in $\Fun(\Theta_n^\op,\mS), \Fun(\Theta_n^\op,\Set)$, respectively.
Hence also the left adjoints $L$ preserve finite products since $\infty\Cat, \infty\Cat^\strict$ are cartesian closed by \cref{carclo}.

The second part of the statement follows from the fact that for every $0 \leq n \leq \infty$ 
any full subcategory of $n\Cat$ that contains the disks of dimension smaller $n+1$ and is closed under small colimits also contains $\Theta_n$ and therefore agrees with $n\Cat$ by the first part of the proof. The same holds for $n\Cat^\strict$.
\end{proof}

\begin{proposition}\label{subchar} Let $X$ be an $\infty$-category and $\mE$ a collection of cells of $X.$
The following are equivalent:

\begin{enumerate}

\item There is a subcategory inclusion $Y \to X$ such that for every
$n \geq 0$ an $n$-morphism of $X$ belongs to $Y$ if and only if 
it belongs to $\mE.$

\item 
For every $\theta \in \Theta$ and functors $\phi: \theta \to X$
and $\bD^m \to \theta$ for $n \geq 0$, the composition $\bD^n \to \theta \xrightarrow{\phi} X$ belongs to $\mE$
if for every standard cell $\bD^n \to \theta$ the functor $\bD^n \to \theta \xrightarrow{\phi} X$ belongs to $\mE$.

\item The collection $\mE$ contains all identity 1-morphism between objects of $X$ in $\mE$, for every objects $A, B $ of $ X$ in $\mE$ there is a subcategory inclusion $Y_{A,B} \to \Mor_X(A,B)$
such that an $n$-morphism of $\Mor_X(A,B)$ belongs to $Y_{A,B}$ if and only if the corresponding $n+1$-morphism of $X$ belongs to $\mE,$
and for every objects $A,B,C$ of $X$ in $\mE$ the composition
functor $\Mor_X(B,C) \times \Mor_X(A,B) \to \Mor_X(A,C)$ sends $Y_{B,C} \times Y_{A,B} $ to $Y_{A,C} $.

\item The collection $\mE$ contains all identity 1-morphism between objects of $X$ in $\mE$, for every objects $A, B $ of $ X$ in $\mE$ there is a subcategory inclusion $Y_{A,B} \to \Mor_X(A,B)$
such that an $n$-morphism of $\Mor_X(A,B)$ belongs to $Y_{A,B}$ if and only if the corresponding $n+1$-morphism of $X$ belongs to $\mE,$
and for every morphisms $A' \to A, B \to B' $ of $X$ in $\mE$ the induced functor $\Mor_X(A,B) \to \Mor_X(A',B')$ sends $Y_{A,B} $ to $Y_{A',B'} $.

\end{enumerate}

    
\end{proposition}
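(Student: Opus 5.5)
We prove the cycle (1)$\Rightarrow$(2)$\Rightarrow$(3)$\Rightarrow$(4)$\Rightarrow$(1). The tools are \cref{monochar} and its corollary characterizing inclusions of $\infty$-categories, \cref{enrsub} for building enriched subcategories out of monomorphisms on morphism objects, and the $\Theta$-density of \cref{theta}. Throughout, the ``standard cells'' of $\theta=\bD^{i_0}\coprod_{\bD^{j_1}}\cdots\coprod_{\bD^{j_n}}\bD^{i_n}$ are the canonical inclusions $\bD^{i_\ell}\to\theta$ and their iterated source and target faces.

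For (1)$\Rightarrow$(2), let $Y\to X$ be the inclusion. Given $\phi:\theta\to X$ whose standard cells lie in $\mE$, each composite $\bD^{i_\ell}\to\theta\to X$ factors through $Y$. Since $Y\to X$ is a monomorphism, these factorizations are unique and agree on the $\bD^{j_\ell}$. Because $\theta$ is the colimit of the $\bD^{i_\ell}$ along the $\bD^{j_\ell}$, the map $\phi$ factors through $Y$, and hence so does every composite $\bD^m\to\theta\to X$.

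For (2)$\Rightarrow$(3), identities lie in $\mE$ by taking $\theta=\bD^0$ with the degenerate map $\bD^1\to\bD^0$. To build $Y_{A,B}$, note that cells of $\Mor_X(A,B)$ correspond to cells of $X$ with boundary $(A,B)$ via suspension. Define $\mE_{A,B}$ as the cells whose suspension lies in $\mE$. Condition (2) for $\mE$ then implies condition (2) for $\mE_{A,B}$ inside $\Mor_X(A,B)$, since $S(\theta)\in\Theta$ and its standard cells are the suspensions of those of $\theta$ together with $A,B$. We then proceed by induction on dimension, or more precisely by a limit argument over the truncations $\iota_n$. This produces the inclusions $Y_{A,B}$. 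Composition is preserved by applying (2) to $\theta=S(\theta_1)\vee S(\theta_2)$ with $\theta_1,\theta_2\in\Theta$: its standard cells are those of the two wedge summands, and composition is a cell of $\theta$ in this sense.

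The step (3)$\Rightarrow$(4) is formal, since whiskering by morphisms in $\mE$ is composition with identities of $Y_{A',A}$ and $Y_{B,B'}$.

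For (4)$\Rightarrow$(1), first recover composition closure from (4). Composition $\Mor_X(B,C)\times\Mor_X(A,B)\to\Mor_X(A,C)$ factors through whiskering on objects; on higher cells it is determined by the two whiskerings together with the interchange. Cells of a product are pairs, and these decompose as $(f,\mathrm{id})\circ(\mathrm{id},g)$. Hence $Y_{B,C}\times Y_{A,B}$ maps into $Y_{A,C}$, using that $Y_{A,C}$ is a subcategory, closed under composition. Then apply \cref{enrsub} with $\mV=\infty\Cat$ (via \cref{fix}) to the full sub-$\infty$-category on the objects in $\mE$, taking $T_{A,B}=Y_{A,B}$. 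This yields a monomorphism $Y\to X$, and \cref{monochar} identifies its cells with $\mE$.

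I expect the main obstacle to be (2)$\Rightarrow$(3). This step requires a precise inductive construction of $Y_{A,B}$, since the statement is about $\infty$-categories. The plan is to prove the cycle for all $n$-categories by induction on $n$, and then pass to $\infty\Cat$ as the limit along $\iota_n$, using that inclusions and the cell conditions are compatible with $\iota_n$.
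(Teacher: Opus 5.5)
Your implications (1)$\Rightarrow$(2), (3)$\Rightarrow$(4) and (4)$\Rightarrow$(1) are essentially the paper's. The paper also recovers composition closure from (4) by factoring the composition functor on morphism $\infty$-categories through the two whiskerings, and it also passes between (3) and (1) via \cref{enrsub}.

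The genuine difference is how (2) is used. The paper never proves (2)$\Rightarrow$(3); it proves (2)$\Rightarrow$(1) directly:
\begin{itemize}
\item For $\theta\in\Theta$, let $\mE(\theta)\subset\N(X)(\theta)$ be the union of components of those $\theta\to X$ whose standard cells lie in $\mE$.
\item Condition (2) says precisely that these subspaces are preserved by every map $\theta\to\theta'$ in $\Theta$. So they form a sub-$\Theta$-space $\N(X)_\mE\subset\N(X)$.
\item This sub-$\Theta$-space is Segal, because the Segal maps of $\N(X)$ restrict to it. By \cref{theta} it is the nerve of some $Y$, and the monomorphism of presheaves is a monomorphism $Y\to X$.
\end{itemize}
This disposes of the step you flag as the main obstacle in a few lines, with no induction on dimension.

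Your route instead needs (2)$\Rightarrow$(1) for each $\Mor_X(A,B)$, hence an induction you only sketch. To make it work you must:
\begin{itemize}
\item[(a)] treat $(\infty,0)$-categories separately. $\Mor$ of an $\infty$-groupoid is again an $\infty$-groupoid, so the recursion does not terminate there. This case is easy: all cells of a space are degenerate, and $Y$ is the union of components $\mE_0$.
\item[(b)] check that (2) descends to $\iota_nX$ with the induced collection. This uses that cells of $\iota_nX$ above dimension $n$ are degenerate and that $\tau_n\theta\in\Theta$.
\item[(c)] glue the resulting $Y_n\subset\iota_nX$ into a single subcategory of $X$, using that a subcategory is determined by its cells.
\end{itemize}
All of this is doable, but the sub-$\Theta$-space argument gives it for free.

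One caveat, which the paper also leaves tacit: (3)$\Rightarrow$(1) and (4)$\Rightarrow$(1) need every cell of $\mE$ to have its source and target in $\mE$, since \cref{monochar} only sees cells between objects of $\mE$. For example, $\mE$ consisting of the non-degenerate $1$-cell of $\bD^1$ alone satisfies (3) and (4) vacuously, yet no subcategory has exactly these cells.
\begin{itemize}
\item If the standard cells of $\bD^n$ are just $\id_{\bD^n}$, then (2) forces this closure.
\item With your convention that standard cells include iterated faces, it does not. The example above then satisfies your (2) vacuously as well.
\end{itemize}
So you should state the closure assumption explicitly.
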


\begin{proof}
1. clearly implies 2. We prove first that 2. implies 1.
Let $\N: \infty\Cat \to \Fun(\Theta^\op, \mS)$ be the nerve, which is an equivalence to the full subcategory of Segal $\Theta$-spaces by \cref{theta}.
For every $\theta \in \Theta$ let $\mE(\theta) \subset \N(X)(\theta)$ be the full subspace of objects $F$ such that for every standard cell $\bD^n \to \theta $ of $\theta$ for $n \geq 0$ the induced map $ \N(X)(\theta) \to \N(X)(\bD^n)$ sends $Z$ to an element of $\mE_n.$
Then 2. is equivalent to say that for every map $\theta \to \theta' $ in $\Theta$ the induced map 
$\N(X)(\theta') \to \N(X)(\theta) $ restrict to a map $\mE(\theta') \to \mE(\theta) $. Hence 2. is equivalent to the condition that the Segal $\Theta$-space $\N(X)$ restricts to a $\Theta$-space $\N(X)_\mE$ that sends $\theta$ to 
$\mE(\theta).$ By construction the $\Theta$-space $\N(X)_\mE$ is a Segal $\Theta$-space. The resulting monomorphism $\N(X)_\mE \to \N(X)$ of Segal
$\Theta$-spaces represents a monomorphism $Y \to X$ in $\infty\Cat.$
By construction, for every $n \geq 0$ an $n$-morphism of $X$ belongs to $Y$ if and only if it belongs to $\mE_n.$

By \cref{enrsub} conditions 1. and 3. are equivalent.
3. clearly implies 4. We prove that 4. implies 3.

We assume that 4. holds. Then the collection of composable 1-morphisms of $\mE$ is closed under composition.
Hence, for every $A,B,C \in X $ in $\mE$ the composition
functor $\Mor_X(B,C) \times \Mor_X(A,B) \to \Mor_X(A,C)$ sends $Y_{B,C} \times Y_{A,B} $ to $Y_{A,C} $ if and only if for every 1-morphisms $f,g: B \to C, f', g': A \to B $ in $\mE$ the induced functor 
$$ \xi: \Mor_{\Mor_X(B,C)}(f,g) \times \Mor_{\Mor_X(A,B)}(f',g') \to \Mor_{\Mor_X(A,C)}(f f', g g') $$ preserves cells of $\mE.$

The functor $\xi$ factors as
$$ \Mor_{\Mor_X(B,C)}(f,g) \times \Mor_{\Mor_X(A,B)}(f',g') \to \Mor_{\Mor_X(A,C)}(f g',g g') \times \Mor_{\Mor_X(A,C)}(f f',f g') \to $$$$ \Mor_{\Mor_X(A,C)}(f f', g g').$$
Hence 4. implies 3.
    
\end{proof}

In the following we study $\infty$-categories with no nontrivial equivalences. These behave especially simply and are very useful as many building blocks of $\infty$-categories are of this type.

\begin{definition}Let $\n \geq 1$.
An $\infty$-category is gaunt if it is strict and univalent.
\end{definition}

\begin{notation}
Let $\infty\Cat^\gaunt \subset \infty\Cat$ be the full subcategory of gaunt $\infty$-categories.		
\end{notation}

\begin{remark}\label{dimen}
Let $\n \geq 0$.
An $\infty$-category is gaunt if and only if for every $\n \geq 0$ the underlying $n$-category of $\mC$ is gaunt.
An $\n$-category is gaunt if and only if it satisfies the following inductive condition, which we call gaunt$'$.
A 1-category is gaunt$'$ if and only if every equivalence is an identity.
An $\n$-category is gaunt$'$ if and only if the underlying category and all morphism $\n$-1-categories are gaunt$'$.
\end{remark}

\begin{example}
For every gaunt $\infty$-category $\mC$ the categorical suspension $S(\mC)$ is gaunt.
To see this it suffices to observe that every non-identity morphism of $S(\mC)$ is not invertible since it goes from 0 to 1
and there is no morphism from 1 to 0 in $S(\mC).$

\end{example}

\begin{proposition}\label{gaunto}
Let $\mC$ be an $\infty$-category. The following are equivalent:
\begin{enumerate}[\normalfont(1)]\setlength{\itemsep}{-2pt}
\item The $\infty$-category $\mC$ is gaunt.

\item For every $\n \geq 0$ the underlying $\n$-category $\iota_\n(\mC)$ is an $(\n,\n)$-category. 

\item For every $\infty$-category $\mB$ the mapping space $\Map_{\infty\Cat}(\mB,\mC)$ is a set.

\item For every gaunt $\infty$-category $\mB$ the mapping space $\Map_{\infty\Cat}(\mB,\mC)$ is a set.

\item For every $\bk \geq 0$ the mapping space $\Map_{\infty\Cat}(\bD^\bk,\mC)$ is a set.

\item The $\infty$-category $\mC$ is local with respect to all projections $  (* \coprod_{*\coprod *} *) \times \mB 
\to \mB $ for $\mB \in \infty\Cat.$

\item The $\infty$-category $\mC$ is local with respect to all projections $ (* \coprod_{*\coprod *} *) \times \bD^\bk
\to \bD^\bk $ for $\bk \geq 0.$

\end{enumerate}	

\end{proposition}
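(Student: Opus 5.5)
We prove the equivalences in the pattern (1)$\Leftrightarrow$(2), (1)$\Rightarrow$(3)$\Rightarrow$(4)$\Rightarrow$(5)$\Rightarrow$(1), and (3)$\Leftrightarrow$(6)$\Rightarrow$(7)$\Leftrightarrow$(5). Throughout, the object $E:=* \coprod_{*\coprod *} *$ is the free-living ``loop'' category ($S(\emptyset)$ with its two objects identified). We will use two standard facts about it.
\begin{itemize}
\item[(a)] A space $K$ is a set precisely when $\Map_\mS(S^1,K)\to K$ is an equivalence.
\item[(b)] Because products in $\infty\Cat$ preserve colimits in each variable (\cref{carclo}), there is an identification $E\times \mB \simeq \mB\coprod_{\mB\coprod\mB}\mB$.
\end{itemize}
From (b) we get
\[
\Map(E\times\mB,\mC)\simeq \Map(\mB,\mC)\times_{\Map(\mB,\mC)^{2}}\Map(\mB,\mC)\simeq \Map_\mS(S^1,\Map(\mB,\mC)).
\]
Combining this with (a), locality of $\mC$ with respect to $E\times\mB\to\mB$ is equivalent to $\Map(\mB,\mC)$ being a set. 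This gives (3)$\Leftrightarrow$(6) and (5)$\Leftrightarrow$(7) directly. The implications (6)$\Rightarrow$(7) and (3)$\Rightarrow$(4)$\Rightarrow$(5) are trivial, since the disks are gaunt (they are iterated suspensions of $*$, and suspensions of gaunt categories are gaunt).

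For (5)$\Rightarrow$(3), we use \cref{theta}: every $\mB$ is a colimit of objects of $\Theta$. Sets are closed under limits in $\mS$, so it suffices to treat $\mB\in\Theta$. The Segal condition writes $\Map(\theta,\mC)$ as an iterated pullback of the spaces $\Map(\bD^k,\mC)$, and a pullback of sets is a set. Thus $\Map(\mB,\mC)$ is a set for every $\mB$.

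For (1)$\Leftrightarrow$(2) we induct on dimension, using \cref{dimen} and the definitions of strict and univalent. An $n$-category is an $(n,n)$-category exactly when its space of objects is a set and all morphism categories are $(n-1,n-1)$-categories. We compare this with ``strict and univalent''. Strictness says the object spaces and iterated morphism spaces are sets. Univalence says the equivalences in each dimension are identities, up to the space of equivalences mapping to the space of objects. In the nonunivalent setting, the relevant meaning of $(n,n)$-category is that the top-dimensional groupoid data is discrete. So the induction reduces to the statement that a category object in sets is univalent iff its only invertible morphisms are identities. For this we use the universal property of $\{0\simeq 1\}$.

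For (5)$\Rightarrow$(1): $\Map(\bD^k,\mC)$ is the space of $k$-cells, so (5) gives strictness inductively, using the suspension adjunction in \cref{suspi} to pass to morphism categories. For univalence we test against $\{0\simeq1\}\to *$. Both sides are colimits of disks (\cref{theta}), so by the argument for (5)$\Rightarrow$(3) the mapping spaces out of them are sets. The map $\Map(*,\mC)\to\Map(\{0\simeq1\},\mC)$ is an inclusion of the identities into the invertible $1$-cells. These two sets agree when $\mC$ has no nonidentity equivalence.

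The converse (1)$\Rightarrow$(5) is where the real work lies, and we expect it to be the main obstacle. We must show that a strict univalent $\mC$ has, in each dimension, a discrete space of $k$-cells, meaning that equivalences in $\mC$ impose no nontrivial automorphisms. Our first approach is induction on $k$ via $\Map(\bD^{k},\mC)\simeq \coprod_{X,Y}\Map(\bD^{k-1},\Mor_\mC(X,Y))$, which is valid once the object space $\iota_0\mC$ is a set. Strictness gives that it is a set, and morphism categories of gaunt categories are gaunt, which handles the induction. To pass from the finite-dimensional case to the limit defining $\infty\Cat$, we use \cref{decom}.
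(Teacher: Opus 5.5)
The gap is in (5)$\Rightarrow$(1), at the sentence ``these two sets agree when $\mC$ has no nonidentity equivalence.'' That sentence restates $1$-univalence; it is not a consequence of (5), and nothing before it derives it. Condition (5) only gives strictness: the object space and the iterated morphism spaces are sets. In the paper's non-univalent $\infty\Cat$, the object space does not detect equivalences.

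In fact the implication is false as stated. Take the paper's own $J=\{0\simeq 1\}$, with two objects and a unique equivalence between them; the paper itself calls it a $(1,1)$-category. It is a Segal set, so $\Map_{\infty\Cat}(\mB,J)$ is a set for every $\mB$, and $J$ satisfies (2)--(7). But $\Map(*,J)\to\Map(J,J)$ is the inclusion of the $2$ constant functors into the $4$ functors $J\to J$. So $J$ is not univalent, hence not gaunt.

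The same example defeats the (2)$\Rightarrow$(1) half of your (1)$\Leftrightarrow$(2) sketch. Reducing to ``a category object in sets is univalent iff its only invertible morphisms are identities'' is fine, but (2) does not supply the right-hand side. So no better argument can repair this step. The paper's proof has the identical defect in its (3)$\Rightarrow$(1). There it passes from ``the underlying space of $\mC$ is a set'' to ``every equivalence of $\mC$ is an identity'', which is valid only if $\mC$ is already univalent.

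The statement becomes correct if $\mC$ is assumed univalent, or if ``gaunt'' in (1) is replaced by ``strict''. Then (1) just says $\mC$ is strict. Strictness is equivalent to (5) by your own fibration $\Map(\bD^k,\mC)\to\iota_0(\mC)^{\times 2}$ with fibres $\Map(\bD^{k-1},\Mor_\mC(X,Y))$, which uses no univalence at all.

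With that correction, the rest of your outline is sound and matches the paper:
\begin{itemize}
\item (5)$\Rightarrow$(3) follows from generation under colimits and closure of sets under limits.
\item (3)$\Leftrightarrow$(6) and (5)$\Leftrightarrow$(7) follow from $\Map(E\times\mB,\mC)\simeq\Map_\mS(S^1,\Map(\mB,\mC))$.
\end{itemize}

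Two smaller points. First, you have the difficulty backwards: (1)$\Rightarrow$(5) is the easy direction and needs neither univalence nor the passage through the filtration of $\mC$. Second, $E$ is simply the space $S^1$, since the pushout is computed in $\mS$, which is closed under colimits in $\infty\Cat$. It is not a ``free-living loop category'' or a quotient of $S(\emptyset)$. Your computation only uses (b), so this misdescription is harmless.
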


\begin{proof}
We first prove that (1) and (2) are equivalent.
By \cref{dimen} the $\infty$-category $\mC$ is gaunt if and only if for every $\n \geq 0$ the underlying $\n$-category $\iota_\n(\mC)$ is gaunt.
An $\infty$-category $\mC$ satisifies (2) if and only if for every $\n \geq 0$ the underlying $\n$-category $\iota_\n(\mC)$ satisfies (2).
So we can assume that $\mC$ is an $\n$-category.
Since for every $\n \geq 1$ and $\X, \Y \in \mC$ there is an equivalence $\Mor_{\iota_\n(\mC)}(\X,\Y)\simeq \iota_{\n-1}(\Mor_{\mC}(\X,\Y))$, an $\n$-category $\mC$ satisfies (2) if and only if the underlying space of $\mC$ is a set and every morphism $\n$-1 category of $\mC$ satisfies (2). 
Thus by induction on $\n \geq 0$ it suffices to see that every gaunt space is a set.
A space is gaunt if and only if for every $\n \geq 0$ the $\n$-truncation of the space is gaunt.
Hence via the Postnikov tower it suffices to see that a gaunt $(\n,0)$-category is a set. In this case the statement follows immediately by induction on $\n \geq 0 $ from the fact that a groupoid is a set when every isomorphism is an identity.
This proves the equivalence between (1) and (2).

Condition (3) implies (4) and (4) implies (5). Condition (5) implies (3) since $\infty\Cat$ is generated under small colimits by the disks (\cref{theta}) and sets are closed under limits in the category of spaces.

Since $\infty\Cat^\gaunt \subset \infty\Cat^\strict$, condition (1) implies (4). We prove that (3) implies (1).
Condition (3) for $\mB=*$ gives that the underlying space of $\mC$ is a set. So every equivalence of
$\mC$ is an identity and $\mC$ is 1-gaunt.
We prove by induction on $\n \geq 1$ that (3) implies that the underlying $n$-category of $\mC$ is gaunt.
We have to see that for every $\X,\Y \in \mC$, the underlying $(n-1)$-category of the morphism $\infty$-category
$\Mor_\mC(\X,\Y)$ is gaunt. So by induction it suffices to show that for every $\infty$-category $\mB$ the space $\Map(\mB, \Mor_\mC(\X,\Y))$ is a set.
This space identifies with the space
\[
\Map_{\infty\Cat_{\partial \bD^1/}}(S(\mB), (\mC,\X,\Y)) \simeq \{(\X,\Y) \}\times _{\iota_0(\mC) \times \iota_0(\mC)}\Map_{\infty\Cat}(S(\mB), \mC),
\]
which is a pullback of sets by (3) and so a set.

We complete the proof by showing that (6) is equivalent to (3) and that (7) is equivalent to (5).
This follows immediately from the fact that a space $\X$ is a set if and only if it is local with respect to the map $* \coprod_{*\coprod *} *  \to *.$
Locality with respect to this map means that the canonical map $\X \to \X \times_{\X \times \X} \X$
induced by the diagonal map is an equivalence, which says that all loop spaces of $\X$ are contractible, in which case $\X$ is a set. 
\end{proof}

\begin{corollary}
	
The full subcategory $\infty\Cat^\gaunt \subset \infty\Cat$ is an accessible localization and so presentable.	
	
\end{corollary}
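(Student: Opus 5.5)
The plan is to use the characterization in \cref{gaunto}: by its equivalences (1)$\Leftrightarrow$(7), $\infty\Cat^\gaunt \subset \infty\Cat$ is exactly the full subcategory of objects local with respect to the \emph{set} of morphisms
\[
S := \{\, (* \coprod_{*\coprod *} *) \times \bD^\bk \to \bD^\bk \mid \bk \geq 0 \,\}.
\]
This set is small, since it is indexed by the natural numbers.

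First I will check that $\infty\Cat$ is presentable. It is defined as a limit in $\PrR$ of the presentable categories $\n\Cat$ along the right adjoints $\iota_\n$. Such limits are presentable, and they agree with the corresponding colimit in $\PrL$, as already used in the proof of \cref{carclo}.

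Second, I will invoke the standard result on localizations of presentable categories (Lurie, HTT 5.5.4.15). For a presentable category $\mC$ and a small set $S$ of morphisms, the full subcategory of $S$-local objects is an accessible localization of $\mC$: the inclusion admits a left adjoint, and the subcategory is itself presentable. Combining this with the first step and \cref{gaunto} gives the claim.

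The only point that needs care is that the localizing class is small. Condition (6) of \cref{gaunto} ranges over all $\mB \in \infty\Cat$, which is a large class, so it cannot be used directly. The reduction (6)$\Leftrightarrow$(7) in \cref{gaunto} replaces it by the disks alone. That reduction rests on \cref{theta}, which says that $\infty\Cat$ is generated under colimits by the disks. Beyond this, I expect no real obstacle: the statement is a formal consequence once presentability of $\infty\Cat$ and the small generating set are in place.
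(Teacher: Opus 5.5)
Your argument is the one the paper intends (the corollary is stated as an immediate consequence of \cref{gaunto}, with the same small set and presentability of $\infty\Cat$). However, the input it rests on fails in this paper's non-univalent setting: the objects local for your set of maps are the \emph{strict} $\infty$-categories, not the gaunt ones.

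Take $J$, the category with object set $\{0,1\}$ and contractible morphism spaces between any two objects. Then $\Map_{\infty\Cat}(\bD^0,J)=\{0,1\}$ and $\Map_{\infty\Cat}(\bD^k,J)\simeq\{0,1\}^2$ for $k\geq 1$, so $J$ satisfies condition (5), and hence (7), of \cref{gaunto}. But $J$ is not $1$-univalent, since $\Map_{\infty\Cat}(\ast,J)$ has two points while $\Map_{\infty\Cat}(J,J)$ has four. So $J$ is local for your set but is not gaunt.

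The faulty step in the proof of \cref{gaunto} is the inference ``the underlying space of $\mC$ is a set, so every equivalence of $\mC$ is an identity'', which is valid only for univalent $\mC$. Conditions (3)--(7) there only record that all spaces $\Map_{\infty\Cat}(\bD^k,\mC)$ are sets. As written, your proof therefore shows that $\infty\Cat^\strict\subset\infty\Cat$ is an accessible localization. Your sentence identifying $\infty\Cat^\gaunt$ with the local objects for that set is false, and it is exactly the point you flagged as unproblematic.

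The repair is cheap: enlarge the set by the maps $S^n(J)\to S^n(\ast)=\bD^n$ for $n\geq 0$.
\begin{itemize}
\item By the universal property of the suspension (\cref{suspi}), for $f\colon A\to B$ the map $\Map_{\infty\Cat}(S(B),\mC)\to\Map_{\infty\Cat}(S(A),\mC)$ lies over $\iota_0(\mC)\times\iota_0(\mC)$. Its fibres are $\Map_{\infty\Cat}(B,\Mor_\mC(X,Y))\to\Map_{\infty\Cat}(A,\Mor_\mC(X,Y))$.
\item Hence $\mC$ is local for $S(f)$ if and only if every morphism $\infty$-category of $\mC$ is local for $f$.
\item By induction along the definition of $n$-univalence, $\mC$ is local for all $S^n(J)\to\bD^n$ if and only if it is univalent.
\end{itemize}
Since gaunt means strict and univalent, $\infty\Cat^\gaunt$ is exactly the class of objects local for this enlarged set. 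The set is still countable, so your appeal to presentability of $\infty\Cat$ and HTT 5.5.4.15 then goes through unchanged.
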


\begin{corollary}
Every right adjoint functor $\infty\Cat \to \infty\Cat$ preserves gaunt $\infty$-categories.	
	
\end{corollary}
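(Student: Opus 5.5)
The claim reduces to showing that right adjoints preserve maps of the kind in \cref{gaunto} (6). I will use the characterization of gaunt $\infty$-categories given by \cref{gaunto} (3): $\mC$ is gaunt if and only if $\Map_{\infty\Cat}(\mB,\mC)$ is a set for every $\infty$-category $\mB$. Equivalently, by (6), $\mC$ is gaunt if and only if it is local with respect to the projections $(* \coprod_{* \coprod *} *) \times \mB \to \mB$. So let $G: \infty\Cat \to \infty\Cat$ be a right adjoint with left adjoint $F$, and let $\mC$ be gaunt. For any $\mB$ we have $\Map_{\infty\Cat}(\mB, G(\mC)) \simeq \Map_{\infty\Cat}(F(\mB), \mC)$, and the right-hand side is a set by \cref{gaunto} (3) applied to $\mC$. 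Hence $\Map_{\infty\Cat}(\mB, G(\mC))$ is a set for every $\mB$, and \cref{gaunto} (3) applied to $G(\mC)$ shows that $G(\mC)$ is gaunt.

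This is essentially a one-line argument. The only point to check is that (3) quantifies over all $\infty$-categories $\mB$. The restricted versions (4) and (5) would not suffice, since $F$ need not preserve gaunt objects or disks. Because (3) ranges over all $\mB$, there is no obstacle. No accessibility or presentability hypotheses are needed beyond the existence of the left adjoint $F$.

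Alternatively, one can phrase the argument as the statement that right adjoints preserve $S$-local objects whenever the left adjoint sends $S$ into $S$-local equivalences. I will not pursue this, since the mapping-space formulation above is more direct.
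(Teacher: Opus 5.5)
\textbf{Gap: the implication (3)$\Rightarrow$(1) of \cref{gaunto} fails here.} Your argument is the one-line deduction the paper intends; the corollary is stated right after \cref{gaunto} with no proof. Your final step concludes that $G(\mC)$ is gaunt because $\Map_{\infty\Cat}(\mB,G(\mC))$ is a set for every $\mB$. That uses (3)$\Rightarrow$(1), which is false in the paper's own framework: $\infty\Cat$ consists of not necessarily univalent $\infty$-categories, and ``gaunt'' means strict \emph{and} univalent. In that setting condition (3) only characterizes strictness. For example, let $J$ be the category with two objects and a unique equivalence between them. Then $\Map_{\infty\Cat}(\mB,J)\simeq\Map_{\mS}(\iota_0(\mB),\{0,1\})$ is a set for every $\mB$, yet $J$ is not univalent. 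The paper's proof of (3)$\Rightarrow$(1) breaks at the inference ``the underlying space of $\mC$ is a set, so every equivalence of $\mC$ is an identity''. That inference is valid only for univalent $\mC$.

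\textbf{The statement itself fails as written.} For a space $X$, let $R(X)$ be the chaotic $\infty$-category on $X$, with $\Theta$-nerve $\theta\mapsto\Map_{\mS}(\iota_0(\theta),X)$. This is a Segal $\Theta$-space, and it satisfies $\Map_{\infty\Cat}(\mB,R(X))\simeq\Map_{\mS}(\iota_0(\mB),X)$, so $R$ is right adjoint to $\iota_0\colon\infty\Cat\to\mS$. Since $\iota_0$ is itself right adjoint to the inclusion $\mS\subset\infty\Cat$, the composite $G=R\circ\iota_0$ is a right adjoint endofunctor of $\infty\Cat$. But $G(\bD^1)=R(\{0,1\})\simeq J$ is not gaunt, although $\bD^1$ is. So no argument can close the gap.

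\textbf{What your argument does prove.} It shows two things. First, right adjoints $\infty\Cat\to\infty\Cat$ preserve strict $\infty$-categories. Second, right adjoints $\infty\Cat^{\univ}\to\infty\Cat^{\univ}$ preserve gaunt ones, because for univalent targets (3) is equivalent to gauntness. To recover the statement in the non-univalent setting you need the extra hypothesis that $G$ preserves univalent objects. A sufficient condition is that its left adjoint sends each $S^n(J)\to\bD^n$ to a univalent-local equivalence.
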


\subsection{Steiner $\infty$-categories}
Of special importance is a class of gaunt $\infty$-categories
known as strong Steiner $\infty$-categories,
which are gaunt $\infty$-categories generated by a set of well-behaved loop-free generators.
We refer to the latter simply as Steiner $\infty$-categories since we will not work with the non-strong variant.

\begin{definition}\label{polygraph}
A strict $\infty$-category $\mO$ is a {\em polygraph} if for every $\n \geq 0$ there is set $\mE_n$ of $n$-morphisms of $\mO$, called the {\em atomic} $n$-morphisms, such that the following induced commutative square in $\infty\Cat^\strict$ is a pushout square:
\[
\xymatrix{
\coprod_{\mE_\n} \partial\bD^n \ar[d]\ar[r] & \iota_{n-1}(\mO) \ar[d]\\
\coprod_{\mE_\n} \bD^n \ar[r] & \iota_{n}(\mO).
}
\]
We call $\coprod_{n \geq 0}\mE_n $ a graded set of generators for $\mO.$
\end{definition}

\begin{remark}
    All of the objects of a strict $\infty$-category $\mO$ are technically atomic $0$-cells. Hence atomicity is not a useful notion in dimension zero.
\end{remark}

By \cite[Proposition 2.4]{ARA2023107313} every polygraph has a unique set $\mE$ of generators, which is precisely the set of non-trivial morphisms that are indecomposable, i.e. can only be factored by using trivial morphisms.
Moreover by \cite[Proposition 2.3]{ARA2023107313} the set $\mE$ composition-generates the polygraph in the evident sense.
By \cite[Definition 2.22]{ARA2023107313} there is the following definition of a strongly loop-free set of generators:

\begin{definition}
Let $\mC$ be a polygraph with set of atomic generators $\mE.$
The preorder relation $\leq_\bN$ on $\mE$ is the preorder generated by the condition that
$$ X \leq_\bN Y  $$
for $X, Y \in \mE $ if $X$ appears as a factor in the source of $Y$ or $Y$ appears as a factor in the target of $X.$ 
    
\end{definition}

\begin{definition}

An atomic set of generators of a polygraph is strongly loop-free if the preorder relation $\leq_\bN$ is a partial order, i.e. is also antisymmetric.

\end{definition}

The following definition is \cite[Theorem 2.30]{ARA2023107313} and it is shown \cite[Theorem 2.30]{ARA2023107313} that it agrees with Steiner's original definition \cite[Definition 2.2]{Steiner2004OmegacategoriesAC}:

\begin{definition}
A Steiner $\infty$-category is a polygraph whose set of generators is strongly loop-free.

\end{definition}

\begin{remark}

Steiner $\infty$-categories are gaunt.    
\end{remark}

\begin{remark}
What we call Steiner $\infty$-category is usually called strong Steiner $\infty$-category, and the name Steiner $\infty$-category is reserved for a weaker notion. Since we don't use this weaker notion, we will simplify the terminology and refer to these objects as Steiner $\infty$-categories.
\end{remark}

\begin{notation}
Let $\infty\Cat^\Steiner \subset \infty\Cat^\gaunt$ be the full subcategory of Steiner $\infty$-categories.	
\end{notation}

We have the following useful result of Campion \cite{campion2023inftyncategoricalpastingtheorem}:

\begin{proposition}\label{pasting1} Every pushout square 
$$
\xymatrix{
\mA \ar[r] \ar[d] & \mB \ar[d] \\ \mC \ar[r] & \mD 
}
$$
in $\infty\Cat^\strict$, where the top horizontal functor is an inclusion and the left vertical functor belongs to the saturated class generated by the inclusions $\partial \bD^n \subset \bD^n$ for $n \geq 0$, is a pushout square in $\infty\Cat.$

\end{proposition}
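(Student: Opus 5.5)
The plan is to test the pushout in $\infty\Cat$ against the dense family $\Theta$ of \cref{theta}. Since the nerve $\N:\infty\Cat\to\Fun(\Theta^\op,\mS)$ is fully faithful, the comparison map $\mC\coprod^{\infty\Cat}_{\mA}\mB\to\mD$ is an equivalence exactly when it becomes an equivalence after applying $\N$ and the Segal localization. It therefore suffices to show that the presheaf pushout $\N\mC\coprod_{\N\mA}\N\mB$ is already local for the Segal maps, up to the gluing of new cells, and that its localization is $\N\mD$. Rather than proving this for an arbitrary left vertical map, I will first reduce to a single cell attachment.

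\emph{Reduction.} The class of maps $\mA\to\mC$ for which the statement holds, for every inclusion $\mA\to\mB$, should be closed under the operations that generate the saturated class. Cobase change is handled by pasting of pushout squares. Here one needs that inclusions are stable under pushout along such maps in $\infty\Cat^\strict$, so that iterating remains legitimate. Transfinite composition works because filtered colimits in $\infty\Cat^\strict$ and $\infty\Cat$ are compatible: the strict objects are closed under filtered colimits, by \cref{homfil} and the levelwise description of filtered colimits. Coproducts and retracts are formal. This reduces the claim to the case where $\mA\to\mC$ is a pushout of a single boundary inclusion $\partial\bD^n\subset\bD^n$, that is, to squares obtained by attaching one $n$-cell to $\mA$ and to $\mB$ along an attaching map $\partial\bD^n\to\mA\subset\mB$. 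Pasting further reduces this to showing two things: the strict pushout $\mA\coprod_{\partial\bD^n}\bD^n$ agrees with the $\infty\Cat$-pushout for an arbitrary strict $\mA$, and the same holds for $\mB$.

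\emph{Single cell attachment.} I will argue by induction on $n$ using suspension. By \cref{suspi} and \cref{susp}, attaching an $n$-cell with boundary in $\Mor_\mA(x,y)$ is computed on morphism objects. Concretely, $\Mor$ of the pushout is built from composites of the form $\Mor_\mA\times\cdots\times(\text{cell-attached }\Mor_\mA(x,y))\times\cdots$. This is the free-algebra formula from the proof of \cref{susp}, applied to $\infty\Cat$-enriched categories and compared with the strict version through $\mS\to\Set$. The aim is to show this formula yields a strict $\infty$-category agreeing with the strict pushout, so that the weak pushout is already strict. For $n=0,1$ this is a direct check: a disjoint union, or a free arrow added to a $1$-category, where Segal-space pushouts of discrete objects stay discrete because the attached arrow creates no new loops. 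For larger $n$ we recurse into morphism categories using the inclusion hypothesis.

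\emph{Main obstacle.} The difficult step is the inductive one. There, composites in the pushout involve whiskering the new cell by arbitrary cells of $\mA$, and these may interact with equivalences or nontrivial homotopies in the weak setting. The weak pushout could then acquire nontrivial mapping spaces, that is, fail to be gaunt or strict. I would handle this by exhibiting the morphism categories of both pushouts as explicit filtered colimits of iterated pushouts of lower-dimensional attachments, in the style of Campion's pasting argument, and applying the induction hypothesis to them. The inclusion hypothesis on $\mA\to\mB$ is exactly what makes the two-sided whiskering decomposition behave uniformly in $\mA$ and $\mB$. I expect this to be the most delicate bookkeeping in the proof.
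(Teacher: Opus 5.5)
There is a genuine gap, and it sits in your reduction. The saturated class contains cell attachments along \emph{arbitrary} attaching maps. Your cobase-change step therefore asks you to show that $\mA\coprod_{\partial\bD^n}\bD^n$ is computed correctly in $\infty\Cat$ for every strict $\mA$ and every map $\partial\bD^n\to\mA$, and at that point the inclusion hypothesis has disappeared. That claim is false from $n=2$ on. Take $\mA=\bD^0$:
\begin{itemize}
\item The strict pushout is $B^2\mathbb{N}$: one object, only the identity $1$-cell, and a monoid $\mathbb{N}$ of $2$-cells, commutative by Eckmann--Hilton.
\item The pushout in $\infty\Cat$ corepresents pairs of an object $x$ and a $2$-cell $\mathrm{id}_x\Rightarrow\mathrm{id}_x$. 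So it is $B^2$ of the free $E_2$-monoid on a point, namely $\coprod_k B\mathrm{Br}_k$, which is not discrete.
\end{itemize}
Your free-algebra description of a cell attachment via \cref{susp} is correct only for $n\leq 1$. For $n\geq 2$ the new cell must be whiskered by cells of $\mA$, and in $\infty\Cat$ the interchange law is coherent structure rather than an equation; that is exactly where the braid groups come from. So the ``main obstacle'' you flag is not bookkeeping: the weak pushout really acquires nontrivial mapping spaces, and no induction on $n$ removes them.

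Reading the saturated class in $\infty\Cat^\strict$, as you do, the statement itself fails even with the inclusion hypothesis, so no argument along these lines can succeed. The functor $\bD^0\to B^2\mathbb{N}$ is both an inclusion and a single cell attachment. For $\mA=\bD^0$ and $\mB=\mC=B^2\mathbb{N}$, the strict pushout is $B^2(\mathbb{N}\times\mathbb{N})$. The pushout in $\infty\Cat$ is $B^2$ of the coproduct of $E_2$-monoids $\mathbb{N}\sqcup_{E_2}\mathbb{N}$, because $B^2$ is left adjoint to taking $2$-endomorphisms of an identity. This coproduct is not discrete:
\begin{itemize}
\item Its group completion is $\Omega^2(\mathbb{CP}^\infty\vee\mathbb{CP}^\infty)$.
\item That space has $\pi_1\cong\pi_3(\mathbb{CP}^\infty\vee\mathbb{CP}^\infty)\cong\mathbb{Z}$, generated by a Whitehead product.
\item If the coproduct were discrete, it would be a commutative monoid, and then its group completion would be discrete.
\end{itemize}

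The paper gives no argument of its own here, only a citation of Campion's pasting theorem. Every place it invokes \cref{pasting1} (\cref{cello}, \cref{orientdecom}, \cref{cell}, \cref{locmon}, \cref{example_theta}) concerns Steiner $\infty$-categories and inclusions preserving atomic generators. The missing idea is loop-freeness. In a strongly loop-free polygraph no atom has equal source and target, which rules out degenerate attaching maps like the ones above. The real content is then that the free $(\infty,\infty)$-category on such a loop-free pasting diagram is already strict. That has to be proved from the loop-free structure; it cannot come from a reduction to arbitrary cell attachments.
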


\begin{proof}
This is \cite[Theorem C]{campion2023inftyncategoricalpastingtheorem}.
\end{proof}

\begin{lemma}\label{cello}\label{orientdec}
Let $n \geq 0$ and $\mO$ a $n$-dimensional Steiner $\infty$-category
that has a unique non-invertible $n$-morphism.
The following induced commutative square in $\infty\Cat$ is a pushout square:
\[
\xymatrix{
\partial\bD^n\ar[d]\ar[r] & \iota_{n-1}(\mO) \ar[d]\\
\bD^n \ar[r] & \mO.
}
\]

\end{lemma}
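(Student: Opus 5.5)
The plan is to combine the polygraph structure of $\mO$ with Campion's pasting theorem (\cref{pasting1}). First I will show that the square is a pushout in $\infty\Cat^\strict$, and then upgrade it to a pushout in $\infty\Cat$.

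\textbf{Step 1: the square is a strict pushout.} Since $\mO$ is a Steiner $\infty$-category, it is a polygraph with a uniquely determined set of atomic generators $\mE=\coprod_k \mE_k$. By \cite[Proposition 2.4]{ARA2023107313}, these are exactly the nontrivial indecomposable cells. Since $\mO$ is $n$-dimensional, we have $\iota_n(\mO)=\mO$.

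I claim that $\mE_n$ consists of a single element. The nontrivial $n$-cells of $\mO$ are the non-invertible ones, because $\mO$ is gaunt. Every atomic $n$-cell is such a cell, and $\mE_n$ composition-generates all $n$-cells (\cite[Proposition 2.3]{ARA2023107313}). By hypothesis there is exactly one non-invertible $n$-cell, so $\mE_n$ is a singleton. It is nonempty because, when $n\geq 1$, the unique non-invertible $n$-cell must be a composite of atoms of dimension $n$.

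\cref{polygraph} with $|\mE_n|=1$ then says precisely that the displayed square is a pushout in $\infty\Cat^\strict$. For $n=0$ the statement is trivial: $\partial\bD^0=\emptyset$, $\iota_{-1}(\mO)=\emptyset$, and $\mO=\ast$.

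\textbf{Step 2: apply \cref{pasting1}.} To use Campion's theorem, I transpose the square so that the top functor is $\partial\bD^n\to\iota_{n-1}(\mO)$ and the left functor is $\partial\bD^n\subset\bD^n$. The left functor is a generating boundary inclusion, so it lies in the saturated class. What remains is to check that $\partial\bD^n\to\iota_{n-1}(\mO)$ is an inclusion, i.e.\ a monomorphism in $\infty\Cat$.

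\textbf{Main obstacle: injectivity of the boundary map.} Checking this injectivity is the step I expect to be hardest. The boundary map sends the two $k$-cells of $\partial\bD^n$, for $k<n$, to the iterated sources and targets $s_k(x)$ and $t_k(x)$ of the unique $n$-atom $x$. By the characterization of monomorphisms via objects and morphism categories (the corollary to \cref{monochar}, applied inductively along suspensions), it suffices to show $s_k(x)\neq t_k(x)$ for all $k<n$. Since $\partial\bD^n$ has no nontrivial composites, no further conditions are needed.

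For $k=n-1$, I will argue from strong loop-freeness. The source and target of an atom are built from lower atoms, and $\leq_\bN$ is a partial order. If $s_{n-1}(x)=t_{n-1}(x)$, I expect to derive a contradiction as follows. Either both are identities, in which case $x$ would be an endomorphism of an identity cell; or some atom appears as a factor of both, which yields a loop for $\leq_\bN$. Then I propagate downward: if $s_k(x)=t_k(x)$, apply the globular identities together with the fact that $s_{k}(x)$ and $t_{k}(x)$ are themselves sources and targets of the non-identity $(k+1)$-cells $s_{k+1}(x)$ and $t_{k+1}(x)$, and again use loop-freeness.

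If this direct argument becomes messy, the fallback is Steiner's chain-complex description of $\mO$. There, $s_k(x)$ and $t_k(x)$ correspond to distinct elements because they have disjoint supports in degree $k$, by loop-freeness. This gives injectivity directly.

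With injectivity established, \cref{pasting1} shows that the strict pushout square is also a pushout in $\infty\Cat$, which proves the lemma.
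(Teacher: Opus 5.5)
Your proposal is correct and is essentially the paper's proof. The polygraph pushout with a single $n$-atom gives the square in $\infty\Cat^\strict$, and \cref{pasting1} upgrades it to a pushout in $\infty\Cat$ because the top functor is an inclusion by loop-freeness; the paper simply asserts that inclusion, while you correctly reduce it to $s_k(x)\neq t_k(x)$ for all $k<n$.

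One small point about your chain-complex fallback: disjoint supports of $\langle b\rangle_k^-$ and $\langle b\rangle_k^+$ give distinctness only once you also know they are nonzero. That follows from unitality of the basis, since if both vanished then so would all lower $\langle b\rangle_j^{\pm}$, contradicting $\epsilon\langle b\rangle_0^{\pm}=1$. The same observation also settles the ``both identities'' case that your direct argument leaves open.
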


\begin{proof}

The top horizontal functor is an inclusion by loopfreeness of Steiner $\infty$-categories. 
By \cref{pasting1} the commutative square is a pushout square in $\infty\Cat$ if it is a pushout square in $\infty\Cat^\strict$.
By \cref{polygraph} every Steiner $\infty$-category is a polygraph, i.e. that for every $i \leq n$ the following commutative square is a pushout square in $\infty\Cat^\strict$:
\[
\xymatrix{
\coprod_{A_i} \partial\bD^i \ar[d]\ar[r] & \iota_{i-1}(\mO) \ar[d]\\
\coprod_{A_i} \bD^i \ar[r] & \iota_{i}(\mO),
}
\]
where $A_i$ is the set of atomic generators of dimension $i.$
So the result follows from $i=n$ and the assumption that $A_i$ has precisely one object.
\end{proof}

We have the following useful result of Ara--Lucas \cite{ara.folkmodel}:

\begin{proposition}\label{pasting2}

Let $K$ be a category.
Every colimit diagram $\phi: K^\triangleright \to \infty\Cat^\Steiner$,
where for every morphism $f: X \to Y$  in $ K^\triangleright $ the induced functor $\phi(f): \phi(X) \to \phi(Y) $ preserves atomic generators, is already a colimit diagram in $\infty\Cat^\strict.$
    
\end{proposition}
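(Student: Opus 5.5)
The plan is to move the problem to Steiner's category of augmented directed complexes, where colimits can be computed degreewise, and use that Steiner's realization functor is a left adjoint. Write $\lambda:\infty\Cat^\strict\to\mathrm{ADC}$ for Steiner's functor to augmented directed complexes and $\nu$ for its left adjoint. By Steiner's theorem \cite{Steiner2004OmegacategoriesAC}, in the form recalled in \cite[Theorem 2.30]{ARA2023107313}, $\nu$ restricts to an equivalence between strongly loop-free unital basis complexes and $\infty\Cat^\Steiner$, with $\lambda$ as inverse. Under this equivalence the atomic generators of a Steiner $\infty$-category $\mO$ are the elements $\langle b\rangle$ for $b$ ranging over the basis of $\lambda(\mO)$. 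Hence a functor of Steiner $\infty$-categories preserves atomic generators exactly when the corresponding chain map sends basis elements to basis elements. So the diagram $\lambda\circ\phi|_K$ is a diagram of basis complexes with basis-preserving maps.

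\textbf{Step 1.} Form the colimit $C:=\colim_K \lambda\phi$ degreewise in chain complexes. Because all transition maps carry bases to bases, $C_n$ is free abelian on the colimit $B_n$ of the graded basis sets. The positivity submonoid of $C_n$ is generated by $B_n$, and the augmentation and differential are induced. One checks that $C$ is unital: the atom $\langle b\rangle$ of a basis element is computed inside any stage containing a representative of $b$, and this is independent of the stage because the maps preserve bases and commute with $\partial$. Thus $C$ is the colimit of the diagram in $\mathrm{ADC}$ and comes with a basis.

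\textbf{Step 2.} Identify $C$ with $\lambda\phi(\infty)$, where $\infty$ is the cone point of $K^\triangleright$. The cone maps $\phi(X)\to\phi(\infty)$ also preserve atoms, so they induce a basis-preserving comparison map $C\to\lambda\phi(\infty)$. Applying $\nu$, which is a left adjoint and hence preserves colimits, gives $\nu(C)\simeq \colim_K \nu\lambda\phi\simeq\colim_K\phi$ in $\infty\Cat^\strict$, together with a map $\nu(C)\to\phi(\infty)$ that we must show is an equivalence.

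\textbf{Main obstacle.} The crux is to show that $C$ is itself strongly loop-free, or at least that $\nu(C)$ lies in $\infty\Cat^\Steiner$. Once that holds, $\nu(C)$ is a Steiner $\infty$-category receiving compatible atom-preserving maps from the diagram. The universal property of $\phi(\infty)$ as a colimit in $\infty\Cat^\Steiner$ then gives an inverse to $\nu(C)\to\phi(\infty)$, so $\phi(\infty)\simeq\nu(C)$ is the colimit in $\infty\Cat^\strict$.

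For loop-freeness I would first try to pull back a relation chain: a cycle for $\leq_\bN$ in $B$ involves only finitely many basis elements. Each relation $x\leq_\bN y$ is witnessed at some stage, and mapping the whole cycle into $\phi(\infty)$ through the atom-preserving cone maps yields a cycle among atoms of $\phi(\infty)$, contradicting the strong loop-freeness of $\phi(\infty)$. This needs the induced map on bases $B\to\mathrm{basis}(\lambda\phi(\infty))$ to be injective. I would try to derive injectivity from the universal property in $\infty\Cat^\Steiner$, by testing against suitable Steiner categories such as disks and their wedges, which separate atoms. This is the delicate point, and it is where I would follow Ara--Lucas \cite{ara.folkmodel} most closely.
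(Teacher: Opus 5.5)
The argument breaks at Step 2. By \cref{hh}, $\nu$ is the \emph{right} adjoint of $\lambda$, not its left adjoint. Its $n$-cells are the maps $\lambda(\bD^n)\to A$, so it is a nerve-type functor and does not preserve colimits in general. The identification $\nu(C)\simeq\colim_K\nu\lambda\phi$ in $\infty\Cat^\strict$ is therefore unjustified. It is in fact essentially the statement to be proved: the proposition amounts to saying that $\nu$ commutes with this particular colimit.

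What adjointness does give you is $\lambda(Q)\cong C$ for $Q:=\colim_K\phi$ computed in $\infty\Cat^\strict$, because $\lambda$ is the left adjoint. Passing from this to $Q\cong\nu(C)$ requires knowing that $Q$ itself is Steiner, and that does not follow from $\lambda(Q)$ being Steiner. For the same reason your endgame, even granting that $C$ is strongly loop-free, only yields $\phi(\infty)\cong\nu(C)$. That says $C$ is the colimit in $\mathrm{ADC}^\Steiner\simeq\infty\Cat^\Steiner$, which is basically the hypothesis. It gives no control over functors from $\phi(\infty)$ into strict $\infty$-categories that are not Steiner.

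Consequently, the step you single out as the main obstacle is not where the difficulty lies. Strong loop-freeness is the easy part and needs no injectivity. A generator-preserving functor sends a generator occurring as a factor in the source (resp.\ target) of another to a generator occurring as a factor in the source (resp.\ target) of the image. So every generating step of $\leq_\bN$, which changes dimension, maps to a generating step, and a cycle would map to a cycle in $\phi(\infty)$.

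The missing idea is to compute $Q$ itself:
\begin{enumerate}
\item Steiner $\infty$-categories are polygraphs freely generated by their atoms (\cref{polygraph}), and atom-preserving functors are morphisms of polygraphs.
\item Induct over the cell-attaching pushouts, commute colimits with colimits, and use that $S\mapsto\coprod_S\bD^n$ and $S\mapsto\coprod_S\partial\bD^n$ preserve colimits of sets. This shows $Q$ is the polygraph whose $n$-generators form the colimit over $K$ of the sets of atomic $n$-cells, with the induced attaching maps.
\item The comparison $Q\to\phi(\infty)$ is then generator-preserving, so $Q$ is strongly loop-free by the remark above, hence Steiner.
\item As a colimit in $\infty\Cat^\strict$ lying in the full subcategory $\infty\Cat^\Steiner$, $Q$ is also the colimit there, whence $Q\cong\phi(\infty)$.
\end{enumerate}
The paper does not argue this itself; it simply cites \cite[Theorem 5.6]{ara.folkmodel}.
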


\begin{proof}
This is \cite[Theorem 5.6]{ara.folkmodel}. 
\end{proof}

Steiner $\infty$-categories are useful since they can be uniquely built from chain complexes:

\begin{definition}

An augmented directed complex is a quadruple $(\A,\partial, \epsilon, \B)$ consisting of an augmented chain complex of abelian groups $(\A, \partial, \epsilon)$ concentrated in non-negative degrees 
and a $\bN$-graded submonoid $\B \subset \A$ such that $\epsilon(\B_0) \subset \bN.$
A map of augmented directed complexes is a map of augmented chain complexes that restricts to the graded submonoids.
\end{definition}
\begin{notation}

We write $\mathrm{ADC}$ for the category of augmented directed complexes.
\end{notation}

\begin{example}\label{hhht}
The final augmented directed complex is the final augmented chain complex $\bZ$
equipped with the submonoid $\bN \subset \bZ.$

\end{example}

By \cite[Definition 3.4, Definition 3.5, Definition 3.6]{Steiner2004OmegacategoriesAC} there is the notion of a strongly loop-free unital basis of an augmented directed complex.
\begin{definition}
	
An augmented directed complex is a Steiner complex if it admits a strongly loop-free unital basis.

\end{definition}

\begin{remark}
What we call Steiner complex is usually called strong Steiner complex, and the name Steiner complex is reserved for a weaker notion. Since we don't use this weaker notion, we decided for that terminology.
    
\end{remark}

\begin{notation}
Let $\mathrm{ADC}^\Steiner \subset \mathrm{ADC}$ be the full subcategory of Steiner complexes.		
	
\end{notation}

\begin{example}
By \cref{example_theta} every object of $\Theta$ is a Steiner $\infty$-category.
    
\end{example}

For every strict $\infty$-category $\mA$ and $\n \geq 0$ let $\mA_\n$ be the set of $\n$-morphisms of $\mA$ and $\circ_\n$ the $\n$-th composition operator
and $\mathrm{d}_\n^-,\mathrm{d}_\n^+$ the $\n$-th source and target operators.
The next proposition is \cite[Definition 2.4]{Steiner2004OmegacategoriesAC}:
\begin{proposition}
Let $\mA$ be a strict $\infty$-category.
There is an augmented directed complex $\lambda(\mA)$ such that $$\lambda(\mA)_\q= \bZ[\mA_\q]/ \{\X\circ_\p\Y -\X-\Y \mid \X,\Y \in \mA_\q, \p < \q \ \mathrm{such} \ \mathrm{that} \ \X\circ_\p\Y \ \mathrm{exists} \}$$ for every $\q \geq 0$.
The differential $\partial_{\q+1}: \lambda(\mA)_{\q+1} \to \lambda(\mA)_\q$
is induced by the map $\mA_{\q+1} \to \lambda(\mA)_\q, \X \mapsto \mathrm{d}^+_\q(\X)- \mathrm{d}^-_\q(\X).$
The augmentation $\epsilon: \lambda(\mA)_0 \to \bZ$ sends all objects of $\mA$ to 1.
The submonoid  $\B_\q \subset \lambda(\mA)_\q$ for $\q \geq 0$ is the submonoid generated by the images in $\lambda(\mA)_\q $ of the $\q$-morphisms of $\mA$.

\end{proposition}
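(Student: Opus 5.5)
We must verify that the quadruple $\lambda(\mA)=(\lambda(\mA)_\bullet,\partial,\epsilon,\B)$ is well defined and is an augmented directed complex. The plan is to check the required properties in the following order, working directly from the presentation of $\lambda(\mA)_\q$ as a quotient of the free abelian group $\bZ[\mA_\q]$ and using only the globular identities and the interchange laws of the strict $\infty$-category $\mA$.

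First, the differential is well defined. Define $\tilde\partial:\bZ[\mA_{\q+1}]\to\lambda(\mA)_\q$ on generators by $\X\mapsto [\mathrm{d}^+_\q\X]-[\mathrm{d}^-_\q\X]$. We must show it kills the relations $\X\circ_\p\Y-\X-\Y$ for $\p<\q+1$; there are two cases.
\begin{itemize}
\item If $\p=\q$, then $\mathrm{d}^-_\q(\X\circ_\q\Y)=\mathrm{d}^-_\q\Y$ and $\mathrm{d}^+_\q(\X\circ_\q\Y)=\mathrm{d}^+_\q\X$, while $\mathrm{d}^-_\q\X=\mathrm{d}^+_\q\Y$. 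The alternating sum therefore telescopes to zero.
\item If $\p<\q$, then $\mathrm{d}^\pm_\q(\X\circ_\p\Y)=\mathrm{d}^\pm_\q\X\circ_\p\mathrm{d}^\pm_\q\Y$. Since $\p<\q$, this composite is identified with $[\mathrm{d}^\pm_\q\X]+[\mathrm{d}^\pm_\q\Y]$ in $\lambda(\mA)_\q$, so additivity holds.
\end{itemize}
The same argument shows that identities (degenerate cells) cause no issue. Indeed, the relation $\mathrm{id}\circ_\p\X=\X$ with $\p<\q$ forces $[\mathrm{id}_{\q-1}(\cdot)]=0$ in degree $\q$, which is consistent with the differential.

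Second, $\partial\partial=0$ and $\epsilon\partial=0$. For $\X\in\mA_{\q+2}$, the globular identities $\mathrm{d}^\pm_\q\mathrm{d}^+_{\q+1}\X=\mathrm{d}^\pm_\q\mathrm{d}^-_{\q+1}\X=\mathrm{d}^\pm_\q\X$ give
\[
\partial\partial\X=(\mathrm{d}^+_\q\X-\mathrm{d}^-_\q\X)-(\mathrm{d}^+_\q\X-\mathrm{d}^-_\q\X)=0.
\]
For the augmentation, $\epsilon$ is well defined on $\lambda(\mA)_0=\bZ[\mA_0]$, since there are no relations in degree $0$. For a $1$-cell $\X$ we get $\epsilon(\partial\X)=1-1=0$, so $\epsilon\circ\partial_1=0$.

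Third, the positivity data. $\B_\q$ is by definition a submonoid of $\lambda(\mA)_\q$, and $\epsilon(\B_0)\subset\bN$ because $\B_0$ is generated by objects, each of augmentation $1$. Note that no compatibility of $\partial$ with $\B$ is required in the definition of an augmented directed complex, so nothing further needs to be checked. Functoriality in strict functors is immediate, since they preserve $\circ_\p$, the operators $\mathrm{d}^\pm$, and cells.

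The only real obstacle is the case analysis in the first step. In particular, the $\p<\q$ case needs the interchange formula for the sources and targets of a $\p$-composite, and it also needs the observation that the resulting relation in degree $\q$ is one of the imposed relations. This holds because the relations are imposed for all $\p<\q$.
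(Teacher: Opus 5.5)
Your verification is correct. The check that $X\mapsto \mathrm{d}^+_q X-\mathrm{d}^-_q X$ kills the relations covers both cases: for $p=q$ the terms telescope, and for $p<q$ it uses the source/target axiom $\mathrm{d}^\pm_q(X\circ_p Y)=\mathrm{d}^\pm_q X\circ_p \mathrm{d}^\pm_q Y$; together with the globular identities giving $\partial\partial=0$, $\epsilon\partial_1=0$, and $\epsilon(B_0)\subset\mathbb{N}$, this is all that is needed. The paper gives no argument of its own and simply takes the construction from Steiner's Definition~2.4, so your check is exactly the routine verification implicit in that citation.
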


We obtain a functor $\lambda:\infty\Cat^{\mathrm{strict}} \to \mathrm{ADC}$ from strict $\infty$-categories to augmented directed complexes.

The next theorem is \cite[Proposition 4.6, Theorem 5.6, Theorem 5.11]{Steiner2004OmegacategoriesAC} and \cite[Theorem 1.28] {ARA2023107313}:
\begin{theorem}\label{hh}
The functor $\lambda:\infty\Cat^{\mathrm{strict}} \to \mathrm{ADC}$
admits a right adjoint
$\nu: \mathrm{ADC} \to \infty\Cat^{\mathrm{strict}}$.
The resulting adjunction $\lambda:\infty\Cat^{\mathrm{strict}} \rightleftarrows \mathrm{ADC}:\nu$
restricts to an equivalence
$\infty\Cat^{\Steiner} \simeq \mathrm{ADC}^\Steiner$
on the full subcategories of Steiner $\infty$-categories and Steiner complexes.
\end{theorem}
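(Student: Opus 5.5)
The plan is to construct $\nu$ explicitly, check the adjunction directly on generators, and then prove that the unit and counit are invertible on Steiner objects, using a basis and the loop-free order.

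\textbf{Construction of $\nu$.} For an augmented directed complex $(K,\partial,\epsilon,B)$, let $\nu(K)_n$ be the set of ``tables''
\[
\bigl(x_0^-,x_0^+,\ x_1^-,x_1^+,\ \ldots,\ x_n^-,x_n^+\bigr).
\]
The conditions on a table are:
\begin{enumerate}[\normalfont(a)]\setlength{\itemsep}{-2pt}
\item each entry satisfies $x_i^\pm\in B_i$;
\item $\epsilon(x_0^-)=\epsilon(x_0^+)=1$;
\item $\partial x_i^\pm = x_{i-1}^+-x_{i-1}^-$ for $i\geq 1$;
\item $x_n^-=x_n^+$.
\end{enumerate}
The operations are as follows.
\begin{enumerate}[\normalfont(a)]\setlength{\itemsep}{-2pt}
\item The source $\mathrm d_p^-$ truncates the table at level $p$ and uses $x_p^-$ in both top slots; the target $\mathrm d_p^+$ does the same with $x_p^+$.
\item Identities are obtained by padding with zeros.
\item For $p<q$, the composite $x\circ_p y$ of $q$-cells is defined when $\mathrm d_p^+x=\mathrm d_p^-y$. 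It agrees with this common table in degrees $\le p$ and is given by entrywise sums $x_i^\pm+y_i^\pm$ in degrees $>p$.
\end{enumerate}
Checking the strict $\infty$-category axioms is routine, since everything is linear.

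\textbf{Adjunction.} A functor $\mA\to\nu(K)$ sends a $q$-cell $X$ to a table. All entries of this table are determined by the images of the iterated sources and targets $\mathrm d_i^\pm X$ together with the top entry. Functoriality forces the assignment $X\mapsto x_q$ to be additive on composites, which is precisely the defining relation of $\lambda(\mA)_q$. It also forces compatibility with $\partial$ and $\epsilon$ and positivity. Hence functors $\mA\to\nu(K)$ are in natural bijection with maps $\lambda(\mA)\to K$ of augmented directed complexes.

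\textbf{Equivalence on Steiner objects.} Let $K$ be a Steiner complex with strongly loop-free unital basis $\mB$. For each $b\in\mB_n$ form the \emph{atom} $\langle b\rangle\in\nu(K)_n$, defined by
\[
\langle b\rangle_n^\pm=b,\qquad \langle b\rangle_{i-1}^-=(\partial\langle b\rangle_i^-)_-,\qquad \langle b\rangle_{i-1}^+=(\partial\langle b\rangle_i^+)_+,
\]
where $(-)_\pm$ denote the positive and negative parts with respect to the basis. Unitality gives $\epsilon=1$ in degree $0$. The core statement is Steiner's theorem that $\nu(K)$ is a polygraph whose atomic generators are exactly the atoms. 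The proof has two parts.
\begin{enumerate}[\normalfont(1)]\setlength{\itemsep}{-2pt}
\item \emph{Generation.} Every table decomposes as a composite of atoms and identities. Argue by induction on the dimension and on the total size of the top entry. Use the loop-free partial order $\leq_\bN$ to choose a basis element in the support of the top entry that is minimal, or maximal, among the relevant ones. Split it off as a composite $x=\langle b\rangle\circ_p x'$, or $x=x'\circ_p\langle b\rangle$, along a suitable $p$.
\item \emph{Freeness.} The square $\coprod\partial\bD^n\to\iota_{n-1}\nu(K)$, $\coprod\bD^n\to\iota_n\nu(K)$ is a pushout. Equivalently, any two decompositions of a table into atoms are related by the strict interchange and associativity relations. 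Again induct along the loop-free order.
\end{enumerate}
Given these, $\lambda\nu(K)_q$ is free on the atoms, which map to the basis. Hence the counit $\lambda\nu(K)\to K$ is an isomorphism.

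Conversely, let $\mA$ be a Steiner $\infty$-category. Show that $\lambda(\mA)$ is a Steiner complex whose basis is the image of the atomic generators; loop-freeness transfers because $\leq_\bN$ is defined by the same source and target factor relations. Then the unit $\mA\to\nu\lambda(\mA)$ matches atomic generators with atoms. Both sides are polygraphs on the same generators with the same attaching maps, built inductively via the pushouts of \cref{polygraph}. So the unit is an isomorphism.

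The main obstacle is the freeness part of Steiner's theorem: showing that $\nu(K)$ has no relations among atoms beyond the strict ones. This is exactly where strong loop-freeness, rather than mere loop-freeness, is needed. My first attempt would be to prove the pushout property skeleton by skeleton. I would build a candidate inverse from the free polygraph on the atoms to $\nu(K)$. Using the decomposition from step (1), ordered by a linear extension of $\leq_\bN$, I would show that this inverse is well defined: any two such decompositions are related by moves that swap adjacent atoms which are incomparable in the order. If this direct approach becomes too unwieldy, I would fall back on citing \cite{Steiner2004OmegacategoriesAC} and \cite{ARA2023107313} for this step, as the statement itself does.
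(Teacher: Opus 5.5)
Your route is the one behind the results the paper cites without further argument: \cite{Steiner2004OmegacategoriesAC} for $\nu$, the adjunction, atoms and freeness, and \cite{ARA2023107313} for the comparison with polygraphs. Deferring the freeness step to those sources is exactly what the paper does.

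There is, however, an error in your construction of $\nu(K)$. When $\mathrm d_p^+x=\mathrm d_p^-y$, the composite $x\circ_p y$ agrees with the common table only in degrees $<p$. In degree $p$ its entries must be $x_p^-$ and $y_p^+$. With your choice (both equal to $x_p^+=y_p^-$), condition (c) fails in degree $p+1$, because $\partial(x_{p+1}^\pm+y_{p+1}^\pm)=y_p^+-x_p^-$. Your own adjunction argument forces the correct formula, since $\mathrm d_p^-(X\circ_pY)=\mathrm d_p^-X$ and $\mathrm d_p^+(X\circ_pY)=\mathrm d_p^+Y$.

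The real gap is in the converse direction. A Steiner complex needs a \emph{unital} strongly loop-free basis, and you only transfer loop-freeness. Two of your claims reduce to the same condition:
\begin{enumerate}
\item unitality of the basis of $\lambda(\mA)$;
\item that the unit sends a generator $a$ to the atom $\langle[a]\rangle$.
\end{enumerate}
Both amount to $\langle[a]\rangle_k^\pm=[\mathrm d_k^\pm a]$ for all $k<\dim a$. That is, $[\mathrm d_k^-a]$ and $[\mathrm d_k^+a]$ must have disjoint supports in $\lambda(\mA)_k$, which you must first know is free on the $k$-dimensional generators.

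Disjointness follows from antisymmetry of $\le_\bN$ only if $\le_\bN$ is formed using all iterated sources and targets: a generator $x$ occurring in both would give $x\le_\bN a\le_\bN x$. With codimension-one sources and targets alone it fails. Take the polygraph with one object $x$ and a single $2$-generator $\alpha\colon 1_x\Rightarrow 1_x$. Its generators are vacuously loop-free, yet $\langle[\alpha]\rangle_0^\pm=0$, so $\lambda$ of it is not a Steiner complex. Matching the polygraphic notion with Steiner's is precisely what \cite{ARA2023107313} supplies. It has to be proved or cited; it cannot be read off from the shape of the two relations.

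A smaller point in your generation step: the cell you split off has top entry $b$, but in general it is a whiskered atom, not $\langle b\rangle$ itself. Splitting off only whiskers does not decrease the size of the top entry, so cells whose top entry is a single basis element need a separate argument, by induction on dimension.
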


\begin{definition}
We say that a Steiner $\infty$-category is finitely generated if its associated chain complex is finitely generated, in the sense that it is a bounded complex of finitely generated abelian groups.
We write $\infty\Cat^{\Steiner}_\mathrm{fg}\subset\infty\Cat^{\Steiner}$ for the full subcategory of finitely generated Steiner $\infty$-categories.
\end{definition}

\begin{notation}
The tensor product of augmented directed complexes $(\A, \partial, \epsilon, \B), (\A', \partial', \epsilon', \B')$ is the augmented directed complex $$(\A, \partial, \epsilon, \B) \ot (\A', \partial', \epsilon', \B') = (\A \ot \A', \partial^{\A \ot \A'}, \epsilon \ot \epsilon', \B \boxtimes \B'),$$
where $(\A \ot \A', \partial^{\A\ot\A'})$ is the tensor product of chain complexes of abelian groups
and $\B \boxtimes \B'$ is the image of the map $\B \ot \B' \to \A \ot \A'$ from the tensor product of graded commutative monoids. 

\end{notation}

\begin{remark}\label{Graytensor}
    
By \cite[Example 3.10]{Steiner2004OmegacategoriesAC} the tensor product of augmented directed complexes defines a monoidal structure on $\mathrm{ADC}$ whose tensor unit is the final object.
This monoidal structure on $\mathrm{ADC}$ is not symmetric and by \cite[Example 3.10]{Steiner2004OmegacategoriesAC} restricts to the full subcategory $\mathrm{ADC}^\Steiner$.
\end{remark}

\begin{definition}

The Gray monoidal structure on $\infty\Cat^\Steiner$
is the monoidal structure corresponding via the equivalence $\lambda: \infty\Cat^\Steiner \simeq \mathrm{ADC}^\Steiner$ of \cref{hh} to the monoidal structure of Steiner complexes
of \cref{Graytensor}.
We write $\boxtimes$ for the Gray tensor product of Steiner $\infty$-categories.

\end{definition}

\begin{remark}

The tensor unit of the Gray monoidal structure is the final $\infty$-category.
    
\end{remark}

\begin{definition}
Let $n \geq 0$. The oriented $n$-cube $\cube^n$ is the
$\n$-fold Gray tensor product $ (\cube^1)^{\boxtimes \n}$.

\end{definition}

\begin{notation}\label{sqGray}

Let $\cube \subset \infty\Cat^\Steiner$ be the full subcategory of oriented cubes.
\end{notation}

\begin{definition}Let $\A, \B$ be augmented directed complexes.
The join complex of $\A, \B$ is $$\A \diamond \B := \A \ot \lambda(\bD^1) \ot \B \coprod_{\A\ot \lambda(\partial\bD^1)\ot\B} \A \oplus \B.$$
\end{definition}

\begin{remark}\label{suspmod2}
There is an identity $$ \A \diamond \B = \A \oplus \A \ot \B[1] \oplus \B $$
as graded abelian groups respecting the graded submonoids.

\end{remark}

\begin{remark}\label{jointensor}
    
By \cite[7.1]{ara.folkmodel} the join defines a (non-symmetric) monoidal structure on the category $\mathrm{ADC}$ of augmented directed complexes, which by \cite[Corollaire 6.21]{Dimitri_Ara_2020} 
restricts to the full subcategory $\mathrm{ADC}^\Steiner$ of Steiner complexes. 
\end{remark}

\begin{definition}

The join monoidal structure on $\infty\Cat^\Steiner$
is the monoidal structure corresponding via the equivalence $\lambda: \infty\Cat^\Steiner \simeq \mathrm{ADC}^\Steiner$ of \cref{hh} to the join monoidal structure of Steiner complexes
of \cref{Graytensor}.
We write $\star$ for the join of Steiner $\infty$-categories.
\end{definition}

\begin{remark}

The tensor unit of the join monoidal structure is the initial $\infty$-category.
    
\end{remark}

\begin{definition}Let $\n \geq 0.$ The $\n$-th oriental $\bbDelta^\n$ is 
the $\n+1$-fold join $ \bD^0 \star \cdots \star \bD^0.$
\end{definition}

\begin{notation}	
Let $\bbDelta \subset \infty\Cat^\Steiner$ be the full subcategory of orientals.
Let $\bbDelta_{\geq -1} \subset \infty\Cat$ be the full subcategory of orientals and the empty $\infty$-category.
\end{notation}

\begin{remark}
Let $\n \geq 0.$ The $n$-th oriental corresponds to the $\n$-dimensional Steiner complex
$$ 	(... \to 0 \to ... \to 0 \to \bigoplus_{[\bk] \hookrightarrow [\n]} \bZ \to ... \to \bZ^{\n+1}),$$
which arises by applying the join iterately. The group of $\bk$-chains is free on the set of order preserving injections $[\bk] \to [\n]$. The differentials are the alternating sums of face maps. The augmentation is the sum.
Thus 
$\lambda(\bbDelta^\n)$ is the complex of normalized integral chains on the $n$-simplex.
\end{remark}

\subsection{Thetas are retracts of orientals}

We will prove the following theorem:

\begin{theorem}\label{retract}
Every object of $\Theta'$ is a retract in ${\infty\Cat_{\partial\bD^1/}}$ of a bipointed oriented simplex.
\end{theorem}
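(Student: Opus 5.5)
We argue by induction on the construction of $\Theta'$. Since $\Theta'$ is generated from $\bD^0$ under suspension and bipointed wedge, it suffices to check three things. First, $\bD^0=\bbDelta^0$, bipointed by its unique object, is trivially a retract of a bipointed oriental. Second, the class of retracts of bipointed orientals is closed under bipointed wedges. Third, it is closed under suspension. The bipointing of $\bbDelta^n$ is always the pair of extreme vertices $(0,n)$. Throughout we work on the level of Steiner complexes via \cref{hh}, since orientals and objects of $\Theta$ are Steiner $\infty$-categories (by \cref{example_theta}). There a retraction amounts to a pair of maps of augmented directed complexes $\lambda(\theta)\to\lambda(\bbDelta^N)\to\lambda(\theta)$ composing to the identity. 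Under \cref{hh} these correspond to functors $\theta\to\bbDelta^N\to\theta$. The bipointing just requires that the extreme vertices go to the chosen objects.

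For the wedge, suppose $(\theta,a,b)$ is a retract of $(\bbDelta^n,0,n)$ and $(\theta',b,c)$ is a retract of $(\bbDelta^m,0,m)$. We use the inclusion $\bbDelta^n\vee\bbDelta^m\subset\bbDelta^{n+m}$ (gluing vertex $n$ of the first to vertex $0$ of the second). We need a retraction $r\colon\bbDelta^{n+m}\to\bbDelta^n\vee\bbDelta^m$ in $\infty\Cat_{\partial\bD^1/}$. At the chain level, $r$ sends a face $[k]\hookrightarrow[n+m]$ to itself if it lies entirely in $\{0,\dots,n\}$ or entirely in $\{n,\dots,n+m\}$. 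If a face straddles $n$ and has dimension $\geq 2$, we send it to $0$. If it is an edge $(i,j)$ with $i<n<j$, we send it to the concatenated path $(i,n)+(n,j)$. We then check that this respects differentials, positivity submonoids and augmentation. This is essentially the ``collapse'' showing that the wedge of simplices is a retract of the big simplex, and composing with the given retractions yields $\theta\vee\theta'$ as a retract of $\bbDelta^{n+m}$.

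For the suspension, suppose $\theta$ is a retract of $\bbDelta^n$ as a plain $\infty$-category. Since $S$ is a functor, $S\theta$ is a retract of $S\bbDelta^n$. It therefore remains to exhibit $S\bbDelta^n$ as a bipointed retract of $\bbDelta^{n+1}$ or $\bbDelta^{n+2}$. The natural candidate is the cone: $\bbDelta^{n+1}=\bD^0\star\bbDelta^n$, and $\bbDelta^{n+2}=\bD^0\star\bbDelta^n\star\bD^0$ with extreme vertices as basepoints. By \cref{suspmod2}, $\lambda(\bD^0\star\bbDelta^n\star\bD^0)$ contains $\lambda(\bbDelta^n)[1]\otimes\lambda(\bD^0)$-type summands in the shape $\bZ\oplus\lambda(\bbDelta^n)[1]\oplus\dots$, in close analogy with $\lambda(S\bbDelta^n)=\bZ^2\oplus\lambda(\bbDelta^n)[1]$. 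For the inclusion $S\bbDelta^n\to\bbDelta^{n+2}$ we send the cell $x$ of $\lambda(\bbDelta^n)$ to the face spanned by the cone point $\bot$, the vertices of $x$, and $\top$. We adjust by the boundary terms needed to make this a chain map: a degree-$k$ cell of $S\bbDelta^n$ is $x[1]$, which should become the $(k)$-face $\bot\, x\,\top$ minus or plus lower contributions.

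The retraction sends every face not containing both $\bot$ and $\top$ either to a vertex or to $0$ or to a degenerate image. It sends $\bot x\top$ to $x[1]$ and each interior vertex to one of the two poles, chosen consistently (say, all to $\bot$, with edges $\bot\to i$ going to the identity).

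The main obstacle is verifying that these maps are genuinely maps of augmented directed complexes. The delicate points are that the signs in the differential $\partial(\bot x\top)$ match $\partial_{S}(x[1])$ after killing the faces $\bot x$ and $x\top$, and that positivity is preserved. If the double cone fails, we would try the single cone $\bD^0\star\bbDelta^n$ with the target collapsed, and use the $(-)^\co/(-)^\op$ symmetries of \cref{oppo} to fix orientation mismatches. Once these chain-level checks hold, \cref{hh} converts them into functors, and the induction closes.
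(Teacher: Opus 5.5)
Your overall structure is the paper's: the class of retracts of bipointed orientals contains $\bD^0$ and is closed under bipointed wedges and suspensions (\cref{10}, \cref{11}). However, neither retraction you write down is a map of augmented directed complexes, and the suspension step, which carries the real content of the theorem, is not established.

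\textbf{Wedge step (repairable).} The formula is wrong as stated. Sending every straddling face of dimension $\geq 2$ to $0$ breaks $\partial r=r\partial$ as soon as $n\geq 2$ or $m\geq 2$. For $n=2$, $m=1$ you get
$r(\partial(0,1,3))=(1,2)+(2,3)-(0,2)-(2,3)+(0,1)=\partial(0,1,2)\neq 0$.
A correct chain-level retraction is as follows.
\begin{itemize}
\item A straddling face $A\cup\{b\}$ with $A\subseteq\{0,\dots,n-1\}$, $|A|\geq 2$, $b>n$ goes to $A\cup\{n\}$.
\item Symmetrically, $\{a\}\cup B$ with $|B|\geq 2$ goes to $\{n\}\cup B$.
\item Straddling edges $(a,b)$ go to $(a,n)+(n,b)$, as you proposed.
\item All remaining straddling faces (those containing $n$, or with at least two vertices on each side) go to $0$.
\end{itemize}
One checks this is positive, augmented, commutes with $\partial$, and retracts the inclusion. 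The paper avoids formulas altogether and builds this left inverse inductively from the truncation $\bbDelta^2\to\bD^1\vee\bD^1$.

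\textbf{Suspension step (genuine gap).} Your assignment sends $x[1]$ to the face spanned by both cone points and the vertices of $x$, and this has the wrong degree. If $x$ has degree $k-1$, then $x[1]$ has degree $k$, while that face has $k+2$ vertices and hence degree $k+1$. Already the $1$-cells $i[1]$ would land on $2$-simplices. The proposed retraction has the same mismatch, so no lower-order correction can repair it, and the single-cone fallback is never carried out.

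That fallback is exactly where the difficulty lies. A bipointed section of the collapse $\ell_{\bbDelta^n}\colon\bbDelta^{n+1}=\bbDelta^n\star\bD^0\to S(\bbDelta^n)$ is the same as a section of the induced functor $\Mor_{\bbDelta^{n+1}}(0,n+1)\to\bbDelta^n$. The objects of this hom are paths, i.e.\ subsets of $\{1,\dots,n\}$, and its $2$-cells only enlarge them, so it is cube-shaped. Positivity then rules out naive choices. With $i[1]\mapsto(0,i)+(i,n+1)$, there is no positive $2$-chain lifting $(i,j)[1]$ for $0<i<j$: the chain $(i,j,n+1)-(0,i,j)$ has the right boundary but is not positive. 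One is forced to use, for example, maximal paths $(0,1)+\dots+(i-1,i)+(i,n+1)$, and then increasingly intricate higher cells.

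The paper sidesteps this by passing through cubes.
\begin{itemize}
\item $q_{\cube^n}\colon\cube^{n+1}\to S(\cube^n)$ has a section, obtained by induction from the explicit chain-level section of $\bD^1\boxtimes S(C)\to S(\bD^1\boxtimes C)$ (\cref{hio}, \cref{6}).
\item $\xi_n\colon\cube^n\to\bbDelta^n$ has a section (\cref{Thh}), built from the lax homotopy $h=s\circ q$ on $\cube^2$.
\item The commuting square $\ell_{\bbDelta^n}\circ\xi_{n+1}=S(\xi_n)\circ q_{\cube^n}$ then gives the section $S(\bbDelta^n)\to S(\cube^n)\to\cube^{n+1}\to\bbDelta^{n+1}$ of $\ell_{\bbDelta^n}$.
\end{itemize}
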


\begin{notation}Let $\n, \m \geq 1$.
The canonical functor $$\alpha: \bbDelta^\n \to \bbDelta^{\n}*\bbDelta^{\m-1}= \bbDelta^{\n+\m}$$ sends $0$ to $0$ and $\n$ to $\n$ and the canonical functor $$\beta: \bbDelta^\m \to \bbDelta^{\n-1}*\bbDelta^{\m}= \bbDelta^{\n+\m}$$ 
sends $0$ to $\n$ and $\m$ to $\n+\m$ and so induce a bipointed functor $$\alpha+\beta: \bbDelta^\n \vee \bbDelta^\m \to \bbDelta^{\n+\m}.$$

\end{notation}

\begin{notation}
Let $\mA \in {\infty\Cat^\Steiner_{\partial\bD^1/}}$.
\begin{enumerate}[\normalfont(1)]\setlength{\itemsep}{-2pt}
\item Let $$p_\mA: \mA \boxtimes \bD^1 \to \mA* \bD^0 \simeq (\mA \boxtimes \bD^1) \coprod_{\mA \boxtimes \{1\}}\bD^0$$
be the quotient functor, which is a morphism of $ {\infty\Cat_{\partial\bD^1/}}$.	

\item Let
$$\ell_A: \mA* \bD^0 \simeq (\mA \boxtimes \bD^1) \coprod_{\mA \boxtimes \{1\}}\bD^0 \to S(\mA)\simeq  \bD^0 \coprod_{\mA\boxtimes\{0\}} (\mA \boxtimes \bD^1)\coprod_{\mA \boxtimes\{1\}} \bD^0$$ be the quotient functor,
which is a morphism of ${\infty\Cat_{\partial\bD^1/}}$.	

\item Let
$$q_A:= \ell_\mA \circ p_\mA: \mA \boxtimes \bD^1 \to \mA* \bD^0\to S(\mA)$$ be the composition in ${\infty\Cat_{\partial\bD^1/}}$.	
\end{enumerate}

\end{notation}

To prove \cref{retract} we prove the following results.

\begin{proposition}\label{10}
Let $\n,\m \geq 0$. The functor
$$\zeta_{\n,\m}:=\alpha +\beta: \bbDelta^\n \vee \bbDelta^\m \to \bbDelta^{\n+\m}$$
in ${\infty\Cat_{\partial\bD^1/}}$ admits a left inverse $\theta$ in ${\infty\Cat_{\partial\bD^1/}}.$

\end{proposition}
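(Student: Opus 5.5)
The plan is to construct the retraction $\theta$ at the level of augmented directed complexes and then transport it back along the equivalence $\infty\Cat^\Steiner \simeq \mathrm{ADC}^\Steiner$ of \cref{hh}. Three facts make this possible. First, $\bbDelta^\n$, $\bbDelta^\m$ and $\bbDelta^{\n+\m}$ are Steiner $\infty$-categories. Second, their complexes $\lambda(-)$ are the normalized chain complexes of the simplices, with basis the nonempty subsets $S \subset [\n]$, resp.\ $[\n+\m]$. Third, the bipointed wedge $\bbDelta^\n \vee \bbDelta^\m$ is again Steiner. For this third point I will check that the wedge is a pushout along an inclusion of atomic generators. \cref{pasting2} then shows it is the pushout in $\infty\Cat^\strict$, and \cref{pasting1} (the map $\bD^0 \to \bbDelta^\m$ is in the saturated class) shows it is the pushout in $\infty\Cat$. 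Hence $\lambda(\bbDelta^\n \vee \bbDelta^\m)$ is the pushout $\lambda(\bbDelta^\n) \oplus_{\bZ} \lambda(\bbDelta^\m)$. Its basis consists of the subsets of $[0,\n]$ together with the subsets of $[\n,\n+\m]$, with $\{\n\}$ counted once. Under this description, $\zeta_{\n,\m}=\alpha+\beta$ is the map sending each such subset to itself in $[\n+\m]$.

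Next I will define $f\colon \lambda(\bbDelta^{\n+\m}) \to \lambda(\bbDelta^\n \vee \bbDelta^\m)$ on basis elements. For $S \subset [\n+\m]$ write $A = S \cap [0,\n)$ and $B = S \cap (\n,\n+\m]$, and define $f(S)$ as follows:
\begin{itemize}
\item if $S$ lies in $[0,\n]$ or in $[\n,\n+\m]$, set $f(S)=S$;
\item if $S=\{i,k\}$ with $i<\n<k$, set $f(S)=\{i,\n\}+\{\n,k\}$;
\item if $S$ straddles, $|S|\geq 3$, $\n\notin S$ and $|B|=1$, set $f(S)=A\cup\{\n\}$;
\item if $S$ straddles, $|S|\geq 3$, $\n\notin S$ and $|A|=1$, set $f(S)=\{\n\}\cup B$;
\item in all remaining straddling cases, set $f(S)=0$.
\end{itemize}
These clauses are mutually exclusive, since $|A|=|B|=1$ with $\n\notin S$ forces $|S|=2$. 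Geometrically, $f$ collapses the simplex onto the wedge by pushing straddling cells to the pivot vertex $\n$. The edge case records that a straddling $1$-cell becomes the composite through $\n$.

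By construction $f$ preserves degrees and the augmentation. It sends basis elements to sums of basis elements, so it preserves the positive submonoids. It is bipointed, fixing $0$ and $\n+\m$, and $f \circ \lambda(\zeta_{\n,\m}) = \id$, because $\zeta$ only hits non-straddling subsets. So once $f$ is shown to be a chain map, \cref{hh} provides a bipointed functor $\theta\colon \bbDelta^{\n+\m} \to \bbDelta^\n \vee \bbDelta^\m$ with $\theta \circ \zeta_{\n,\m} \simeq \id$, which is the claim.

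The main obstacle is verifying $\partial f = f \partial$ on straddling simplices, where the boundary has terms of mixed type. I will organize this check by the triple $(|A|, [\n\in S], |B|)$. In low dimensions it is a telescoping computation:
\begin{itemize}
\item for $\{i,j,k\}$ with $i<j<\n<k$, the boundary maps to $\partial\{i,j,\n\}$;
\item for $\{i,\n,k\}$, the boundary maps to $0$;
\item for $\{i,j,k,l\}$ with $i<j<\n<k<l$, the faces cancel in pairs.
\end{itemize}
In general the terms of $f(\partial S)$ cancel in pairs. The only surviving terms arise when deleting a vertex leaves a face of one of the two special shapes, and these reproduce $\partial f(S)$. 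If the ad hoc formula fails in some case, I would instead build $\theta$ inductively in $\m$, using the join decomposition $\bbDelta^{\n+\m}=\bbDelta^{\n+\m-1}\star\bD^0$ and the quotient maps $p_\mA$ and $\ell_\mA$ introduced above.
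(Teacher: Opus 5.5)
Your argument is correct, and it takes a different route from the paper.

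\textbf{How the paper argues.} The paper never writes down a chain map. It runs a double induction: first $m=1$ by induction on $n$, then induction on $m$. At each step it factors $\zeta$ through a pushout of the form $\bbDelta^{n+1}\coprod_{\bD^1}\bbDelta^{2}\cong\bD^0\star(\bbDelta^n\vee\bbDelta^1)$, or the mirror one with $\star\bD^0$. Functoriality of the join transports the inductively given left inverse, as $\id_{\bD^0}\star\theta$ or $\theta\star\id_{\bD^0}$. The one remaining left inverse is built by hand from the $2$-dimensional collapse $\kappa\colon\bbDelta^2\to\tau_{\leq1}\bbDelta^2=\bD^1\vee\bD^1$.

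\textbf{How you argue.} You write down the whole retraction at once on Steiner complexes and pull it back along full faithfulness of $\nu$ on Steiner complexes (\cref{hh}).

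\textbf{Your sketched check does close in every case}, not only the three low-dimensional ones you list.
\begin{itemize}
\item Type $(a,0,1)$ with $a\geq 2$: deleting an element of $A$ gives the corresponding face of $A\cup\{n\}$. For $a=2$ this holds after two $\{n,k\}$ terms cancel. Deleting $k$ gives the face $A$, with the same sign.
\item Type $(1,0,b)$ is symmetric.
\item When $n\in S$: every face goes to $0$ except possibly three. These are the face omitting $n$ and the faces omitting a lone element of $A$ or of $B$, and their images cancel.
\item When $n\notin S$ and $a,b\geq 2$: the nonzero faces come in adjacent pairs with equal image and opposite sign.
\end{itemize}
Since every case works, your fallback plan is not needed.

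\textbf{What each route buys.} Yours gives an explicit, non-inductive formula for $\theta$ on every cell. It needs only the identification $\lambda(\bbDelta^n\vee\bbDelta^m)\cong\lambda(\bbDelta^n)\oplus_{\bZ}\lambda(\bbDelta^m)$. That identification follows from the same pasting argument the paper uses in \cref{example_theta}. The paper's route needs no sign bookkeeping beyond the $2$-simplex. In exchange, it relies on the functoriality of the join and on identifying joins of wedges with the pushouts $\bbDelta^{n+1}\coprod_{\bD^1}\bbDelta^{m+1}$.
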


\begin{proof}We proceed by induction on $\m.$
If $\m=0$, the functor $\zeta_{\n,\m}$
identifies with the identity of $\bbDelta^\n.$ So there is nothing to show.	
Next we prove the case $\m=1$ by induction on $\n.$
Assume the statement holds for $\m=1$ and a given $\n \geq 0.$
The functor
$\zeta_{\n+1,\m}$
factors as the following morphisms in $\Cat_{\infty*,*}:$
$$\bbDelta^{\n+1} \coprod_{\bD^0} \bbDelta^{\m} \xrightarrow{\gamma:=\id_{\bbDelta^{\n+1}} \coprod_{\{1\}} (\{1\}\boxtimes \id_{\bbDelta^\m})} \bbDelta^{\n+1} \coprod_{\bD^1} \bbDelta^{\m+1} \cong \bD^0 \star (\bbDelta^{\n} \vee \bbDelta^{\m}) \xrightarrow{\id_{\bD^0}\star \zeta_{\n,\m}} \bbDelta^{\n+\m+1}.$$
So it is enough to see that the first functor $\gamma$ in the composition
admits a left inverse $\tau$ in ${\infty\Cat_{\partial\bD^1/}}$ if $\m=1.$

Let $\kappa: \bbDelta^2 \to \tau_{\leq1} \bbDelta^2=\bD^1 \vee \bD^1 $ be the
truncation functor.
Then $\kappa$ is a left inverse of the canonical functor 
$\bD^1 \vee \bD^1 \to \bbDelta^{2}$ in ${\infty\Cat_{\partial\bD^1/}}.$
Hence $ \kappa \circ (\id_{\bD^1}\boxtimes\{-\}):\bD^1 \to \bD^1 \vee \bD^1$ is the inclusion to the first summand and
$ \kappa \circ (\{+\}\boxtimes \id_{\bD^1}):\bD^1 \to \bD^1 \vee \bD^1$ is the inclusion to the second summand.

Let $\tau: \bbDelta^{\n+1} \coprod_{\bD^1} \bbDelta^{2} \to \bbDelta^{\n+1} \vee \bbDelta^{1}$ be the functor
that is the identity on $\bbDelta^{\n+1}$ and the functor
$\bbDelta^{2} \xrightarrow{\kappa} \bD^1 \vee \bD^1 \xrightarrow{(\id_{\bbDelta^{1}} \star \{\n\})\vee \id_{\bD^1}} \bbDelta^{\n+1} \vee \bbDelta^{1}$ on $\bbDelta^{2}.$
This determines $\tau$ since the composition $ \bD^0 \star \{0\} \xrightarrow{\bD^0 \star\{0\}} \bbDelta^2 \xrightarrow{\kappa} \bD^1 \vee \bD^1 $
is the inclusion to the first summand.
The functor $\tau$ is a left inverse of $\gamma$
since $ \kappa \circ (\{1\}\star \id_{\bD^0}):\bD^1 \to \bD^1 \vee \bD^1$ is the inclusion to the second summand.
Consequently, by induction on $\n$ the statement holds for $\m=1.$

Now we prove the general case by induction on $\m.$ Assume the statement holds for a given $\m \geq 0$ and every $\n \geq 0.$
The functor
$\zeta_{\m+1,\n}$ factors as the following morphisms in ${\infty\Cat_{\partial\bD^1/}}:$
$$ \bbDelta^\n \coprod_{\bD^0} \bbDelta^{\m+1} \xrightarrow{\delta:=(\id_{\bbDelta^\n}\star\{0\}) \coprod_{\{0\}}\id_{\bbDelta^{\m+1}}} \bbDelta^{\n+1} \coprod_{\bD^1} \bbDelta^{\m+1} \cong (\bbDelta^\n \vee \bbDelta^{\m})\star \bD^0 \xrightarrow{\zeta_{\n,\m}\star \id_{\bD^0}} \bbDelta^{\n+\m+1}.$$
Consequently, it is enough to see that the first functor $\delta$ in the composition
admits a left inverse $\rho$ in ${\infty\Cat_{\partial\bD^1/}}$.
By the case $\m=1$, which we have proven above, the functor
$\zeta_{n,1} $ admits a left inverse $\kappa.$ 

We define $\rho: \bbDelta^{n+1} \coprod_{\bD^1} \bbDelta^{\m+1} \to \bbDelta^n \vee \bbDelta^{\m+1}$ to be the functor
that is the identity on $\bbDelta^{\m+1}$ and the functor
$$\bbDelta^{n+1} \xrightarrow{\kappa} \bbDelta^n \vee \bD^1 \xrightarrow{\id_{\bbDelta^n} \vee (\{0\} \star \id_{\bD^0})} \bbDelta^n \vee \bbDelta^{\m+1}$$ on $\bbDelta^{n+1}.$
Then $\rho$ is a left inverse of $\delta$ in ${\infty\Cat_{\partial\bD^1/}}$ because the functor
$\kappa\circ (\id_{\bbDelta^n}\star\{0\}): \bbDelta^n \to \bbDelta^n \vee \bD^1$ is the inclusion to the first summand.
\end{proof}

\begin{lemma}\label{hio} For every Steiner $\infty$-category $\mC$ the functor $\varrho_{\mC}: \bD^1 \boxtimes S(\mC) \to S(\bD^1 \boxtimes \mC)$ admits a section in ${\infty\Cat_{\partial\bD^1/}}$.
\end{lemma}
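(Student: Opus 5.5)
The plan is to build the section at the level of Steiner complexes and transport it back along the equivalence $\infty\Cat^\Steiner \simeq \mathrm{ADC}^\Steiner$ of \cref{hh}. First I will check that all four objects involved are Steiner $\infty$-categories: $S(\mC)$, $\bD^1\boxtimes \mC$, $\bD^1 \boxtimes S(\mC)$ and $S(\bD^1\boxtimes\mC)$. Suspensions of Steiner complexes are Steiner: $\lambda(S(\mA))$ is $\bZ\{0,1\}$ in degree $0$ and $\lambda(\mA)[1]$ above it, and a strongly loop-free unital basis of $\lambda(\mA)$ yields one of $\lambda(S(\mA))$. Gray tensor products of Steiner $\infty$-categories are Steiner by \cref{Graytensor}.

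Next I will describe $\varrho_\mC$ on chains. Write $e$ for the $1$-cell of $\bD^1$, $\sigma$ for the suspension shift, and $c$ for a basis element of $\lambda(\mC)$. Geometrically, $\varrho_\mC$ collapses $\bD^1\boxtimes\{0\}$ to $0$ and $\bD^1\boxtimes\{1\}$ to $1$. On chains this means:
\begin{itemize}
\item the vertices $(i,j)$ go to $j$;
\item the cells $e\otimes 0$ and $e\otimes 1$ go to $0$;
\item $i\otimes \sigma c \mapsto \sigma(i\otimes c)$ and $e\otimes\sigma c\mapsto \sigma(e\otimes c)$.
\end{itemize}
I will confirm that this is the map of augmented directed complexes corresponding to $\varrho_\mC$. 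By \cref{hh}, it suffices to check this on atomic generators, where it is clear from the definition of $\varrho_\mC$ as the quotient map.

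The section $s$ is the chain map $\lambda(S(\bD^1\boxtimes\mC))\to \lambda(\bD^1)\otimes\lambda(S(\mC))$ defined as follows:
\begin{itemize}
\item on vertices, $0\mapsto (0,0)$ and $1\mapsto(1,1)$, as required for a bipointed functor;
\item for $c$ of degree $k\geq 1$, $\sigma(i\otimes c)\mapsto i\otimes\sigma c$ and $\sigma(e\otimes c)\mapsto e\otimes\sigma c$;
\item for $c$ a vertex, the degree-one cells need a correction term, namely $\sigma(0\otimes c)\mapsto 0\otimes\sigma c + e\otimes 1$ and $\sigma(1\otimes c)\mapsto e\otimes 0 + 1\otimes\sigma c$.
\end{itemize}
The correction term records that $\sigma(i\otimes c)$ must be sent to the composite $1$-cell from $(0,0)$ to $(1,1)$ through $(0,1)$, respectively through $(1,0)$. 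Higher-dimensional whiskerings by $e\otimes 1$ and $e\otimes 0$ contribute nothing in $\lambda$, because identities on lower cells vanish there.

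I will then verify the following, in this order:
\begin{enumerate}
\item $s$ commutes with the differentials. In degree one this holds because $\partial(0\otimes\sigma c + e\otimes 1) = (1,1)-(0,0)$. In degree two, for $c$ a $1$-cell, the correction terms coming from $\sigma c_+$ and $\sigma c_-$ cancel. For $e\otimes c$ with $c$ a vertex, $\partial(e\otimes\sigma c)=1\otimes\sigma c + e\otimes 0 - 0\otimes\sigma c - e\otimes 1$, which is exactly the image of $\partial\sigma(e\otimes c)$. In higher degrees $s$ is a relabelling of the Gray differential.
\item $s$ preserves the augmentation and sends basis elements into the positivity submonoids, since every image is a sum of basis elements.
\item $\lambda(\varrho_\mC)\circ s=\id$. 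This is immediate, because $\lambda(\varrho_\mC)$ kills $e\otimes 0$ and $e\otimes 1$.
\end{enumerate}
Applying $\nu$ and \cref{hh} then gives a bipointed functor $S(\bD^1\boxtimes\mC)\to\bD^1\boxtimes S(\mC)$ that is a section of $\varrho_\mC$ in $\infty\Cat_{\partial\bD^1/}$.

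I expect the main obstacle to be the sign and orientation bookkeeping in the Gray differential $\partial(a\otimes b)=\partial a\otimes b+(-1)^{|a|}a\otimes\partial b$. One has to make sure that the correction terms $e\otimes 1$ and $e\otimes 0$ are attached to the correct composites, that is, that the route through $(0,1)$ is the source and the route through $(1,0)$ is the target, or the reverse, consistently with the chosen orientation of $\boxtimes$. If the naive assignment produces a sign mismatch in degree two, I would first try swapping which of $\sigma(0\otimes c)$ and $\sigma(1\otimes c)$ receives which correction term. A second, more minor, point is to make sure that $\lambda(\varrho_\mC)$ is really the collapse map described above; I would check this on atomic generators using \cref{pasting2}.
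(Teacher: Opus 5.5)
Your proposal is correct and is essentially the paper's proof. The paper also writes down an explicit section at the level of Steiner complexes and transports it along the Steiner equivalence. That section is the identity relabelling in degrees $\geq 2$ and sends the two objects to the corners $(0,0)$ and $(1,1)$. In degree one it sends $(X,Y)\mapsto(X,Y,\epsilon(Y),\epsilon(X))$, i.e.\ it adds exactly your augmentation-weighted correction terms $e\otimes 1$ and $e\otimes 0$. The paper then checks the differentials in degrees one and two.

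Your fallback of swapping the correction terms is not needed. The degree-one boundary already forces your assignment, and that assignment passes the degree-two check with the paper's sign convention $\partial(a\otimes b)=\partial a\otimes b+(-1)^{|a|}a\otimes\partial b$.
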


\begin{proof}
	
The functor $\varrho_{\mC}$ admits a section if and only if the induced map of Steiner complexes admits a section.
Thus we may work instead with the Steiner complex $A$ associated to $\mC.$

The map on Steiner complexes is the quotient map $$\rho: \lambda(\bD^1) \ot S(A)\cong S(A)[1] \oplus S(A) \oplus S(A)\cong A[2] \oplus \bZ[1] \oplus \bZ[1] \oplus A[1] \oplus \bZ \oplus \bZ \oplus A[1] \oplus \bZ \oplus \bZ $$$$ \to S(\lambda(\bD^1) \ot A) \cong \lambda(\bD^1) \ot A[1] \oplus \bZ \oplus \bZ \cong A[2] \oplus A[1] \oplus A[1] \oplus \bZ \oplus \bZ$$
induced by the map $ \epsilon + \epsilon: \bZ^4 \to \bZ^2.$
The first differential of $\lambda(\bD^1)\ \ot S(A)$ is the map
$$A_0 \oplus A_0 \oplus \bZ \oplus \bZ 
\to \bZ\oplus \bZ \oplus \bZ \oplus \bZ $$
sending $(\X,\Y,n,m)$ to $(\epsilon(X)+n, -\epsilon(X)+\m, \epsilon(Y)-n, -\epsilon(Y)-\m) $
and the second differential is the map
$$ A_1 \oplus A_1 \oplus A_0 \to A_0 \oplus A_0 \oplus \bZ \oplus \bZ  $$
sending $(\X,\Y,\Z)$ to $(\partial_1(\X)+\Z,\partial_1(\Y)-\Z,-\epsilon(\Z),\epsilon(\Z)).$
On the other hand, the first differential of $ S(\lambda(\bD)^{1} \ot A)$ is the map
$$(\epsilon + \epsilon, -\epsilon-\epsilon): A_0 \oplus A_0 \to \bZ \oplus \bZ $$
and the second differential of $S(\lambda(\bD)^{1} \ot A) $ is the first differential of $\lambda(\bD)^{1} \ot A $, which is the map 
$$ A_1 \oplus A_1 \oplus A_0 \to A_0 \oplus A_0 $$
sending $(\X,\Y,\Z)$ to $(\partial_1(\X)+\Z,\partial_1(\Y)-\Z).$

We define a $\bN$-graded map $$\phi: S(\lambda(\bD^1)\ \ot A)\to \lambda(\bD^1)\ot S(A) $$
that is in degree $\bk > 1$ the identity of 
$ A_{\bk-1} \oplus A_{\bk-1} \oplus A_{\bk-2}$,
in degree $1$ the map $$ A_{0} \oplus A_{0} \to A_{0} \oplus A_{0} \oplus \bZ \oplus \bZ$$
sending $(\X,\Y)$ to $(\X,\Y,\epsilon(\Y), \epsilon(\X))$
and in degree 0 the map
$$\bZ\oplus \bZ \to \bZ \oplus \bZ\oplus \bZ \oplus \bZ: (\X,\Y) \mapsto (\X,0,0,\Y).$$
The $\bN$-graded map preserves differentials since for every $(\X,\Y,\Z) \in  
A_1 \oplus A_1 \oplus A_0 \cong S(\lambda(\bD)^{1}\ot A)_2 $
we have 
\begin{align*}
\partial_2(\phi_2(\X,\Y,\Z))&=(\partial_1(\X)+\Z,\partial_1(\Y)-\Z,-\epsilon(\Z),\epsilon(\Z))\\
&=(\partial_1(\X)+\Z,\partial_1(\Y)-\Z,\epsilon(\partial_1(\Y)-\Z),\epsilon(\partial_1(\X)+\Z))\\
&=\phi_1(\partial_1(\X)+\Z,\partial_1(\Y)-\Z) = \phi_1(\partial_2(\X,\Y,\Z))
\end{align*}
and for every $(\X,\Y) \in  A_{0} \oplus A_{0}\cong S(\lambda(\bD)^{1}\oplus A)_1$
we have
\begin{align*}
\partial_1(\phi_1(\X,\Y))&=\partial_1(\X,\Y,\epsilon(\Y), \epsilon(\X)) 
=(\epsilon(X)+\epsilon(Y), -\epsilon(X)+\epsilon(X), \epsilon(Y)-\epsilon(Y), -\epsilon(Y)-\epsilon(X))\\
&=(\epsilon(\X)+\epsilon(\Y), 0 ,0, -\epsilon(\X)-\epsilon(\Y))
= \phi_0(\epsilon(\X)+\epsilon(\Y), -\epsilon(\X)-\epsilon(\Y)) = \phi_0(\partial_1(\X,\Y)).
\end{align*}
This provides an explicit section on the level Steiner complexes.
\end{proof}

\begin{proposition}\label{6}
Let $n \geq 0$. The functor 
$ q_{\cube^n}: \cube^{n+1} \to S(\cube^{n}) $
admits a section in ${\infty\Cat_{\partial\bD^1/}}$.	
\end{proposition}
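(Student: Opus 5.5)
We argue by induction on $n$, using \cref{hio} to pass a section through the suspension. Here $\cube^{n+1}=\cube^n\boxtimes\bD^1$ and $q_{\cube^n}=\ell_{\cube^n}\circ p_{\cube^n}$ collapses $\cube^n\boxtimes\{0\}$ and $\cube^n\boxtimes\{1\}$ to the two points of $S(\cube^n)$. The ordering of the Gray tensor factors has to be tracked: with $\cube^{n+1}=\bD^1\boxtimes\cube^n$ one writes $q$ with the $\bD^1$ factor on the left, which should be the convention implicit in \cref{hio}, where $\varrho_\mC:\bD^1\boxtimes S(\mC)\to S(\bD^1\boxtimes\mC)$.

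\emph{Base case $n=0$.} Here $\cube^0=\ast$, $\cube^1=\bD^1$ and $S(\cube^0)=\bD^1$, and $q_{\ast}:\bD^1\to\bD^1$ is the identity. For $n=1$ one can also check directly: $q_{\bD^1}:\cube^2\to S(\bD^1)=\bD^2$ collapses the two edges $\bD^1\boxtimes\{0\}$ and $\bD^1\boxtimes\{1\}$. A section sends the $2$-cell of $\bD^2$ to the $2$-cell of the square, with source and target $1$-cells the two composite paths from $(0,0)$ to $(1,1)$. This fails in $\infty\Cat_{\partial\bD^1/}$ for the bipointing $(0,0),(1,1)$, since the images of the two objects of $\bD^2$ must be $(0,0)$ and $(1,1)$, and the boundary $1$-cells must map to single morphisms. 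That is fine, because a composite is a morphism. At the level of Steiner complexes this is the map $\bZ\oplus\bZ\to\lambda(\cube^2)_0$ sending the two objects to the corners, each $1$-chain to a sum of two edges, and the $2$-chain to the square.

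\emph{Inductive step.} Assume $s_n:S(\cube^n)\to\cube^{n+1}$ is a bipointed section of $q_{\cube^n}$. Apply $\bD^1\boxtimes(-)$ and compose with the section $\sigma$ of \cref{hio}:
\[
S(\cube^{n+1})=S(\bD^1\boxtimes\cube^n)\xrightarrow{\sigma}\bD^1\boxtimes S(\cube^n)\xrightarrow{\bD^1\boxtimes s_n}\bD^1\boxtimes\cube^{n+1}.
\]
Then $(\bD^1\boxtimes q_{\cube^n})\circ(\bD^1\boxtimes s_n)=\id$, so it remains to verify the factorization
\[
q_{\cube^{n+1}}=\varrho_{\cube^n}\circ(\bD^1\boxtimes q_{\cube^n}):\bD^1\boxtimes\cube^n\boxtimes\bD^1\to S(\bD^1\boxtimes\cube^n).
\]
Here $q_{\cube^{n+1}}$ collapses the two ends in the last coordinate, and $\varrho$ is the quotient collapsing the four corner points coming from $\bD^1\times\partial S$ to two points. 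Both sides are quotient maps of Steiner complexes that agree on basis elements. Given the factorization, the composite above is a section of $q_{\cube^{n+1}}$ in $\infty\Cat_{\partial\bD^1/}$. Everything can be checked on Steiner complexes via \cref{hh}, since all objects involved are Steiner: suspensions and Gray tensors of Steiner complexes are Steiner, and $\bD^1\boxtimes s_n$ is a map of Steiner complexes.

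\emph{Expected obstacle.} The main difficulty is bookkeeping. First, $\bD^1\boxtimes(-)$ is applied to a bipointed map, so one must check that the result is still compatible with the relevant points after composing with $\sigma$; $\sigma$ sends the two points to the corners, and the explicit degree-$0$ formula in the proof of \cref{hio} handles this. Second, the factorization of $q_{\cube^{n+1}}$ through $\varrho$ requires matching the ordering of tensor factors and the sign conventions in $\lambda(\bD^1)\otimes S(A)$, which is a short direct computation on the chain-level description.
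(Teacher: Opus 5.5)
Your proof is correct and is essentially the paper's argument. Both proceed by induction from the trivial case $n=0$, factor $q_{\cube^{n+1}}=\varrho_{\cube^n}\circ(\bD^1\boxtimes q_{\cube^n})$, and compose the section of $\varrho_{\cube^n}$ from \cref{hio} with $\bD^1\boxtimes s_n$. The separate $n=1$ check is unnecessary, and its ``this fails \ldots{} that is fine'' sentence is muddled and should be cut.
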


\begin{proof}
We proceed by induction on $n \geq 0.$
For $n=0$ there is nothing to show since the functor $ q_{\cube^n}$ identifies with the identity.
Assume the statement holds for $n.$
The functor 
$ q_{\cube^{n+1}}: \cube^{n+2} \to S(\cube^{n+1}) $
factors as the following morphisms in ${\infty\Cat_{\partial\bD^1/}}:$ $$ \cube^{n+2} \xrightarrow{ \bD^1 \boxtimes q_{\cube^n}} \bD^1 \boxtimes S(\cube^{n}) \xrightarrow{\varrho_{\cube^n}} S(\cube^{n+1}).$$
So by induction hypothesis we are reduced to show that the functor $\varrho_{\cube^n}$ admits a section in ${\infty\Cat_{\partial\bD^1/}}$ for every $n \geq0.$
This follows from \cref{hio}.
\end{proof}

\begin{corollary}\label{11}
Let $n \geq 0$. The functor 
$ \ell_{\bbDelta^n}: \bbDelta^{n+1} \to S(\bbDelta^{n}) $
admits a section in ${\infty\Cat_{\partial\bD^1/}}$.	
\end{corollary}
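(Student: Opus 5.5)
We deduce the statement from \cref{6}, the analogous assertion for oriented cubes, combined with a retraction of the oriented cube onto the oriented simplex. The plan is to express $\ell_{\bbDelta^n}$ as a retract, in the arrow category of ${\infty\Cat_{\partial\bD^1/}}$, of a map that is already known to have a section.

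The key observation is that $q_\mA = \ell_\mA \circ p_\mA$ is natural in $\mA$. Write $\bbDelta^{n+1} \simeq \bbDelta^n \star \bD^0$. Suppose $\bbDelta^n$ is a retract of $\cube^n$, via $i:\bbDelta^n \to \cube^n$ and $r: \cube^n \to \bbDelta^n$ with $ri = \id$. By naturality of the quotients $p$ and $\ell$, we get a commutative diagram
\[
\xymatrix{
\cube^{n}\boxtimes\bD^1 \ar[r]^{p_{\cube^n}} \ar[d]_{r\boxtimes \bD^1} & \cube^n \star \bD^0 \ar[r]^{\ell_{\cube^n}} \ar[d]^{r\star\bD^0} & S(\cube^n) \ar[d]^{S(r)}\\
\bbDelta^n\boxtimes\bD^1 \ar[r]^{p_{\bbDelta^n}} & \bbDelta^{n+1} \ar[r]^{\ell_{\bbDelta^n}} & S(\bbDelta^n),
}
\]
together with the analogous diagram for $i$.

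Let $\sigma$ be the section of $q_{\cube^n}$ provided by \cref{6}. We define
\[
s := p_{\bbDelta^n}\circ (r\boxtimes\bD^1)\circ \sigma\circ S(i): S(\bbDelta^n) \to \bbDelta^{n+1}.
\]
Then
\[
\ell_{\bbDelta^n}\circ s = S(r)\circ q_{\cube^n}\circ\sigma\circ S(i) = S(r)\circ S(i) = \id,
\]
so $s$ is the desired section. All maps are bipointed, because $p$, $\ell$, $S(-)$ and $-\boxtimes\bD^1$ preserve the distinguished endpoints.

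The only input still needed is that $\bbDelta^n$ is a retract of $\cube^n$ in $\infty\Cat^\Steiner$. Since the paper lists this as \cref{Thh}, stated later, we should avoid circularity and prove it directly if it depends on this corollary. A direct route goes through Steiner complexes via \cref{hh}. Take $i$ to be the inclusion of the "diagonal" simplex, sending a $k$-simplex $j_0<\dots<j_k$ to the appropriate iterated cube face. Take $r$ to be the chain map induced by the lax deformation $\bD^1\boxtimes\bD^1 \to \bD^1$ iterated, i.e. the Eilenberg--Zilber type map $\lambda(\cube^n)\to\lambda(\bbDelta^n)$ sending basis cubes to sums of simplices. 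One then checks that both maps preserve the positivity submonoids and that $ri=\id$.

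This verification is the main obstacle. If it proves too heavy, a fallback is an induction on $n$ parallel to \cref{6}. That would use the join decomposition $\bbDelta^{n+1}\simeq\bbDelta^n\star\bD^0$ and the quotient $p_{\bbDelta^n}$, producing a section of $\ell_{\bbDelta^{n}}$ from sections of $\varrho$ (\cref{hio}) and an explicit chain-level formula analogous to $\phi$. In that formula the degree $0$ and $1$ components are the only nontrivial ones, and they collapse the cone point.
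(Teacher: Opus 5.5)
Your argument is correct and is the paper's proof: the square $\ell_{\bbDelta^n}\circ\xi_{n+1} = S(\xi_n)\circ q_{\cube^n}$, where $\xi_{n+1} = p_{\bbDelta^n}\circ(\xi_n\boxtimes\bD^1)$, combined with the section $\sigma$ of $q_{\cube^n}$ from \cref{6} and a section $i$ of $\xi_n$ from \cref{Thh}, gives the section $\xi_{n+1}\circ\sigma\circ S(i)$. The circularity you worried about does not arise, because \cref{Thh} is proved from \cref{sec} via the explicit maps $e_X, s_X$ without using this corollary, so your fallback sketch is not needed.
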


\begin{proof}
There is a commutative square
\[
\xymatrix{
\cube^{n+1} \ar[r]^{q_{\cube^n}} \ar[d] & S(\cube^{n}) \ar[d]\\
\bbDelta^{n+1}  \ar[r]^{\ell_{\bDelta^n}} &S(\bbDelta^{n}). 
}
\]	

By \cref{6} the functor $q_{\cube^n}$ has a section. By \cref{Thh} the right vertical functor of the square has a section. 
Hence also $ \ell_{\bDelta^n}$ has a section given by the composition of sections
$ S(\bbDelta^{n}) \to S(\cube^{n}) \to \cube^{n+1}$ followed by the left vertical functor
$ \cube^{n+1} \to \bbDelta^{n+1}.$	
\end{proof}

\begin{proof}[Proof of \cref{retract}]
Let $\bDelta' \subset \infty\Cat_{\partial\bD^1/ }$
be the full subcategory of orientaled simplices bipointed at the leftmost and rightmost object.
The closure $\overline{\bDelta'} \subset \infty\Cat_{\partial\bD^1/ }$ of $\bDelta' \subset \infty\Cat_{\partial\bD^1/ }$ under retracts contains the final $\infty$-category and is closed under bipointed wedges (\cref{10}) and suspensions  (\cref{11}) and so by definition of $\Theta' $ (\cref{Theta}) contains
$\Theta'.$
\end{proof}

We obtain the following:

\begin{theorem}\label{orientaldense}
The full subcategory $\bDelta$ is dense in $\infty\Cat$.
\end{theorem}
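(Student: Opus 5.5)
The plan is to deduce density of $\bDelta$ from the density of $\Theta$ (\cref{theta}) together with \cref{retract}. The key general fact is the following. Let $\mD \subset \mD' \subset \infty\Cat$ be full subcategories such that $\mD'$ is dense and every object of $\mD'$ is a retract of an object of $\mD$. Then $\mD$ is dense.

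To see this, I would consider the restricted Yoneda functor $\N_{\mD}: \infty\Cat \to \Fun(\mD^\op,\mS)$. It factors as $\N_{\mD'}$ followed by restriction $r: \Fun(\mD'^\op,\mS) \to \Fun(\mD^\op,\mS)$. Since $\N_{\mD'}$ is fully faithful, it suffices to show that $r$ is fully faithful on the essential image of $\N_{\mD'}$. In fact I would show that $r$ is fully faithful on all presheaves. Its left adjoint is left Kan extension $i_!$, and the counit $i_! r F \to F$ is an equivalence at every $d \in \mD$. Let $c \in \mD'$ be a retract of some $d \in \mD$. Evaluation at $c$ is then a retract of evaluation at $d$, naturally in $F$, and a retract of an equivalence is an equivalence. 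Hence the counit is an equivalence at $c$, so $r$ is fully faithful. This is the standard fact that presheaves on a category and on its idempotent completion agree.

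Next I would apply this with $\mD = \bDelta$ and $\mD'$ the full subcategory spanned by $\bDelta$ together with $\Theta$. The category $\mD'$ is dense because it contains the dense $\Theta$: adding objects to a dense subcategory preserves density, by the same left Kan extension argument, since $\N_{\mD'}$ followed by restriction to $\Theta$ is fully faithful. It remains to check that every object of $\Theta$ is a retract in $\infty\Cat$ of an oriental. This follows from \cref{retract} by forgetting the bipointing: the forgetful functor $\infty\Cat_{\partial\bD^1/} \to \infty\Cat$ preserves retract diagrams, and $\Theta$ is the essential image of $\Theta'$.

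The main obstacle is already hidden in \cref{retract}. Specifically, \cref{11} relies on \cref{Thh}, the fact that $\bDelta^n$ is a retract of $\cube^n$, compatibly with suspension. Given those results, the remaining care is only in the retract-implies-density step. There one must make sure that the retraction takes place in $\infty\Cat$ and not merely in strict categories, which holds because \cref{retract} is stated in $\infty\Cat_{\partial\bD^1/}$.
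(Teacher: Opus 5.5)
Your proposal is correct and follows the paper's own route. \cref{retract} puts $\Theta$ inside the retract closure of $\bDelta$, and density of $\Theta$ (\cref{theta}) then passes to $\bDelta$, because presheaves do not see idempotent completion and enlarging a dense full subcategory keeps it dense; the paper only asserts this last principle, while you spell it out.

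One justification needs repair, namely ``adding objects preserves density.'' Full faithfulness of the composite of $\N_{\mD'}$ with restriction $\Fun((\mD')^{\op},\mS)\to\Fun(\Theta^{\op},\mS)$ does not by itself make $\N_{\mD'}$ fully faithful. Nor does the left Kan extension argument repeat, since that restriction is not fully faithful on all presheaves. Instead, density of $\Theta$ gives $\Map(c,Y)\simeq\Map(\N_\Theta(c),\N_\Theta(Y))$ for $c\in\mD'$, i.e.\ $\N_{\mD'}(Y)$ is right Kan extended from $\Theta$, so maps into $\N_{\mD'}(Y)$ can be computed after restricting to $\Theta$.
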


\begin{proof}
By \cref{retract} the idempotent completion of $\bDelta$
contains $\Theta.$
By \cref{theta} the full subcategory $\Theta$ is dense in $\infty\Cat$ and therefore also every full subcategory of $\infty\Cat$ whose idempotent completion contains $\Theta,$ is dense in $\infty\Cat.$ 
\end{proof}

\subsection{Orientals are retracts of oriented cubes}

\begin{notation}
Let $s: \bDelta^2 \to \cube^2$ be the functor
corresponding to the maps $(0,0) \to (0,1), (0,1) \to (1,1)$ and $(0,0) \to (1,0) \to (1,1)$
and the unique 2-morphism from the composite map $(0,0) \to (0,1) \to (1,1)$ to the composite map $(0,0) \to (1,0) \to (1,1)$.

\end{notation}

\begin{remark}

The functor $s: \bDelta^2 \to \cube^2$ is a section of the functor
$q: \cube^2= \bD^1 \boxtimes \bD^1 \to \bD^1 \star \bD^0 = \bDelta^2.$

\end{remark}

\begin{remark}\label{remar}
Recall that any functor of Steiner $\infty$-categories uniquely determines, and is determined by, a map on the level of Steiner complexes.
The functor $q:\cube^2 \to \bDelta^2$ corresponds to the chain map
$$ \lambda(\bD^1) \ot \lambda(\bD^1) \cong \bZ[2] \oplus \bZ^4[1] \oplus \bZ^4 \to \lambda(\bDelta^2) \cong \bZ[2] \oplus \bZ^3[1] \oplus \bZ^3  $$
that is the identity in degree 2, the projection $\bZ^4 = \bZ^3 \oplus \bZ \to \bZ^3$ in degree 1 and  the map $\bZ^4 = \bZ^2 \oplus \bZ^2 \to \bZ^2 \oplus \bZ= \bZ^3$ in degree 0 induced by the codiagonal map $  \bZ^2 \to \bZ.$ 
The functor $s: \bDelta^2 \to \cube^2$ corresponds to the chain map
$$\xi: \lambda(\bDelta^2) \cong \bZ[2] \oplus \bZ^3[1] \oplus \bZ^3 \to \lambda(\bD^1) \ot \lambda(\bD^1) \cong \bZ[2] \oplus \bZ^4[1] \oplus \bZ^4$$
that is the identity in degree 2, the canonical inclusion in degree 0 and in degree 1 the map
$ \bZ^3 = \bZ^2 \oplus \bZ \to \bZ^4 = \bZ^2 \oplus \bZ^2  $ induced by the diagonal map
$ \bZ \to \bZ^2.$

\end{remark}

\begin{notation}

Let $h	:= s \circ q: \bD^1\boxtimes\bD^1\to\bD^1\boxtimes\bD^1$ be the composition.

\end{notation}

\begin{remark}
Consider the functors $f,g:\bD^1\to\bD^1\boxtimes\bD^1$ selecting the identity $\id_{(0,0)}$ and $(0,1)\to (1,1)$, respectively.
Then $h$ determines a lax natural transformation from $f$ to $g$.
\end{remark}

\begin{notation}

For $X$ an $\infty$-category let $h_X=h\boxtimes\id_X$ be the induced endomorphism of $\bD^1\boxtimes\bD^1\boxtimes X$.
\end{notation}

\begin{remark}\label{facto}
The quotient functor $q_X: \bD^1\boxtimes\bD^1\boxtimes X \to  \bD^0\star(\bD^0\star X)$
factors as $$\bD^1\boxtimes\bD^1\boxtimes X \xrightarrow{q \boxtimes X}\bDelta^2 \boxtimes X \simeq (\bD^0 \coprod_{\{0\} \boxtimes \bD^1} \bD^1 \boxtimes \bD^1) \boxtimes X \simeq X \coprod_{\{0\} \boxtimes \bD^1 \boxtimes X} \bD^1 \boxtimes \bD^1 \boxtimes X \to$$$$ \bD^0 \star (\bD^1 \boxtimes X) \simeq \bD^0 \coprod_{\{0\}\boxtimes \bD^1 \boxtimes X} \bD^1 \boxtimes \bD^1 \boxtimes X \to \bD^0\star(\bD^0\star X) .$$
By definition, the functor $h_X: \bD^1\boxtimes\bD^1\boxtimes X \to \bD^1\boxtimes\bD^1\boxtimes X $
fits into a commutative triangle:
$$
\xymatrix{
\bD^1\boxtimes\bD^1\boxtimes X\ar[rr]^{h_X}\ar[rd]_{q \boxtimes X} && \bD^1\boxtimes\bD^1\boxtimes X\ar[ld]^{q \boxtimes X}\\ & \bDelta^2 \boxtimes X
}
$$
and so also into a commutative triangle:
$$
\xymatrix{
\bD^1\boxtimes\bD^1\boxtimes X\ar[rr]^{h_X}\ar[rd]_{q_X} && \bD^1\boxtimes\bD^1\boxtimes X\ar[ld]^{q_X}\\ & \bD^0\star(\bD^0\star X).
}
$$

\end{remark}

\begin{proposition}Let $X$ be an $\infty$-category.
The square
$$
\xymatrix{
\bD^1\boxtimes\bD^0\boxtimes X\ar[rr]^{\bD^1 \boxtimes d_1\boxtimes X}\ar[d]_{\bD^1 \boxtimes p_{\bD^0}} && \bD^1\boxtimes\bD^1\boxtimes X\ar[d]^{(\bD^1 \boxtimes q_X)\circ h_X}\\
\bD^1\boxtimes\bD^0\ar[rr]^{\bD^1 \boxtimes i_{\bD^0}} && \bD^1\boxtimes(\bD^0\star X)
}
$$
commutes and so induces a map $e_X\colon\bD^1\boxtimes(\bD^0\star X)\to\bD^1\boxtimes(\bD^0\star X)$ that fits into a commutative triangle:
\begin{equation}\label{tri}
\xymatrix{
\bD^1\boxtimes(\bD^0\star X)\ar[rr]^{e_X}\ar[rd]_{q_{\bD^0\star X}} && \bD^1\boxtimes(\bD^0\star X)\ar[ld]^{q_{\bD^0\star X}} \\ & \bD^0\star(\bD^0\star X).}
\end{equation}
The resulting square
$$
\xymatrix{
\bD^0\boxtimes(\bD^0\star X)\ar[rr]^{d_1\boxtimes(\bD^0 \star X)}\ar[d]^{p_{\bD^0}} && \bD^1\boxtimes(\bD^0\star X)\ar[d]^{e_X}\\
\bD^0\boxtimes\bD^0\ar[rr]_{d_1\boxtimes i_{\bD^0}} && \bD^1\boxtimes(\bD^0\star X)
}
$$
commutes and so induces a map $s_X\colon\bD^0\star(\bD^0\star X)\to\bD^1\boxtimes(\bD^0\star X)$
that fits into a commutative triangle:
\begin{equation}\label{tri2}
\xymatrix{
\bD^1\star(\bD^0\star X)\ar[rr]^{s_X}\ar[rd]_{=} && \bD^1\boxtimes(\bD^0\star X)\ar[ld]^{q_{\bD^0\star X}} \\ & \bD^0\star(\bD^0\star X).}
\end{equation}

\end{proposition}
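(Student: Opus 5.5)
The plan is to build everything from the pushout description of the cone, $\bD^0\star X\simeq \bD^0\coprod_{\{0\}\boxtimes X}\bD^1\boxtimes X$, which is implicit in \cref{facto}. Since $\boxtimes$ preserves colimits in each variable, it gives a pushout
\[
\bD^1\boxtimes(\bD^0\star X)\simeq (\bD^1\boxtimes\bD^0)\coprod_{\bD^1\boxtimes\bD^0\boxtimes X}(\bD^1\boxtimes\bD^1\boxtimes X).
\]
Its legs are $\bD^1\boxtimes p_{\bD^0}$ and $\bD^1\boxtimes d_1\boxtimes X$. A map out of $\bD^1\boxtimes(\bD^0\star X)$ is therefore exactly a commuting square of the first displayed shape. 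Likewise, a map out of $\bD^0\star(\bD^0\star X)$ is exactly a commuting square of the second shape.

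I will first treat the case where $X$ is a Steiner $\infty$-category, or at least gaunt. Then all objects in sight are gaunt, so by \cref{gaunto} the relevant mapping spaces are sets, and commutativity of each square is a property rather than extra data. By \cref{hh} it may be checked on the associated Steiner complexes. For the first square, the composite $(\bD^1\boxtimes q_X)\circ h_X$ restricted along $d_1$ factors through the face that $h$ sends into the collapsed edge. Using the explicit chain maps of \cref{remar} for $q$ and $s$, I check that $h=s\circ q$ restricted to that face agrees with the composite $\bD^1\boxtimes\bD^0\boxtimes X\to\bD^1\boxtimes\bD^0\to\bD^1\boxtimes(\bD^0\star X)$. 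Tensoring with $\id_X$ and $\id_{\bD^1}$ preserves this agreement. This produces $e_X$.

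Triangle (\ref{tri}) is checked on the two pushout components. On $\bD^1\boxtimes\bD^1\boxtimes X$ it reduces to the triangle $q_X\circ h_X=q_X$ of \cref{facto}, after identifying $q_{\bD^0\star X}$ restricted to this component with $q_X$ up to the factorization given there. On $\bD^1\boxtimes\bD^0$ both composites are the cone-point inclusion.

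The second square commutes because $e_X$ restricted along $d_1$, i.e.\ on the end $\{0\}\boxtimes(\bD^0\star X)$, is constant at the cone point. This follows from \cref{tri} together with the fact that $q_{\bD^0\star X}$ collapses that end, plus a direct check on the chain-complex level of the component formula for $e_X$. This yields $s_X$. Triangle (\ref{tri2}) follows by precomposing (\ref{tri}) with the quotient $\bD^1\boxtimes(\bD^0\star X)\to\bD^0\star(\bD^0\star X)$, which is an epimorphism out of the pushout. The composite $q_{\bD^0\star X}\circ s_X$ then agrees with the identity on both pushout components, hence equals the identity.

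For general $X$, I would extend by naturality. The constructions $X\mapsto\bD^1\boxtimes\bD^1\boxtimes X$ and $X\mapsto\bD^1\boxtimes(\bD^0\star X)$, as objects under the cone data, preserve colimits in $X$ relative to the base. Writing $X$ as a colimit of objects of $\Theta$, which are Steiner, via \cref{theta}, the Steiner-case maps, which are natural in Steiner $X$, assemble into $e_X$ and $s_X$.

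The main obstacle is making this coherent. Naturality in the Steiner case is only a property on sets of maps, and it must be promoted to a functor on $\Theta$ with values in maps, compatible with colimits. I would try to obtain $h_X$, $e_X$ and $s_X$ directly as $h\boxtimes\id_X$-type constructions, so that coherence comes for free from functoriality of $\boxtimes$. Only the commutativity checks would then be reduced to Steiner complexes.
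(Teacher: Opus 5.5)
For Steiner $X$ your argument is essentially the paper's own proof, which likewise works entirely with the Steiner complex of $X$: it checks the first square on augmented directed complexes via the chain map $\xi$ of \cref{remar}, writes $e_X$ as an explicit chain map, deduces the first triangle from \cref{facto} and the pushout property, checks the second square from that formula, and gets the second triangle from the universal property of the quotient. The paper never treats non-Steiner $X$ (only orientals are needed in \cref{sec}), so your last paragraphs go beyond it. If you pursue them, two remarks apply: the coherence worry you raise is harmless, since on $\Theta$ all values are gaunt (so naturality is a property) and natural transformations between colimit-preserving functors out of $\infty\Cat$ are determined by their restriction to $\Theta$; but you should not take the cone pushout $\bD^0\star X\simeq\bD^0\coprod_{\{0\}\boxtimes X}\bD^1\boxtimes X$ for general $X$ from \cref{cor:weakvsstrictjoin}, because its proof relies on density of the orientals and hence, via \cref{Thh} and \cref{sec}, on this very proposition.
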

\begin{proof}

To see that the first square commutes, it suffices to check that the corresponding square of augmented directed complexes
\[
\xymatrix{
A[1] \oplus A \oplus A \ar[rrr] \ar[dd]^= &&&  A[2] \oplus A[1] \oplus A[1] \oplus A[1] \oplus A \oplus A \oplus  A[1] \oplus A \oplus A\ar[d]\\
&&& A[2] \oplus A[1] \oplus A[1] \oplus A[1] \oplus A \oplus A \oplus 0 \oplus A   \ar[d]^{A \ot \xi}
\\
A[1] \oplus A \oplus A  \ar[rrr] \ar[d] &&& A[2] \oplus A[1] \oplus A[1] \oplus A[1] \oplus A \oplus A \oplus  A[1] \oplus A \oplus A   \ar[d]  \\
\bZ[1] \oplus \bZ \oplus \bZ \ar[rrr] &&& A[2] \oplus A[1] \oplus \bZ[1] \oplus \A[1] \oplus A \oplus \bZ \oplus  A[1] \oplus A \oplus \bZ
}
\]
commutes.
The upper and middle horizontal map is induced by the map $A =  0 \oplus 0 \oplus A \to  A[1] \oplus A \oplus A$ and the lower horizontal map is induced by the map $\bZ \to A[1] \oplus A \oplus \bZ.$ 
The lower left vertical map and right lower vertical map are induced by the map $A \to \bZ.$
Consequently, the bottom square commutes.
The upper right vertical map is induced by the codiagonal map $ A\oplus A \to A$ and the map $A[1] \to 0$. 
The right middle vertical maps uses the map $\xi$ of \cref{remar}.
We prove that the top square commutes.
Hence the upper horizontal map in the diagram sends $(X,Y,Z) \in A[1] \oplus A \oplus A$
to $(0,0,X,0,0,Y,0,0,Z)$ that is sent by the upper right vertical map to $(0,0,X,0,0,Y,0+Z)$ that is sent by the middle right vertical map to $(0,0,X,0,0,Y, 0,0, Z)$.

The induced map $e_X$ induces on Steiner complexes the map 
$$\kappa: A[2] \oplus A[1] \oplus \bZ[1] \oplus \A[1] \oplus A \oplus \bZ \oplus  A[1] \oplus A \oplus \bZ \to A[2] \oplus A[1] \oplus \bZ[1] \oplus \A[1] \oplus A \oplus \bZ \oplus  A[1] \oplus A \oplus \bZ$$
sending $(T, X, n, Y,Z, m, O,V,k)$ to $ (T, X, n, Y,Z, m,Y, 0, \epsilon(V+\eta(k))). $
\cref{facto} implies that there is a commutative triangle
$$
\xymatrix{
\bD^1\boxtimes\bD^1\boxtimes X\ar[rr]^{(\bD^1 \boxtimes q_X) \circ h_X}\ar[rd]_{q_X} && \bD^1\boxtimes(\bD^1\star X) \ar[ld]^{q_{\bD^1 \star X}}\\ & \bD^0\star(\bD^0\star X).
}
$$
This implies by the universal propert of the pushout that there is a commutative triangle (\ref{tri}).

Similarly, for the second square, it suffices to check that the induced square of augmented directed complexes
\[
\xymatrix{A[1] \oplus A \oplus \bZ \ar[r]\ar[d] & A[2] \oplus A[1] \oplus \bZ[1] \oplus A[1] \oplus A \oplus \bZ \oplus A[1] \oplus A \oplus \bZ \ar[d] \\
\bZ \ar[r] & A[2] \oplus A[1] \oplus \bZ[1] \oplus A[1] \oplus A \oplus \bZ \oplus A[1] \oplus A \oplus \bZ
}
\]
commutes, where the top horizontal map is the inclusion into the last three summands, the  
bottom horizontal map is the inclusion into the last summand and the left vertical map factors as the projection $ A[1] \oplus A \oplus \bZ \to A \oplus \bZ$ followed by the map $ \epsilon+\id_\bZ: A \oplus \bZ \to \bZ$
induced by the augmentation $\epsilon:  A  \to \bZ$.
Hence the square commutes as $\kappa$ sends $(0, 0, 0, 0,0, 0, 0,V,k)$ to $ (0, 0, 0, 0,0, 0,0, 0, \epsilon(V+\eta(k))=\epsilon(V)+k).$
The universal property of the quotient $q_{\bD^0 \star X}$ and the commutativity of triangle (\ref{tri}) imply the triangle (\ref{tri2}) commutes.
\end{proof}

\begin{corollary}\label{sec}For every $n \geq 0$ the quotient functor
$ \bD^1\boxtimes \bDelta^n \to \bDelta^{n+1}$
admits a section.

\end{corollary}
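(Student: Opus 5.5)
We apply the preceding proposition with $X=\bbDelta^{n-1}$ (for $n\geq 1$), using that $\bDelta^n\simeq\bD^0\star\bbDelta^{n-1}$ by definition of the orientals as iterated joins. In this case the quotient functor $\bD^1\boxtimes\bbDelta^n\to\bbDelta^{n+1}$ identifies with $q_{\bD^0\star X}\colon \bD^1\boxtimes(\bD^0\star X)\to \bD^0\star(\bD^0\star X)$. The preceding proposition produces $s_X\colon\bD^0\star(\bD^0\star X)\to\bD^1\boxtimes(\bD^0\star X)$, and the commutative triangle (\ref{tri2}) says exactly that $q_{\bD^0\star X}\circ s_X=\id$. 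So $s_X$ is the desired section. The domain of that triangle is misprinted as $\bD^1\star(\bD^0\star X)$; it should read $\bD^0\star(\bD^0\star X)$, the codomain of the induced map $s_X$.

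The remaining case is $n=0$, where $X$ would be $\bbDelta^{-1}=\emptyset$. Here the quotient $\bD^1\boxtimes\bD^0\to\bD^0\star\bD^0=\bbDelta^1$ is an equivalence, since $\bD^1\boxtimes\bD^0\simeq\bD^1\simeq\bbDelta^1$ and the collapse $\bD^1\boxtimes\{1\}\to\bD^0$ is trivial. Its inverse is the section. Alternatively, the proposition should still apply with $X=\emptyset$, because $\bD^0\star\emptyset\simeq\bD^0$ and all the constructions make sense for empty $X$.

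Before applying the proposition, I would check two identifications:
\begin{itemize}
\item the quotient $q_{\bD^0\star X}$ used there is the same as the canonical quotient $\bD^1\boxtimes\bbDelta^n\to\bbDelta^{n+1}=\bD^0\star\bbDelta^n$ of the statement, i.e. $p$ composed with the collapse defining the join with $\bD^0$ on the left;
\item the associativity equivalence $\bD^0\star(\bD^0\star\bbDelta^{n-1})\simeq\bbDelta^{n+1}$ holds.
\end{itemize}
Both follow from the join being a monoidal structure on Steiner $\infty$-categories (\cref{jointensor}) and from the defining pushouts of $p_\mA$ and $\ell_\mA$. This bookkeeping of which quotient is meant is the only real obstacle. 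Since all objects involved are Steiner, I would verify it on Steiner complexes through \cref{hh}, where both maps are the evident quotient collapsing $\{1\}\boxtimes$-part via the augmentation.
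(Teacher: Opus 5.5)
Your proposal is correct and is the paper's argument: the paper gives no separate proof of this corollary, and it is read off from the commutative triangle (\ref{tri2}) of the preceding proposition with $\bD^0\star X=\bbDelta^n$, i.e.\ $X=\bbDelta^{n-1}$, exactly as you do. You are also right that the domain in that triangle should read $\bD^0\star(\bD^0\star X)$, and your separate treatment of $n=0$, where $\bD^1\boxtimes\bD^0\to\bD^0\star\bD^0$ is already an isomorphism, fills in a case the paper leaves implicit.
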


\begin{notation}
For every $\n \geq 0$ we inductively define a map $\xi_\n: \cube^\n \to \bbDelta^\n$ in ${\infty\Cat_{\partial\bD^1/}}$ starting with $\xi_0=\id$.	
Assume we have constructed $\xi_\n: \cube^\n \to \bbDelta^\n$ in ${\infty\Cat_{\partial\bD^1/}}.$
Then we define $\xi_{\n+1}: \cube^{\n+1} \to \bbDelta^{\n+1}$ in ${\infty\Cat_{\partial\bD^1/}}$
as $$ \cube^{\n+1} =  \cube^{\n} \boxtimes \bD^1  \xrightarrow{\xi_\n\boxtimes \bD^1}  \bbDelta^\n \boxtimes \bD^1 \xrightarrow{p_{\bbDelta^\n}} \bbDelta^\n \star \bD^0 = \bbDelta^{\n+1}.$$
\end{notation}

We obtain the following theorem as a corollary:
\begin{theorem}\label{Thh}

Let $\n \geq 0$.
The functor $\xi_\n: \cube^\n \to \bbDelta^\n$ admits a section. 
\end{theorem}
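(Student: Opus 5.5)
We argue by induction on $\n$, using \cref{sec} as the key input. For $\n=0$ the functor $\xi_0$ is the identity and there is nothing to show. Suppose we have a section $\sigma_\n: \bbDelta^\n \to \cube^\n$ with $\xi_\n \circ \sigma_\n = \id$, taken in ${\infty\Cat_{\partial\bD^1/}}$.

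By definition, $\xi_{\n+1}$ is the composite
$$\cube^{\n}\boxtimes\bD^1 \xrightarrow{\xi_\n\boxtimes\bD^1} \bbDelta^\n\boxtimes\bD^1 \xrightarrow{p_{\bbDelta^\n}} \bbDelta^{\n+1}.$$
A section of a composite is obtained by composing sections of its factors in reverse order.

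For the first factor, functoriality of the Gray tensor product in the Steiner setting gives $(\xi_\n\boxtimes\bD^1)\circ(\sigma_\n\boxtimes\bD^1)=(\xi_\n\circ\sigma_\n)\boxtimes\bD^1=\id$. So $\sigma_\n\boxtimes\bD^1$ is a section, and since $\xi_\n$ and $\sigma_\n$ are bipointed, it is compatible with the bipointings.

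It therefore remains to produce a section of the quotient functor
$$p_{\bbDelta^\n}: \bbDelta^\n\boxtimes\bD^1 \to \bbDelta^\n\star\bD^0.$$
This is the one step that is not formal, and I expect it to be the main obstacle. \cref{sec} supplies a section of the quotient functor $\bD^1\boxtimes\bbDelta^\n \to \bbDelta^{\n+1}$, but with the cone point and the tensor factor on the opposite side: it splits $q_{\bD^0\star X}$ via $s_X$ for $X=\bbDelta^{\n-1}$, so that $\bD^0\star X=\bbDelta^\n$.

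To handle the side mismatch, I would first try to transport \cref{sec} along an involution. Under the antisymmetry $X\boxtimes Y\simeq (Y^\co\boxtimes X^\co)^\co$, combined with $(-)^\op$ (which reverses joins) and the fact that orientals and $\bD^1$ are self-dual up to the appropriate involution, the functor $p_{\bbDelta^\n}$ should correspond to the quotient $\bD^1\boxtimes\bbDelta^\n\to\bD^0\star\bbDelta^\n$ treated in \cref{sec}. One must then check on Steiner complexes that these identifications are compatible with the quotient maps and with the basepoints.

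If the duality bookkeeping becomes messy, the fallback is to redo the construction of the preceding proposition with the factors swapped. Using the chain-level description of $h$ and $\xi$ in \cref{remar}, one builds $e_X$ and $s_X$ on $\bbDelta^\n\boxtimes\bD^1$, verifies the two commuting squares of augmented directed complexes exactly as before, and obtains a section of $p_{\bbDelta^\n}$ via \cref{hh}.

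Given such a section $t$, the composite $(\sigma_\n\boxtimes\bD^1)\circ t$ is a section of $\xi_{\n+1}$, which completes the induction.
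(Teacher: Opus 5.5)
Your argument is the paper's: you induct on $n$, split off $\sigma_n\boxtimes\bD^1$, and reduce to a section of $p_{\bbDelta^n}$, which the paper obtains by citing \cref{sec} directly, without remarking on the left/right mismatch you correctly noticed. That mismatch is handled by $(-)^\op$ alone, which reverses both $\boxtimes$ and $\star$ (\cref{lemmcub}, \cref{lemmcub2}), fixes $\bD^1$ and $\bbDelta^n$ up to isomorphism, and moves the collapsed end $\{0\}$ to $\{1\}$; do not also apply $(-)^\co$, because composing the two involutions no longer swaps the tensor factors, and $(-)^\co$ turns orientals into antiorientals and joins into antijoins.
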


\begin{proof}
We proceed by induction on $\n \geq 0.$
For $\n=0$ there is nothing to show.
We assume the statement holds for $n.$
To see the statement for $n+1$ it suffices to show that the map $$ p_{\bbDelta^\n}: \bbDelta^\n \boxtimes \bD^1 \xrightarrow{} \bbDelta^\n * \bD^0 = \bbDelta^{\n+1}$$ admits a section.
This holds by \cref{sec}.
\end{proof}

The following theorem is due to Campion and proven in \cite{campion2022cubesdenseinftyinftycategories}.
It is a corollary of \cref{Thh} and \cref{theta}:

\begin{theorem}\label{cubicaldense}

The full subcategory $\cube$ is dense in $\infty\Cat$.

\end{theorem}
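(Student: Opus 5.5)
The plan is to deduce density of $\cube$ from density of $\bDelta$ (\cref{orientaldense}), which in turn rests on density of $\Theta$ (\cref{theta}), by passing through idempotent completions. The key formal fact is the following. Let $\mD \subset \mE \subset \infty\Cat$ be full subcategories with $\mD$ dense, and suppose every object of $\mD$ is a retract of an object of $\mE$. Then $\mE$ is dense as well. Indeed, $\N_{\mE}$ is fully faithful if and only if $\N_{\overline{\mE}}$ is, where $\overline{\mE}$ denotes the closure of $\mE$ under retracts in $\infty\Cat$. Restriction of presheaves along $\mE \subset \overline{\mE}$ is an equivalence, since presheaf categories only see the idempotent completion. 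Moreover, density passes from a full subcategory to any larger full subcategory: restriction along $\mD \subset \overline{\mE}$ composed with $\N_{\overline{\mE}}$ gives $\N_\mD$, which is fully faithful, and the colimit formula for the counit $\colim_{D\in\overline{\mE}_{/X}} D \to X$ can be compared by left Kan extension. I will invoke this formal fact in the same way as in the proof of \cref{orientaldense}.

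The steps are then as follows.

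First, by \cref{Thh}, for each $n$ the functor $\xi_n: \cube^n \to \bbDelta^n$ admits a section. Hence each oriental $\bbDelta^n$ is a retract of the oriented cube $\cube^n$, so $\bDelta \subset \overline{\cube}$.

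Second, by \cref{retract} every object of $\Theta$ is a retract of an oriental. Since retracts of retracts are retracts, $\Theta \subset \overline{\bDelta} \subset \overline{\cube}$.

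Third, \cref{theta} says $\Theta$ is dense, so $\overline{\cube}$ is dense. Applying the formal fact once more, $\cube$ itself is dense. Equivalently, one can simply cite \cref{orientaldense} together with the first step.

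The only point requiring care is the formal lemma that density is invariant under idempotent completion and inherited by larger full subcategories. For the second part, I would argue directly. Given a dense $\mD \subset \mE$ and objects $X,Y$, the map $\Map(X,Y) \to \Map_{\mathcal{P}(\mE)}(\N_\mE X, \N_\mE Y)$ has a retraction obtained by restricting to $\mD$ and using full faithfulness of $\N_\mD$. To see that it is an equivalence, observe that $\N_\mE X$ receives the map from the left Kan extension of $\N_\mD X$, which is $\colim_{D \in \mD_{/X}} \N_\mE(D)$. Mapping out of this colimit into $\N_\mE Y$ yields $\lim \Map(D,Y) \simeq \Map(X,Y)$ by density of $\mD$. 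I would therefore check that $\Map(\N_\mE X, \N_\mE Y) \to \Map(\colim_{D\in\mD_{/X}} \N_\mE D, \N_\mE Y)$ is an equivalence. This holds because restricting along $\mD \subset \mE$ on the target side is compatible with the Yoneda lemma.

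I expect this bookkeeping to be the main, though mild, obstacle. All of the genuinely geometric content is already contained in \cref{Thh} and \cref{retract}.
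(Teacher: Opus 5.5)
Your proposal is correct and is essentially the paper's proof. \cref{Thh} puts every oriental in the idempotent completion of $\cube$, and density then passes from $\bDelta$ (dense by \cref{orientaldense}, via $\Theta$ and \cref{retract}) to any full subcategory whose retract closure contains $\bDelta$. The only difference is that you also spell out the formal lemma that the paper invokes without proof; your sketch of it is sound, since $\N_\mE Y$ is right Kan extended from the dense $\mD$.
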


\begin{proof}
By \cref{Thh} the idempotent completion of $\cube$
contains $\bDelta.$
By \cref{orientaldense} the full subcategory $\bDelta$ is dense in $\infty\Cat$ and therefore also every full subcategory of $\infty\Cat$ whose idempotent completion contains $\bDelta,$ is dense in $\infty\Cat.$ 
\end{proof}

\section{\mbox{$\infty$-Categories as oriented spaces}}

\subsection{Oriented polytopes}

\begin{notation}
Let $\mO$ be a polygraph.
Then $\mO$ admits a filtration $\mO=\colim_n\iota_n(\mO)$ such that
\[
\xymatrix{
\coprod_{A_n} \partial\bD^n \ar[d]\ar[r] & \iota_{n-1}(\mO) \ar[d]\\
\coprod_{A_n} \bD^n \ar[r] & \iota_{n}(\mO),
}
\]
$\iota_n(\mO)$ is obtained from from $\iota_{n-1}(\mO)$ by attaching cells.
We write $|\bD^n|$ for the topological $n$-disk and $|\partial\bD^n |$ for its boundary, a topological $n-1$-sphere.
Extending by colimits, we define the {\em strict realization} $|\mO|$ of $\mO$ to be the resulting (compactly generated weak Hausdorf) topological space, together with its cell structure induced from polygraphic structure of $\mO$.
\end{notation}

\begin{definition}
A family of oriented polytopes is a full subcategory $\mO :=\{\mO^n\}_{n\in\bN}\subset\infty\Cat^\Steiner$ satisfying the following axioms:
\begin{enumerate}[\normalfont(1)]\setlength{\itemsep}{-2pt}
\item For every $n \geq 0$ the $n$-category $\mO^n $ has a unique non-invertible $n$-morphism.
\item For every $n \geq 0$ the canonical functor
$$\underset{\mO^m \rightarrowtail\mO^n \mid m < n}{\colim} \mO^m \to \partial \mO^n:= \iota_{n-1}(\mO^n) $$
is an equivalence. Here the inclusions are taken over all atomic inclusions.
\item For every $n \geq 0$ the topological cell complex $|\mO^n|$ is the cell structure on an unoriented polytope in which the $m$-cells are the $m$-dimensional faces and the attaching maps are piecewise linear injections. 
In particular, $|\mO^n|$ is a contractible topological space, and $\tau_{0}\mO^n$ is a contractible $\infty$-groupoid.
\end{enumerate}  
\end{definition}

\begin{notation}
We will say that a family of oriented polytopes is dense if it is dense as a full subcategory of $\infty\Cat$.
\end{notation}

\cref{cello} implies the following: 
\begin{corollary}\label{deco}
Let $\{\mO^n\}_{n\in\bN}$ be a dense family of oriented polytopes and $n \geq 0.$
The canonical commutative square is a pushout square:
\[
\xymatrix{
\partial\bD^n\ar[d]\ar[r] & \partial \mO^n \ar[d]\\
\bD^n \ar[r] & \mO^n.
}
\]

\end{corollary}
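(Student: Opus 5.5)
The plan is to reduce this to \cref{cello}, which is almost verbatim the same statement. The work is checking that each $\mO^n$ in a family of oriented polytopes satisfies the hypotheses of \cref{cello}, and that the top right corner $\partial\mO^n$ is the same object as the $\iota_{n-1}(\mO^n)$ appearing there.

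First, by the definition of a family of oriented polytopes, $\mO^n$ lies in $\infty\Cat^\Steiner$, so it is a Steiner $\infty$-category. By axiom (1) it has a unique non-invertible $n$-morphism. It remains to see that $\mO^n$ is $n$-dimensional, i.e.\ that it has no atomic generators above dimension $n$. For this I would use axiom (3): the strict realization $|\mO^n|$ is the cell structure of an $n$-dimensional polytope whose $m$-cells are the $m$-dimensional faces. Since the cells of the strict realization are exactly the atomic generators, there are no generators of dimension $>n$. Hence $\iota_n(\mO^n)\simeq \mO^n$, because the polygraph filtration stabilizes at stage $n$.

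Second, by definition $\partial\mO^n := \iota_{n-1}(\mO^n)$, so the square in the statement is literally the square of \cref{cello}. Its maps are the attaching map of the unique top generator, $\partial\bD^n\to\iota_{n-1}(\mO^n)$, and the map $\bD^n\to\mO^n$ classifying that generator. \cref{cello} then says this square is a pushout in $\infty\Cat$, not merely in $\infty\Cat^\strict$; internally that lemma uses the polygraph pushout together with Campion's pasting theorem \cref{pasting1}. Density of the family plays no role, and neither does axiom (2).

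The only step needing care is the dimension claim, namely that axiom (3) forces the atomic generators to live in degrees $\le n$ with exactly one in degree $n$. If the reading of (3) through cells of $|\mO^n|$ were disputed, I would instead argue directly. Axiom (1) says $\mO^n$ is an $n$-category, so it has no non-invertible cells above dimension $n$. A Steiner $\infty$-category is gaunt, so it has no nontrivial invertible cells either. Therefore every generator has dimension $\le n$, and exactly one has dimension $n$.
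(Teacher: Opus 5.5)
Your proposal is correct and matches the paper, which deduces this corollary directly from \cref{cello}, with $\partial\mO^n=\iota_{n-1}(\mO^n)$ and the lemma's hypotheses supplied by the axioms of a family of oriented polytopes. Your explicit check that $\mO^n$ is $n$-dimensional, so that $\iota_n(\mO^n)\simeq\mO^n$, fills in a step the paper leaves implicit, and you are right that neither density nor axiom (2) is used.
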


\begin{notation}
    
Let $\{\mO^n\}_{n\in\bN}$ be a dense family of oriented polytopes and $n \geq 0.$
For every $n \in \bN$ we call $\bar{\mO}^n:= (\mO^n)^\co$ the antioriented polytope associated to $\mO^n$.
We call $\{\bar{\mO}^n\}_{n\in\bN}$ the associated anti-family.

\end{notation}

\begin{remark}

Let $\{\mO^n\}_{n\in\bN}$ be a dense family of oriented polytopes and $n \geq 0.$
The associated anti-family $\{\bar{\mO}^n\}_{n\in\bN}$ is a family of oriented polytopes.
    
\end{remark}

\begin{proposition}\label{orientdecom}\label{orientdecom2}

Let $\n \geq 0$. 
\begin{enumerate}[\normalfont(1)]\setlength{\itemsep}{-2pt}
\item The canonical functor $\underset{\underset{{k<n}}{[\bk] \hookrightarrow [\n]}}{\colim} \bbDelta^\bk \to \partial\bbDelta^\n$
is an equivalence.
\item The canonical functor $\underset{\underset{{n-2\leq k\leq n-1}}{[k]\hookrightarrow [\n]}}{\colim}\bbDelta^\bk \to \partial\bbDelta^\n $	
is an equivalence.

\item The canonical functor 
$\colim(\underset{0 \leq i <j \leq n}{\coprod}\bbDelta^{n-2}\rightrightarrows \underset{0 \leq i \leq n}{\coprod} \bbDelta^{n-1}) \to \partial\bbDelta^\n $
is an equivalence.

\end{enumerate}

\end{proposition}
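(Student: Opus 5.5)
The plan is to work entirely inside Steiner theory and then transport the statements to $\infty\Cat$ using \cref{pasting2} and \cref{pasting1}. All diagrams involved consist of orientals $\bbDelta^k$ and the face inclusions induced by order preserving injections $[k]\hookrightarrow[n]$. These face inclusions send atomic generators to atomic generators: under $\lambda$ they are the inclusions of normalized chains induced by injective maps of simplices. They are also inclusions of Steiner $\infty$-categories. The strategy is therefore to compute each colimit in $\mathrm{ADC}^\Steiner$, or equivalently in $\infty\Cat^\strict$, and then check that it agrees with the colimit in $\infty\Cat$.

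For part (1), the target $\partial\bbDelta^n=\iota_{n-1}(\bbDelta^n)$ is again a Steiner $\infty$-category. Its Steiner complex is the truncation of $\lambda(\bbDelta^n)$, namely the normalized chains of the simplicial boundary $\partial\Delta^n$, with atomic basis the injections $[k]\hookrightarrow[n]$ for $k<n$. The diagram indexed by the proper faces, with their inclusions, has colimit in augmented directed complexes given degreewise by $\colim \bigoplus_{[j]\hookrightarrow[k]}\bZ$. This colimit is the free abelian group on the proper faces of $[n]$, and the bases and submonoids glue accordingly. So the comparison map is an isomorphism of Steiner complexes, hence of Steiner $\infty$-categories by \cref{hh}. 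Because all transition functors preserve atomic generators, \cref{pasting2} identifies this colimit with the colimit in $\infty\Cat^\strict$.

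To pass to $\infty\Cat$, I would argue by induction on the skeleton: build $\partial\bbDelta^n$ by attaching the faces one at a time, in order of increasing dimension. Each attachment is a pushout of the form $\bbDelta^k\leftarrow\partial\bbDelta^k\to(\text{previous stage})$, where the right-hand map is an inclusion. By \cref{orientdec} and induction on $k$, the map $\partial\bbDelta^k\to\bbDelta^k$ is a pushout of $\partial\bD^k\subset\bD^k$, so it lies in the saturated class. \cref{pasting1} then shows that each such strict pushout is a pushout in $\infty\Cat$. Assembling these pushouts, by cofinality of the skeletal filtration of the face poset, gives (1) in $\infty\Cat$.

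Parts (2) and (3) are cofinality statements about (1). For (2), I would show that the full subposet of faces of dimension $n-2$ and $n-1$ is cofinal in the poset of all proper faces. For a face $[k]\hookrightarrow[n]$ with $k<n$, the comma category of $(n-2)$- and $(n-1)$-faces containing it is the poset of such faces containing a fixed $k$-face. Under complements this is the poset of $1$- and $2$-element subsets of the complement $C$, with $|C|=n-k\ge 1$, ordered by reverse inclusion. This poset is contractible when $|C|\le 2$. For larger $|C|$ its realization is the barycentric subdivision of the $1$-skeleton of a simplex on $C$, which is not contractible. The cofinality argument therefore fails as stated.

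This step is the main obstacle. I would not rely on cofinality, and would instead prove (2) directly by the same Steiner computation as in (1). The point is that the colimit over codimension $\le 2$ faces already recovers the lower faces, since each lower face sits inside the $(n-2)$-faces. In the Steiner/strict setting, the colimit of injective face inclusions in degrees $<n-1$ produces each $j$-face exactly once, because any two $(n-1)$-faces intersect in an $(n-2)$-face that is in the diagram. Identifications of lower simplices are generated through the codimension-two overlaps, so the degreewise colimit is again the free group on the proper faces. The passage to $\infty\Cat$ then needs an extra argument. I would realize this colimit as an iterated pushout of $(n-1)$-simplices glued along subcomplexes that are unions of faces. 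By (1) in lower dimension, those subcomplexes are colimits of orientals, so \cref{pasting1} again applies to each gluing step. Finally, (3) is just a rewriting of the colimit in (2): the diagram of $(n-1)$-faces with their pairwise $(n-2)$-intersections $\delta_i\delta_j$ for $i<j$ is exactly the coequalizer presentation, using the simplicial identities.
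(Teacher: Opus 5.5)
\textbf{The gap is in part (2), and hence (3).} You plan to prove (2), and therefore (3), in $\infty\Cat$ by iterated pushouts. No such argument can exist, because for $n\geq 3$ both statements are false in $\infty\Cat$; they hold only in $\infty\Cat^{\strict}$.

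To see this, apply the colimit-preserving functor $\tau_0\colon\infty\Cat\to\mS$, which sends every oriental to a point.
\begin{itemize}
\item The colimit in (2), equivalently the coequalizer in (3), goes to the classifying space of the poset of faces of codimension $1$ and $2$. That space is the subdivided complete graph on $n+1$ vertices, a wedge of $\binom{n}{2}$ circles.
\item By (1), which you do prove, $\tau_0(\partial\bbDelta^n)$ is the realization of the poset of all proper faces. That is $S^{n-1}$, which is simply connected for $n\geq 3$.
\end{itemize}

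Your gluing step already breaks for $n=3$. First glue the facets $\{1,2,3\}$ and $\{0,2,3\}$ along $\{2,3\}$. The codimension-$\le 2$ diagram then attaches the facet $\{0,1,3\}$ along the disjoint union $\bbDelta^{\{1,3\}}\amalg\bbDelta^{\{0,3\}}$, not along their union, because the vertex $\{3\}$ is not in the diagram. Both maps out of this disjoint union send the two copies of the object $3$ to the same object. So the attaching map is not an inclusion, \cref{pasting1} does not apply, and the homotopy pushout creates a loop at $3$. Appealing to (1) in lower dimension does not help: (1) presents the union as a colimit over a diagram that contains that vertex, and the codimension-$\le 2$ diagram does not.

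\textbf{The rest of your proposal is sound.} Your complete-graph computation is exactly why the cofinality claim in the paper's own proof fails $\infty$-categorically. In $\infty\Cat^{\strict}$, which is a $1$-category, finality only needs the comma posets to be nonempty and connected, and they are. So the strict versions of (2) and (3) follow as in the paper, and that is the form in which they should be stated; in $\infty\Cat$ they hold only for $n\leq 2$.

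The same defect affects the paper's passage to $\infty\Cat$ for (1). Its induction takes (2) at level $n$ as hypothesis and asserts that the codimension-$\le 2$ poset is weakly contractible, which fails for $n\geq 2$. Your cell-by-cell argument for (1) avoids this and is a valid, different route from the paper's induction through $(-)\star\bDelta^0$ and \cref{cell}:
\begin{itemize}
\item You attach the proper faces in order of increasing dimension, using (1) for $k<n$ to identify each $\infty$-categorical attaching object with $\partial\bbDelta^k$.
\item The attaching map into the previous stage is an inclusion.
\item \cref{orientdec} puts $\partial\bbDelta^k\subset\bbDelta^k$ in the saturated class, so \cref{pasting1} applies at every step.
\end{itemize}
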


\begin{proof}

Statement (3) is equivalent to statement (2) since there is a canonical equivalence
$$\underset{\underset{{n-2\leq k\leq n-1}}{[k]\hookrightarrow [\n]}}{\colim}\bbDelta^\bk \simeq \colim(\underset{0 \leq i <j \leq n}{\coprod}\bbDelta^{n-2}\rightrightarrows \underset{0 \leq i \leq n}{\coprod} \bbDelta^{n-1}).$$

We prove (1) and (2).
We first prove that the result holds when considering the colimit in $\infty\Cat^\strict.$
The canonical directed augmented chain map $\colim_{[\bk] \hookrightarrow [\n], \bk < \n} \lambda(\bbDelta^\bk) \to \lambda(\bbDelta^\n) $
induces a map \begin{equation}\label{map1}
\colim_{[\bk] \hookrightarrow [\n], \bk < \n} \lambda(\bbDelta^\bk) \to \lambda(\partial\bbDelta^\n),\end{equation}
which in degree $m < n $ is the map on free abelian groups
induced by the map
$$ \colim_{[\bk] \hookrightarrow [\n], \bk < \n} \{ [m]\hookrightarrow [k] \} \to \{ [m]\hookrightarrow [n] \}.$$
The latter map is evidently surjective.
The latter map is also injective since for every injective order preserving maps
$[m] \hookrightarrow [k], [k] \hookrightarrow [n] $ the image of
$[m] \hookrightarrow [k]$ in the colimit $\colim_{[\bk] \hookrightarrow [\n], \bk < \n} \{ [m]\hookrightarrow [k] \}$ under the map associated to $[\bk] \hookrightarrow [\n]$ coincides with the image of the identity of $[m]$ in this colimit under the map associated to $[m]\hookrightarrow [\bk] \hookrightarrow [\n]$,
and so only depends on the composition $[m]\hookrightarrow [\bk] \hookrightarrow [\n]$.
Therefore the directed augmented chain map (\ref{map1}) is an isomorphism.
Since for every $k \leq \ell \leq n$ any order preserving injection
$ [k] \hookrightarrow [\ell] $ the augmented directed chain map $\lambda(\bbDelta^\bk) \to \lambda(\bbDelta^\ell) $
preserves generators, by \cref{pasting1} the colimit $\colim_{[\bk] \hookrightarrow [\n], \bk < \n} \lambda(\bbDelta^\bk) $ 
is preserved by the functor $\nu: \mathrm{ADC} \to \infty\Cat^\strict$.
This proves (1) for the colimit in $\infty\Cat^\strict.$
The inclusion of posets
$\{ [\bk] \hookrightarrow [\n], \bk =n-1,n-2 \} \subset \{ [\bk] \hookrightarrow [n], \bk < n \}$ is cofinal. This implies (2) for the colimit in $\infty\Cat^\strict.$

We prove next (1) (and so (2)) by induction on $n \geq 0$.
For $n=0,1$ there is nothing to show.
We assume the statement for $n$ and like to see it for $n+1.$
By assumption the canonical functor $ \colim_{[\bk] \hookrightarrow [\n], \bk = \n-1, n-2} \bbDelta^\bk \to \partial\bbDelta^\n $
is an equivalence.
The inclusion of posets
$\{ [\bk] \hookrightarrow [\n], \bk =n-1,n-2 \} \subset \{ [\bk] \hookrightarrow [n], \bk < n \}$ is trivially cofinal and so a weak equivalence. Since the poset 
$\{ [\bk] \hookrightarrow [n], \bk < n \}$
is cofiltered and so weakly contractible,
also the poset $\{ [\bk] \hookrightarrow [\n], \bk =n-1,n-2 \}$
is weakly contractible.
Hence the canonical functor $$\colim_{[\bk] \hookrightarrow [\n], \bk= n-1,n-2} (\bbDelta^\bk \star \bDelta^0) \to (\partial\bbDelta^\n) \star \bDelta^0 $$
is an equivalence.
Using \cref{cell} the commutative square in $\infty\Cat$
\[
\xymatrix{
\colim_{[\bk] \hookrightarrow [n], \bk= n-1,n-2}\bbDelta^{\bk} \ar[d]\ar[r] & \colim_{[\bk] \hookrightarrow [\n], \bk= n-1,n-2} (\bbDelta^\bk \star \bDelta^0) \ar[d]\\
\bDelta^n \ar[r] & \partial\bDelta^{n+1}
}
\]
is a pushout square.
The pushout of this square is the colimit
$\colim_{[\bk] \hookrightarrow [n+1] , \bk= n,n-1} \bbDelta^{\bk} $ 
in $\infty\Cat.$
\end{proof}

\begin{corollary}\label{orientalfam}

The full subcategory of oriented simplices $\bDelta \subset \infty\Cat$ is a dense family of oriented polytopes.
    
\end{corollary}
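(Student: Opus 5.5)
We check the definition of a family of oriented polytopes one axiom at a time for $\mO^n=\bbDelta^n$, and then invoke \cref{orientaldense} for density. First, each $\bbDelta^n$ is a Steiner $\infty$-category. It is the $(n+1)$-fold join of $\bD^0$, and by \cref{jointensor} the join restricts to Steiner complexes, so $\bDelta\subset\infty\Cat^\Steiner$. For axiom (1), we use the explicit Steiner complex $\lambda(\bbDelta^n)$ recorded in the remark after the definition of orientals. In degree $n$ it is free on the single injection $[n]\hookrightarrow[n]$. Its basis is the set of atomic generators, so $\bbDelta^n$ has exactly one atomic $n$-cell and no cells above dimension $n$. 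Every $n$-morphism is therefore either an identity or a composite of copies of this one generator. By loop-freeness, the generator has distinct source and target in dimension $n-1$, so it cannot be composed with itself in dimension $n$. Hence there is a unique non-invertible $n$-morphism; gauntness rules out any other invertible ones.

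Axiom (2) is exactly \cref{orientdecom}(1). Atomic inclusions $\bbDelta^m\rightarrowtail\bbDelta^n$ correspond to functors preserving atomic generators, and these are precisely the maps induced by order-preserving injections $[m]\hookrightarrow[n]$, since a map of Steiner complexes sending basis elements to basis elements is determined by where it sends the vertices. So the colimit over atomic inclusions with $m<n$ is the colimit over injections $[k]\hookrightarrow[n]$ with $k<n$, and the proposition identifies this colimit with $\iota_{n-1}(\bbDelta^n)=\partial\bbDelta^n$.

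For axiom (3), the strict realization $|\bbDelta^n|$ is built by attaching one $k$-cell for each injection $[k]\hookrightarrow[n]$, along its boundary. By axiom (2) and induction on $n$, the boundary of that cell is the realization of the face complex of $\partial\Delta^k$. So $|\bbDelta^n|$ is the standard simplicial cell structure on the geometric simplex $\Delta^n$. Its $k$-cells are the $k$-dimensional faces, and the attaching maps are the linear inclusions of faces. In particular $|\bbDelta^n|$ is contractible. Since $\lambda(\bbDelta^n)$ is the normalized chain complex of $\Delta^n$, $\tau_0\bbDelta^n$ is the groupoidification, and it is contractible: inverting all cells of a finite Steiner $\infty$-category yields the homotopy type of its realization. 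Finally, density is \cref{orientaldense}.

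The main obstacle is the last claim, that $\tau_0$ of a Steiner $\infty$-category is modeled by its topological realization. We would handle it by induction along the cell filtration. Groupoidification preserves pushouts, so by \cref{deco} and axiom (2) it suffices to know that $\tau_0\bD^n$ and $\tau_0\partial\bD^n$ are the homotopy types of the disk and the sphere. This follows because $\tau_0$ commutes with suspension in the sense that $\tau_0 S(\mC)$ is the unreduced suspension of $\tau_0\mC$. A second, milder point is the identification of atomic inclusions with injections $[m]\hookrightarrow[n]$; this follows from the Steiner equivalence \cref{hh}.
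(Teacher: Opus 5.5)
Your overall route is the paper's. The paper proves the corollary by citing \cref{orientdecom} for the boundary axiom and \cref{retract} (via \cref{orientaldense}) for density, and leaves axioms (1) and (3) implicit. Your identification of atomic inclusions $\bbDelta^m\rightarrowtail\bbDelta^n$ with order-preserving injections $[m]\hookrightarrow[n]$ correctly matches the indexing in axiom (2) with \cref{orientdecom}(1).

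\textbf{The gap.} The step you single out as the main obstacle is not handled by the argument you give. Since $\tau_0$ preserves colimits, induction and axiom (2) give $\tau_0\partial\bbDelta^n\simeq S^{n-1}$, namely the classifying space of the poset of proper faces. \cref{cello} then gives $\tau_0\bbDelta^n\simeq D^n\cup_f S^{n-1}$, where $f\colon S^{n-1}\simeq\tau_0\partial\bD^n\to\tau_0\partial\bbDelta^n\simeq S^{n-1}$ is $\tau_0$ of the attaching functor. This space is contractible exactly when $f$ is an equivalence, i.e.\ has degree $\pm1$. Knowing that $\tau_0\bD^n$ and $\tau_0\partial\bD^n$ are a disk and a sphere says nothing about $f$. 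To compare with $|\bbDelta^n|$ you would need a natural map $\tau_0(-)\to|-|$ compatible with the attaching maps, and you have not constructed one. So the claim that $\tau_0$ of a finite Steiner $\infty$-category is its realization remains unproved.

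\textbf{Fix.} You do not need that claim. By \cref{grayspace}, $\tau_0$ is monoidal for $\boxtimes$ and $\tau_0\cube^n\simeq\ast$. By \cref{Thh}, $\bbDelta^n$ is a retract of $\cube^n$, so $\tau_0\bbDelta^n$ is a retract of a point. Alternatively, the join formula \cref{cor:weakvsstrictjoin}, together with \cref{grayspace}, identifies $\tau_0(X\star\bD^0)$ with the cone on $\tau_0X$.

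\textbf{Two smaller points.}
\begin{enumerate}
\item In axiom (1), an $n$-morphism of a polygraph is a composite of the $n$-generator $\alpha$ with \emph{identities on lower-dimensional cells}. So besides $\alpha\circ_p\alpha$ you must exclude whiskerings $\mathrm{id}_c\circ_p\alpha$ and $\alpha\circ_p\mathrm{id}_c$. For $\bbDelta^n$ this can be checked; for $p=0$ it holds because no non-identity $1$-cell enters $0$ or leaves $n$. Your sentence that every $n$-morphism is a composite of copies of the generator skips this. If you read (1) as the paper uses it in the proof of \cref{cello}, namely as ``exactly one atomic $n$-cell'', then $\lambda(\bbDelta^n)_n\cong\bZ$ already suffices.
\item Cite \cref{cello} rather than \cref{deco}. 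The latter is stated for a dense family of oriented polytopes, which is what you are trying to establish.
\end{enumerate}
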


\begin{proof}
This follows immediately from \cref{orientdecom} and \cref{retract}.  
\end{proof}

\begin{proposition}\label{cubedecom}
Let $\n \geq 0$.
\begin{enumerate}[\normalfont(1)]\setlength{\itemsep}{-2pt}
\item The canonical functor $\colim_{\theta \in (\{0,1\}^{\triangleright})^{\times n}, |\theta|< n} \cube^{|\theta|} \to \partial\cube^\n$
is an equivalence.
\item The canonical functor $\colim_{\theta \in (\{0,1\}^{\triangleright})^{\times n}, |\theta| = n-1, n-2} \cube^{|\theta|} \to \partial\cube^\n$
is an equivalence.

\item The canonical functor 
$\colim(\underset{0 \leq i <j \leq n}{\coprod} 4 \times \cube^{n-2}\rightrightarrows \underset{0 \leq i \leq n}{\coprod} 2 \times \cube^{n-1}) \to \partial\cube^\n $
is an equivalence.

\end{enumerate}

\end{proposition}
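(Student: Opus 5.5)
I will follow the template of the proof of \cref{orientdecom}, replacing joins by Gray tensor products with $\bD^1$. The faces of $\cube^n$ are indexed by $\theta=(\theta_1,\dots,\theta_n)\in(\{0,1\}^{\triangleright})^{\times n}$, where the cone point $\ast$ means that coordinate is free. Let $|\theta|$ be the number of free coordinates. The face inclusion $\cube^{|\theta|}\to\cube^n$ is the Gray tensor product of the maps $\{0\},\{1\},\id\colon \bullet\to\bD^1$.

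First I will prove (1) and (2) after replacing $\infty\Cat$ by $\infty\Cat^\strict$, working in $\mathrm{ADC}$. We have $\lambda(\cube^n)=\lambda(\bD^1)^{\ot n}$. In degree $m<n$, $\lambda(\cube^n)_m$ is free on those $\theta$ with $|\theta|=m$, and so is $\lambda(\partial\cube^n)_m$. The colimit of the $\lambda(\cube^{|\theta|})$ over the proper part of the poset has, in degree $m$, the free abelian group on
\[
\colim_{|\theta|<n}\{\theta'\le\theta : |\theta'|=m\}.
\]
As for simplices, this set maps bijectively to $\{\theta' : |\theta'|=m\}$. Surjectivity is clear. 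For injectivity, the basis element indexed by $\theta'$ in any $\cube^{|\theta|}$ with $\theta'\le\theta$ is identified with the identity element of $\cube^{|\theta'|}$, and the poset of faces is closed under this comparison. The submonoids agree because they are generated by basis elements. All face inclusions preserve atomic generators, so \cref{pasting2} shows that $\nu$ carries this colimit to a colimit of Steiner $\infty$-categories in $\infty\Cat^\strict$, which gives (1) strictly. For (2) I will check that $\{|\theta|=n-1,n-2\}\subset\{|\theta|<n\}$ is cofinal. For each $\theta$ with $|\theta|<n$, the comma poset of codimension-$1$ and codimension-$2$ faces containing $\theta$ is connected and weakly contractible, being the $1$-skeleton-type poset of a link sphere of dimension $\ge 0$; the case $|\theta|\le n-3$ needs this link argument and is the first point to check carefully.

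The main obstacle is passing from $\infty\Cat^\strict$ to $\infty\Cat$. I will argue by induction on $n$ via $\cube^{n+1}=\cube^n\boxtimes\bD^1$. Then
\[
\partial\cube^{n+1}\simeq \cube^n\boxtimes\partial\bD^1 \coprod_{\partial\cube^n\boxtimes\partial\bD^1}\partial\cube^n\boxtimes\bD^1 .
\]
I plan to establish this pushout in $\infty\Cat$ by building $\partial\cube^{n+1}$ from cells: the top cell of each copy $\cube^n\boxtimes\{i\}$ is attached via \cref{cello}, and each pushout along $\partial\bD^k\subset\bD^k$ with injective top map is a pushout in $\infty\Cat$ by \cref{pasting1}. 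By the induction hypothesis, $\partial\cube^n$ is the colimit of its faces in $\infty\Cat$. I need $-\boxtimes\bD^1$ to preserve this colimit. If a Gray tensor on $\infty\Cat$ preserving colimits is not yet available, I will instead use \cref{pasting1} directly, filtering the face colimit by attaching faces one at a time in order of dimension, each along its boundary. Substituting then exhibits $\partial\cube^{n+1}$ as the colimit in $\infty\Cat$ over its proper faces, which proves (1). Statement (2) follows from (1) by the cofinality established above.

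Finally, (3) is a rewriting of (2). The codimension-$1$ faces are the $2n$ faces $\theta_i\in\{0,1\}$ with all other coordinates free, giving $\coprod_i 2\times\cube^{n-1}$. The codimension-$2$ faces are the choices of $i<j$ and $(\theta_i,\theta_j)\in\{0,1\}^2$, giving $\coprod_{i<j}4\times\cube^{n-2}$. Each codimension-$2$ face lies in exactly two codimension-$1$ faces, so the colimit over the two-layer poset is the displayed coequalizer.
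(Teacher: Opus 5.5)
\textbf{Part (1).} Your argument here is sound and follows the paper's route: the strict case via $\lambda$, the bijection on bases and \cref{pasting2}, then induction along $\cube^{n+1}=\cube^n\boxtimes\bD^1$ using the pushout of \cref{cell2} and the fact that $-\boxtimes\bD^1$ preserves colimits in $\infty\Cat$ (available from \cref{locmon}). Taking (1) itself as the inductive hypothesis is the right choice.

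To close the induction, let $P_n$ be the proper faces of $\cube^n$ and $\bar P_n$ all its faces. Then $P_{n+1}$ is the union of $P_n\times\{0,1\}^{\triangleright}$ and $\bar P_n\times\{0,1\}$ along $P_n\times\{0,1\}$. Every chain lies in one of the two pieces, so $\colim_{P_{n+1}}$ is the pushout of $\cube^n\coprod\cube^n\leftarrow\partial\cube^n\coprod\partial\cube^n\to\partial\cube^n\boxtimes\bD^1$, which is $\partial\cube^{n+1}$.

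\textbf{The gap: deducing (2), and hence (3), from (1) in $\infty\Cat$.} Take a face $\theta$ of codimension $c$. The poset of codimension-$1$ and codimension-$2$ faces containing $\theta$ is the opposite of the poset of $1$- and $2$-element subsets of $\{1,\dots,c\}$. Its nerve is the subdivided complete graph $K_c$, which is connected but not contractible once $c\ge 3$. So the inclusion is cofinal only in the $1$-categorical sense. That suffices in $\infty\Cat^\strict$, which is a $1$-category, but not in $\infty\Cat$.

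The step cannot be repaired there. Since $\tau_0$ preserves colimits and sends every $\cube^k$ to a point, $\tau_0$ of a colimit of faces indexed by a poset $Q$ is $|Q|$. By (1), $\tau_0(\partial\cube^3)\simeq|P_3|\simeq S^2$. The codimension-$1$/$2$ poset of $\cube^3$, equivalently the coequalizer of $12$ points onto $6$ points in (3), realizes to a connected graph of Euler characteristic $-6$, i.e.\ $\bigvee^7 S^1$. For general $n\ge 3$ one gets the $1$-skeleton of the $n$-orthoplex instead of $S^{n-1}$.

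So (2) and (3) hold in $\infty\Cat^\strict$ for all $n$, but in $\infty\Cat$ only for $n\le 2$. The paper's own argument has the same defect: its final identification of the pushout with $\colim_{|\theta|=n,n-1}\cube^{|\theta|}$ in $\infty\Cat$ is exactly this failed cofinality.
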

\begin{proof}
Statement (3) is equivalent to statement (2) since there is a canonical equivalence
$$ \colim_{\theta \in (\{0,1\}^{\triangleright})^{\times n}, |\theta| = n-1, n-2} \cube^{|\theta|} \simeq \colim(\underset{0 \leq i <j \leq n}{\coprod} 4 \times \cube^{n-2}\rightrightarrows \underset{0 \leq i \leq n}{\coprod} 2 \times \cube^{n-1}).$$

We prove (1) and (2). We first prove that the result holds when considering the colimit in $\infty\Cat^\strict.$
The canonical directed augmented chain map \[
\colim_{\theta \in (\{0,1\}^{\triangleright})^{\times n}, |\theta|< n} \lambda(\cube^{|\theta|}) \to \lambda(\cube^\n)
\]
induces a map \[
\colim_{\theta \in (\{0,1\}^{\triangleright})^{\times n}, |\theta|< n} \lambda(\cube^{|\theta|})\to \lambda(\partial\cube^\n)
\]
which in degree $m < n $ is the map on free abelian groups
induced by the map
\begin{equation}\label{map2}
\colim_{\theta \in (\{0,1\}^{\triangleright})^{\times n}, |\theta|< n} \{\Lambda \in (\{0,1\}^{\triangleright})^{\times |\theta|} \mid |\Lambda| = m\} \to \{\rho \in (\{0,1\}^{\triangleright})^{\times n} \mid |\rho| = m\}.
\end{equation}
We prove that this map is bijective.
Let 
$\theta \in (\{0,1\}^{\triangleright})^{\times n} $ and $|\theta|< n$
and $\psi_\theta$ the canonical map
$$ \{\Lambda \in (\{0,1\}^{\triangleright})^{\times |\theta|} \mid |\Lambda| = m\} \to \{\rho \in (\{0,1\}^{\triangleright})^{\times n} \mid |\rho| = m\}. $$
Let $ \Lambda \in (\{0,1\}^{\triangleright})^{\times |\theta|} $ such that $|\Lambda| = m$.
We describe $\psi_\theta(\Lambda).$ If we write $\theta$ as $(X_1,...,X_n) \in \{0,1\}^{\triangleright})^{\times n} \setminus \{\infty\}$
and $\Lambda$ as $(Y_1,...,Y_{|\theta|}) \in \{0,1\}^{\triangleright})^{\times |\theta|}$,
then $\psi_\theta(\Lambda)$ arises by replacing the $\ell$-th $X_i$ satisfying $X_i =\infty$ by $Y_\ell$ for $1 \leq \ell \leq |\theta|.$
In particular, there is a morphism $ \psi_\theta(\Lambda) \to \theta$ in the poset
$(\{0,1\}^{\triangleright})^{\times n}$.
So for every $\rho \in (\{0,1\}^{\triangleright})^{\times n} $ such that $|\rho| = m$, which implies that $|\rho| < n,$
we find that $\psi_\rho(\infty) = \rho$, where 
$\infty$ is the final object of $(\{0,1\}^{\triangleright})^{\times m}.$
Hence the map \ref{map2} is surjective.

We prove next that the map \ref{map2} is injective.
Let 
$\theta \in (\{0,1\}^{\triangleright})^{\times n} $ and $|\theta|< n$
and
$ \Lambda \in (\{0,1\}^{\triangleright})^{\times |\theta|} $ such that $|\Lambda| = m$.
We prove that $(\theta, \Lambda)$ and $(\psi_\theta(\Lambda), \infty)$
agree in the colimit 
$\colim_{\theta \in (\{0,1\}^{\triangleright})^{\times n}, |\theta|< n} \{\Lambda \in (\{0,1\}^{\triangleright})^{\times |\theta|} \mid |\Lambda| = m\}.$
Since $ |\psi_\theta(\Lambda)|=m,$ the morphism
$ \psi_\theta(\Lambda) \to \theta$ in the poset
$(\{0,1\}^{\triangleright})^{\times n}$
induces a map
$$\{\infty\}=\{\Gamma \in (\{0,1\}^{\triangleright})^{\times m} \mid |\Gamma| = m\} \to \{\Gamma \in (\{0,1\}^{\triangleright})^{\times |\theta|} \mid |\Gamma| = m\},$$
which sends $\infty$ to $\Lambda.$
Therefore the map (\ref{map2}) is a bijection.

Since for every morphism $\theta \to \theta'$
in $(\{0,1\}^{\triangleright})^{\times n}$
the augmented directed chain map $\lambda(\cube^{|\theta|}) \to \lambda(\cube^{|\theta'|}) $
preserves generators, by \cref{pasting1} the colimit $\colim_{\theta \in (\{0,1\}^{\triangleright})^{\times n}, |\theta|< n} \lambda(\cube^{|\theta|}) $ 
is preserved by the functor $\nu: \mathrm{ADC} \to \infty\Cat^\strict$.
This proves (1) for the colimit in $\infty\Cat^\strict.$
The inclusion of posets
$ \{\theta \in (\{0,1\}^{\triangleright})^{\times n}, |\theta| = n-1, n-2 \} \subset \{\theta \in (\{0,1\}^{\triangleright})^{\times n}, |\theta|< n\}$ is trivially cofinal.

We prove next (1) (and so (2)) by induction on $n \geq 0$.
For $n=0,1$ there is nothing to show.
We assume the statement for $n$ and like to see it for $n+1.$
By assumption the functor $ \colim_{ \theta \in (\{0,1\}^{\triangleright})^{\times n}, |\theta| = n-1, n-2} \cube^{|\theta|} \to \partial\cube^\n $
is an equivalence. 
So also the canonical functor $$ \colim_{ \theta \in (\{0,1\}^{\triangleright})^{\times n}, |\theta| = n-1, n-2} (\cube^{|\theta|} \boxtimes\cube^1)
\to (\partial\cube^\n) \boxtimes\cube^1 $$
is an equivalence.
Using \cref{cell} the commutative square in $\infty\Cat$
\[
\xymatrix{
\colim_{\theta \in (\{0,1\}^{\triangleright})^{\times n}, |\theta| = n-1, n-2} \cube^{|\theta|} \coprod \cube^{|\theta|} \ar[d]\ar[r] & \colim_{ \theta \in (\{0,1\}^{\triangleright})^{\times n}, |\theta| = n-1, n-2} (\cube^{|\theta|} \boxtimes\cube^1) \ar[d]\\
\cube^n \coprod \cube^n \ar[r] & \partial\cube^{n+1}
}
\]
is a pushout square.
The pushout of this square is the colimit
$\colim_{ \theta \in (\{0,1\}^{\triangleright})^{\times n+1}, |\theta| = n, n-1} \cube^{|\theta|} $ 
in $\infty\Cat.$
\end{proof}

\begin{corollary}\label{cubefam}

The full subcategory of oriented cubes $\cube \subset \infty\Cat$ is a dense family of oriented polytopes.
\end{corollary}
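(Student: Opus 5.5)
We have to verify that $\cube=\{\cube^n\}_{n\in\bN}$ satisfies the three axioms of a family of oriented polytopes, and then prove density. We follow the pattern of \cref{orientalfam}, which deduced the analogous statement for orientals from \cref{orientdecom} and \cref{retract}.

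Each $\cube^n=(\cube^1)^{\boxtimes n}$ is a Steiner $\infty$-category, since the Gray tensor product restricts to Steiner complexes (\cref{Graytensor}). Its Steiner complex is $\lambda(\bD^1)^{\otimes n}$. The basis elements of this complex are indexed by $(\{0,1\}^{\triangleright})^{\times n}$, and a basis element $\theta$ has degree $|\theta|$, the number of coordinates equal to the cone point. There is exactly one basis element in degree $n$, namely $(\infty,\ldots,\infty)$. This gives axiom (1): $\cube^n$ has a unique non-invertible $n$-morphism.

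For axiom (2) we use \cref{cubedecom}(1). It identifies $\partial\cube^n$ with the colimit of the cubes $\cube^{|\theta|}$ over $\theta$ with $|\theta|<n$, and these maps are exactly the atomic face inclusions $\cube^m\rightarrowtail\cube^n$. The only thing to check is that the indexing category in \cref{cubedecom}(1) agrees with the category of atomic inclusions with $m<n$ used in the axiom. To see this, note that an atomic inclusion $\cube^m\rightarrowtail\cube^n$ is determined by the atomic $m$-cell it hits. That cell is a basis element of $\lambda(\cube^n)$ of degree $m$, i.e.\ some $\theta$ with $|\theta|=m$. Moreover, the maps between such inclusions correspond to the order relations in the poset. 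This holds because Steiner functors are determined by their maps on complexes (\cref{hh}).

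For axiom (3) we argue cell by cell. The strict realization $|\cube^n|$ attaches one cell for each basis element $\theta$, with dimension $|\theta|$. A basis element $\theta$ corresponds to the face of $[0,1]^n$ whose coordinates are fixed at $0$ or $1$ where $\theta$ has $0$ or $1$, and free where $\theta$ has the cone point. Inductively, by the pushout of \cref{deco} together with axiom (2), the boundary of each cell is glued along the realizations of its lower faces. The attaching maps are therefore the standard piecewise linear inclusions of faces of the geometric cube. Hence $|\cube^n|$ is the standard face cell structure on $[0,1]^n$, which is contractible. It follows that $\tau_0\cube^n$ is contractible.

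Density is \cref{cubicaldense}. We expect the realization step for axiom (3) to be the main obstacle, since it requires comparing the polygraphic cell attachments with the geometric face lattice. We would first try to handle it by induction on $n$, using that realization is compatible with the colimit in \cref{cubedecom}(2). The tensor-with-$\cube^1$ pushout appearing in the proof of \cref{cubedecom} then matches $|\cube^{n+1}|=|\cube^n|\times[0,1]$, with the product cell structure.
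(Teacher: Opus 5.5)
Your proposal is correct and follows the paper's route. The paper's proof just cites \cref{cubedecom} for the boundary axiom, and \cref{retract} together with \cref{Thh} for density; these are exactly the ingredients behind \cref{cubicaldense}. Your explicit checks of axioms (1) and (3) go beyond the paper, which leaves those axioms implicit.
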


\begin{proof}
This follows immediately from \cref{cubedecom}, \cref{retract} and \cref{Thh}.  
\end{proof}

\subsection{The oriented Street--Roberts conjecture}

The {\em homotopy hypothesis}, formulated by Grothendieck, stipulates an equivalence between the categories of homotopy types and $\infty$-groupoids.
Motivated by this philosophy, we show that $\infty$-categories are equivalent to {\em oriented homotopy types}, by which we mean presheaves of $\infty$-groupoids on a suitable category of oriented polytopes, satisfying a certain sheaf-like condition.
In the sequel paper \cite{gepner2025oriented}, we develop the theory of oriented categories, which are categories enriched in oriented spaces under the Gray tensor product, the monoidal structure compatible with the geometry of oriented polytopes.
This higher dimensional analogue of the homotopy hypothesis is naturally an equivalence of oriented categories.

\begin{definition}Let $\mO=\{\mO^n\}_{n\in\bN}\subset\infty\Cat$ be a dense family of oriented polytopes and $\mC$ a category.	
An $\mO$-object in $\mC$ is a functor $\mO^\op \to \mC.$
\end{definition}

\begin{example}
An $\mO$-space is a functor $\mO^\op \to \mS.$	
\end{example}

\begin{definition}Let $\mO$ be a generating family of polytopes.
The $\mO$-nerve functor is the restricted Yoneda-embedding
$$ \N_\mO: \infty\Cat \to \Fun(\mO^\op,\infty\Grp).$$

\end{definition}

\begin{remark}

The $\mO$-nerve functor $ \N_\mO: \infty\Cat \to \Fun(\mO^\op,\infty\Grp)$
sends strict $\infty$-categories to $\mO$-sets
and so restricts to a functor 
$ \infty\Cat^\strict \to \Fun(\mO^\op,\Set).$

\end{remark}

\begin{notation}Let $\mO\subset\infty\Cat$ be a dense family of oriented polytopes.
For every $n \geq 0$ we define the $\mO$-space $ \partial\N_{\mO}(\bD^n) $
equipped with a map of $\mO$-spaces $ \partial\N_{\mO}(\bD^n) \to \N_{\mO}(\partial\bD^n)$ by induction on $n \geq 0:$
\begin{enumerate}[\normalfont(1)]\setlength{\itemsep}{-2pt}
 
\item Let $ \partial\N_{\mO}(\bD^0)=\emptyset. $

\item Let $ \partial\N_{\mO}(\bD^0)=\emptyset \to \N_{\mO}(\partial\bD^0) $ be the unique map.

\item Let $ \partial\N_{\mO}(\bD^{n+1})= \N_{\mO}(\bD^n)\coprod_{\partial\N_{\mO}(\bD^{n})} \N_{\mO}(\bD^n). $

\item Let $ \partial\N_{\mO}(\bD^{n+1})= \N_{\mO}(\bD^n)\underset{\partial\N_{\mO}(\bD^{n})}{\coprod} \N_{\mO}(\bD^n) \to \N_{\mO}(\bD^n) \underset{\N_{\mO}(\partial\bD^{n})}{\coprod} \N_{\mO}(\bD^n) \to \N_{\mO}(\bD^n \underset{\partial\bD^{n}}{\coprod} \bD^n)= \N_{\mO}(\partial\bD^{n+1}). $
\end{enumerate}   
\end{notation}

\begin{definition}\label{orientedSegal}
Let $\mO:= \{\mO_n \mid n \geq 0\}\subset\infty\Cat$ be a dense family of oriented polytopes.
An $\mO$-space satisifies the Segal condition if it is local with respect to the following maps:
\begin{enumerate}[\normalfont(1)]\setlength{\itemsep}{-2pt}
\item For every $\n \geq 0$ the canonical map
$$ \colim_{\mO^m \rightarrowtail\mO^n \mid m < n} \N_{\mO}(\mO^m) 
\to \N_{\mO}(\partial \mO^\n),$$
where the colimit is taken over all atomic inclusions.
\item For every $\n \geq 0$ the canonical map
$$ \N_{\mO}(\bD^\n) \coprod_{\partial\N_{\mO}(\bD^\n)} \N_{\mO}(\partial \mO^\n) \to \N_{\mO}(\mO^\n).$$
\item The map $$ \N_{\mO}(\bD^{i_0}) \coprod_{\N_{\mO}(\bD^{j_1})} \N_{\mO}(\bD^{i_1}) \coprod_{\N_{\mO}(\bD^{j_2})} \cdots \coprod_{\N_{\mO}(\bD^{j_n})} \N_{\mO}(\bD^{i_n}) \to \N_{\mO}(\bD^{i_0} \coprod_{\bD^{j_1}} \bD^{i_1} \coprod_{\bD^{j_2}} \cdots \coprod_{\bD^{j_n}} \bD^{i_n}) $$
for any natural numbers $n, i_0,\ldots, i_n, j_1,\ldots, j_n$ and monomorphisms 
$ \bD^{j_\ell} \rightarrowtail \bD^{i_\ell},\bD^{j_\ell} \rightarrowtail \bD^{i_{\ell-1}}$, $ 1 \leq \ell \leq n$.
\end{enumerate}	
\end{definition}

\begin{definition}\label{orientedcomp}
Let $\mO$ be a generating family of polytopes.
Let $J$ be the (non-univalent) $(1,1)$-category with two objects and a unique equivalence between them. 
An $\mO$-space satisfying the Segal condition is univalent if it is local with respect to the map of oriented $\mO$-spaces $\N_\mO(S^{n}(J)) \to * $ for every $n \geq 0.$
\end{definition}

We prove the following oriented version of the Street--Roberts conjecture.

\begin{theorem}\label{polytopnerv}
Let $\mO$ be a generating family of polytopes.
The $\mO$-nerve functor $$\N_\mO:\infty\Cat\to\Fun(\mO^{\op},\infty\Gpd)$$
is fully faithful and the essential image precisely consists of the
$\mO$-spaces satisfying the Segal condition.
Moreover an $\infty$-category is univalent if and only if its $\mO$-nerve is univalent.
\end{theorem}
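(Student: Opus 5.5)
The plan is to exhibit $\N_\mO$ as the right adjoint of a localization $\mP(\mO):=\Fun(\mO^\op,\infty\Gpd)\rightleftarrows\infty\Cat$, and then to identify its local objects with the $\mO$-spaces satisfying the Segal condition of \cref{orientedSegal}.

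\textbf{Full faithfulness.} Since $\mO$ is dense, $\N_\mO$ is fully faithful, and it has the realization $||-||_\mO$ as left adjoint. In particular, for all $A,C\in\infty\Cat$ we get
\[
\Map(\N_\mO A,\N_\mO C)\simeq\Map(A,C).
\]

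\textbf{Nerves are Segal.} Next I check that every nerve $\N_\mO(C)$ is local for the maps (1)--(3) of \cref{orientedSegal}. By adjunction it suffices to show that each of these maps realizes to an equivalence in $\infty\Cat$. The realization preserves colimits, and the counit $||\N_\mO(A)||_\mO\to A$ is an equivalence for every $A$ by density. So the realization of each map is the canonical comparison from a colimit in $\infty\Cat$, and each comparison is handled as follows.
\begin{itemize}
\item For (1) this is axiom (2) of a family of oriented polytopes.
\item For (2), first show by induction on $n$ that $||\partial\N_\mO(\bD^n)||\simeq\bD^{n-1}\coprod_{\partial\bD^{n-1}}\bD^{n-1}\simeq\partial\bD^n$. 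The claim then reduces to \cref{deco}.
\item For (3) it is the defining pushout description of the objects of $\Theta$.
\end{itemize}

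\textbf{Segal $\mO$-spaces are nerves.} Let $X$ be an $\mO$-space satisfying the Segal condition. For every $A\in\infty\Cat$ I extend $X$ by setting $\widetilde X(A):=\Map_{\mP(\mO)}(\N_\mO A,X)$; this is a functor $\infty\Cat^\op\to\mS$ extending $X$ by Yoneda. Restricting $\widetilde X$ to $\Theta$ and using locality for (3) shows that $\widetilde X|_\Theta$ is a Segal $\Theta$-space. By \cref{theta} it is therefore $\N_\Theta(C)$ for some $\infty$-category $C$, and the identification $\widetilde X|_{\bD^\bullet}\simeq\Map(\bD^\bullet,C)$ is natural. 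Since $\N_\mO$ is fully faithful, a map $N_\mO(C)\to X$, or in the other direction, is determined by the presheaf $\widetilde X$; concretely I take the comparison to be the map $X\to \N_\mO(C)$ adjoint to the counit $||X||_\mO\to C$, noting $C\simeq ||X||_\mO$ up to checking. I then prove that $X(\mO^n)\to\Map(\mO^n,C)$ is an equivalence by induction on $n$:
\begin{itemize}
\item Locality for (2) gives $X(\mO^n)\simeq X(\bD^n)\times_{\widetilde X(\partial\N_\mO\bD^n)}\widetilde X(\partial\mO^n)$.
\item Locality for (1) writes $\widetilde X(\partial\mO^n)$ as a limit of the values $X(\mO^m)$ for $m<n$, so the inductive hypothesis applies to it.
\item The term $\widetilde X(\partial\N_\mO\bD^n)$ is an iterated pullback of disk values, hence it is identified with $\Map(\partial\bD^n,C)$ through the $\Theta$-description.
\end{itemize}
The same decompositions hold for $\N_\mO(C)$ by the second step, so the comparison is an equivalence at every $\mO^n$.

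I expect the main obstacle to be making this comparison coherent, that is, producing the map between $X$ and $\N_\mO(C)$ naturally on all of $\mO$ and not merely objectwise. To do this I would work with the localization $L\colon\mP(\mO)\to\mP(\mO)[S^{-1}]$ at the set $S$ of maps (1)--(3). I would show that the localization is compatible with $\mP(\Theta)\to\mathrm{Seg}_\Theta$ via the functors $\N_\mO(\theta)$, and then argue that $L$ and $||-||_\mO$ invert the same maps. Since the representables $\N_\mO(\mO^n)$ are $S$-local-equivalent to iterated colimits of the $\N_\mO(\bD^k)$ by (1) and (2), both localizations are generated by the same $\Theta$-Segal maps.

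\textbf{Univalence.} By full faithfulness and the adjunction,
\[
\Map(\N_\mO(S^n J),\N_\mO C)\simeq\Map(S^n J,C).
\]
Hence locality of $\N_\mO C$ against $\N_\mO(S^nJ)\to\ast$ is equivalent to locality of $C$ against $S^nJ\to\ast$ (read relative to the disk $S^n(\ast)=\bD^n$). Unwinding the inductive definition of univalence through the morphism $\infty$-categories shows that, for all $n$, this is exactly univalence of $C$.
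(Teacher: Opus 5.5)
\textbf{Gap: the comparison map between $X$ and $\N_\mO(C)$.} Everything except this step matches the paper's proof: realizing the maps (1)--(3) to see that nerves are Segal, getting $\widetilde X|_\Theta\simeq\N_\Theta(C)$ from (3) and \cref{theta}, the induction on $n$ through (2) and (1), and the univalence argument. The missing piece is the one you flag yourself, and your concrete choice does not work.

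There is no map ``adjoint to the counit $||X||_\mO\to C$''. The counit of $||-||_\mO\dashv\N_\mO$ is $||\N_\mO C||_\mO\to C$. Write $u^*Z:=\Map(\N_\mO(-),Z)|_\Theta$, so $u^*X=\widetilde X|_\Theta$, and let $u_!$ be its left adjoint. The only natural map relating $C$ and $||X||_\mO$ is $C\simeq||u_!u^*X||_\mO\to||X||_\mO$, and it goes the wrong way. Asserting $C\simeq||X||_\mO$ ``up to checking'' is equivalent to the theorem. If you use the unit $X\to\N_\mO||X||_\mO$ instead, your induction needs it to be an equivalence on disks, which is again $C\simeq||X||_\mO$.

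Without an actual map of presheaves, the objectwise decompositions of $X(\mO^n)$ and $\Map(\mO^n,C)$ cannot be compared. The fallback via $L$ versus $||-||_\mO$ has the same hole: the $S$-equivalences between $\N_\mO(\mO^n)$ and colimits of disk nerves must be natural in $\mO^n$, and you do not supply them.

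\textbf{How the paper closes it.} It uses that $\Theta$ lies in the idempotent completion $\bar\mO$ (for orientals and cubes, by \cref{retract} and \cref{Thh}). Then restriction $i^*\colon\mP(\mO)\simeq\mP(\bar\mO)\to\mP(\Theta)$ has a right adjoint $i_*$. The unit $X\to i_*i^*X\simeq i_*\N_\Theta(C)\simeq\N_\mO(C)$, using density of $\Theta$, is a natural comparison. By the triangle identity $i^*$ inverts it, so it is an equivalence on all disks, and your induction finishes.

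\textbf{A fix inside your own setup.} You can also repair the argument without that retract input, using the functoriality of $u^*$. For every $A\in\infty\Cat$, applying $u^*$ to mapping spaces gives
$\widetilde X(A)=\Map(\N_\mO A,X)\to\Map(u^*\N_\mO A,u^*X)\simeq\Map(\N_\Theta A,\N_\Theta C)\simeq\Map(A,C)$.
Here $u^*\N_\mO A\simeq\N_\Theta A$ by full faithfulness of $\N_\mO$, and the map is natural in $A$. Restricting to $A\in\mO$ gives $\phi\colon X\to\N_\mO(C)$.

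At $A=\theta\in\Theta$ this map is the adjunction equivalence $\Map(u_!y_\Theta\theta,X)\simeq u^*X(\theta)$. Since $\N_\mO\theta$ is the colimit of the representables over it, and both $\widetilde X$ and $\Map(-,C)$ turn that colimit into a limit (the latter by density of $\mO$), $\Map(\N_\mO\theta,\phi)$ is that equivalence. So $\phi$ is inverted on disks, and your induction then shows it is an equivalence.
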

\begin{proof}

We observe first that by fully faithfulness of the $\mO$-nerve an $\infty$-category is univalent if and only if its $\mO$-nerve is local with respect to the map of oriented $\mO$-spaces $\N_\mO(S^{n}(J)) \to * $ for every $n \geq 0.$

So it suffices to see that an oriented $\mO$-space $X$ satisfies the Segal condition if and only if it is the $\mO$-nerve of an $\infty$-category.

We start with proving that the $\mO$-nerve of an $\infty$-category satisfies the Segal condition, i.e. is local with respect to the maps of \cref{orientedSegal}.
By adjointness this is equivalent to say that the $\mO$-realization functor inverts the maps of \cref{orientedSegal}.
This holds for the maps (3) of \cref{orientedSegal} because the $\mO$-nerve is fully faithful. It holds for the maps (1) of \cref{orientedSegal} by \cref{orientdecom2}.
It holds for the maps (2) of \cref{orientedSegal} by \cref{orientdec} and because the $\mO$-realization of $\partial\N_{\mO}(\bD^n)$ is $\partial\bD^n$, which follows immediately by induction on $n \geq 0.$

We prove the converse. Let $i: \Theta \subset \bar{\mO}$ denote the canonical embedding to the idempotent completion, $i^*: \mP(\mO) \simeq \mP(\bar{\mO}) \to \mP(\Theta)$ the induced functor, and $i_*$ its right adjoint, the right Kan extension along $i.$
Let $X$ be an oriented $\mO$-space.
Condition (3) of \cref{orientedSegal} implies that the restriction
$i^*(X)$ is a $\Theta$-Segal space. By \cref{theta} this implies that
$i^*(X)$ is the $\Theta$-nerve $\N_\Theta(\mC)$ of an $\infty$-category $\mC$.
There is a canonical equivalence $i^* (\N_{\mO}(\mC)) \simeq \N_\Theta(\mC).$
By density of $\Theta$ the canonical map $\N_{\mO}(\mC) \to i_*(i^*(\N_{\mO}(\mC))) \simeq i_*(\N_\Theta(\mC)) $ is an equivalence. 
So there is an equivalence $\N_{\mO}(\mC) \simeq i_*(\N_\Theta(\mC)) \simeq i_*(i^*(X)).$
Consequently, it suffices to see that the unit
$X \to i_*(i^*(X)) $ is an equivalence. 

By the triangle identities the unit
$X \to i_*(i^*(X)) $ is inverted by $i^*$ and so in particular inverted by the functor corepresented by $\N_{\mO}(\bD^n)$ for every $n \geq 0.$
By the first part of the proof $\N_{\mO}(\mC)$ satisfies the Segal condition.
So it suffices to see that a map $\phi$ between oriented $\mO$-spaces satisfying the Segal condition is an equivalence if it is inverted by the functor
corepresented by $\N_{\mO}(\bD^n)$ for every $n \geq 0.$

We prove by induction on $n \geq 0$ that $\phi$ 
induces an equivalence at any oriented polytope of dimension smaller or equal $n$ if $\phi$ is inverted by the functor corepresented by $\N_{\mO}(\bD^m)$ for every $m \leq n.$
This trivially hold for $n=0.$
Assume it holds for $n \geq 0$ and that $\phi$ is inverted by the functor corepresented by $\N_{\mO}(\bD^m)$ for every $m \leq n+1.$
Then by induction hypothesis the map $\phi$ induces an equivalence at any oriented polytope of dimension smaller or equal $n.$ So it remains to see that $\phi$ induces an equivalence at the oriented polytope of dimension $n+1.$
By (2) of \cref{orientedSegal} the map $\phi$ induces an equivalence at the oriented polytope of dimension $n+1$
if $\phi$ is inverted by the functors corepresented by $\N_{\mO}(\bD^{n+1})$ and $\partial\N_{\mO}(\bD^{n+1})$ and the functor
corepresented by $\N_{\mO}(\partial\mO^{n+1})$.

By assumption the functor $\phi$ is inverted by the functor corepresented by $\N_{\mO}(\bD^{n+1})$.
By (1) of \cref{orientedSegal} the map $\phi$ is inverted by the functor corepresented by $\N_{\mO}(\partial\mO^{n+1})$ since it induces an equivalence at any oriented polytope of dimension smaller or equal $n.$

So it suffices to see that $\phi$ is inverted by the functor corepresented by $\partial\N_{\mO}(\bD^{n+1}).$
This is because, for every $n \geq 0$, 
a map between oriented $\mO$-spaces is inverted by the functor 
corepresented by $\partial\N_{\mO}(\bD^{n+1})$ if it is inverted by the functor corepresented by $\N_{\mO}(\bD^m)$ for every $m \leq n.$
The latter follows immediately by induction on $n \geq 0$
from the inductive definition of $\partial\N_{\mO}(\bD^{n+1})= \N_{\mO}(\bD^n)\coprod_{\partial\N_{\mO}(\bD^{n})} \N_{\mO}(\bD^n).$
\end{proof}

\begin{corollary}\label{compactgeneration}
Every oriented polytope is compact in $\infty\Cat$ and $\infty\Cat^\strict$.    
\end{corollary}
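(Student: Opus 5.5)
We prove compactness by induction on the dimension $n$ of the oriented polytope $\mO^n$, using the cell decompositions established above; disks are the base case. First, every disk $\bD^m$ is compact in $\infty\Cat$. By \cref{theta}, $\Map_{\infty\Cat}(\bD^m,\mC)$ is the value at $\bD^m$ of the $\Theta$-nerve. The nerve $\N_\Theta$ preserves filtered colimits: Segal $\Theta$-spaces are closed under filtered colimits in $\Fun(\Theta^\op,\mS)$, because the Segal condition involves only finite limits of representables, which commute with filtered colimits in $\mS$. Alternatively, one can argue inductively. $\Map(\bD^0,-)=\iota_0$ preserves filtered colimits. Moreover $\Map(\bD^{m+1},\mC)$ is the total space over $\iota_0(\mC)^{\times 2}$ of the family $(X,Y)\mapsto\Map(\bD^m,\Mor_\mC(X,Y))$, by \cref{suspi}. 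Since $\Mor$ preserves filtered colimits (\cref{homfil}), the filtered colimit statement follows by induction. The same argument shows that $\partial\bD^m=S^m(\emptyset)$ is compact.

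Next we induct on $n$ for $\mO^n$. For $n=0$, $\mO^0=\bD^0$ (it is a contractible Steiner $0$-category). For the inductive step, \cref{deco} gives the pushout $\mO^n\simeq\bD^n\coprod_{\partial\bD^n}\partial\mO^n$, and axiom (2) gives $\partial\mO^n\simeq\colim_{\mO^m\rightarrowtail\mO^n,\,m<n}\mO^m$. The indexing category of atomic inclusions of lower-dimensional faces is finite: a finitely generated Steiner $\infty$-category has finitely many atoms, and by axiom (3) the faces correspond to the finitely many faces of a polytope. Since compact objects are closed under finite colimits, $\partial\mO^n$ is compact by the induction hypothesis, and hence so is $\mO^n$.

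For $\infty\Cat^\strict$, the inclusion $\infty\Cat^\strict\subset\infty\Cat$ is a right adjoint. It preserves filtered colimits, since strictness (sets for objects, inductively on morphism objects) is preserved by filtered colimits, which are computed levelwise via $\Mor$ (\cref{homfil}) and sets are closed under filtered colimits in $\mS$. Mapping spaces from an object of $\infty\Cat^\strict$ can therefore be computed in $\infty\Cat$, and compactness transfers.

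The main obstacle is the finiteness of the index category in axiom (2). Here the point is to identify atomic inclusions $\mO^m\rightarrowtail\mO^n$ with faces, that is, with atoms of $\mO^n$. Since Steiner $\infty$-categories are gaunt, an atomic inclusion is determined by its image atom, so the colimit is indexed by a finite poset. A secondary subtlety is verifying that filtered colimits of strict $\infty$-categories computed in $\infty\Cat$ remain strict.
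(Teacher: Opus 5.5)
Your proof is correct, but it takes a different route from the paper's.

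The paper gives a one-line argument based on \cref{polytopnerv}. There, $\infty\Cat$ is a localization of $\mP(\mO)$ whose local objects are closed under filtered colimits. So the representables $\mO^n$, which are compact in $\mP(\mO)$, stay compact in $\infty\Cat$. The strict case is handled the same way with set-valued presheaves.

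You never use the nerve theorem for $\mO$. You run that localization argument only for $\Theta$, where it is airtight because the Segal maps of \cref{theta} go from finite colimits of representables to representables. This gives compactness of the disks. You then build $\mO^n$ from disks by finite colimits, by induction on dimension, using \cref{deco} and axiom (2).

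What your route buys is that the finiteness input is explicit. In the paper's argument, closure of the local objects under filtered colimits is not automatic. The usual justification is that the maps of \cref{orientedSegal} go between compact presheaves. That needs the boundary colimit in (1) to be finite and $\N_\mO(\partial\mO^n)$ to be compact in $\mP(\mO)$, which is close to what is being proved. The paper's route, in exchange, is shorter and treats $\infty\Cat$ and $\infty\Cat^\strict$ uniformly. Your strict case, transferring compactness via closure of strict objects under filtered colimits in $\infty\Cat$, is fine.

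One small correction to your finiteness step: gauntness is not what makes the index category a poset. The reason is as follows.
\begin{itemize}
\item Atomic inclusions are monomorphisms, so the mapping spaces between them over $\mO^n$ are empty or contractible. Hence the index category is a preorder.
\item Its isomorphism classes inject into the finitely many atoms of $\mO^n$ of dimension less than $n$. This is because the image of an atomic inclusion is the closure of the image of its top atom.
\end{itemize}
So the index category is equivalent to a finite poset with finite nerve. That is exactly what you need for compact objects to be closed under the colimit.
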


\begin{proof}
By the Yoneda lemma any representable presheaf corepresents a small colimit-preserving functor.
The result follows from the fact that
$\infty\Cat$ is a localization of $\mP(\mO)$
and the local objects are closed under small filtered colimits by \cref{polytopnerv}.
The strict version is similar, using set-valued presheaves.
\end{proof}

\subsection{Complicial spaces}

In the following we recall the theory of complicial sets and complicial spaces developed by \cite{VERITY1}, \cite{VERITY2}, \cite{ozornova2020model}, \cite{loubaton2024categorical}, \cite{loubaton2024complicialmodelinftyomegacategories}.

\begin{notation}
Let $X $ be a simplicial space and $n \geq 1.$
Let $$ \deg^n(X):= \coprod_{[n] \twoheadrightarrow [n-1]} X_{n-1}$$
the space of degenerate $n$-simplices.
There is a canonical map $\deg^n(X) \to X_n.$

\end{notation}

\begin{definition}

A prestratified simplicial space is a pair $(X,\mE,t)$
consisting of a simplicial space $X$ and for every $n \geq 1$ a 
factorization $\deg^n(X) \to \mE_n \to X_n$ of the canonical map.

A prestratified simplicial set is a prestratified simplicial space
$(X,\mE)$ such that $X$ is a simplicial set and for every $n \geq 1$
the space $\mE_n$ is a set.
    
\end{definition}

\begin{notation}

Let $$ \mP(\Delta)^{\#} := \prod_{n\geq 1}\mP([2])\times_{\prod_{n\geq 1}\mP([1])} \mP(\Delta)$$
be the full subcategory of prestratified simplicial spaces,
where the pullback
is taken along the functor $\mP([2]) \to \mP([1]) $ induced by the functor $[1] =\{0<2\} \subset [2]$ and the functor
$ \mP(\Delta) \to \prod_{n\geq 1}\mP([1])$ sending $ X$ to $ \{\deg^n(X) \to X_n\}_{n \geq 1}.$ 
\end{notation}

\begin{notation}

The projection of the pullback restricts to a forgetful functor 
$\mP(\Delta)^{\#} \to \mP(\Delta).$

\end{notation}

\begin{remark}\label{heu}

Let $(A \to B \to C), (A' \to B' \to C') \in \mP([2])$.
There is a canonical equivalence
$$ \Map_{\mP([2])}(A \to B \to C), (A' \to B' \to C') \simeq $$
$$ \Map_{\mP([1])}((A \to B), (A' \to B')) \times_{\Map_{\mS}(B, B')} \Map_{\mP([1])}((B \to C), (B' \to C')) \simeq $$
$$ \Map_{\mS}(A, A') \times_{\Map_{\mS}(A, B')} \Map_{\mS}(B, B') \times_{\Map_{\mS}(B, B')} \Map_{\mS}(B, B') \times_{\Map_{\mS}(B, C')} \Map_{\mS}(C, C') $$
$$ \simeq \Map_{\mS}(A, A') \times_{\Map_{\mS}(A, B')} \Map_{\mS}(B, B') \times_{\Map_{\mS}(B, C')} \Map_{\mS}(C, C').$$

The functor $ \xi: \mP([2]) \to \mP([1]) $ induced by the functor $[1] =\{0<2\} \subset [2]$ induces a map
$$ \Map_{\mP([2])}((A \xrightarrow{\id} B \to C), (A' \to B' \to C')) \to \Map_{\mP([1])}((A \to C), (A' \to C')), $$
which identifies with the canonical map
\begin{equation}\label{maops}
\Map_{\mS}(A, A') \times_{\Map_{\mS}(A, B')} \Map_{\mS}(B, B') \times_{\Map_{\mS}(B, C')} \Map_{\mS}(C, C') \to \Map_{\mS}(A, A') \times_{\Map_{\mS}(A, C')} \Map_{\mS}(C, C').
\end{equation}

The latter map is for instance an equivalence if $A \to B$ or $B' \to C'$
is an equivalence.

Let $U,V, W $ be spaces and $U \coprod V \to W$ a map.
For $(A \to B \to C) = (U \to U \coprod V \to W)$ the map
(\ref{maops}) identifies with the projection
$$
\Map_{\mS}(U, A') \times_{\Map_{\mS}(U,C')} \Map_{\mS}(W, C') \times_{\Map_{\mS}(V,C')} \Map_{\mS}(V, B') \to \Map_{\mS}(U, A') \times_{\Map_{\mS}(U, C')} \Map_{\mS}(W, C').
$$
\end{remark}

\begin{notation}
Let $X$ be a simplicial space.
\begin{enumerate}[\normalfont(1)]\setlength{\itemsep}{-2pt}
\item Let $X^\flat$ be the following prestratified simplicial space:
the simplicial space $X$ equipped with
the factorizations $\{\deg^n(X) \xrightarrow{\id} \deg^n(X) \to X_n\}_{n \geq 1}.$

\item Let $X^\#$ be the following prestratified simplicial space:
the simplicial space $X$ equipped with
the factorizations $\{\deg^n(X) \to X_n \xrightarrow{\id} X_n\}_{n \geq 1}.$

\end{enumerate}
    
\end{notation}

\begin{lemma}\label{stratadj}

The forgetful functor 
$\mP(\Delta)^{\#} \to \mP(\Delta)$ admits a fully faithful left adjoint $(-)^\flat$ that sends $X$ to $X^\flat$ and a fully faithful right adjoint $(-)^{\#}$ that sends $X$ to $X^{\#}.$
\end{lemma}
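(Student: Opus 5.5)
The plan is to realize $\mP(\Delta)^{\#}$ as a pullback and compute the two adjoints fiberwise. By definition, $\mP(\Delta)^{\#}$ is the pullback of $\prod_{n\geq 1}\mP([2]) \to \prod_{n\geq1}\mP([1])$ along $X\mapsto\{\deg^n(X)\to X_n\}_{n\geq1}$. An object is therefore $X$ together with, for each $n\geq1$, a factorization $\deg^n(X)\to\mE_n\to X_n$. A morphism $(X,\mE)\to(Y,\mE')$ is a map $f\colon X\to Y$ together with maps $\mE_n\to\mE'_n$ compatible with the factorizations. Since mapping spaces in a pullback are pullbacks of mapping spaces,
\[
\Map_{\mP(\Delta)^{\#}}((X,\mE),(Y,\mE')) \simeq \Map_{\mP(\Delta)}(X,Y)\times_{\prod_{n}\Map_{\mP([1])}(\deg^n X\to X_n,\ \deg^n Y\to Y_n)}\prod_{n}\Map_{\mP([2])}\bigl((\deg^n X\to\mE_n\to X_n),(\deg^n Y\to\mE'_n\to Y_n)\bigr).
\]
The whole proof then reduces to analysing the maps $\xi\colon \Map_{\mP([2])}\to\Map_{\mP([1])}$ via \cref{heu}.

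For the left adjoint, take the source to be $X^\flat$, so that each triple is $(\deg^n X\xrightarrow{\id}\deg^n X\to X_n)$. The first map is an equivalence, so \cref{heu} shows that the map (\ref{maops}) is an equivalence for every $n$. The pullback above therefore collapses to
\[
\Map_{\mP(\Delta)^{\#}}(X^\flat,(Y,\mE'))\simeq\Map_{\mP(\Delta)}(X,Y).
\]
Naturality in $X$ holds because $\deg^n$ is functorial. This exhibits $(-)^\flat$ as left adjoint to the forgetful functor $U$, with unit $X\to UX^\flat=X$ the identity, so $(-)^\flat$ is fully faithful.

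For the right adjoint, take the target to be $Y^{\#}$, so that each triple is $(\deg^n Y\to Y_n\xrightarrow{\id}Y_n)$. Now the second map $B'\to C'$ is an equivalence, so \cref{heu} again shows that (\ref{maops}) is an equivalence. This gives
\[
\Map_{\mP(\Delta)^{\#}}((X,\mE),Y^{\#})\simeq\Map_{\mP(\Delta)}(X,Y),
\]
naturally in both variables. Hence $(-)^{\#}$ is right adjoint to $U$, and since the counit $UY^{\#}=Y\to Y$ is the identity, $(-)^{\#}$ is fully faithful.

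The main point needing care is justifying that $X^\flat$ and $X^{\#}$ are functorial in $X$, and that the identification of the mapping space in the pullback is natural, so that the pointwise equivalences assemble into adjunctions. I would handle this by constructing $(-)^\flat$ and $(-)^{\#}$ as functors into the pullback. These are induced by the functors $\mP([1])\to\mP([2])$ that insert an identity, namely left and right Kan extension along $[1]\to[2]$, $0\mapsto0$, $1\mapsto2$, in the appropriate sense. Those functors are respectively left and right adjoint sections of the restriction $\xi$, and \cref{heu} is exactly the pointwise verification of these adjunctions. The adjunctions are then obtained by base change of adjunctions along the pullback.
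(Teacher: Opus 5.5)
Your proposal is correct and matches the paper's proof. Both compute the mapping spaces in the pullback defining $\mP(\Delta)^{\#}$ and use \cref{heu}: the map (\ref{maops}) is an equivalence when $A\to B$ (for $X^\flat$), respectively $B'\to C'$ (for $X^{\#}$), is an equivalence, so the mapping spaces collapse to $\Map_{\mP(\Delta)}(X,Y)$, and full faithfulness follows because the unit, respectively counit, is an equivalence. Your identification of $(-)^\flat$ and $(-)^{\#}$ with left and right Kan extension along $\{0<2\}\subset[2]$ is also right: these are fully faithful sections of the restriction, so they base-change along the pullback, and this supplies the functoriality that the paper leaves implicit.
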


\begin{proof}

Let $X$ be a simplicial space and
$(Y, \mF) \in \mP(\Delta)^{\#}.$
The induced map $$ \Map_{\mP(\Delta)^{\#}}(X^\flat,(Y, \mF)) \to \Map_{\mP(\Delta)}(X,Y) $$ is an equivalence by \cref{heu}.

Let $X$ be a simplicial space and 
$(Y, \mF) \in \mP(\Delta)^{\#}.$
The induced map $$ \Map_{\mP(\Delta)^{\#}}((Y, \mF), X^{\#}) \to \Map_{\mP(\Delta)}(Y,X) $$ 
is an equivalence by \cref{heu}.

The  left and right adjoints are both fully faithful since the respective unit and counit are equivalences.   
\end{proof}

We embed $\mP(\Delta)$ into $\mP(\Delta)^{\#}$ via the left adjoint of the forgetful functor $\mP(\Delta)^{\#} \to \mP(\Delta)$ , which we henceforth suppress notationally.

\begin{definition}

A stratified simplicial space is a prestratified simplicial space
$(X,\mE)$ such that for every $n \geq 1$
the map $\mE_n \to \X_n$ is an embedding of spaces.

A stratified simplicial set is a stratified simplicial space
$(X,\mE)$ such that $X$ is a simplicial set.
    
\end{definition}

\begin{remark}

Let $(X,\mE)$ be a stratified simplicial space.
For every $n \geq 1$ the map $\mE_n \to X_n$ is an embedding of spaces.
This implies that the factorization $\deg^n(X) \to \mE_n \to X_n$ of the canonical map is uniquely determined and so the structure of the factorization becomes the condition that every element of the subspace $\mE_n$ of $X_n$ is degenerate.
So we can identify a stratified simplicial space with a simplicial space
$X$ and for every $n \geq 1$ a collection $\mE_n$ of distinguished $n$-simplices of $X$, which we call marked or thin $n$-simplices of $X.$
    
\end{remark}

\begin{remark}
The embedding $\mP(\bD^1)_\mathrm{mono}\to\mP(\bD^1)$ 
of the full subcategory of monomorphisms admits a left adjoint which arises from the factorization of any map into an essentially surjective and fully faithful map.
The unit of this adjunction is inverted by evaluation at the target and is sent to an essentially surjective map by evaluation at the source. 
This implies that the full subcategory of $\mP(\Delta)^{\#}$
of stratified simplicial spaces is a reflective full subcategory.
\end{remark}

\begin{notation}

Let $\n \geq 0$.
Let $(\Delta^\n)^t $ be stratified simplicial set $(\Delta^\n, \mE),$ 
where $\mE_m$ is the subset of degenerate $m$-simplices of $\Delta^n$ for every $m \neq n$ and $\mE_n$ is the subspace of
degenerate $n$-simplices together with the identity of $[n].$
\end{notation}

\begin{notation}
Let $$\Delta^+ \subset \mP(\Delta)^{\#}$$ be the full subcategory  
spanned by $\Delta^\n$ and $(\Delta^\n)^t.$
\end{notation}

\begin{lemma}\label{markadj}
Let $n \geq 1$ and $(X,\mE) \in \mP(\Delta)^{\#}.$
There is a natural equivalence
$$ \Map_{\mP(\Delta)^{\#}}((\Delta^n)^t, (X,\mE)) \simeq \mE_n. $$
    
\end{lemma}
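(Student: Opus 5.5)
The plan is to unwind both sides as iterated fiber products, using the explicit description of $\mP(\Delta)^{\#}$ as the pullback $\prod_{n\geq 1}\mP([2])\times_{\prod_{n\geq1}\mP([1])}\mP(\Delta)$. A map $(\Delta^n)^t\to(X,\mE)$ consists of three pieces of data. The first is a map of simplicial spaces $\Delta^n\to X$, which by Yoneda is a point $x\in X_n$. The second is, for each $m\geq 1$, a map of factorizations $(\deg^m(\Delta^n)\to \mE^t_m\to \Delta^n_m)\to(\deg^m(X)\to\mE_m\to X_m)$ in $\mP([2])$, compatible with the induced maps on $\deg^m\to(-)_m$. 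The third is the identification of the images in $\prod_m\mP([1])$. So
\[
\Map((\Delta^n)^t,(X,\mE))\simeq \Map_{\mP(\Delta)}(\Delta^n,X)\times_{\prod_m \Map_{\mP([1])}}\prod_{m\geq1}\Map_{\mP([2])}(\ldots).
\]

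The key step uses the last computation in \cref{heu}. For $m\neq n$, the marking $\mE^t_m$ of $\Delta^n$ is just $\deg^m(\Delta^n)$; more precisely, the factorization has identity first map. By \cref{heu}, the map from $\Map_{\mP([2])}$ to $\Map_{\mP([1])}$ is then an equivalence, so these factors contribute nothing: this is the same argument as for $(-)^\flat$ in \cref{stratadj}. For $m=n$, the factorization is $\deg^n(\Delta^n)\to \deg^n(\Delta^n)\coprod\{\id_{[n]}\}\to\Delta^n_n$. Here I have to check that the identity of $[n]$ is not degenerate, and that $\deg^n(\Delta^n)=\coprod_{[n]\twoheadrightarrow[n-1]}\Delta^n_{n-1}$ maps in as the complementary summand. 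In fact the map $\deg^n(\Delta^n)\to\Delta^n_n$ is not injective, so I will instead write $\mE^t_n$ as $U\coprod V$ with $U=\deg^n(\Delta^n)$, $V=\ast$ and $W=\Delta^n_n$. The final formula of \cref{heu}, with $A'\to B'\to C'$ being $\deg^n(X)\to\mE_n\to X_n$, then identifies the fiber of $\Map_{\mP([2])}\to\Map_{\mP([1])}$ over a given map of arrows with $\Map_\mS(V,\mE_n)\times_{\Map_\mS(V,X_n)}\{\ast\}$. This is the fiber of $\mE_n\to X_n$ over the image of $\id_{[n]}$, that is, over $x$.

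Assembling, the mapping space becomes $\coprod_{x\in X_n}\mathrm{fib}_x(\mE_n\to X_n)$, i.e. the total space $\mE_n$, since $X_n\times_{X_n}\mE_n\simeq\mE_n$. Naturality in $(X,\mE)$ is clear because every identification is induced by restriction along fixed maps.

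The main obstacle is the bookkeeping in the $m=n$ factor. I need the compatibility between the $\mP([1])$-component determined by $\Delta^n\to X$ and the $\mP([2])$-datum to cut out exactly the fiber over $x$, with no extra condition on the degenerate part. This is precisely the pullback over $\Map_\mS(U,C')$ appearing in \cref{heu}, and I would verify it by writing $W=\Delta^n_n$ as a set and using that $\Map_\mS(W,X_n)\simeq X_n^{W}$ is already fixed by $x$.
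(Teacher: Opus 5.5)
Your proposal is correct and is essentially the paper's argument. The paper's proof is a one-line appeal to \cref{heu}, giving $\Map_{\mP(\Delta)^{\#}}((\Delta^n)^t,(X,\mE))\simeq \Map_{\mP(\Delta)}(\Delta^n,X)\times_{X_n}\mE_n\simeq\mE_n$. You arrive at exactly this by discarding the levels $m\neq n$, where the first map of the factorization is the identity, and by applying the $U\to U\coprod V\to W$ formula at level $n$. You model the marking of $(\Delta^n)^t$ as $\deg^m(\Delta^n)$ for $m\neq n$ and as $\deg^n(\Delta^n)\coprod\ast$ at level $n$, rather than as image subsets. This is the reading the paper's proof implicitly relies on, and it is the right one: with image subsets, the lifting condition at the levels $m\neq n$ would not be automatic for a general prestratified target.
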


\begin{proof}
By \cref{heu} there is a canonical equivalence
$$ \Map_{\mP(\Delta)^{\#}}((\Delta^n)^t, (X,\mE)) \simeq \Map_{\mP(\Delta)}(\Delta^n, X) \times_{X_n} \mE_n \simeq \mE_n.$$
\end{proof}

\begin{proposition}
The restricted Yoneda-embedding
$\gamma: \mP(\Delta)^{\#} \to \mP(\Delta^+)$ is an equivalence.

\end{proposition}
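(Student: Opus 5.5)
The restricted Yoneda embedding $\gamma$ sends $(X,\mE)$ to the presheaf $\Delta^+ \ni D \mapsto \Map_{\mP(\Delta)^{\#}}(D,(X,\mE))$. The plan is to identify $\mP(\Delta)^{\#}$ explicitly as a category of presheaves, and to recognize $\Delta^+$ as the corresponding representables. Concretely, a presheaf $F$ on $\Delta^+$ consists of three pieces of data. First, its restriction along $\Delta \subset \Delta^+$ gives a simplicial space $X$. Second, the values $F((\Delta^n)^t) =: \mE_n$ for $n \geq 1$. Third, the maps induced by morphisms between these objects. So the task is to compute all mapping spaces in $\Delta^+$ and check that they encode exactly the structure of a prestratified simplicial space.

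The mapping spaces I would compute are as follows.
\begin{enumerate}[\normalfont(1)]\setlength{\itemsep}{-2pt}
\item Maps $\Delta^m \to \Delta^n$ are given by $\Delta([m],[n])$, since $(-)^\flat$ is fully faithful by \cref{stratadj}.
\item Maps $\Delta^m \to (\Delta^n)^t$ are again $\Delta([m],[n])$, by the adjunction of \cref{stratadj} together with $\Delta^m = (\Delta^m)^\flat$.
\item Maps $(\Delta^m)^t \to (\Delta^n)^t$ and $(\Delta^m)^t \to \Delta^n$ are computed by \cref{markadj}: they are the marked $m$-simplices of the target. These are the degenerate maps $[m]\to[n]$, together with the identity when $m=n$ and the target is $(\Delta^n)^t$.
\end{enumerate}

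With these in hand, the essential step is to show that $\gamma$ is an equivalence by exhibiting an inverse. Given a presheaf $F$ on $\Delta^+$, I would set $X = F|_\Delta$ and $\mE_n = F((\Delta^n)^t)$. The canonical map $\Delta^n \to (\Delta^n)^t$ yields $\mE_n \to X_n$. For each degeneracy $s:[n]\twoheadrightarrow[n-1]$, the map $s:(\Delta^n)^t \to \Delta^{n-1}$, which is marked because it is degenerate, yields $X_{n-1} \to \mE_n$. Together these give the factorization $\deg^n(X) \to \mE_n \to X_n$.

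To make this rigorous rather than a hand computation, I would write $\mP(\Delta)^{\#}$ as the pullback defining it, and use the description of maps in \cref{heu}. That description expresses the functor $(X,\mE)\mapsto(\mE_n, X_n)$ as corepresented by $(\Delta^n)^t$ and $\Delta^n$ via \cref{markadj}. From this I would deduce two facts about $\gamma$.
\begin{enumerate}[\normalfont(1)]\setlength{\itemsep}{-2pt}
\item $\gamma$ is conservative. A map of prestratified spaces is an equivalence if it is so on all $X_n$ and all $\mE_n$, since equivalences in the pullback of presheaf categories are detected levelwise.
\item $\gamma$ preserves colimits. Colimits in $\mP(\Delta)^{\#}$ are computed componentwise in the pullback, because the functors $\mP([2])\to\mP([1])$ and $X\mapsto(\deg^n X\to X_n)$ preserve colimits, as $\deg^n$ is a coproduct of evaluations. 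Evaluations at $\Delta^n$ and $(\Delta^n)^t$ therefore commute with colimits.
\end{enumerate}

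Hence $\gamma$ has a colimit-preserving left adjoint $\gamma_!$ extending the inclusion $\Delta^+ \subset \mP(\Delta)^{\#}$. The restriction of $\gamma$ to $\Delta^+$ is the Yoneda embedding by construction, so the unit $\id \to \gamma\gamma_!$ is an equivalence on representables. Since both $\gamma$ and $\gamma_!$ preserve colimits, the unit is an equivalence everywhere, and $\gamma_!$ is fully faithful. Conservativity of $\gamma$ then makes the counit an equivalence, so $\gamma$ is an equivalence.

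The main obstacle is the colimit-preservation claim in (2). The subtle point is that $\deg^n(X)=\coprod_{[n]\twoheadrightarrow[n-1]}X_{n-1}$ genuinely commutes with colimits, and that colimits in the fiber product of presentable categories are computed componentwise. Both facts hold because all the functors involved are evaluations or finite coproducts of evaluations, hence preserve all colimits; but this is the point I would check most carefully.
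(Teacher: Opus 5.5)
Your proof is correct and is essentially the paper's argument: the values of $\gamma(X,\mE)$ at $\Delta^n$ and $(\Delta^n)^t$ are identified with $X_n$ and $\mE_n$ via \cref{stratadj} and \cref{markadj}, so these evaluations are jointly conservative and colimit-preserving, the left adjoint is therefore fully faithful, and the conservative right adjoint $\gamma$ is an equivalence. The preliminary computation of mapping spaces in $\Delta^+$ and the sketched explicit inverse are not needed, and are slightly off in one place: for the target $\Delta^n=(\Delta^n)^\flat$ the marked $m$-simplices are $\deg^m(\Delta^n)=\coprod_{[m]\twoheadrightarrow[m-1]}\Delta([m-1],[n])$, which in general only surjects onto the degenerate maps $[m]\to[n]$ rather than equalling them.
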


\begin{proof} By \cref{stratadj} for every $(X,\mE) \in \mP(\Delta)^{\#}$
and $n \geq 0$ there is a natural equivalence
$$ \gamma(X,\mE)(\Delta^n) = \Map_{\mP(\Delta)^{\#}}(\Delta^n, (X,\mE)) \simeq \Map_{\mP(\Delta)}(\Delta^n, X) \simeq X_n.$$

By \cref{markadj} for every $(X,\mE) \in \mP(\Delta)^{\#}$ and $n \geq 1$
there is a natural equivalence
$$ \gamma(X,\mE)((\Delta^n)^t) = \Map_{\mP(\Delta)^{\#}}((\Delta^n)^t, (X,\mE)) \simeq \mE_n. $$
Thus the functors $$  \Map_{\mP(\Delta)^{\#}}(\Delta^n,-), \ \Map_{\mP(\Delta)^{\#}}((\Delta^n)^t, -): \mP(\Delta)^{\#} \to \mS $$
preserve small colimits and so the objects
$ \Delta^n, (\Delta^n)^t \in \mP(\Delta)^{\#}$ for $ n \geq 0 $
corepresent a family of jointly conservative small colimits
preserving functors.
This implies that the restricted nerve functor
$\mP(\Delta)^{\#} \to \mP(\Delta^+)$ is conservative and its left adjoint is fully faithful and so an equivalence.
\end{proof}

\begin{notation}
We write $\Delta_{\geq -1}=\Delta^\triangleleft$ and $\bDelta_{\geq -1}=\bDelta^\triangleleft$ for the full subcategories of $\infty\Cat$ consisting of the simplices $\Delta$ and oriented simplices $\bDelta$ together with the initial object, the empty $\infty$-category. 
\end{notation}

\begin{construction}\label{joinalg}
Let $(\Delta_{\geq -1}, \ast)$ be the monoidal category of possibly empty totally ordered sets endowed with the join.
By \cref{locmon3} the restricted Yoneda embedding
$$ (\infty\Cat, \star) \to (\mP(\bDelta_{\geq -1}), \star) $$
is lax monoidal.
The monoidal structure on the right is Day-convolution 
induced by the join monoidal structure on $\bDelta_{\geq -1}$,
the full subcategory of $\infty\Cat$ spanned by the orientals and the empty $\infty$-category.
Therefore an associative algebra structure on an
$\infty$-category $\mC$ with respect to join corresponds to
an associative algebra structure on $\Map_{\infty\Cat}((-)|_{\bDelta_{\geq -1}},\mC) \in \mP(\bDelta_{\geq -1})$
with respect to Day-convolution, which is the same as a lax monoidal structure on the functor $$\Map_{\infty\Cat}((-)_{|\bDelta_{\geq -1}},\mC): \bDelta_{\geq -1}^\op \to \mS.$$

We apply this to $\mC= \bD^0.$
Then an associative algebra structure on $\bD^0$ with respect to join corresponds to a lax monoidal structure on the constant functor $\bDelta_{\geq -1}^\op \to \mS$ with value the final object, the tensor unit of $(\mS, \times).$
The constant functor $\bDelta_{\geq -1}^\op \to \mS$ with value the final object factors as monoidal functors $\bDelta_{\geq -1}^\op \to \bD^0 \to \mS$.
So we obtain a canonical associative algebra structure on $\bD^0$ in $(\infty\Cat, \star).$
    
\end{construction}

\begin{proposition}

There is a unique 
monoidal functor $\bDelta^{(-)}:(\Delta_{\geq -1}, \ast) \to (\infty\Cat, \ast)$ sending $[n]$ to $\bDelta^n.$
    
\end{proposition}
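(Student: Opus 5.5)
One natural route is to construct the functor on the strict side, using Steiner theory, and then transport it to $\infty\Cat$. The first step is to note that $(\Delta_{\geq -1},\ast)$ is the free monoidal category on an associative algebra object. More precisely, for any monoidal category $\mC$, restriction along $\{[0]\}\subset\Delta_{\geq -1}$ identifies monoidal functors $(\Delta_{\geq -1},\ast)\to\mC$ with associative algebras in $\mC$. The algebra here is $[0]$, with unit $[-1]=\emptyset\to[0]$ and multiplication $[0]\ast[0]=[1]\to[0]$. This is the $\infty$-categorical version of the classical fact, due to Mac Lane, that the augmented simplex category is the walking monoid. I would cite it in the form: $\Delta_{\geq -1}$ is the monoidal envelope of the trivial associative operad, or equivalently $\Alg(\mC)\simeq\Fun^{\otimes}(\Delta_{\geq -1},\mC)$.

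Once this is in place, existence and uniqueness reduce to a statement about algebras. We need a \emph{unique} associative algebra structure on $\bD^0$ in $(\infty\Cat,\star)$. We must also check that the induced monoidal functor sends $[n]$ to $\bD^0\star\cdots\star\bD^0$, the $(n+1)$-fold join, which is $\bbDelta^n$ by definition. \cref{joinalg} already produces an algebra structure on $\bD^0$.

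For uniqueness, the plan follows the argument of \cref{joinalg}. By \cref{locmon3}, algebra structures on $\bD^0$ correspond to lax monoidal structures on the constant functor $\bDelta_{\geq -1}^\op\to\mS$ with value $\ast$. Every value is the terminal object of $(\mS,\times)$. So the structure maps $\ast\times\ast\to\ast$ and $\ast\to\ast$ have contractible spaces of choices, and so do all higher coherences. Indeed, the space of lax monoidal structures on a functor into a cartesian monoidal category with terminal values is equivalent to the space of lax monoidal functors into the terminal monoidal category $\{\ast\}$, which is contractible. Hence the space of algebra structures on $\bD^0$ is contractible. Composing with the equivalence from the first step gives a contractible space of monoidal functors $(\Delta_{\geq -1},\ast)\to(\infty\Cat,\star)$ sending $[0]$ to $\bD^0$. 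For the value on $[n]$ no further argument is needed: a monoidal functor sends $[n]=[0]^{\ast(n+1)}$ to $(\bD^0)^{\star(n+1)}=\bbDelta^n$.

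The main obstacle is making precise what ``unique'' refers to. Monoidal functors sending $[n]$ to $\bbDelta^n$ could a priori differ by automorphisms of $\bD^0$, but $\bD^0$ is terminal, so $\Map(\bD^0,\bD^0)\simeq\ast$ and no such ambiguity arises. A second point is that the join on all of $\infty\Cat$ must be a genuine monoidal structure, not just one on Steiner $\infty$-categories. This is supplied by \cref{locmon3}, which provides the monoidal structure on $\infty\Cat$ by localizing the Day convolution on $\mP(\bDelta_{\geq -1})$. If one prefers to avoid the contractibility argument, there is an alternative: work in $(\infty\Cat^\Steiner,\star)$ via \cref{hh} and \cref{jointensor}. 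There $\bD^0$ corresponds to the final augmented directed complex $\bZ$, which is terminal in $\mathrm{ADC}$, and the same terminality argument gives uniqueness.
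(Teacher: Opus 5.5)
Your proposal is correct and takes essentially the paper's route: you combine the universal property of $(\Delta_{\geq -1},\ast)$ as the walking monoid, for which the paper cites Lurie's envelope result, with the algebra structure on $\bD^0$ produced in \cref{joinalg}. You also argue that this algebra structure is itself unique, since $\bD^0$ is terminal and so the constant functor has a contractible space of lax monoidal structures; this makes explicit what the paper's word ``unique'' leaves implicit.
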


\begin{proof}

Let $\mC$ be a monoidal category.
By \cite[Proposition 2.2.4.9.]{lurie.higheralgebra} for every associative algebra $A$ in $\mC$ there is a unique 
monoidal functor $(\Delta_{\geq -1}, \ast) \to \mC$ sending
$[0]$ to $A.$
So by \cref{joinalg} there is a unique 
monoidal functor $\bDelta^{(-)}:(\Delta_{\geq -1}, \ast) \to (\infty\Cat, \ast)$ sending $[0]$ to $\bD^0$ and so sending
$[n]$ to $\bDelta^n.$
\end{proof}

\begin{definition}\label{nerves}

The functor 
$\bDelta^{(-)}: \Delta \xrightarrow{} \bDelta \subset \infty\Cat$
induces an adjunction
$$ \mP(\Delta) \rightleftarrows \infty\Cat. $$
The right adjoint canonically lifts to a functor $$ \N: \infty\Cat \to \mP(\Delta)^{\#} $$
that sends an $\infty$-category $\mC $ to
the space $\Map_{\infty\Cat}(-,\mC)$ equipped with for every $n \geq 0$
the map $$\Map_{\infty\Cat}(\tau_{n-1}\bDelta^\n,\mC) \to \Map_{\infty\Cat}(\bDelta^\n,\mC).$$
We refer to the functor $\N$ as the Street nerve functor.

\end{definition}

\begin{remark}
The Street nerve functor $ \N: \infty\Cat \to \mP(\Delta)^{\#} $ is conservative and admits a left adjoint $$ | -|: \mP(\Delta)^{\#} \to \infty\Cat $$ since $\N$ preserves small limits and is accessible as the forgetful functor $\mP(\Delta)^{\#} \to \mP(\Delta) $ preserves limits and colimits.
\end{remark}

\begin{remark}\label{restru}
By adjointness the left adjoint $| - |: \mP(\Delta)^{\#} \to \infty\Cat $ extends the left adjoint $ \mP(\Delta) \to \infty\Cat $
along the free functor. So $|[n]| \simeq \bDelta^n$.
Moreover there is a canonical equivalence $$| (\Delta^n)^t | \simeq \tau_{n-1}\bDelta^n$$
because for every $\mC \in \infty\Cat$ there is a canonical equivalence
$$ \Map_{\infty\Cat}(|(\Delta^n)^t |,\mC) \simeq  \Map_{\mP(\Delta)^{\#}}((\Delta^n)^t,\N(\mC)) \simeq \Map_{\infty\Cat}(\tau_{n-1}\bDelta^\n,\mC). $$

\end{remark}

\begin{notation}

Let $\bDelta^+ \subset \infty\Cat $ be the full subcategory spanned by the oriented simplices $\bDelta^n $ for $n \geq 0$ and 
$\tau_{n-1}(\bDelta^n) $ for $n \geq 0$.

\end{notation}

\begin{notation}\label{funi}
    
By \cref{restru} the functor $$ \Delta^+ \subset \mP(\Delta)^{\#} \xrightarrow{|-|} \infty\Cat $$ induces an essentially surjective functor $i: \Delta^+ \to \bDelta^+. $
The functor $i: \Delta^+ \to \bDelta^+ $
induces an adjunction $$ i_!: \mP(\Delta^+) \rightleftarrows \mP(\bDelta^+):i^*. $$ 

\end{notation}

\begin{remark}\label{stnerve}
The right adjoint $ i^*: \mP(\bDelta^+) \to \mP(\Delta^+) $ is conservative since $i$ is essentially surjective. 
The composition $ \infty\Cat \to \mP(\bDelta^+) \to \mP(\Delta^+) $
of the $\bDelta^+$-nerve with the functor $i^*$ is the Street nerve $\N$ since for every $\mC \in \infty\Cat$ there is a canonical equivalence
$$ i^*(\N_{\bDelta^+}(\mC)) \simeq \Map_{\infty\Cat}(-,\mC) \circ i \simeq \Map_{\infty\Cat}(|-|_{| \Delta^\op},\mC) \simeq \Map_{\mP(\Delta^+)}((-)_{| \Delta^\op},\N(\mC)) \simeq \N(\mC).$$
\end{remark}

\begin{remark}

Since we work non-univalent, the Street nerve 
$ \N: \infty\Cat \to \mP(\Delta^+)$
sends strict $\infty$-categories to presheaves of sets on $\Delta^+$.
Thus the Street nerve functor restricts to a functor
$ \N: \infty\Cat^\strict \to \mP_\Set(\Delta^+)$.
\end{remark}

\begin{notation}

The restricted Street nerve functor
$ \N: \infty\Cat^\strict \to \mP_\Set(\Delta^+)$
admits a left adjoint $$|-|_{\strict}: \mP_\Set(\Delta^+) \to \infty\Cat^\strict, $$ which factors as $$ \mP_\Set(\Delta^+) \subset \mP(\Delta^+) \xrightarrow{|-|} \infty\Cat \to \infty\Cat^\strict,$$
where the last functor is the left adjoint of the embedding
$\infty\Cat^\strict \subset \infty\Cat$.

\end{notation}

\begin{remark}

By construction, for every $X \in \mP_\Set(\Delta^+)$ there is a canonical functor $|X| \to |X|_\strict$, which is an equivalence if and only if $|X|$ is a strict $\infty$-category. This applies to $X= \Delta^n, (\Delta^n)^t$ for any $n \geq 0.$

\end{remark}

\begin{notation}Let $n \geq 0$ and $0 \leq k \leq n$.
\begin{enumerate}[\normalfont(1)]\setlength{\itemsep}{-2pt}
\item Let $\Delta^n_k$ be the simplicial set $\Delta^n$ equipped with the marking whose non-degenerate simplices are marked if
and only if they contain the vertices $${k-1,k,k+1} \cap [n].$$

\item Let $\Lambda^n_k$ be the simplicial set $\Lambda_k^n$ equipped with marking whose non-degenerate simplices are marked if and only if they contain the vertices $${k-1,k,k+1} \cap [n].$$

\item Let $ (\Delta^n_k)' $ by the simplicial set 
$\Delta^n$ equipped with the marking obtained from
$\Delta^n_k$ and additionally marking the $(k-1)$-st and $(k+1)$-st face of $\Delta^n.$

\item Let $ (\Delta^n_k)'' $ be the simplicial set 
$\Delta^n$ equipped with the marking obtained from
$(\Delta^n_k)'$ and additionally marking the $k$-th face of $\Delta^n.$

\end{enumerate}
    
\end{notation}

\begin{definition}\label{locmapl}
\begin{enumerate}[\normalfont(1)]\setlength{\itemsep}{-2pt}
\item The complicial horn extension is the canonical map
$$ \Lambda^n_k \to \Delta^n_k$$ for $n > 1$ and $0 < k < n$, which lifts the simplicial horn inclusion.

\item The complicial thinness extension is the canonical map
$$ (\Delta_k^n)' \to (\Delta_k^n)'' $$ for $n > 1$ and $0 \leq k \leq n$, which lifts the identity on underlying simplicial sets.

\end{enumerate}

\end{definition}

\begin{definition}

A complicial space is an object of $\mP(\Delta)^{\#}$ that is local with respect to the complicial horn extensions and complicial thinness extensions.
A complicial set is a complicial space whose underlying simplicial space is a simplicial set.
    
\end{definition}

\begin{remark}

Locality with respect to complicial horn extensions provides associative and unital composition.

Locality with respect to complicial thinness extension makes the marked simplices compatible with composition.
    
\end{remark}

\begin{definition}

A complicial space is univalent if it is local with respect to to the
map $\N(S^{n}(J)) \to * $ in $ \mP(\Delta^+)$ for every $n \geq 0$, where $J$ is the non-univalent $(1,1)$-category with two objects and one equivalence between them.
    
\end{definition}

\subsection{The Street--Roberts conjecture}

The following is due to \cite[Theorem 2.1.]{maehara2023orientals}:

\begin{theorem}\label{localeq}

The map $$\Delta^n \to \N(\bDelta^n) $$ of
stratified simplicial sets is local with respect to complicial spaces.

\end{theorem}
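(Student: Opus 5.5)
We need to show that $\Delta^n \to \N(\bDelta^n)$ becomes an equivalence after complicial localization, i.e.\ is an anodyne-type local equivalence. Following Maehara's strategy, the plan is to factor the map through a sequence of complicial horn extensions and complicial thinness extensions, together with pushouts and transfinite composites of these. Local equivalences are closed under such operations, so any such factorization proves the claim.

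\textbf{Step 1: describe the target.} By \cref{hh}, functors $\bDelta^m \to \bDelta^n$ correspond to maps of Steiner complexes $\lambda(\bDelta^m)\to\lambda(\bDelta^n)$. The latter are determined by sending the atomic basis elements of $\lambda(\bDelta^m)$ to positive sums of basis elements in the correct degrees, compatibly with the differential and augmentation. For each $m$, the $m$-simplex $\bDelta^m\to\bDelta^n$ is thin exactly when it factors through $\tau_{m-1}\bDelta^m$. Hence an $m$-simplex of $\N(\bDelta^n)$ is thin iff it sends the top generator to an identity, i.e.\ to $0$ in degree $m$. This gives a combinatorial model: $\N(\bDelta^n)$ is a stratified simplicial set, and its nondegenerate nonthin simplices are controlled by the complexity of the chain images.

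\textbf{Step 2: filtration.} I would filter $\N(\bDelta^n)$ by stratified subsets $F_k$, starting from $F_0=\Delta^n$ (the atomic simplices, with the degenerate marking). Simplices are added in order of a complexity measure, for instance the pair (dimension, number of atomic summands used in the images of the generators). Each simplex $x$ not in $\Delta^n$ should be attached in one of two ways:
\begin{itemize}
\item as the filler of a complicial horn $\Lambda^m_k\to\Delta^m_k$, where $k$ is a distinguished ``pivot'' vertex read off from the first place where $x$ fails to be atomic;
\item as a marking added by a thinness extension $(\Delta^m_k)'\to(\Delta^m_k)''$.
\end{itemize}
Simplices that are thin but not degenerate are exactly the ones that acquire their marking through thinness extensions. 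The pairing between new simplices $x$ and the higher simplices $y$ whose horns $x$ fills is of the discrete-Morse, Reedy-style type. Concretely, $y$ is obtained from $x$ by inserting a degenerate-looking vertex that is realized non-degenerately, as a composite in $\bDelta^n$.

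\textbf{Step 3: verify the pushouts.} For each pair $(x,y)$ I need three things:
\begin{itemize}
\item the horn $\Lambda^{m}_k\subset y$ lies in the previous stage;
\item the newly required markings of $\Delta^m_k$ (the faces containing $k-1,k,k+1$) are indeed thin in $\N(\bDelta^n)$;
\item the pushout of stratified simplicial sets equals the next stage.
\end{itemize}
Since everything is strict and set-valued, these pushouts compute correctly as spaces. It also suffices to check locality against complicial spaces by mapping in, since the maps involved are monomorphisms of sets.

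\textbf{Main obstacle.} I expect the main obstacle to be Step 2: choosing the pivot and the complexity ordering so that every non-atomic simplex is paired exactly once, and so that the horn really lies in an earlier stage. This is where Street's loop-free combinatorics of the orientals (the parity/excision description of $\bDelta^n$) enters. I would first attempt this by induction on $n$ via the join decomposition $\bDelta^n=\bDelta^{n-1}\star\bD^0$. The idea is to reduce to the cone direction and use the retraction results of \cref{11} to compare simplices of $\N(\bDelta^{n-1}\star\bD^0)$ with those of $\N(\bDelta^{n-1})$.
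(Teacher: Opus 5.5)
\textbf{Where the proposal falls short.} Your outer reduction matches the paper's: once $\Delta^n\to\N(\bDelta^n)$ lies in the closure of the complicial horn and thinness extensions under coproducts, pushouts and transfinite composition, locality follows. Your direct passage to complicial spaces is a reasonable substitute for the paper's appeal to the Ozornova--Rovelli comparison with Verity's model structure. It uses that pushouts of set-valued presheaves on $\Delta^+$ along these monomorphisms are already pushouts in $\mP(\Delta^+)$. This works provided each thinness extension only marks a simplex that is not yet marked.

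The gap is that the anodyne decomposition itself, which is the entire content of the theorem, is never constructed. Step 2 only says each simplex \emph{should be attached}, with a pivot read off from somewhere. No pivot, no ordering and no pairing $x\leftrightarrow y$ is actually defined, so the three checks in Step 3 cannot even be stated, let alone verified. This is genuinely nontrivial. $\N(\bDelta^n)$ has nondegenerate simplices in every dimension: already for $n=2$ there are such simplices on vertex sequences $0,\dots,0,2,\dots,2$ whose edges $0\to 2$ form a staircase pattern of the two parallel $1$-cells. Your heuristic about markings is also off. The filler of $\Lambda^m_k\to\Delta^m_k$ is itself marked, so every upper member $y$ of a pair must be a thin simplex of $\N(\bDelta^n)$ and enters already thin. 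Every non-thin simplex outside $\Delta^n$ must then enter as the face $d_k y$ of such a thin $y$. So thin nondegenerate simplices are not just those marked by thinness extensions.

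The paper does not carry out this combinatorics; it cites it as Maehara's Theorem 2.1. As written, your proposal is a plan for reproving that theorem, not a proof. You should either cite the theorem, as the paper does, or actually produce the pairing and verify it.

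\textbf{The fallback does not help.} \cref{11} only exhibits $S(\bDelta^{n})$ as a retract of $\bDelta^{n+1}$ in $\infty\Cat_{\partial\bD^1/}$. A retraction of $\infty$-categories says nothing about how the simplices of $\N(\bDelta^{n+1})$ are generated from $\Delta^{n+1}$ by horn fillings. An induction along $\bDelta^n=\bDelta^{n-1}\star\bD^0$ would instead need two facts. First, the stratified join with $\Delta^0$ must preserve complicial anodyne maps. Second, the comparison $\N(\bDelta^{n-1})\star\Delta^0\to\N(\bDelta^{n-1}\star\bD^0)$ must be anodyne. That second fact is a statement of the same nature and difficulty as the theorem itself.
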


\begin{proof}

By \cite[Theorem 2.1.]{maehara2023orientals} the map $\Delta^n \to \N(\bDelta^n) $ of
stratified simplicial sets belongs to the closure of the union
of all complicial horn extensions and thinness extensions
under coproducts, pushouts along arbitrary maps, and transfinite compositions. 
Hence this map is a weak equivalence in the Verity model structure on stratified simplicial sets.
By \cite[Proposition 1.35., Theorem 2.12.]{ozornova2020model} the Verity model structure on stratified simplicial sets models complicial spaces.
\end{proof}

The following conjecture is of fundamental importance in higher category theory.
If it is shown to be true, it would facilitate comparisons between various models of $\infty$-category theory.
Namely, it would provide a direct equivalence between the categories of complicial spaces and Segal presheaves on the orientals, or any other dense family of oriented polytopes.

\begin{conjecture}\label{localeq2}

The map 
$$(\Delta^n)^t \to \N(\tau_{n-1}(\bDelta^n)) $$ of
stratified simplicial spaces is local with respect to complicial spaces.

\end{conjecture}

\begin{notation}

Let $\overline{\bDelta^+} \subset \infty\Cat$ be the smallest full subcategory containing $\bDelta^+$ and closed under retracts.
Note that restriction induces an equivalence $ \mP(\overline{\bDelta^+}) \simeq \mP(\bDelta^+).$     
\end{notation}

\begin{lemma}\label{retros}

Let $Z \in \mP(\bDelta^+) \simeq \mP(\overline{\bDelta^+})$ such that $i^*(Z)$ is a complicial space and $T \in \infty\Cat$ a 
retract of $\bDelta^n$ for some $n \geq 0.$ 
The canonical map 
$$ Z(\T) \simeq \Map_{\mP(\overline{\bDelta^+})}(\N_{\overline{\bDelta^+}}(T),Z) \simeq \Map_{\mP(\bDelta^+)}(\N_{\bDelta^+}(T),Z) \to \Map_{\mP(\Delta^+)}(\N(T),i^*(Z)) $$
is an equivalence.
  
\end{lemma}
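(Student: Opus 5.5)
Since $T$ is a retract of $\bDelta^n$, I would choose maps $s\colon T\to\bDelta^n$ and $r\colon\bDelta^n\to T$ with $rs=\id_T$. The canonical map of the statement is natural in $T$ with respect to all maps in $\overline{\bDelta^+}$. Source and target are both contravariant functors of $T$: the source is $Z$ extended to $\overline{\bDelta^+}$, and the target is $\Map_{\mP(\Delta^+)}(\N(-),i^*(Z))$. Consequently the map for $T$ is a retract, in the arrow category of $\mS$, of the map for $\bDelta^n$, via the maps induced by $s$ and $r$. Equivalences are closed under retracts. So it suffices to treat the case $T=\bDelta^n$ itself.

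For $T=\bDelta^n$ the source is $Z(\bDelta^n)\simeq\Map_{\mP(\bDelta^+)}(\N_{\bDelta^+}(\bDelta^n),Z)$ by Yoneda. I would factor the map as
\[
Z(\bDelta^n)\simeq i^*(Z)([n])\simeq\Map_{\mP(\Delta^+)}(\Delta^n,i^*(Z))\to\Map_{\mP(\Delta^+)}(\N(\bDelta^n),i^*(Z)),
\]
where the first equivalence holds because $i$ sends $\Delta^n$ to $\bDelta^n$ by \cref{restru}. The last map is restriction along the unit $\Delta^n\to\N(\bDelta^n)$, which corresponds to the identity of $\bDelta^n$ under \cref{stnerve}. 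I would check that this composite agrees with the canonical map in the statement. That check amounts to unwinding the adjunction $i_!\dashv i^*$ together with the equivalence $i^*\N_{\bDelta^+}\simeq\N$ of \cref{stnerve}.

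By \cref{localeq}, the map $\Delta^n\to\N(\bDelta^n)$ is local with respect to complicial spaces. Since $i^*(Z)$ is complicial by hypothesis, restriction along this map induces an equivalence on mapping spaces into $i^*(Z)$. This settles the case $T=\bDelta^n$, and the general case follows by the retract argument.

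I expect the main obstacle to be making the naturality precise, namely showing that the canonical map is natural for all maps of $\overline{\bDelta^+}$, in particular the non-atomic maps $r$ and $s$. This requires that $\N(-)=i^*\N_{\bDelta^+}(-)$ is functorial on all of $\infty\Cat$ and that the comparison map is the component of a natural transformation. That transformation is $Z\simeq\Map(\N_{\bDelta^+}(-),Z)\to\Map(i^*\N_{\bDelta^+}(-),i^*Z)$, given by applying the functor $i^*$ to mapping spaces, which is manifestly natural. With this formulation naturality is automatic, and the rest is bookkeeping.
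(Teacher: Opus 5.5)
Your argument is essentially the paper's: both compare the canonical map for $\bDelta^n$ with the adjunction equivalence $\Map(\N_{\bDelta^+}(\bDelta^n),Z)\simeq\Map(\Delta^n,i^*Z)$ via the unit $u\colon\Delta^n\to\N(\bDelta^n)$. Both then use \cref{localeq} and two-out-of-three, and pass to retracts; you do the retract reduction first, while the paper does it last. One slip to fix: restriction along $u$ is a map $u^*\colon\Map(\N(\bDelta^n),i^*Z)\to\Map(\Delta^n,i^*Z)$. So the correct statement is that the canonical map followed by $u^*$ is the adjunction equivalence, not that the canonical map factors through a map out of $\Map(\Delta^n,i^*Z)$ as you wrote.
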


\begin{proof}
The induced map $$ \Map_{\mP(\bDelta^+)}(\bDelta^n,Z) \to \Map_{\mP(\Delta^+)}(i^*(\bDelta^n),i^*(Z)) \to \Map_{\mP(\Delta^+)}(\Delta^n,i^*(Z)) $$ identifies with the canonical equivalence
$$ \Map_{\mP(\bDelta^+)}(\bDelta^n,Z) \simeq \Map_{\mP(\bDelta^+)}(i_!(\Delta^n), Z) \simeq \Map_{\mP(\Delta^+)}(\Delta^n,i^*(Z)). $$
Since $i_!(Z)$ is a complicial space, by \cref{localeq} the second map in the composition 
$$ \Map_{\mP(\Delta^+)}(i^*(\bDelta^n),i^*(Z)) \to \Map_{\mP(\Delta^+)}(\Delta^n,i^*(Z)) $$ is an equivalence. Thus the first map in the composition $$ \xi: \Map_{\mP(\bDelta^+)}(\bDelta^n,Z) \to \Map_{\mP(\Delta^+)}(i^*(\bDelta^n),i^*(Z))$$ is an equivalence.

Let $T \in \infty\Cat$ be a retract of $\bDelta^n$.
The map $$ \Map_{\mP(\bDelta^+)}(\N_{\bDelta^+}(T),Z) \to \Map_{\mP(\Delta^+)}(N(T),i^*(Z))$$ is a retract of the equivalence $\xi$ and so an equivalence, too.
Hence the canonical map
$$ Z(\T) \simeq \Map_{\mP(\overline{\bDelta^+})}(T,Z) = \Map_{\mP(\overline{\bDelta^+})}(\N_{\overline{\bDelta^+}}(T),Z) \simeq \Map_{\mP(\bDelta^+)}(\N_{\bDelta^+}(T),Z) \to \Map_{\mP(\Delta^+)}(\N(T),i^*(Z)) $$
is an equivalence.
\end{proof}

\begin{corollary}\label{retros2}

Let $Z \in \mP(\bDelta^+) \simeq \mP(\overline{\bDelta^+})$ such that $i^*(Z)$ is a complicial space and $T \in \mP(\bDelta^+)$ an object of the smallest full subcategory of $\mP(\bDelta^+) \simeq \mP(\overline{\bDelta^+})$ closed under small colimits and containing $\overline{\bDelta} \subset \overline{\bDelta^+}$.
The canonical map 
$$ \Map_{\mP(\bDelta^+)}(T,Z) \to \Map_{\mP(\Delta^+)}(i^*(T),i^*(Z)) $$
is an equivalence.

\end{corollary}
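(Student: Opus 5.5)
We fix $Z$ with $i^*(Z)$ complicial and let $\mathcal{T} \subset \mP(\bDelta^+)$ be the full subcategory of those $T$ for which the canonical map
$$\Map_{\mP(\bDelta^+)}(T,Z) \to \Map_{\mP(\Delta^+)}(i^*(T),i^*(Z))$$
is an equivalence. The plan is to show that $\mathcal{T}$ contains the representables $\N_{\overline{\bDelta^+}}(T')$ for $T' \in \overline{\bDelta}$ and is closed under small colimits. Then $\mathcal{T}$ contains the smallest full subcategory closed under small colimits and containing $\overline{\bDelta}$, which is exactly the claim.

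\textbf{Step 1: the generators.} Let $T'$ be an object of $\overline{\bDelta}$, i.e.\ a retract of some $\bDelta^n$. Under $\mP(\bDelta^+)\simeq\mP(\overline{\bDelta^+})$ its representable corresponds to $\N_{\bDelta^+}(T')$. Moreover, $i^*(\N_{\bDelta^+}(T'))$ is the Street nerve $\N(T')$, by \cref{stnerve}. With these identifications, the map in question is exactly the map of \cref{retros}, which is an equivalence by that lemma. So the representables of $\overline{\bDelta}$ lie in $\mathcal{T}$.

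\textbf{Step 2: closure under colimits.} Both sides send colimits in $T$ to limits of spaces. For the left side this holds because $\Map_{\mP(\bDelta^+)}(-,Z)$ converts colimits to limits. For the right side, $i^*$ is restriction along $i$ and is computed pointwise, so it preserves small colimits (it also has the right adjoint $i_*$); composing with $\Map_{\mP(\Delta^+)}(-,i^*(Z))$ again converts colimits to limits. The comparison map is natural in $T$, being induced by the functor $i^*$ on mapping spaces. Hence, for a diagram $T=\colim_k T_k$ with all $T_k\in\mathcal{T}$, the comparison map for $T$ is a limit of equivalences and therefore an equivalence. So $\mathcal{T}$ is closed under small colimits.

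\textbf{Main obstacle.} The only delicate point is bookkeeping in Step 1: making sure that the map of \cref{retros}, which is phrased through $\N(T)$ and the equivalence $\mP(\overline{\bDelta^+})\simeq\mP(\bDelta^+)$, coincides with the map induced by $i^*$ applied to the representable $\N_{\bDelta^+}(T')$. I would check this by unwinding \cref{stnerve}. That remark gives $i^*\N_{\bDelta^+}\simeq \N$ naturally, and this equivalence is compatible with the functoriality of $i^*$ on mapping spaces. Once this identification is in place, the corollary follows from Steps 1 and 2.
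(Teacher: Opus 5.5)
Your proposal is correct and follows the paper's proof. The paper argues the same way: the class of $T$ for which the comparison map is an equivalence is closed under small colimits because $i^*$ preserves them, and it contains $\overline{\bDelta}$ by \cref{retros}. Your identification of $i^*\N_{\bDelta^+}(T')$ with $\N(T')$ via \cref{stnerve} is exactly the bookkeeping the paper leaves implicit.
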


\begin{proof}
The full subcategory of $T \in \mP(\Delta^+)$ such that the map of the statement is an equivalence, is closed under small colimits since the functor
$i^*: \mP(\bDelta^+) \to \mP(\Delta^+)$ preserves small colimits,
and contains $\overline{\bDelta}$ by \cref{retros}.
The result follows.   
\end{proof}

By \cref{orientaldense} the full subcategory $\bDelta \subset \infty\Cat$ is dense.
Hence also the full subcategory $\bDelta^+ \subset \infty\Cat $ is dense. Thus the $\bDelta^+$-nerve $ \infty\Cat \to \mP(\bDelta^+)$ is fully faithful.
In other words, we have a localization $$ \mP(\bDelta^+) \rightleftarrows \infty\Cat,$$ whose right adjoint is the $\bDelta^+$-nerve.

\begin{proposition}\label{charnerve}

A presheaf $X \in \mP(\bDelta^+)$ is the $\bDelta^+$-nerve of an $\infty$-category if and only if $i^*(X) \in \mP(\Delta^+)$ 
is a complicial space.
    
\end{proposition}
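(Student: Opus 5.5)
The plan is to reduce both directions to the complicial model of Loubaton~\cite{loubaton2024complicialmodelinftyomegacategories}. We use it in the form: the Street nerve $\N\colon\infty\Cat\to\mP(\Delta^+)$ is fully faithful with essential image the complicial spaces. We combine this with \cref{stnerve}, which identifies $i^*\circ\N_{\bDelta^+}$ with the Street nerve, and with \cref{retros}, which says that on presheaves $Z$ with $i^*(Z)$ complicial the value $Z(T)$ at a retract $T$ of an oriental is computed by $\Map_{\mP(\Delta^+)}(\N(T),i^*(Z))$.

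The forward direction is immediate. If $X\simeq\N_{\bDelta^+}(\mC)$, then by \cref{stnerve} we have $i^*(X)\simeq\N(\mC)$, the Street nerve of $\mC$. By Loubaton's theorem this is a complicial space.

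For the converse, let $i^*(X)$ be complicial. By Loubaton's theorem there are an $\infty$-category $\mC$ and an equivalence $i^*(X)\simeq\N(\mC)=i^*(\N_{\bDelta^+}(\mC))$. Set $Z=X$ and $Z'=\N_{\bDelta^+}(\mC)$; both have complicial image under $i^*$. We first compare them on the oriented simplices and their retracts. By \cref{retros}, for every $T\in\overline{\bDelta}$ there are equivalences
\[
Z(T)\simeq \Map_{\mP(\Delta^+)}(\N(T),i^*(Z))\simeq \Map_{\mP(\Delta^+)}(\N(T),i^*(Z'))\simeq Z'(T).
\]
These are natural in $T\in\overline{\bDelta}$, including the non-atomic maps, because the middle term is functorial in $T$ through the Street nerve. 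This gives an equivalence $X|_{\bDelta}\simeq \N_{\bDelta}(\mC)$ that is compatible with the given equivalence after applying $i^*$.

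It remains to handle the objects $\tau_{n-1}(\bDelta^n)$. Complicial spaces are in particular stratified, so for each $n$ the map $X(\tau_{n-1}\bDelta^n)\simeq (i^*X)((\Delta^n)^t)\to (i^*X)(\Delta^n)\simeq X(\bDelta^n)$ is an embedding, and the same holds for $Z'$. Hence the values of $X$ at $\tau_{n-1}(\bDelta^n)$ are subspaces of its values on orientals. Consider any map in $\bDelta^+$ whose source or target is some $\tau_{m-1}(\bDelta^m)$. Composing with the quotient $\bDelta^m\to\tau_{m-1}(\bDelta^m)$ turns it into data on orientals. Because restriction along this quotient is a monomorphism on values, the functoriality on these subspaces is uniquely determined by the functoriality on $\bDelta$. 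Therefore the equivalence on $\bDelta$ extends uniquely to $X\simeq Z'$ on all of $\bDelta^+$, provided the subspaces correspond. They do correspond, since both are identified with the marked $n$-simplices of $i^*(X)\simeq\N(\mC)$.

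The main obstacle is this last step. It requires showing rigorously that a presheaf on $\bDelta^+$ is determined by its restriction to $\bDelta$ together with the embeddings $X(\tau_{n-1}\bDelta^n)\hookrightarrow X(\bDelta^n)$. Concretely, one shows that the relevant full subcategory of $\mP(\bDelta^+)$ is equivalent to a category of presheaves on $\bDelta$ equipped with compatible subobjects. I would prove this by a right Kan extension argument along $\bDelta\subset\bDelta^+$, using that each quotient $\bDelta^n\to\tau_{n-1}(\bDelta^n)$ is an epimorphism in $\infty\Cat$. The point to avoid is any attempt to compare $i^*$ of the representable on $\tau_{n-1}(\bDelta^n)$ with $(\Delta^n)^t$ directly, since that comparison is exactly \cref{localeq2}.
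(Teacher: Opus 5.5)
Your forward direction and your comparison $X|_{\overline{\bDelta}}\simeq\N_{\overline{\bDelta}}(\mC)$ via \cref{retros} are fine. The last step, however, fails outright, not merely for lack of rigor. The paper works with non-univalent $\infty$-categories and defines $\tau_{n-1}$ as the left adjoint of $(n-1)\Cat\subset\infty\Cat$. So, for instance, $\tau_0(\bDelta^1)\simeq\bD^0$, and the degree-one marking map of a Street nerve is the degeneracy $\iota_0(\mC)\to\Map_{\infty\Cat}(\bDelta^1,\mC)$. Take $\mC$ to be the one-object $1$-category whose endomorphism space is the circle group $S^1$. Then this map is the unit $\ast\to S^1$, whose fiber over the identity is $\Omega S^1\simeq\bZ$. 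Correspondingly $\iota_0(\bD^0\coprod_{\bD^1}\bD^0)\simeq S^1$, so $\bDelta^1\to\tau_0(\bDelta^1)$ is not an epimorphism in $\infty\Cat$, already for $n=1$. Complicial spaces in the paper's sense are only prestratified, and Street nerves of non-univalent $\infty$-categories are exactly of this non-stratified kind. Indeed, the black box you invoke, in the non-univalent form you state it, places them in the essential image. Hence $X(\tau_{n-1}\bDelta^n)$ and $\N_{\bDelta^+}(\mC)(\tau_{n-1}\bDelta^n)$ are in general not subspaces of the values at $\bDelta^n$. Your extension from $\bDelta$ to $\bDelta^+$ by matching subspaces is therefore unavailable, and the right Kan extension argument you sketch rests on the same false epimorphism claim.

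The remedy is to compare via an actual map in $\mP(\bDelta^+)$ rather than objectwise. Take the unit $u\colon X\to j_*j^*X$ of restriction along $j\colon\bDelta\subset\bDelta^+$; the paper uses $\Theta\subset\overline{\bDelta^+}$ instead. Your \cref{retros} step gives $j^*X\simeq\N_{\bDelta}(\mC)$, so $j_*j^*X\simeq\N_{\bDelta^+}(\mC)$ by density. Since $i^*$ is conservative, it then suffices to show that $i^*(u)$ is an equivalence; it is a map between complicial spaces that is already an equivalence on unmarked simplices. The paper identifies $u$ at the disks with $\Map(\N(\bD^n),i^*(u))$ via \cref{retros}, and concludes by Loubaton's result that the $\N(\bD^n)$ detect equivalences of complicial spaces. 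With your stronger black box you could instead write $i^*(u)$ as the Street nerve of a functor inducing equivalences on all $\Map_{\infty\Cat}(\bDelta^n,-)$, which is then an equivalence by \cref{orientaldense}. Be aware, though, that the form of Loubaton's theorem you assume (fully faithful non-univalent Street nerve with image the prestratified complicial spaces) is essentially \cref{Street2}. The paper deduces that theorem from this very proposition and imports from Loubaton only the detection statement.
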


\begin{proof}

If a presheaf on $\bDelta^+$ is the $\bDelta^+$-nerve of an $\infty$-category, then its image under $i^*$ is the Street nerve of an $\infty$-category and so a complicial space.
We prove the converse. Let $X$ be a presheaf on $\bDelta^+$ such that $i^*(X)$ is a complicial space.

By \cref{retract} there is an embedding $ \Theta \subset \overline{\bDelta}$ and so an embedding $j: \Theta \subset 
\overline{\bDelta} \subset \overline{\bDelta^+}$.
This embedding induces an adjunction 
$$j^*: \mP(\bDelta^+) \simeq \mP(\overline{\bDelta^+}) \rightleftarrows \mP(\Theta): j_* .$$

If $\iota^*(X)$ is a complicial space, the $\Theta$-space $i^*(X)$ satisfies the Segal condition, and therefore by \cref{theta} is the $\Theta$-nerve of an $\infty$-category $\mC$. Hence $j_*(j^*(X)) \simeq \N_{\bDelta^+}(\mC)$ is the $\bDelta^+$-nerve of $\mC$ by density of $\bDelta^+$ in $\infty\Cat$.
So it suffices to see that the unit $X \to j_*(j^*(X))$ is an equivalence.
Since $i^*$ is conservative, we have to see that the induced map
$i^*(X) \to i^*(j_*(j^*(X)))$ is an equivalence.
By assumption $i^*(X)$ is a complicial space. Moreover $i^*(j_*(j^*(X)))$
is the Street nerve of $\mC$, and so a complicial space, since $j_*(j^*(X))$ is the $\bDelta^+$-nerve of $\mC$. 
Hence $i^*(X) \to i^*(j_*(j^*(X)))$ is a map between complicial spaces.

By definition the unit $X \to j_*(j^*(X))$ in $ \mP(\bDelta^+) \simeq \mP(\overline{\bDelta^+})$ induces an equivalence after restriction to $\Theta.$ In particular, the unit $X \to j_*(j^*(X))$ is an equivalence when evaluated at any $n$-disk for every $n \geq 0.$

The $n$-disk $\bD^n$ is a retract of $\bDelta^n$ via the functor taking the unique non-degenerate $n$-morphism. 
By \cref{retros} the unit $X \to j_*(j^*(X))$ evaluated at any $n$-disk
identifies with the induced map
$$ \Map_{\mP(\Delta^+)}(\N(\bD^n),i^*(X)) \to \Map_{\mP(\Delta^+)}(\N(\bD^n),i^*(j_*(j^*(X)))).$$ 
Hence the latter map is an equivalence.
This implies that the map $i^*(X) \to i^*(j_*(j^*(X)))$ between complicial spaces is an equivalence since the family $\N(\bD^n)$ for $n \geq 0$ detects equivalences between complicial spaces by \cite[Theorem 2.4.4.13.]{loubaton2024categorical}.
\end{proof}

\begin{notation}

Right Kan extension along the embedding $j: \bDelta \subset \bDelta^+$
gives an embedding $j_*: \mP(\bDelta) \to \mP(\bDelta^+)$.
We obtain a functor $$\mP(\bDelta) \xrightarrow{j_*} \mP(\bDelta^+) \xrightarrow{i^*} \mP(\Delta^+). $$ 

\end{notation}

\begin{remark}

The functor $ i^* \circ j_*: \mP(\bDelta) \to \mP(\Delta^+) \simeq \mP(\Delta)^{\#} $
lifts the functor 
$\mP(\bDelta) \to \mP(\Delta) $ induced by the functor
$\bDelta^{(-)}: \Delta \to \bDelta$ of \cref{nerves}, along the forgetful functor $\mP(\Delta)^{\#} \to \mP(\Delta).$

So by construction, the functor $ i^* \circ j_*: \mP(\bDelta) \to \mP(\Delta^+) \simeq \mP(\Delta)^{\#} $ sends $X \in \mP(\bDelta)$ to $ X \circ \bDelta^{(-)}$
equipped with the marked simplices 
$$ \{\Map_{\mP(\bDelta^+)}(\N_{\bDelta^+}(\tau_{n-1}\bDelta^n),j_*(X)) \to \Map_{\mP(\bDelta^+)}(\N_{\bDelta^+}(\bDelta^n),j_*(X)) \}_{n \geq 0}. $$
This set of marked simplices agrees with the set 
$$ \{\Map_{\mP(\bDelta)}(\N_{\bDelta}(\tau_{n-1}\bDelta^n),X) \to \Map_{\mP(\bDelta)}(\N_{\bDelta}(\bDelta^n),X) \}_{n \geq 0} $$
since $j_* \circ \N_{\bDelta} \simeq \N_{\bDelta^+}$ and so 
$\N_{\bDelta} \simeq j^* \circ j_* \circ \N_{\bDelta} \simeq j^* \circ \N_{\bDelta^+}.$

In particular, the functor $ \mP(\bDelta) \subset \mP(\bDelta^+) \xrightarrow{i^*} \mP(\Delta^+) $ 
sends an $\infty$-category to its Street nerve. 

\end{remark}

\begin{corollary}\label{charnerve2}

A $\bDelta$-space $X \in \mP(\bDelta)$ satisfies the Segal condition 
if and only if its image under the canonical functor
$ i^* \circ j_* : \mP(\bDelta) \to \mP(\Delta^+) $  is a complicial space.
  
\end{corollary}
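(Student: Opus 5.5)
The plan is to reduce the statement to \cref{charnerve} together with \cref{polytopnerv} for the family $\bDelta$. By \cref{orientalfam}, $\bDelta$ is a dense family of oriented polytopes, so \cref{polytopnerv} says that a $\bDelta$-space $X$ satisfies the Segal condition if and only if $X\simeq \N_\bDelta(\mC)$ for some $\infty$-category $\mC$. By \cref{charnerve}, a presheaf $Y\in\mP(\bDelta^+)$ is a $\bDelta^+$-nerve if and only if $i^*(Y)$ is a complicial space. It therefore suffices to prove the following: $X\in\mP(\bDelta)$ is a $\bDelta$-nerve if and only if $j_*(X)\in\mP(\bDelta^+)$ is a $\bDelta^+$-nerve.

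For the forward direction, suppose $X\simeq\N_\bDelta(\mC)$. The preceding remark gives $j_*\circ\N_\bDelta\simeq\N_{\bDelta^+}$, so $j_*(X)\simeq\N_{\bDelta^+}(\mC)$. Then $i^*j_*(X)$ is the Street nerve of $\mC$. Street nerves are complicial spaces; this is the easy half of \cref{charnerve}.

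For the converse, suppose $i^*j_*(X)$ is complicial. By \cref{charnerve}, $j_*(X)\simeq\N_{\bDelta^+}(\mC)$ for some $\mC$. Since $j:\bDelta\subset\bDelta^+$ is fully faithful, right Kan extension $j_*$ is fully faithful, so $j^*j_*\simeq\id$. Applying $j^*$ gives $X\simeq j^*\N_{\bDelta^+}(\mC)\simeq\N_\bDelta(\mC)$. By \cref{polytopnerv}, $X$ then satisfies the Segal condition.

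The step needing the most care is the identification $j_*\circ\N_\bDelta\simeq\N_{\bDelta^+}$. To establish it, I would compute $(j_*\N_\bDelta\mC)(\tau_{n-1}\bDelta^n)=\Map_{\mP(\bDelta)}(\N_\bDelta(\tau_{n-1}\bDelta^n),\N_\bDelta\mC)$ and use that the $\bDelta$-nerve is fully faithful by density (\cref{orientaldense}). This rewrites the value as $\Map_{\infty\Cat}(\tau_{n-1}\bDelta^n,\mC)$, which is the value of $\N_{\bDelta^+}(\mC)$ at $\tau_{n-1}\bDelta^n$. On objects of $\bDelta$ the identification is immediate from full faithfulness of $j$. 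With this in place, both directions are formal consequences of \cref{charnerve} and \cref{polytopnerv}.
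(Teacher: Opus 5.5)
Your proposal is correct and follows the paper's proof. Both directions go through \cref{polytopnerv} and \cref{charnerve}, and the converse applies $j^*$ to $j_*(X)\simeq\N_{\bDelta^+}(\mC)$ together with $j^*j_*\simeq\mathrm{id}$. Your explicit check that $j_*\circ\N_\bDelta\simeq\N_{\bDelta^+}$ via density of $\bDelta$ supplies a justification that the paper only asserts in the preceding remark.
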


\begin{proof}

By \cref{polytopnerv} a $\bDelta$-space $X $ satisfies the Segal condition if and only if it is the $\bDelta$-nerve of an $\infty$-category. If this holds, the image under the canonical functor
$ i^* \circ j_* : \mP(\bDelta) \to \mP(\Delta^+) $  is the Street nerve of an $\infty$-category and so a complicial space.

Conversely, let $i^*(j_*(X))$ be a complicial space.
By \cref{charnerve} the presheaf $j_*(X) \in \mP(\bDelta^+)$ is the 
$\bDelta^+$-nerve of an $\infty$-category $\mC.$
Hence $X \simeq j^*(j_*(X)) \simeq j^*(\N_{\bDelta^+}(\mC)) \simeq j^*(j_*(\N_{\bDelta}(\mC))) \simeq \N_{\bDelta}(\mC) $ is the $\bDelta$-nerve of $\mC$ and so satisfies the Segal condition. 
\end{proof}

\begin{theorem}\label{Street2}

The Street nerve $ \N: \infty\Cat \to \mP(\Delta^+)$ is fully faithful
and the essential image are precisely the complicial spaces.
An $\infty$-category is univalent if and only if its Street nerve is univalent.
    
\end{theorem}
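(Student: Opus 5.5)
The plan is to factor the Street nerve as $\N \simeq i^*\circ \N_{\bDelta^+}$ (\cref{stnerve}) and combine the density of $\bDelta^+$ with \cref{charnerve} and \cref{retros2}.

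\emph{Essential image.} By \cref{orientaldense}, $\bDelta\subset\bDelta^+$ is dense, so $\N_{\bDelta^+}$ is fully faithful. If $\mC\in\infty\Cat$, then $\N(\mC)$ is complicial; this direction is used in the proof of \cref{charnerve}, and alternatively follows from \cref{localeq} and the adjunction. Conversely, let $Y\in\mP(\Delta^+)$ be complicial. Set $X:=i_*(Y)$, where $i_*$ is the right adjoint of $i^*$. Since $i$ is essentially surjective, I expect the counit $i^*i_*Y\to Y$ to be an equivalence; if this fails, I would instead compare $Y$ with $\N(|Y|)$. Granting it, \cref{charnerve} shows $X\simeq\N_{\bDelta^+}(\mC)$ for some $\mC$, and hence $Y\simeq i^*X\simeq\N(\mC)$.

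\emph{Full faithfulness.} For $\mC,\mD\in\infty\Cat$ consider
\[
\Map_{\infty\Cat}(\mC,\mD)\simeq\Map_{\mP(\bDelta^+)}(\N_{\bDelta^+}\mC,\N_{\bDelta^+}\mD)\to\Map_{\mP(\Delta^+)}(\N\mC,\N\mD).
\]
The first equivalence is density. For the second map, apply \cref{retros2} with $Z=\N_{\bDelta^+}(\mD)$, whose image $i^*Z=\N(\mD)$ is complicial. It remains to check that $T=\N_{\bDelta^+}(\mC)$ lies in the colimit closure of $\overline{\bDelta}$ in $\mP(\bDelta^+)$. This is the main obstacle. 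As a presheaf, $\N_{\bDelta^+}(\mC)$ is a colimit of representables, and some of these are $\tau_{n-1}\bDelta^n$, which are not in $\overline{\bDelta}$. To avoid this, I would not use $T$ itself but rather $T':=\colim_{\bDelta_{/\mC}}\N_{\bDelta^+}(\bDelta^n)$. This lies in the closure by construction, and density gives $|T'|\simeq\mC$. Maps $T'\to Z$ then compute $\lim_{\bDelta_{/\mC}}\Map(\bDelta^n,\mD)\simeq\Map(\mC,\mD)$. On the other side, maps $i^*T'\to\N\mD$ factor through the comparison $i^*T'\to\N\mC$. I would then need to check that precomposition with $i^*T'\to\N\mC$ is an equivalence on maps into complicial spaces. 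For this I would show that $\N\mC$ is the complicial localization of $i^*T'$, using that the essential-image step realizes every complicial space as a Street nerve with realization $|i^*T'|\simeq\mC$. The key point is that $|{-}|$ and $\N$ form an adjunction restricting to an equivalence on complicial objects.

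\emph{Univalence.} This is formal from full faithfulness. $\mC$ is univalent iff it is local for $S^n(J)\to\ast$. By full faithfulness and the fact that the nerve preserves these objects, this holds iff $\N(\mC)$ is local for $\N(S^n(J))\to\ast$, i.e. iff $\N(\mC)$ is univalent in the sense of the definition.
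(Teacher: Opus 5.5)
\textbf{There is a genuine gap.} It sits exactly where the paper needs its extra input.

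\textbf{Essential image.} You assert that the counit $i^*i_*Y\to Y$ is an equivalence because $i$ is essentially surjective. Essential surjectivity only makes $i^*$ conservative. If $i^*i_*\simeq\id$ held, $i_*$ would be fully faithful, so $i^*$ would be a conservative localization, i.e.\ an equivalence. Then $i_!$ would be fully faithful, and the unit $\Delta^2\to i^*i_!\Delta^2=\N(\bDelta^2)$ would be an equivalence. This fails: $\N(\bDelta^2)$ has an extra $1$-simplex, the composite $0\to1\to2$.

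What is true is the following. The counit at $\Delta^n$ and at $(\Delta^n)^t$ is restriction along the units $\Delta^n\to\N(\bDelta^n)$ and $(\Delta^n)^t\to\N(\tau_{n-1}\bDelta^n)$. So for complicial $Y$ the counit is an equivalence if and only if $Y$ is local for these two families of maps. The first family is \cref{localeq}. The second is precisely \cref{localeq2}, which the paper states only as a conjecture.

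The paper's proof descends $F=i_!\dashv G=i^*$ to local objects and checks on representables that the counit $FGX\to X$ and the unit $Y\to GFY$ are local equivalences. It invokes exactly these two statements, so its argument is conditional on \cref{localeq2}, and yours cannot avoid that input either. Your fallback, ``compare $Y$ with $\N(|Y|)$'', is the whole content of the step and needs the same input.

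\textbf{Full faithfulness.} The argument is circular as written. After \cref{retros2} you have $\Map(i^*T',\N\mD)\simeq\Map(\mC,\mD)$. By two-out-of-three, the remaining claim (that restriction along $i^*T'\to\N\mC$ is an equivalence on maps into $\N\mD$) is equivalent to $\Map(\mC,\mD)\to\Map(\N\mC,\N\mD)$ being an equivalence. That is what you are proving. Appealing to ``$|{-}|\dashv\N$ restricts to an equivalence on complicial objects'' assumes the theorem.

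In fact, full faithfulness is equivalent to the unit $\N_{\bDelta^+}\mD\to i_*i^*\N_{\bDelta^+}\mD$ being an equivalence. The component of this unit at $\tau_{n-1}\bDelta^n$ is an equivalence exactly when $\N\mD$ is local for $(\Delta^n)^t\to\N(\tau_{n-1}\bDelta^n)$. So the thin simplices re-enter no matter how you choose $T'$.

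\textbf{What works.} Once the counit statement is justified by \cref{localeq} and \cref{localeq2}, your route is valid and shorter than the paper's.
\begin{itemize}
\item The essential image follows via \cref{charnerve}, as you say.
\item Full faithfulness is then immediate. Apply the triangle identity $\epsilon_{i^*B}\circ i^*(\eta_B)=\id$ for $i^*\dashv i_*$ to $B=\N_{\bDelta^+}\mD$, whose image $i^*B=\N\mD$ is complicial. Together with conservativity of $i^*$ this shows $B\simeq i_*i^*B$. Hence $\Map(\N\mC,\N\mD)\simeq\Map(\N_{\bDelta^+}\mC,\N_{\bDelta^+}\mD)\simeq\Map(\mC,\mD)$, with no need for the $T'$ detour.
\item The univalence part is fine and matches the paper.
\end{itemize}
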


\begin{proof}
We first prove that the Street nerve is fully faithful.
Let $\bi: \Delta^+ \to \bDelta^+$ be the canonical functor of \cref{funi}. 
Let $$ F:=i_!: \mP(\Delta^+) \to \mP(\bDelta^+): G:=i^* $$ be the canonical adjunction.
The functor $G$ is conservative and preserves small colimits.

By \cref{stnerve} the functor $G$ sends the $\bDelta^+$-nerve of an $\infty$-category to the Street nerve of this $\infty$-category.
The Street nerve of an $\infty$-category is a local object of
$\mP(\Delta^+).$ Hence the functor $G$ preserves local objects so that the left adjoint $F$ preserves local equivalences.
Thus the adjunction $$F: \mP(\Delta^+) \rightleftarrows \mP(\bDelta^+): G$$
descends to an adjunction 
$$F': \mP(\Delta^+)^{\mathrm{loc}} \rightleftarrows \mP(\bDelta^+)^{\mathrm{loc}} \simeq \infty\Cat : G',$$
where $G'$ is the restriction of $G$ and $F'$ is the restriction of $F$ followed by the localization.

We prove that $G'$ is an equivalence. We prove first that $G'$ is 
fully faithful. We have to see that for every local object $X$ in $\mP(\bDelta^+)$ the counit $F'(G'(X)) \to X$ is an equivalence.
Since local equivalences between local objects are equivalences, it suffices to see that the counit $F'(G'(X)) \to X$ is a local equivalence. For that it is enough to see that the composition 
$$F(G(X)) \to F'(G(X)) = F'(G'(X)) \to X,$$ which is the counit, is a local equivalence,
where the morphism $ F(G(X)) \to F'(G(X))$ is the localization.

Next we prove that for every $X$ in $\mP(\bDelta^+)$ the counit $F(G(X)) \to X$ is a local equivalence.
Since $F$ and $G$ preserve small colimits and $\mP(\bDelta^+)$ is generated under small colimits by representables, and local equivalences are closed under colimits, we can assume that $X$ is representable.

We have to see that for every $n \geq 0$ the counits $$F(G(\bDelta^n)) \to \bDelta^n, \ F(G(\tau_{n-1}\bDelta^n)) \to \tau_{n-1}\bDelta^n $$ are local equivalences.
By \cref{localeq} the map $\Delta^n \to G(\bDelta^n) $ corresponding to the equivalence $F(\Delta^n) \simeq \bDelta^n, $ is a local equivalence.
Hence the image $F(\Delta^n) \to F(G(\bDelta^n)) $ is a local equivalence.
Thus the counit $F(G(\bDelta^n)) \to \bDelta^n $ is a local equivalence since the composition $F(\Delta^n) \to F(G(\bDelta^n)) \to \bDelta^n $ is the canonical equivalence. 

The case of $X= \tau_{n-1}\bDelta^n $ is similar:
by \cref{localeq2} the map $(\Delta^n)^t \to G(\tau_{n-1}\bDelta^n) $ corresponding to the equivalence $F((\Delta^n)^t) \simeq \tau_{n-1}\bDelta^n, $ is a local equivalence.
Hence the image $F((\Delta^n)^t) \to F(G(\tau_{n-1}\bDelta^n)) $ is a local equivalence.
Thus the counit $F(G(\tau_{n-1}\bDelta^n)) \to \tau_{n-1}\bDelta^n $ is a local equivalence since the composition $F((\Delta^n)^t) \to F(G(\tau_{n-1}\bDelta^n)) \to \tau_{n-1}\bDelta^n $ is the canonical equivalence. 
This shows that the Street nerve is fully faithful.

We prove next that $F'$ is fully faithful. We have to see that for every local object $Y$ in $\mP(\Delta^+)$ the unit $Y \to G'(F'(Y)) $ is an equivalence.
Since local equivalences between local objects are equivalences, it suffices to see that for every $Y \in \mP(\Delta^+)$ the unit $Y \to G'(F'(Y))$ is a local equivalence. For that it is enough to see that the composition 
$$Y \to G(F(Y)) \to G(F'(Y)) = G'(F'(Y)) $$ of the unit $Y \to G(F(Y))$ and the image under $G$ of the localization $ F(Y) \to F'(Y) $, is a local equivalence. Thus it suffices to see that the unit $Y \to G(F(Y))$ is a local equivalence and that the functor $G$ preserves local equivalences.

We prove first that for every $Y \in \mP(\Delta^+)$ the unit $Y \to G(F(Y))$ is a local equivalence.
Since $F$ and $G$ preserve small colimits and $\mP(\Delta^+)$ is generated under small colimits by representables, and local equivalences are closed under colimits, we can assume that $Y$ is representable.
We have to see that for every $n \geq 0$ the units
$$\Delta^n \to G(F(\Delta^n)) \simeq G(\bDelta^n), \ (\Delta^n)^t \to G(F((\Delta^n)^t)) \simeq G(\tau_{n-1}\bDelta^n) $$ are local equivalences. This is \cref{localeq} and \cref{localeq2}.

It remains to see that the functor $G$ preserves local equivalences.
By \cref{charnerve} a presheaf on $\bDelta^+$ is the $\bDelta^+$-nerve of an $\infty$-category if and only if its image under $G$ is a complicial space.
This implies that a presheaf on $\bDelta^+$ is the $\bDelta^+$-nerve of an $\infty$-category if and only if it is local with respect to
the image under $F$ of the canonical class of generating local equivalences of $\mP(\Delta^+).$ In other words, the image under $F$ of the canonical class of generating local equivalences of $\mP(\Delta^+)$ is a class of generating local equivalences of $\mP(\bDelta^+).$
Hence the functor $G$ preserves local equivalences if it sends 
this class of generating local equivalences of $\mP(\bDelta^+)$ to local equivalences.
This is equivalent to say that the functor $G \circ F: \mP(\Delta^+) \to \mP(\Delta^+) $ sends the canonical class of generating local equivalences of $\mP(\Delta^+)$ to local equivalences.
So it suffices to see that the functor $G \circ F: \mP(\Delta^+) \to \mP(\Delta^+) $ preserves local equivalences.
In the first part of the proof we have shown that for every
$Y \in \mP(\Delta^+) $ the unit $Y \to G(F(Y))$ is a local equivalence.
Hence a morphism $Y \to Z $ in $\mP(\Delta^+)$ is a local equivalence if and only if the induced morphism $G(F(Y)) \to G(F(Z)) $ in $\mP(\Delta^+)$ is a local equivalence since there is a commutative square
\[
\xymatrix{
Y \ar[d]\ar[r] & Z \ar[d]\\
G(F(Y)) \ar[r] & G(F(Z)).
}
\]

It remains to see that an $\infty$-category is univalent if and only if its Street nerve is univalent.
Let $X \in \infty\Cat.$ Then $X$ is univalent if and only if
$X$ is local with respect to to the functor $ S^{n}(J) \to * $ for every $n \geq 0,$ where $J$ is the non-univalent category with two objects and an equivalence.
We have shown in the first part of the proof that $\N: \infty\Cat \to \mP(\Delta^+)$ is fully faithful.
Hence $X$ is local with respect to to the functor $ S^{n}(J) \to * $ for every $n \geq 0$ if and only if $\N(X)$ is local with respect to to the map $ \N(S^{n}(J)) \to * $ for every $n \geq 0$.
\end{proof}

\subsection{The Gray tensor product and lax transformations}
In this subsection we use the adjoints of the Gray tensor to define the notion of lax and oplax natural transformation.

By \cref{Graytensor} the category $\infty\Cat^\Steiner$ carries a canonical monoidal structure denoted by $\boxtimes$ whose tensor unit is the final $\infty$-category, called the Gray tensor product of Steiner $\infty$-categories.
Let $\mC \subset \infty\Cat^\Steiner$
be a dense full subcategory, to which the Gray-monoidal structure restricts.
By density the embedding $\mC \subset \infty\Cat$
induces a localization $\mP(\mC) \rightleftarrows \infty\Cat.$

The following theorem is due to Campion \cite[Theorem 3.4, Theorem 4.1]{campion2023graytensorproductinftyncategories}.
We recall the proof for the reader's convenience.

\begin{proposition}\label{folkloc}

A presheaf on $\infty\Cat_{\mathrm{fg}}^\Steiner$ is in the essential image of the $\infty\Cat_{\mathrm{fg}}^\Steiner$-nerve $\N_\Steiner: \infty\Cat \to \mP(\infty\Cat_{\mathrm{fg}}^\Steiner)$
if and only if it is local with respect to the map $\emptyset \to \N_\Steiner(\emptyset)$ and the maps
$$ \N_\Steiner(\partial\bD^n)  \coprod_{\N_\Steiner(\bD^n)} \N_\Steiner(X) \to \N_\Steiner(\partial\bD^n \coprod_{\bD^n} X) $$
for every $n \geq 0$, finitely generated Steiner $\infty$-category $X$ and inclusion
$\partial\bD^n \to X$ such that the pushout $\partial\bD^n \coprod_{\bD^n} X$ in $\infty\Cat$ is a Steiner $\infty$-category.
    
\end{proposition}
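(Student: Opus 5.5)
We read the maps in the statement as cell attachments: an inclusion $\partial\bD^n \to X$ and the pushout $\bD^n \coprod_{\partial\bD^n} X$, assumed Steiner. The plan follows the proof of \cref{polytopnerv}: first show that nerves are local, then show that a local presheaf is recovered from its restriction to $\Theta$.

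\emph{Full faithfulness.} Every object of $\Theta$ is a finitely generated Steiner $\infty$-category (\cref{example_theta}). So $\Theta \subset \infty\Cat^\Steiner_{\mathrm{fg}}$, and density of $\Theta$ (\cref{theta}) gives density of $\infty\Cat^\Steiner_{\mathrm{fg}}$. Hence $\N_\Steiner$ is a fully faithful right adjoint, and its image consists of local objects for the class of maps inverted by realization.

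\emph{Nerves are local.} The realization sends $\N_\Steiner(Y)$ to $Y$ and preserves colimits. It therefore sends $\emptyset \to \N_\Steiner(\emptyset)$ to an equivalence. It sends each map in the list to the canonical comparison from the pushout $\bD^n \coprod_{\partial\bD^n} X$ formed in $\infty\Cat$ to the same Steiner $\infty$-category, which is an equivalence by hypothesis. By adjunction, every $\N_\Steiner(\mC)$ is local.

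\emph{Local presheaves are nerves.} This is the converse and the main work. Let $Z$ be local.
\begin{enumerate}
\item Let $\mO$ be finitely generated Steiner with atoms $\mE$. Choose a linear extension of the partial order $\leq_\bN$ that is compatible with dimension. Adding atoms one at a time in this order builds $\mO$ from $\emptyset$ as a finite chain of cell attachments $\bD^{n} \coprod_{\partial\bD^n} \mO_k \simeq \mO_{k+1}$.
\item Check that each intermediate $\mO_k$ is Steiner, since it is generated by a downward closed set of atoms in a strongly loop-free basis. Check that each pushout is a pushout in $\infty\Cat$, using \cref{pasting1} or \cref{cello}.
\item The inclusion $\partial\bD^n \to \mO_k$ is again an inclusion of Steiner $\infty$-categories. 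Since $\partial\bD^n$ is itself built from disks by cell attachments, locality expresses $Z(\mO)$ as an iterated pullback of the values $Z(\bD^m)$.
\end{enumerate}
In particular, a map between local presheaves that is an equivalence on all disks is an equivalence everywhere, by induction on the number of atoms.

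\emph{Comparison with $\Theta$.} Let $i\colon \Theta \subset \infty\Cat^\Steiner_{\mathrm{fg}}$ be the inclusion.
\begin{enumerate}
\item Show that $i^*Z$ is a Segal $\Theta$-space. Decompose $\theta = \bD^{i_0}\coprod_{\bD^{j_1}}\cdots$ into atoms in two ways: directly, and by attaching the atoms of each $\bD^{i_\ell}$ not lying in the shared $\bD^{j_\ell}$. Both decompositions express $Z(\theta)$ and the iterated fibre product $\prod_{Z(\bD^{j_\ell})} Z(\bD^{i_\ell})$ as the same limit over disks.
\item By \cref{theta}, $i^*Z \simeq \N_\Theta(\mC)$ for some $\infty$-category $\mC$.
\item By density, $i_*i^*Z \simeq \N_\Steiner(\mC)$, which is local by the previous step.
\item The unit $Z \to i_*i^*Z$ is an equivalence on disks, by the triangle identities. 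Both ends are local, so the unit is an equivalence by the induction above.
\end{enumerate}

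\emph{Expected obstacle.} The hardest step is the cell-attachment step: ordering the atoms so that every partial stage is Steiner and every attaching map $\partial\bD^n \to \mO_k$ is an inclusion. The comparison of the two decompositions for $\Theta$ depends on it. I would first try to handle this with the chain-complex description of \cref{hh}: a downward closed sub-basis of a strongly loop-free unital basis should again span a Steiner subcomplex.
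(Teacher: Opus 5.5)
Your proposal follows the paper's proof. Nerves are local because realization inverts the listed maps. Conversely, you restrict a local $Z$ to $\Theta$, recognize $i^*Z\simeq\N_\Theta(\mC)$, identify $i_*i^*Z\simeq\N_\Steiner(\mC)$ by density, observe that the unit $Z\to i_*i^*Z$ is an equivalence on disks, and propagate this along finite cell decompositions of finitely generated Steiner $\infty$-categories. You supply more detail than the paper does. The paper simply asserts that $i^*Z$ is Segal and that finitely generated Steiner $\infty$-categories are finite iterated cell attachments along inclusions.

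One detail of your cell-attachment step needs correcting. No linear extension of $\leq_\bN$ is compatible with dimension. For $f\colon 0\to 1$ in $\bD^1$ one has $0\leq_\bN f\leq_\bN 1$, because an atom precedes the atoms of its target, so $\leq_\bN$ forces $f$ to be attached before $1$. For the same reason, the $\leq_\bN$-downward closed set $\{0,f\}$ does not span a subcomplex.

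The relation $\leq_\bN$ plays no role in the ordering. Attach atoms in order of increasing dimension, which is the polygraph filtration, in any order within a fixed dimension. Each stage is then spanned by a sub-basis closed under taking supports of $\partial^{\pm}$, so it is again a finitely generated Steiner complex. Loop-freeness, inherited by the sub-basis, is what makes each attaching map $\partial\bD^n\to\mO_k$ an inclusion: the iterated sources and targets of an atom never coincide. This is exactly what \cref{pasting1} needs to make the strict pushout a pushout in $\infty\Cat$.

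The same dimension ordering makes your two-decomposition argument for the Segal condition on $i^*Z$ go through. Attach the atoms of $\bD^{i_\ell}$ outside $\bD^{j_\ell}$ by increasing dimension, both onto $\bD^{j_\ell}$ and onto the partial wedge.
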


\begin{proof}

Every presheaf on $\infty\Cat_{\mathrm{fg}}^\Steiner$ is local with respect to the maps of the statement since $\infty\Cat_{\mathrm{fg}}^\Steiner \subset \infty\Cat$ is dense and so the maps of the statement are inverted by the left adjoint extension of the embedding $\infty\Cat_{\mathrm{fg}}^\Steiner \subset \infty\Cat$, which is left adjoint to $\N_\Steiner.$

Conversely, let $X$ be a presheaf on $\infty\Cat_{\mathrm{fg}}^\Steiner$.
The canonical embedding $i: \Theta \subset \infty\Cat$ induces an adjunction $i^*: \mP(\infty\Cat_{\mathrm{fg}}^\Steiner) \rightleftarrows \mP(\Theta) i_*. $
The assumptions imply that the $\Theta$-space $i^*(X)$ satifies the Segal condition and so is the $\Theta$-nerve of an $\infty$-category $\mC$.
By density of $\Theta,$ the $\infty\Cat_{\mathrm{fg}}^\Steiner$-space $i_*(i^*(X))$
is the $\infty\Cat^\Steiner$-nerve of $\mC$ and so by the first part of the proof is local with respect to the maps of the statement.
To complete the proof, it suffices to see that the unit $X \to i_*(i^*(X))$ is an equivalence. By the triangle identities, the unit $X \to i_*(i^*(X))$ induces an equivalence under $i^*$ and so induces an equivalence at any disk. So it suffices to show that a map $X \to Y $ of $\infty\Cat^\Steiner$-spaces which are local with respect to the maps of the statement, is an equivalence if it induces equivalences at any disk.
Such a map also induces equivalences at any boundary of a disk
since for every $n \geq 0$ there is a canonical equivalence
$ \partial\bD^{n+1} \simeq \bD^n \coprod_{\partial\bD^{n}} \bD^n$
and $\partial\bD^0= \emptyset.$
So the map $X \to Y $ of $\infty\Cat_{\mathrm{fg}}^\Steiner$-spaces is an equivalence 
because every finitely generated Steiner $\infty$-category is a loopfree polygraph and so is built by finitely many iterated cell attachments along inclusions from the empty category.  
\end{proof}

\begin{theorem}\label{locmon}Let $\mC \subset \infty\Cat^\Steiner_\mathrm{fg}$
be a dense full subcategory, to which the Gray-monoidal structure restricts.
\begin{enumerate}[\normalfont(1)]\setlength{\itemsep}{-2pt}
\item The convolution monoidal structure descends along 
the localization
$$\mP(\mC) \rightleftarrows \infty\Cat. $$

\item The convolution monoidal structure descends along the localization
$$ \mP_{\Set}(\mC)\simeq\mP(\mC)\otimes{\Set}\rightleftarrows \infty\Cat^{\mathrm{strict}}. $$
\end{enumerate}

\end{theorem}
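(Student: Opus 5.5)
The plan is to apply the standard criterion for a localization of a presentably monoidal category to be compatible with Day convolution. The convolution structure on $\mP(\mC)$ is closed and preserves colimits in each variable, and its restriction to representables is the Gray tensor product on $\mC$. It therefore suffices to show that the class of local equivalences is closed under $-\circledast \N(c)$ and $\N(c)\circledast -$ for every $c\in\mC$. Indeed, local equivalences are strongly saturated, every presheaf is a colimit of representables, and convolution commutes with colimits in each variable, so closure under tensoring with representables gives closure under tensoring with arbitrary presheaves. Moreover, it is enough to check this on a set of generating local equivalences.

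For the generating set I would use \cref{folkloc}, transported from $\infty\Cat^\Steiner_{\mathrm{fg}}$ to $\mC$. Since $\mC\subset\infty\Cat^\Steiner_{\mathrm{fg}}$ is dense, the restriction $\mP(\infty\Cat^\Steiner_{\mathrm{fg}})\to\mP(\mC)$ identifies the two localizations with $\infty\Cat$. Hence the maps $\emptyset\to\N(\emptyset)$ and the cell attachment maps
$$\N(\partial\bD^n)\coprod_{\N(\bD^n)}\N(X)\to\N(\partial\bD^n\coprod_{\bD^n}X),$$
where $\N$ now denotes the $\mC$-nerve, generate the local equivalences of $\mP(\mC)$. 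Here $X$ is a finitely generated Steiner $\infty$-category and the pushout is Steiner.

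The key step is then to show that, for $c\in\mC$, the map obtained from a cell attachment by convolving with $\N(c)$ is again a local equivalence. Convolution does not obviously send $\N(X)$ to $\N(X\boxtimes c)$ when $X\notin\mC$, so I would argue at the level of realizations instead. Let $L\colon\mP(\mC)\to\infty\Cat$ be the left adjoint. Then $L(\N(X)\circledast\N(c))$ is the colimit over $\mC_{/X}$ of $d\boxtimes c$, and I would show that this agrees with $X\boxtimes c$. To do so, I would work inside Steiner $\infty$-categories. The Gray tensor product of Steiner complexes preserves the relevant pushouts of chain complexes, since the tensor product of augmented directed complexes is additive. The cell attachment squares therefore go to cell attachment squares of Steiner complexes, namely $\partial\bD^n\boxtimes c\to\bD^n\boxtimes c$ attached to $X\boxtimes c$. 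These are still pushouts in $\infty\Cat^\strict$ by \cref{pasting2}, since all the maps preserve atomic generators, and they are pushouts in $\infty\Cat$ by \cref{pasting1}, since $\partial\bD^n\boxtimes c\to\bD^n\boxtimes c$ lies in the saturation of the boundary inclusions, being a polygraph relative cell inclusion.

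I expect the main obstacle to be the identification $L(\N(X)\circledast\N(c))\simeq X\boxtimes c$ for all finitely generated Steiner $X$, not only those in $\mC$. I would prove it by induction on the number of atoms of $X$, using \cref{folkloc}-style cell decompositions and the pushout preservation just described. The base case is $X\in\mC$, and the Steiner closure of $\mC$ under cell attachment is supplied by loop-freeness. With this in hand, both sides of the convolved generator realize to $X\boxtimes c$, so the generator is a local equivalence; the $c\boxtimes-$ side is symmetric.

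Part (2) is the same argument after applying $-\otimes\Set$. Steiner $\infty$-categories are strict, and the pushouts above are already pushouts in $\infty\Cat^\strict$ by \cref{pasting2}, so the same generating set, now of $\Set$-valued presheaves, works for the strict localization, using \cref{theta} in the strict case.
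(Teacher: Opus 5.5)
There is a genuine gap in the step you flag as the main obstacle. Write $R(X)=L(\N_\mC(X)\circledast\N_\mC(c))\simeq\colim_{d\in\mC_{/X}}d\boxtimes c$. Suppose you run the induction on atoms with $X=\bD^n\coprod_{\partial\bD^n}X'$. Since $L\circ(-\circledast\N_\mC(c))$ preserves colimits, $R$ takes this square to a pushout if and only if $L$ inverts $g\circledast\N_\mC(c)$, where
$g\colon\N_\mC(\bD^n)\coprod_{\N_\mC(\partial\bD^n)}\N_\mC(X')\to\N_\mC(X)$
is one of your generating local equivalences. That is exactly what you are trying to prove, so the induction is circular.

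The pasting results (\cref{pasting1}, \cref{pasting2}) only show that $-\boxtimes c$ preserves cell-attachment pushouts of the Steiner categories themselves. They therefore control $X\boxtimes c$, not the colimit $R(X)$ over $\mC_{/X}$. The proposed base case does not help either. $\mC$ is not closed under cell attachments, and the pieces $\bD^n$, $\partial\bD^n$ entering the inductive step are usually not in $\mC$ (for $\mC=\cube$ they are not cubes once $n\geq 2$). So you cannot start at objects of $\mC$ and climb by attachments.

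The missing idea, which is what the paper does, is to first prove the statement for $\mD:=\infty\Cat^\Steiner_{\mathrm{fg}}$ itself.
\begin{itemize}
\item There every object appearing in the \cref{folkloc} generators is representable, so $\N_\mD(X)\circledast\N_\mD(c)\simeq\N_\mD(X\boxtimes c)$ by monoidality of the Yoneda embedding.
\item Applying $L_\mD$ to a convolved generator then gives the comparison map $\bD^n\boxtimes c\coprod_{\partial\bD^n\boxtimes c}X\boxtimes c\to(\bD^n\coprod_{\partial\bD^n}X)\boxtimes c$ out of the pushout in $\infty\Cat$. This is an equivalence by exactly your pasting argument.
\item Hence $L_\mD$ is monoidal. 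Equivalently, $\infty\Cat$ acquires a Gray tensor product, preserving colimits in each variable, for which $\mD\subset\infty\Cat$ is monoidal.
\item For a general dense $\mC$, one has $L_\mC\simeq L_\mD\circ i_!$, where $i_!$ is the monoidal left Kan extension along $i\colon\mC\subset\mD$. Equivalently, $L_\mC$ is the colimit-preserving extension of the monoidal composite $\mC\subset\mD\subset\infty\Cat$ given by the universal property of Day convolution. So $L_\mC$ is monoidal.
\end{itemize}
In this route your identification $R(X)\simeq X\boxtimes c$ becomes a consequence, namely $\colim_{\mC_{/X}}d\boxtimes c\simeq(\colim_{\mC_{/X}}d)\boxtimes c$, rather than an input.

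Your transport of the generating set to $\mP(\mC)$ and your reduction to generators and representables are fine. The same fix handles part (2).
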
 

\begin{proof}We prove (1). The proof of (2) is similar but can also be proven elementary \cite[Théorème A.15]{Dimitri_Ara_2020}.
We first assume that $\mC=  \infty\Cat^\Steiner_\mathrm{fg}$.
We first prove that the localization is compatible with the convolution monoidal structure.
For this we have to see that for every $Z \in \mP(\infty\Cat^{\Steiner}_\mathrm{fg})$
the functors $$(-) \boxtimes Z, Z \boxtimes (-) : \mP(\infty\Cat^{\Steiner}_\mathrm{fg}) \to \mP(\infty\Cat^{\Steiner}_\mathrm{fg})$$
preserve local equivalences.
We prove the first case, the second case is similar.

Let $j: \infty\Cat^{\Steiner}_\mathrm{fg} \to \mP(\infty\Cat^{\Steiner}_\mathrm{fg})$ be the Yoneda-embedding, which is monoidal for the convolution.
Since the category $\mP(\infty\Cat^{\Steiner}_\mathrm{fg})$ is generated by the representables under small colimits, and the convolution monoidal structure is compatible with small colimits, we can assume that $Z$ is representable.
By \cref{folkloc} we have to prove that the map
$\emptyset \simeq \emptyset \boxtimes Z \to j(\emptyset) \boxtimes Z \simeq j(\emptyset \boxtimes Z) \simeq j(\emptyset)$ 
is a local equivalence, which trivially holds, and that for every finitely generated Steiner $\infty$-category $Y$ and inclusion $\bD^n \to Y$ such that the pushout $ \bD^n \coprod_{\partial \bD^n} Y $ in $\infty\Cat$ is a Steiner $\infty$-category, the following map is a local equivalence:
\begin{align*}
\left(j(\bD^n)\coprod_{j(\partial\bD^n)}j(A)\right)\boxtimes j(Z) \simeq
j(\bD^n)\boxtimes j(Z) \coprod_{j(\partial\bD^n)\boxtimes j(Z)}j(A)\boxtimes j(Z)&\simeq
j(\bD^n\boxtimes Z)\coprod_{j(\partial\bD^n\boxtimes Z)}j(A\boxtimes Z)\\
\xrightarrow{\alpha} j\left(\bD^n\boxtimes Z\coprod_{\partial\bD^n\boxtimes Z}A\boxtimes Z\right)\xrightarrow{\beta} j((\bD^n\coprod_{\partial\bD^n} A)\boxtimes  Z)&\simeq j(\bD^n\coprod_{\partial\bD^n} A)\boxtimes j(Z).
\end{align*}
The morphism $\alpha$ is evidently a local equivalence.
So it suffices to see that $\beta$ is an equivalence.
The functor $(-)\boxtimes Z: \infty\Cat^\strict \to \infty\Cat^\strict $
preserves colimits as a left adjoint. So the following commutative square 
\[
\xymatrix{
\partial\bD^n \boxtimes  Z \ar[d]\ar[r] & A \boxtimes  Z \ar[d]\\
\bD^n \boxtimes  Z \ar[r] & (\bD^n\coprod_{\partial\bD^n} A)\boxtimes  Z
}
\]
in $\infty\Cat^\strict$ is a pushout square in $\infty\Cat^\strict$.
By \cref{pasting1} this commutative square is also a pushout square in $\infty\Cat$ because the functor $(-)\boxtimes Z: \infty\Cat^\strict \to \infty\Cat^\strict $ preserves the saturated class generated by the inclusions $\partial \bD^n \subset \bD^n$ for $n \geq 0$ by \cite[Theorem 5.6]{ara.folkmodel}.

Now let $\mC$ be arbitrary. 
By the first part of the proof the localization functor
$\mP(\infty\Cat^{\Steiner}_\mathrm{fg}) \to \infty\Cat$ is monoidal.
Thus the embedding $\infty\Cat^{\Steiner}_\mathrm{fg} \subset \infty\Cat$ is monoidal since the Yoneda embedding is monoidal.
By density the monoidal embedding $\mC \subset \infty\Cat^{\Steiner}_\mathrm{fg} \subset \infty\Cat$ uniquely extends to a left adjoint monoidal functor
$\mP(\mC) \rightleftarrows \infty\Cat$, which refines the localization functor.
\end{proof}

We recover the following result of Campion \cite{campion2022cubesdenseinftyinftycategories}.

\begin{corollary}\label{locmon2}
Let $\cube \subset \infty\Cat$ denote the full subcategory of oriented cubes, equipped with the Gray monoidal structure.
\begin{enumerate}[\normalfont(1)]\setlength{\itemsep}{-2pt}
\item The convolution monoidal structure descends along 
the localization $$\mP(\cube) \rightleftarrows \infty\Cat.$$

\item The convolution monoidal structure descends along 
the localization $$ \mP_\Set(\cube) \rightleftarrows \infty\Cat^{\mathrm{strict}}.$$

\end{enumerate}

\end{corollary}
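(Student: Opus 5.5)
The plan is to deduce \cref{locmon2} directly from \cref{locmon}, applied with $\mC=\cube$. That theorem needs two inputs: that $\cube$ is a dense full subcategory of $\infty\Cat^\Steiner_\mathrm{fg}$, and that the Gray monoidal structure on $\infty\Cat^\Steiner_\mathrm{fg}$ restricts to $\cube$. Once both are in place, parts (1) and (2) of \cref{locmon} give the two claimed statements verbatim.

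\textbf{Step 1: $\cube$ lies in $\infty\Cat^\Steiner_\mathrm{fg}$.} Each $\cube^n=(\cube^1)^{\boxtimes n}$ is a Steiner $\infty$-category, because the Gray tensor of Steiner complexes is again Steiner (\cref{Graytensor}). On chain complexes, $\lambda(\cube^n)\cong\lambda(\bD^1)^{\otimes n}$ is a bounded complex of finitely generated free abelian groups, with $\bZ^{\binom{n}{k}2^{n-k}}$ in degree $k$. Hence $\cube^n$ is finitely generated.

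\textbf{Step 2: the Gray structure restricts to $\cube$.} Associativity of the monoidal structure gives $\cube^n\boxtimes\cube^m\simeq\cube^{n+m}$. The unit $\cube^0=\ast$ is the tensor unit. So the monoidal structure of $\infty\Cat^\Steiner_\mathrm{fg}$ restricts to the full subcategory $\cube$.

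\textbf{Step 3: density.} By \cref{cubicaldense}, $\cube$ is dense in $\infty\Cat$. This rests on \cref{Thh} (orientals are retracts of cubes) together with \cref{orientaldense}. It then follows that $\cube$ is dense as a full subcategory sitting in $\infty\Cat^\Steiner_\mathrm{fg}\subset\infty\Cat$, which is the form of the hypothesis used in \cref{locmon}.

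\textbf{Main obstacle: the strict case (2).} Here one needs that the strict localization $\mP_\Set(\cube)\rightleftarrows\infty\Cat^{\mathrm{strict}}$ exists, that is, that $\cube$ is also dense in $\infty\Cat^\strict$. I would obtain this from \cref{theta}: $\Theta$ is dense in $\infty\Cat^\strict$ via the set-valued nerve. By \cref{retract} and \cref{Thh}, every object of $\Theta$ is a retract of a cube, so the idempotent completion of $\cube$ contains $\Theta$. Density then transfers exactly as in the proof of \cref{cubicaldense}, now with set-valued presheaves, since $\mP_\Set(\cube)\simeq\mP_\Set(\overline{\cube})$. With this in hand, \cref{locmon}(2) applies and completes the argument.
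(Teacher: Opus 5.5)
Your proposal is correct and follows the paper's route: the paper states this corollary as an immediate application of \cref{locmon} to $\mC=\cube$, with density supplied by \cref{cubicaldense} and closure under $\boxtimes$ by $\cube^n\boxtimes\cube^m\simeq\cube^{n+m}$. Your detour through $\Theta$ for part (2) is valid but not needed: $\infty\Cat^{\mathrm{strict}}\subset\infty\Cat$ and $\mP_\Set(\cube)\subset\mP(\cube)$ are full subcategories, so the set-valued nerve is just the restriction of the $\cube$-nerve, which is already fully faithful.
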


\begin{remark}\label{stricto}
	
The monoidal localization $\pi_0 : \mS \leftrightarrows \Set$ gives rise to a monoidal localization
$$ \Fun(\cube^\op, \mS) \leftrightarrows \Fun(\cube^\op,\Set) $$ that descends to a monoidal localization
$\infty\Cat \leftrightarrows \infty\Cat^\strict$ for the Gray monoidal structures.
In particular, the embedding $\infty\Cat^\strict \subset \infty\Cat$ is lax monoidal for the Gray monoidal structures.
	
\end{remark}

\begin{notation}
	
Since the Gray tensor product defines a presentably monoidal structure on $\infty\Cat$, it is closed: for every $\infty$-category $\mC$ the functor $$ \mC \boxtimes (-): \infty\Cat \to \infty\Cat$$ admits a right adjoint $\Fun^\lax(\mC,-)$, and the functor $$ (-) \boxtimes \mC : \infty\Cat \to \infty\Cat$$ admits a right adjoint $\Fun^\oplax(\mC,-)$.
\end{notation}

\begin{definition}Let $\F,\G:\mC \to \mD $ be functors of $\infty$-categories.
\begin{enumerate}[\normalfont(1)]\setlength{\itemsep}{-2pt}
\item A lax natural transformation $\F \to \G$ is a morphism in $\Fun^\lax(\mC,\mD)$.

\item An oplax natural transformation $\F \to \G$ is a morphism in $\Fun^\oplax(\mC,\mD)$.
\end{enumerate}
\end{definition}

\begin{remark}
	
Let $\mC,\mD$ be $\infty$-categories. The canonical functors $\mC \to *, \mD \to *$
give rise to functors $\mC \boxtimes \mD \to \mC \boxtimes * \simeq \mC, \mC \boxtimes \mD \to * \boxtimes \mD \simeq \mD$ and so to a functor $\mC \boxtimes \mD \to \mC \times \mD.$
By adjointess the latter functor induces functors
$$ \Fun(\mC,\mD)\to \Fun^\lax(\mC,\mD)\qquad\text{and}\qquad\Fun(\mC,\mD)\to \Fun^\oplax(\mC,\mD).$$
\end{remark}

\begin{remark}\label{lao}\label{grayspace}
If $\mC$ is an $\n$-category and $\mD$ an $\m$-category for $\n,\m \geq 0$,
then $\mC \boxtimes \mD$ is an $\n+\m$-category.
This holds since $\n\Cat$ is closed under small colimits in $\infty\Cat$ and $\n\Cat$ is generated under small colimits by the oriented $\ell$-cubes for $1 \leq \ell \leq \n$ by \cref{cubicaldense}.
For $n=m=0$ one finds that the Gray-monoidal structure restricts to the 
full subcategory of spaces $\mS \subset \infty\Cat$.
The restricted Gray-monoidal structure on $\mS$ is the cartesian structure since $\mS$ is generated in $\infty\Cat$ under small colimits by the final category, the tensor unit for the Gray tensor product and the cartesian product.
Moreover the left adjoint $\tau_0: \infty\Cat \to \mS$ of the monoidal embedding $\mS \subset \infty\Cat$ is monoidal since
$\infty\Cat$ is generated by the cubes under small colimits and
the image of any cube under $\tau_0$ is contractible.

\end{remark}

\begin{proposition}\label{dua}
	
There are canonical monoidal involutions
$$(-)^\op, (-)^\co: (\infty\Cat, \boxtimes) \simeq (\infty\Cat, \boxtimes)^\rev. $$	
$$(-)^\op, (-)^\co: (\infty\Cat^{\mathrm{strict}}, \boxtimes) \simeq (\infty\Cat^{\mathrm{strict}}, \boxtimes)^\rev. $$	

There are commutative squares of lax monoidal embeddings:
$$\begin{xy}
\xymatrix{
\infty\Cat^{\mathrm{strict}} \ar[d]^{(-)^\op} \ar[r] & \infty\Cat  \ar[d]^{(-)^\op}
\\ 
\infty\Cat^{\mathrm{strict}} \ar[r] & \infty\Cat,
}
\end{xy}
\begin{xy}
\xymatrix{
\infty\Cat^{\mathrm{strict}} \ar[d]^{(-)^\co} \ar[r] & \infty\Cat \ar[d]^{(-)^\co}
\\ 
\infty\Cat^{\mathrm{strict}} \ar[r] & \infty\Cat.
}
\end{xy}
$$

\end{proposition}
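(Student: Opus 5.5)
We would first build the involutions at the level of Steiner complexes and then extend them to all of $\infty\Cat$ by density. On augmented directed complexes, $(-)^\op$ and $(-)^\co$ are given by explicit sign twists of the differential. For $\op$, we multiply $\partial_q$ by $(-1)^{q}$, which swaps source and target in odd degrees. For $\co$, we multiply by $(-1)^{q+1}$, which swaps in even degrees. Both operations preserve the submonoids and the augmentation, preserve strongly loop-free unital bases (the preorder $\leq_\bN$ is merely modified in a way that keeps antisymmetry), and hence restrict to $\mathrm{ADC}^\Steiner$. Via \cref{hh} they then agree with the involutions of \cref{ruik} on Steiner $\infty$-categories, since $\nu$ of the twisted complex has cells with source and target exchanged in the appropriate dimensions. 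The key computation is that for Steiner complexes $A,B$ the map $a\otimes b\mapsto (-1)^{|a||b|}\, b\otimes a$ defines an isomorphism $(A\otimes B)^\co\cong B^\co\otimes A^\co$ of augmented directed complexes, and likewise for $\op$. We would check this with the Koszul sign rule, the twisted differential on $A\otimes B$ and on $B\otimes A$, and the observation that the graded submonoids are preserved because basis elements map to basis elements up to the sign, which must be shown to be $+1$ on the relevant monoid generators after the twist. This sign bookkeeping is the main obstacle. If the naive symmetry sign fails, we would instead use $a\otimes b\mapsto (-1)^{\epsilon(|a|,|b|)}\, b\otimes a$ with a correction chosen so that basis elements go to basis elements, verified first on $\lambda(\bD^1)\otimes\lambda(\bD^1)$ against the two squares in the introduction and then extended by multiplicativity. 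Coherence of the resulting monoidal equivalence $(\mathrm{ADC}^\Steiner,\otimes)\simeq(\mathrm{ADC}^\Steiner,\otimes)^\rev$ then follows because these symmetry-type maps satisfy the hexagon-style identities of the symmetric structure on chain complexes.

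Next we restrict to the dense full subcategory $\infty\Cat^\Steiner_{\mathrm{fg}}$, which is preserved by the involutions. By \cref{locmon}, the Gray tensor product on $\infty\Cat$ is the descent of Day convolution on $\mP(\infty\Cat^\Steiner_{\mathrm{fg}})$. The monoidal equivalence from the first step induces a monoidal equivalence of Day convolutions $\mP(\infty\Cat^\Steiner_{\mathrm{fg}})\simeq\mP(\infty\Cat^\Steiner_{\mathrm{fg}})^\rev$ by left Kan extension. This equivalence preserves the generating local equivalences of \cref{folkloc}, because the involutions fix $\bD^n$ and $\partial\bD^n$ up to the evident automorphisms and preserve pushouts and Steiner-ness. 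It therefore descends to monoidal involutions of $(\infty\Cat,\boxtimes)\simeq(\infty\Cat,\boxtimes)^\rev$. The underlying functors are the involutions of \cref{ruik}, since both are colimit preserving and agree on the dense subcategory.

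The strict case is handled in the same way using $\mP_\Set$ and part (2) of \cref{locmon}. Alternatively, one applies $\pi_0$ as in \cref{stricto}: the involutions preserve strict $\infty$-categories by \cref{ruik}, so they commute with the strictification localization. The commutative squares then hold because the inclusion $\infty\Cat^\strict\subset\infty\Cat$ is lax monoidal by \cref{stricto}, is right adjoint to a monoidal localization, and commutes with $\op$ and $\co$ on objects. The lax structure maps are compatible because they are mates of the strictification, which intertwines the involutions.
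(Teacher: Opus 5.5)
Your architecture matches the paper's, but the key step is wrong as stated. The paper builds monoidal involutions on augmented directed complexes, restricts them to a dense Gray-closed family, descends by density, and treats the strict case through the lax monoidal inclusion, exactly as you plan.

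The problem is the Koszul-signed swap $a\ot b\mapsto(-1)^{|a||b|}\,b\ot a$: it is not an isomorphism $(A\ot B)^\co\cong B^\co\ot A^\co$. Take $a\in A_i$, $b\in B_j$ and $k=i+j$.
\begin{itemize}
\item The differential of $(A\ot B)^\co$ is $(-1)^{k+1}\bigl(\partial a\ot b+(-1)^i a\ot\partial b\bigr)$.
\item The differential of $B^\co\ot A^\co$ is $(-1)^{j+1}\partial b\ot a+(-1)^{k+1}\,b\ot\partial a$.
\item With the Koszul sign, the coefficients of $b\ot\partial a$ and $\partial b\ot a$ differ by $(-1)^j$ and $(-1)^i$ respectively, so the map is not a chain map.
\item It also fails to preserve the submonoids. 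Already in your test case $\lambda(\bD^1)\ot\lambda(\bD^1)$, it sends the degree-$2$ generator $a\ot b$ to $-b\ot a$.
\end{itemize}
The correct map is the unsigned swap $a\ot b\mapsto b\ot a$. The display above shows it commutes with the twisted differentials, and the same check with $(-1)^k$ in place of $(-1)^{k+1}$ handles $\op$. It visibly preserves bases, submonoids and augmentations. This is precisely \cref{lemmcub}.

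Your fallback would be forced to this map, since positivity makes the correction trivial. But as written you never identify or verify it. Your coherence argument also appeals to the hexagon identities of the Koszul symmetry of chain complexes, which is the wrong map. For the unsigned swap, the associativity and unit compatibilities must be checked on basis tensors $a\ot b\ot c\mapsto c\ot b\ot a$, where they hold trivially.

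There is a second, smaller gap. Restricting to $\infty\Cat^\Steiner_{\mathrm{fg}}$ requires $(-)^\op$ and $(-)^\co$ to preserve Steiner complexes, and your parenthetical does not prove this. The twist reverses every $\leq_\bN$-relation between degrees $q-1$ and $q$ for each twisted $q$ and keeps all others. A cycle mixing reversed and unreversed layers is therefore not excluded for free, so antisymmetry needs an argument or a citation.

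The paper avoids this entirely by working with the cubes:
\begin{itemize}
\item they are dense and closed under $\boxtimes$, so \cref{locmon2} applies;
\item they are self-dual by \cref{selfdual};
\item hence the involutions of \cref{lemmcub} restrict to $(\cube,\boxtimes)\simeq(\cube,\boxtimes)^\rev$ by \cref{duac}, and density finishes.
\end{itemize}
If you replace your dense subcategory by $\cube$ and use the unsigned swap, the rest of your argument goes through as described. That covers transporting Day convolution and descending, identifying the underlying functors with those of \cref{ruik} by density, and the strict case with its commutative squares via \cref{stricto}.
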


\begin{proof}
The first part follows from \cref{duac} by density.
The lax monoidal embedding $\infty\Cat^{\mathrm{strict}} \to \infty\Cat$ factors as lax monoidal functors $$\infty\Cat^{\mathrm{strict}} \subset \Fun((\infty{\Cat^\Steiner})^\op,\Set) \subset \Fun((\infty{\Cat^\Steiner})^\op,\mS) \to \infty\Cat$$ and so preserves the monoidal equivalences $(-)^\op, (-)^\co.$
\end{proof}

\begin{corollary}\label{grayhoms}
Let $\mC,\mD \in \infty\Cat$.
There are canonical equivalences $$ \Fun^\oplax(\mC,\mD)^\op \simeq \Fun^\lax(\mC^\op,\mD^\op), $$$$\Fun^\oplax(\mC,\mD)^\co \simeq \Fun^\lax(\mC^\co,\mD^\co).$$	
\end{corollary}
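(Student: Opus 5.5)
The plan is to derive both equivalences from \cref{dua} by uniqueness of right adjoints, after rewriting the defining adjunctions of $\Fun^\oplax$ and $\Fun^\lax$ in terms of mapping spaces. Recall that $\Fun^\oplax(\mC,-)$ is right adjoint to $(-)\boxtimes\mC$ and $\Fun^\lax(\mC,-)$ is right adjoint to $\mC\boxtimes(-)$. Fix $\mC,\mD$ and a test $\infty$-category $\mB$, and compute $\Map_{\infty\Cat}(\mB,\Fun^\oplax(\mC,\mD)^\op)$ for the $\op$ case. Since $(-)^\op$ is an involution of $\infty\Cat$, this space is equivalent to $\Map_{\infty\Cat}(\mB^\op,\Fun^\oplax(\mC,\mD))$, hence by adjunction to $\Map_{\infty\Cat}(\mB^\op\boxtimes\mC,\mD)$.

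Next apply the involution $(-)^\op$ once more, which identifies this with $\Map_{\infty\Cat}((\mB^\op\boxtimes\mC)^\op,\mD^\op)$. By \cref{dua}, $(-)^\op$ is a monoidal equivalence $(\infty\Cat,\boxtimes)\simeq(\infty\Cat,\boxtimes)^\rev$, which yields a natural equivalence $(\mB^\op\boxtimes\mC)^\op\simeq \mC^\op\boxtimes\mB$, since the order of the factors is reversed and $(\mB^\op)^\op\simeq\mB$. Therefore the space equals $\Map_{\infty\Cat}(\mC^\op\boxtimes\mB,\mD^\op)\simeq\Map_{\infty\Cat}(\mB,\Fun^\lax(\mC^\op,\mD^\op))$. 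All these equivalences are natural in $\mB$, so the Yoneda lemma gives $\Fun^\oplax(\mC,\mD)^\op\simeq\Fun^\lax(\mC^\op,\mD^\op)$. The $\co$ case is literally the same chain with $\co$ replacing $\op$, again using \cref{dua}.

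The only real point to check is naturality in $\mB$ of the monoidal comparison $(X\boxtimes Y)^\op\simeq Y^\op\boxtimes X^\op$. This follows because \cref{dua} provides a monoidal functor structure, that is, a natural equivalence of functors of two variables, and not merely objectwise equivalences. Everything else is formal.
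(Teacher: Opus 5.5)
Your proposal is correct and is essentially the paper's proof: a Yoneda argument with a test object $\mB$ that combines the adjunctions defining $\Fun^\oplax$ and $\Fun^\lax$ with the monoidal anti-involutions of \cref{dua}. The only difference is bookkeeping. The paper starts from $\Map_{\infty\Cat}(\mB,\Fun^\oplax(\mC,\mD))$ and ends at $\Map_{\infty\Cat}(\mB^\op,\Fun^\lax(\mC^\op,\mD^\op))$, which is your chain with $\mB$ replaced by $\mB^\op$.
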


\begin{proof}
Let $\mB \in  \infty\Cat$.
The desired equivalence is represented by the following equivalence
$$ \Map_{\infty\Cat}(\mB, \Fun^\oplax(\mC,\mD)) \simeq \Map_{\infty\Cat}(\mB \boxtimes \mC, \mD) \simeq \Map_{\infty\Cat}((\mB \boxtimes \mC)^\op, \mD^\op)  \simeq $$$$\Map_{\infty\Cat}(\mC^\op \boxtimes \mB^\op, \mD^\op) \simeq \Map_{\infty\Cat}(\mB^\op, \Fun^\lax(\mC^\op,\mD^\op))$$
induced by the equivalence of \cref{dua}.
The second equivalence is proven the same way.
\end{proof} 

\begin{remark}

By construction the functor $(-)^\op: \infty\Cat \to \infty\Cat $ factors as $$\infty\Cat \simeq {\infty\Cat}\mathrm{-}\Cat \xrightarrow{(-)^\co_!} {\infty\Cat}\mathrm{-}\Cat \xrightarrow{(-)^\circ} {\infty\Cat}\mathrm{-}\Cat \simeq \infty\Cat$$ and the functor $(-)^\co: \infty\Cat \to \infty\Cat$ factors as $$\infty\Cat \simeq {\infty\Cat}\mathrm{-}\Cat \xrightarrow{(-)^\op_!} {\infty\Cat}\mathrm{-}\Cat \simeq \infty\Cat.$$

The involutions $(-)^\op, (-)^\co: \infty\Cat \to \infty\Cat$ of \cref{ruik} restrict to those of \cref{dua}
on $\infty\Cat^\Steiner.$
So by density the involutions $(-)^\op, (-)^\co: \infty\Cat \to \infty\Cat$ of \cref{ruik} are equivalent to those of \cref{dua}.

\end{remark}

\subsection{The suspension formula}
We prove a formula for the suspension in terms of the Gray tensor product.

\begin{theorem}\label{cor:weakvsstrictsuspension}\label{thas2}
For every $\infty$-category $X$ there is a pushout square in $\infty\Cat:$
\[
\xymatrix{
X \coprod X \ar[r]\ar[d] & X \boxtimes \bD^1 \ar[d]\\
\bD^0 \coprod \bD^0 \ar[r] & S(X).
}
\]
\end{theorem}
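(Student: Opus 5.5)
Both sides of the square preserve colimits in $X$, so I would reduce to the dense family $\cube$ and settle that case with Steiner complexes.

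First, both corners of the square are functorial in $X$. Set $P(X) := (\bD^0\coprod\bD^0)\coprod_{X\coprod X}(X\boxtimes\bD^1)$. There is a canonical comparison $c_X\colon P(X)\to S(X)$. It comes from the quotient $q_X\colon X\boxtimes\bD^1\to S(X)$, defined via the Gray tensor as the functor classifying the identity of $X$ under the adjunction $\Map(X\boxtimes\bD^1,\mD)\simeq\Map(X,\Fun^\oplax(\bD^1,\mD))$, which collapses $X\boxtimes\{0\}$ and $X\boxtimes\{1\}$ to the two objects. For Steiner $X$ this is the map $q_X$ already used in \cref{6}. The plan is to show $c_X$ is an equivalence for all $X$.

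The main step is to show that both $P$ and $S$, viewed as functors $\infty\Cat\to\infty\Cat_{\partial\bD^1/}$, preserve small colimits. For $P$ this is immediate: $(-)\boxtimes\bD^1$ preserves colimits because the Gray tensor is presentably monoidal (\cref{locmon2}), and pushouts commute with colimits once we work under $\partial\bD^1$. For $S$, \cref{suspi} identifies maps $(S(X);0,1)\to(\mD;Y,Z)$ in $\infty\Cat_{\partial\bD^1/}$ with maps $X\to\Mor_\mD(Y,Z)$. So $S$ is left adjoint to $\Mor\colon\infty\Cat_{\partial\bD^1/}\to\infty\Cat$ and preserves colimits. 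Colimits in $\infty\Cat_{\partial\bD^1/}$ differ from colimits in $\infty\Cat$ only by identifying basepoints, and both $P$ and $S$ already force this. By density of $\cube$ (\cref{cubicaldense}), every $X$ is a colimit of cubes, so it suffices to prove $c_{\cube^n}$ is an equivalence for every $n$.

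For $X=\cube^n$, I would first show the square is a pushout in $\infty\Cat^\strict$, computed on Steiner complexes. Write $A=\lambda(\cube^n)$. Then $\lambda(\cube^n\boxtimes\bD^1)=A[1]\oplus A\oplus A$, and collapsing the two copies of $A$ along the augmentation gives $A[1]\oplus\bZ\oplus\bZ=\lambda(S(\cube^n))$, as in the proof of \cref{hio}. I would check directly that this is the pushout in augmented directed complexes and that all the maps involved preserve atomic generators, so by \cref{pasting2} the square is a pushout of strict $\infty$-categories. To upgrade to $\infty\Cat$, apply \cref{pasting1}. The top map $X\coprod X\to X\boxtimes\bD^1$ is an inclusion, since Steiner categories are loop-free. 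The other map $X\coprod X\to\bD^0\coprod\bD^0$ is not in the cellular saturated class, so I would instead build the pushout as an iterated attachment. Using the polygraph filtration of $\cube^n$, collapse $X\boxtimes\{0\}$ and $X\boxtimes\{1\}$ one generating cell at a time; equivalently, induct over the cell structure of $X$ via \cref{cello}.

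The hardest step is exactly this upgrade from strict to weak pushouts, because the collapse map is not a cell attachment. If the cellwise induction proves awkward, I would fall back on reducing to disks $X=\bD^k$, which suffices because $\Theta$ and the disks generate under colimits (\cref{theta}). Since $\bD^k\boxtimes\bD^1$ is not a globular sum, I would then verify the weak pushout by comparing nerves on $\Theta$, using the Segal characterization and the explicit description of $\Mor_{S(X)}(0,1)=X$.
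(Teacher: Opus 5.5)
Your reductions are sound and agree with the paper's. $P$ (the paper's $\mathfrak{S}$) and $S$ both preserve small colimits as functors $\infty\Cat\to\infty\Cat_{\partial\bD^1/}$, so given a natural comparison it suffices to treat a generating family. Construct that comparison as the paper does, by extending the natural strict square for Steiner $X$ along density. ``Classifying the identity of $X$'' presupposes a natural map $\Mor_\mD(Y,Z)\to\Fun^\oplax(\bD^1,\mD)$, and that is essentially the transformation you are trying to build.

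The gap is the generator case, which carries all the content, and none of the tools you cite covers it:
\begin{itemize}
\item \cref{pasting2} does not give even the strict pushout. The collapse $X\coprod X\to\bD^0\coprod\bD^0$ sends positive-dimensional atoms to identities, so not every functor in the diagram preserves atomic generators.
\item \cref{pasting1} does not apply to the square in either orientation, as you note yourself.
\item Collapsing one cell at a time only replaces one non-cellular pushout by several.
\item Inducting over the cells of $X$ via \cref{cello} only reduces cubes to disks, where it becomes vacuous.
\item The $\Theta$-nerve comparison is only named. Computing $\Map(\theta,P(\bD^k))$ is the same problem as computing the colimit.
\end{itemize}

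The missing idea is to decompose the cylinder $X\boxtimes\bD^1$ rather than $X$, and to induct on dimension so that the collapse is only ever handled by the inductive hypothesis. Take $X=\bD^n$.
\begin{itemize}
\item Since $\partial\bD^n$ is an iterated pushout of lower-dimensional disks, colimit preservation and induction show that $P(\partial\bD^n)\to S(\partial\bD^n)=\partial\bD^{n+1}$ is an equivalence.
\item By \cref{cell2}, $\iota_n(\bD^n\boxtimes\bD^1)$ is the pushout of $\partial\bD^n\boxtimes\bD^1\leftarrow\partial\bD^n\coprod\partial\bD^n\to\bD^n\coprod\bD^n$. This is a cell attachment along an inclusion, so here \cref{pasting1} legitimately applies.
\item Pasting this square with the boundary case shows that collapsing the two ends of $\iota_n(\bD^n\boxtimes\bD^1)$ gives $\partial\bD^{n+1}$.
\item \cref{cello} presents $\bD^n\boxtimes\bD^1$ as $\iota_n(\bD^n\boxtimes\bD^1)$ with a single $(n+1)$-cell attached along a map from $\partial\bD^{n+1}$. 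Its composite with the collapse is the identity of $\partial\bD^{n+1}$.
\item Two more applications of the pasting law then give $P(\bD^n)\simeq\bD^{n+1}=S(\bD^n)$.
\end{itemize}
The same scheme works for $\cube^n$ using \cref{cubedecom}. Disks are simpler because $S(\bD^n)=\bD^{n+1}$, which makes the final square trivial.
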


\begin{proof}
For every pair of Steiner $\infty$-categories $X, Y$ there is a natural commutative square in $\infty\Cat^\strict:$
\begin{equation}\label{70}
\xymatrix{
X \coprod X \ar[r]\ar[d] & X \boxtimes \bD^1 \ar[d]\\
\bD^0 \coprod \bD^0 \ar[r] & S(X).
}
\end{equation}
Let $\mathfrak{S}:\infty\Cat \to\infty\Cat$
be the functor sending $X$ to 
$$ \mathfrak{S}(X) := (\bD^0 \coprod \bD^0) \coprod_{X \coprod X} X \boxtimes \bD^1. $$

The functors \begin{equation}\label{func10}
S, \mathfrak{S}: \infty\Cat \to \infty\Cat_{\partial\bD^1 /} \end{equation} preserve small colimits.
The latter since it preserves weakly contractible colimits and the initial object.
Hence by density of Steiner $\infty$-categories the natural commutative square \ref{70} extends to a natural commutative square in $\infty\Cat$ for $X$ any $\infty$-category.
This natural commutative square gives rise to a functor
$$\rho:  \mathfrak{S}(X) \to S(X), $$ which is natural in $X \in \infty\Cat. $
We like to see that $\rho$ is an equivalence.
Since the functors \ref{func10} preserve small colimits, by generation of the disks we can reduce to the case that $X $ is a disk.

So we have to see that for every $n \geq 0$ the functor
$\rho: \mathfrak{S}(\bD^n) \to S(\bD^n) = \bD^{n+1}$ is an equivalence,
which we prove 
by induction on $ n \geq 0$.
For $n = 0$ the functor $\rho$ is the identity.
We prove the statement for $n$ assuming that $\rho: \mathfrak{S}(\bD^m) \to S(\bD^m) = \bD^{m+1}$ is an equivalence for every $ m < n$.
Since $\partial\bD^n$ is the colimit of disks of dimension smaller $n$ and the functors \ref{func10} preserve small colimits, $\rho: \mathfrak{S}(\partial\bD^n) \to S(\partial\bD^n) = \partial\bD^{n+1}$ is an equivalence.
Hence the commutative diagram
\begin{equation}\label{o1.5}
\xymatrix{
\partial\bD^n \coprod \partial\bD^n \ar[r]\ar[d] & \bD^1 \boxtimes \partial\bD^n \ar[d]\\
\bD^0 \coprod \bD^0 \ar[r] & \partial\bD^{n+1}}
\end{equation}
is a pushout square.
This commutative square agrees with the outer square of the following commutative diagram:
\begin{equation}\label{o2.5}
\xymatrix{
\partial\bD^n \coprod \partial\bD^n \ar[r]\ar[d] & \bD^1 \boxtimes \partial\bD^n \ar[d]\\
\bD^n \coprod \bD^n \ar[r] \ar[d] & \iota_n(\bD^1 \boxtimes \bD^n) \ar[d] 
\\
\bD^0 \coprod \bD^0 \ar[r] & \partial\bD^{n+1}.}
\end{equation}
The top commutative square is a pushout square by \cref{cell2}.
Thus by the pasting law also the bottom commutative square is a pushout square.

Consider the following commutative diagram:
\begin{equation}\label{o3.5}
\xymatrix{
\partial\bD^{n+1} \ar[r]\ar[d] & \bD^{n+1} \ar[d] 
\\\iota_n(\bD^1 \boxtimes \bD^n) \ar[d] \ar[r] & \bD^1 \boxtimes \bD^n \ar[d]
\\
\partial\bD^{n+1} \ar[r] & \bD^{n+1}.}
\end{equation}
The outer commutative square is trivially a pushout square since both vertical functors are the identities.
The top commutative square is a pushout square by \cref{cello}.
So by the pasting law also the bottom commutative square is a pushout square. Consequently, the outer square in the following commutative square is a pushout square:
\begin{equation}\label{o4}
\xymatrix{
\bD^n \coprod \bD^n \ar[r] \ar[d] & \iota_n(\bD^1 \boxtimes \bD^n) \ar[r]\ar[d] & \bD^1 \boxtimes \bD^n \ar[d] 
\\
\bD^0 \coprod \bD^0 \ar[r] & \partial\bD^{n+1} \ar[r] & \bD^{n+1}.}
\end{equation}
\end{proof}

The following proposition follows from work of Campion \cite[Theorem 3.14.]{campion2023graytensorproductinftyncategories}, which we recall for the reader's convenience.

\begin{proposition}

The localization $\infty\Cat \rightleftarrows \infty\Cat^\univ$ is monoidal.
    
\end{proposition}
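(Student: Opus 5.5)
We need to show the localization $\infty\Cat \rightleftarrows \infty\Cat^\univ$ is compatible with the Gray tensor product. Concretely, for every $\infty$-category $Z$ the functors $(-)\boxtimes Z$ and $Z\boxtimes(-)$ must send local equivalences (the maps inverted by univalent completion $(-)^\wedge$) to local equivalences. Both functors preserve small colimits in each variable, because the Gray structure is presentably monoidal. Local equivalences are the strongly saturated class generated by a set, so it suffices to check the generators.

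By the definition of univalence, a generating set of local equivalences is given by the maps $S^n(J)\to S^n(\ast)=\bD^n$ for $n\ge 0$. Equivalently, and more conveniently, it is the maps $\bD^n\boxtimes J \to \bD^n$, or the maps $S^n(J)\to\bD^n$ themselves. Since $\infty\Cat$ is generated under colimits by the oriented cubes (\cref{cubicaldense}), we may also assume $Z=\cube^m$. Using the symmetry $X\bar\boxtimes Y\simeq (X^\co\boxtimes Y^\co)^\co$ from \cref{dua}, together with the fact that $(-)^\co$ preserves univalent objects and $S^n(J)^\co\simeq S^n(J)$ up to the evident involution, the right-tensoring case reduces to the left one. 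So the key claim is the following: for all $n,m\ge 0$, the map $S^n(J)\boxtimes\cube^m\to \bD^n\boxtimes\cube^m$ is a local equivalence.

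The first step is to reduce to the case $n=0$. By \cref{thas2}, $S(X)$ is the pushout of $\bD^0\coprod\bD^0\leftarrow X\coprod X\to X\boxtimes\bD^1$. Iterating this expresses $S^n(J)$ as an iterated pushout of Gray tensors $J\boxtimes(\bD^1)^{\boxtimes k}$ and lower-dimensional pieces, naturally in $J\to\ast$. Tensoring with $\cube^m$ preserves these pushouts, and local equivalences are closed under pushouts (that is, under colimits in the arrow category). Hence it suffices to know that $J\boxtimes \cube^k\to\cube^k$ is a local equivalence for every $k$, which is the case $n=0$ with $Z$ an arbitrary cube.

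The main step, and the one I expect to be the obstacle, is to show that $J\boxtimes\cube^k\to\cube^k$ is a local equivalence. Here is the approach I would try first. The map $J\to\ast$ has a section $\ast\to J$, and the composite $J\to\ast\to J$ is connected to $\id_J$ by an invertible natural transformation, i.e.\ a map $J\times\bD^1\to J$ that factors through the localization $\bD^1\to J$. Tensoring with $\cube^k$ exhibits $\cube^k\to J\boxtimes\cube^k$ as a homotopy equivalence relative to the cylinder $J\boxtimes\cube^k$, using the comparison $J\boxtimes X\to J\times X$. Therefore it suffices that $J\boxtimes X\to J\times X$ is an equivalence, i.e.\ that the Gray tensor with a groupoid-like object $J$ is cartesian. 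I would prove this by noting that $J$ is a colimit of a $\bD^1$-shaped diagram of points and $\bD^1$'s (the walking isomorphism presented as $\bD^1\cup\bD^1$ with the $2$-out-of-$6$ relations, all built from $\tau$-truncations). The lax squares in $\bD^1\boxtimes X$ become invertible once the $1$-cell of $\bD^1$ is invertible, because the $2$-cells of the Gray tensor are lax naturality cells indexed by cells of $\bD^1$, and lax naturality along an invertible cell is automatically invertible. This is the delicate point. Failing that, I would fall back on Campion's argument and check univalence directly: for univalent $W$, show that $\Fun^{\oplax}(\cube^k,W)$ is univalent by identifying its equivalences with pointwise equivalences. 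This reduces by adjunction to the same statement, namely that $J$-shaped diagrams in $\Fun^\oplax(\cube^k,W)$ are constant.
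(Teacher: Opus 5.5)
Your reductions are sound. Passing to the generators $S^n(J)\to\bD^n$ and to $Z=\cube^m$, swapping sides with $(-)^\co$, and expanding $S^n(J)$ via \cref{thas2} naturally in $J\to\ast$ do reduce the claim to this: $J\boxtimes\cube^k\to\cube^k$ is a local equivalence for every $k$.

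The gap is in this core step, where all the difficulty lies. The lemma you reduce to, that $J\boxtimes X\to J\times X$ is an equivalence, is false in the non-univalent $\infty\Cat$ of the paper. Take $X=\bD^1$. Strictification is monoidal for the Gray product (\cref{stricto}), and $J$ and $J\times\bD^1$ are strict. So your lemma would give an isomorphism between $J\times\bD^1$ and the classical strict Gray tensor product of the $1$-categories $J$ and $\bD^1$. In the latter, the two composites $(0,0)\to(1,1)$ are distinct $1$-cells, because its underlying $1$-category is the funny tensor product. They are joined by an interchange $2$-cell that is invertible, as your intuition says, but not an identity. Hence the mapping category from $(0,0)$ to $(1,1)$ has two objects rather than one.

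Without univalence, invertible is not trivial, and collapsing that cell is itself a univalent-completion step. So $J\boxtimes X\to J\times X$ is only a local equivalence, which is what you are trying to prove. Your homotopy argument is circular for the same reason: a homotopy parametrized by the Gray cylinder $J\boxtimes(-)$ yields a local equivalence only if $J\boxtimes A\to A$ is already known to be one. The fallback, univalence of $\Fun^{\oplax}(\cube^k,W)$ for univalent $W$, is by adjunction just a restatement of the claim. (Separately, your assertion that the maps $\bD^n\boxtimes J\to\bD^n$ form an equivalent generating set is unjustified, though you never use it.)

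What is missing is a way to move the Gray cylinder inside a suspension. Your scheme does settle $k=1$ directly. By \cref{thas2}, $J\boxtimes\bD^1\to S(J)$ is a pushout of $J\coprod J\to\ast\coprod\ast$, and $S(J)\to\bD^1$ is a generator. For $k\ge 2$ the same trick needs $S(J)\boxtimes\cube^{k-1}\to\cube^k$, and expanding $S(J)$ via \cref{thas2} brings back $J\boxtimes\cube^k$, so the argument loops.

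The paper breaks this loop as follows. It reduces to $Z=\bD^1$ by associativity of $\boxtimes$ and handles $J\boxtimes\bD^1$ exactly as above. It then inducts on $n$ using the decomposition
\[
S(Y)\boxtimes\bD^1\simeq(\bD^1\vee S(Y))\coprod_{\partial\bD^1}(S(Y)\vee\bD^1)\coprod_{S(Y\boxtimes\partial\bD^1)}S(Y\boxtimes\bD^1)
\]
from the sequel paper, applied naturally to $S^{n-1}(J)\to S^{n-1}(\ast)$. It also uses that $S$ preserves local equivalences, since $S$ preserves weakly contractible colimits and sends generators to generators. Some such formula for the Gray tensor of a suspension, or an equivalent inductive description of the mapping categories of $\Fun^{\oplax}(\bD^1,W)$, is the ingredient your proposal lacks.
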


\begin{proof}

We have to see that for every $X \in \infty\Cat$ the functors $(-)\boxtimes X, X \boxtimes(-) : \infty\Cat \to \infty\Cat $ preserve local equivalences.

By \cref{cubicaldense} the category $\infty\Cat$ is generated by the oriented cubes under small colimits. Since the collection of local equivalences is closed under small colimits, we can assume that $X = \cube^m$ for some $m \geq 0.$
Since the Gray tensor product is associative, we can assume that $X = \bD^1.$
We prove that the functor $(-)\boxtimes\bD^1: \infty\Cat \to \infty\Cat $ preserves local equivalences. The other case is similar.
It suffices to see that the functor $(-)\boxtimes\bD^1: \infty\Cat \to \infty\Cat $ preserves the generating local equivalences.
For that we have to see that for every $n \geq 0$ 
the induced functor $ S^n(J) \boxtimes\bD^1 \to S^n(*) \boxtimes\bD^1 = \bD^n \boxtimes\bD^1 $ is a local equivalence, where $J$ is the non-univalent $(1,1)$-category with two objects and one isomorphism between them.
We prove this statement by induction on $n \geq 0.$
We start with the induction start $n=1.$

The functor $ J \boxtimes \bD^1 \to * \boxtimes \bD^1 \simeq \bD^1 $
factors as $  J \boxtimes \bD^1 \to S(J) \to S(*) = \bD^1,  $
where the first functor is the canonical one and the second functor
is the image under $S$ of the functor $J \to *,$ which is a local equivalence. So it suffices to see that the first functor in the composition is a local equivalence.

By \cref{thas2} there is a pushout square 
\[
\xymatrix{
J \boxtimes \partial\bD^1  \ar[d]\ar[r] & \partial\bD^1 \ar[d]\\
J \boxtimes \bD^1 \ar[r] & S(J)
}
\]
So the bottom functor is a local equivalence since the functor
$J \to *$ and so also the functor $J \coprod J \to * \coprod *$ are local equivalences.

This proves the induction start. We continue with the induction step.
Let $n > 1$ and let $\theta : S^{n-1}(J) \to S^{n-1}(*) $ be the canonical functor, which is a generating local equivalence. We assume that the functor $$ \theta \boxtimes \bD^1: S^{n-1}(J) \boxtimes\bD^1 \to S^{n-1}(*) \boxtimes\bD^1 $$ is a local equivalence.
We prove that the functor $$ S(\theta) \boxtimes \bD^1: S^n(J) \boxtimes\bD^1 \to S^n(*) \boxtimes\bD^1 $$ is a local equivalence.
Since the collection of local equivalences is closed under colimits, also $\theta \boxtimes \partial\bD^1 = \theta \coprod \theta$ is a local equivalence. 
The functor $S: \infty\Cat \to \infty\Cat$, which preserves weakly contractible colimits, preserves the generating local equivalences and so all local equivalences. Hence also $S(\theta), S(\theta \boxtimes \partial\bD^1), S(\theta \boxtimes \bD^1)$ are local equivalences. 

By \cite[Theorem 4.2.11.]{gepner2025oriented} there is a canonical equivalence
$$ S(\theta) \boxtimes \bD^1 \simeq (\bD^1 \vee S(\theta)) \coprod_{\partial\bD^1} ( S(\theta) \vee \bD^1) \coprod_{S(\theta \boxtimes \partial\bD^1)} S(\theta \boxtimes \bD^1). $$

This proves the induction step since the collection of local equivalences is closed under colimits.
\end{proof}

\subsection{Join and slice}

\begin{notation} For every small category $\mC$ that admits an initial object let $ \mP_{\mathrm{red}}(\mC) \subset \mP(\mC) $  
be the full subcategory of presheaves on $\mC$
that are reduced, i.e. send the initial object to the final space.
\end{notation}

\begin{remark}\label{gener}

The full subcategory $ \mP_{\mathrm{red}}(\mC) \subset \mP(\mC)$ is the localization with respect to the single morphism $\emptyset \to \iota(\emptyset),$
where $\iota: \mC \to \mP(\mC)$ is the Yoneda embedding.
In particular, the Yoneda embedding $\iota$
lands in $ \mP_{\mathrm{red}}(\mC) $ and preserves the initial object
and $\mP_{\mathrm{red}}(\mC)$ is generated under small colimits by the representables.
     
\end{remark}

\begin{remark}\label{weakco}
The full subcategory $ \mP_{\mathrm{red}}(\mC) $ is closed in $ \mP(\mC) $ under small weakly contractible colimits.
    
\end{remark}

\begin{lemma}\label{joinconv}
Let $\mC $ be a small monoidal category that admits an initial object.
The convolution monoidal structure on $\mP(\mC)$ restricts to a monoidal structure on $ \mP_{\mathrm{red}}(\mC).$

\end{lemma}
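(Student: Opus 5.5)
The plan has three steps: identify a localization whose local objects are exactly the reduced presheaves, check that the convolution product preserves the corresponding local equivalences, and then transfer the monoidal structure to the local objects.

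\textbf{Step 1 (the localization).} By \cref{gener}, $\mP_{\mathrm{red}}(\mC)\subset\mP(\mC)$ is the localization at the single map $\emptyset\to\iota(\emptyset)$, where $\iota$ is the Yoneda embedding. The standard criterion for a monoidal structure to descend along a localization, as used in the proof of \cref{locmon}, then applies. It suffices to show that for every $Z\in\mP(\mC)$ the functors $(-)\ot Z$ and $Z\ot(-)$ send local equivalences to local equivalences.

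\textbf{Step 2 (reduce to a generator and a representable).} The convolution product preserves small colimits in each variable. The class of local equivalences is the strongly saturated class generated by $\emptyset\to\iota(\emptyset)$. It is therefore enough to check that $(\emptyset\to\iota(\emptyset))\ot Z$ and $Z\ot(\emptyset\to\iota(\emptyset))$ are local equivalences. Since $\mP(\mC)$ is generated under colimits by representables, we may moreover assume $Z=\iota(c)$ for some $c\in\mC$. I treat the first case; the second is symmetric.

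\textbf{Step 3 (the key point).} Because the tensor product preserves colimits in each variable, $\emptyset\ot\iota(c)\simeq\emptyset$. Because the Yoneda embedding is monoidal for Day convolution, $\iota(\emptyset)\ot\iota(c)\simeq\iota(\emptyset\ot c)$. So the map in question is $\emptyset\to\iota(\emptyset\ot c)$, and we need it to be a local equivalence.

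This is where I expect the main obstacle: it requires $\emptyset\ot c$ to be initial in $\mC$, or at least $\iota(\emptyset\ot c)$ to be locally equivalent to $\emptyset$. In the intended application $\mC=\bDelta_{\geq -1}$ with the join, and the initial object $\emptyset$ is the tensor unit, so $\emptyset\star c\simeq c$. Then the map becomes $\emptyset\to\iota(c)$, which is not a local equivalence. So the descent argument of Step 1 fails as stated. Instead I would show directly that $\mP_{\mathrm{red}}(\mC)$ is closed under the convolution product whenever the unit is the initial object, so that no localization is needed.

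\textbf{Direct closure argument.} Write $X\ot Y\simeq\colim_{\iota(a)\to X,\ \iota(b)\to Y}\iota(a\ot b)$. Evaluating at $\emptyset$ gives
\[(X\ot Y)(\emptyset)\simeq\colim\,\Map(\emptyset,a\ot b)\simeq\colim\,\ast,\]
which is the classifying space of $\mC_{/X}\times\mC_{/Y}$. For $X$ reduced, $\mC_{/X}$ has the terminal-like object given by the unique point of $X(\emptyset)$; more precisely, $\emptyset$ with the point of $X(\emptyset)\simeq\ast$ is an initial object of $\mC_{/X}$, because maps $(\emptyset,\ast)\to(a,x)$ are $\Map(\emptyset,a)\times_{X(\emptyset)}\{\ast\}\simeq\ast$. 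Hence $\mC_{/X}$ is weakly contractible, so the product category is weakly contractible and $(X\ot Y)(\emptyset)\simeq\ast$.

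\textbf{Finishing.} The unit $\iota(\emptyset)$ is reduced, since $\Map(\emptyset,\emptyset)\simeq\ast$. Thus $\mP_{\mathrm{red}}(\mC)$ is a full subcategory containing the unit and closed under the tensor product, so the monoidal structure restricts, with associativity and unit data inherited by fullness. If instead the unit is not initial but $\emptyset\ot c\simeq\emptyset$ for all $c$, the original Steps 1–3 go through directly. I would present both cases, with the join case as the primary one.
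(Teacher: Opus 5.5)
Your final direct closure argument is correct and is essentially the paper's proof. The paper also evaluates the convolution at $\emptyset$, where $\Map_{\mC}(\emptyset,A\otimes B)\simeq\ast$ reduces the Kan extension formula to $\colim_{(A,B)}X(A)\times Y(B)\simeq X(\emptyset)\times Y(\emptyset)$; your initial object $(\emptyset,\ast)$ of $\mC_{/X}$ is the same observation. The paper then notes that the unit is representable, hence reduced.

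Neither step uses that the unit of $\mC$ is initial, since the representable presheaf on $\tu$ is reduced for any unit. So you can drop the localization detour in Steps 1--3 (which, as you correctly observe, fails in the join case) and the case split at the end, which is not exhaustive as stated.
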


\begin{proof}
Since the Yoneda embedding of $\mC$ is monoidal, the tensor unit of the convolution monoidal structure on $\mP(\mC)$ is representable
and so belongs to $\mP_{\mathrm{red}}(\mC)$.
For every $X,Y\in\mP(\mC)$ there is a canonical equivalence
\begin{align*}
(X \ot Y)(\emptyset)\simeq &\underset{(A,B) \in\mC \times \mC}{\colim} X(A)\times Y(B) \simeq (\colim_{A\in \mC} X(A)) \times (\colim_{B \in\mC} Y(B)) \simeq X(\emptyset) \times Y(\emptyset).
\end{align*}
\end{proof}

\begin{remark}
Let $\mC $ be a small monoidal category that admits an initial object.
The Yoneda embedding $\iota: \mC \to \mP(\mC)$ is monoidal so that also the induced embedding
$\iota: \mC \to \mP_{\mathrm{red}}(\mC)$ is monoidal.
In particular, by \cref{gener} the tensor unit of $\mP_{\mathrm{red}}(\mC)$ is initial if the tensor unit of $\mC$ is initial. 
\end{remark}\label{yonemon}

\begin{lemma}\label{joinconv2}Let $\mC,\mD $ be small monoidal categories whose tensor units are initial.
Let $i: \mC \subset \mD $ be a monoidal embedding.
The left adjoint embedding $i_!: \mP_{\mathrm{red}}(\mC) \to \mP_{\mathrm{red}}(\mD)$ refines to a monoidal embedding.
\end{lemma}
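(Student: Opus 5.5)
The plan is to obtain the monoidal structure on $\mP_{\mathrm{red}}(\mC)\to\mP_{\mathrm{red}}(\mD)$ from the monoidal left Kan extension on full presheaf categories, and then descend it along the reduction localizations.

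First, left Kan extension $i_!:\mP(\mC)\to\mP(\mD)$ along a monoidal functor is canonically monoidal for Day convolution. It is the unique colimit-preserving extension of the monoidal composite $\mC\xrightarrow{i}\mD\to\mP(\mD)$, and convolution is the universal presentably monoidal structure receiving a monoidal functor from $\mC$. Since $i$ is fully faithful, $i_!$ is fully faithful as well.

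Next, compose with the localizations. Let $L_\mC:\mP(\mC)\to\mP_{\mathrm{red}}(\mC)$ and $L_\mD$ be the left adjoints to the inclusions. By \cref{gener} these are the localizations at $\emptyset\to\iota(\emptyset)$. By \cref{joinconv} the reduced subcategories are closed under convolution and contain the unit, and they are generated by the single morphism above. To see that $L_\mC$ and $L_\mD$ are monoidal localizations, one checks that $(\emptyset\to\iota(\emptyset))\otimes\iota(c)$ is again a local equivalence. This holds because its target is $\iota(\emptyset\otimes c)$, with $\emptyset\otimes c$ not obviously initial, so the point needs care. Alternatively, use directly that the reduced subcategory is closed under tensoring: for a reduced $X$ and any $Y$ one has $(X\otimes Y)(\emptyset)\simeq X(\emptyset)\times Y(\emptyset)$. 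This is the formula of \cref{joinconv}, which uses only that $\emptyset$ is the unit, so $\mC\times\mC\to\mC$ restricted over $\emptyset$ is essentially $\{(\emptyset,\emptyset)\}$. Hence local objects are stable under internal homs, and the localization is compatible with $\otimes$.

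Then $i_!$ sends the generator $\emptyset\to\iota_\mC(\emptyset)$ to $\emptyset\to\iota_\mD(i\emptyset)=\iota_\mD(\emptyset)$, since $i$ is monoidal and units are initial, so $i(\emptyset)$ is initial. Therefore $i_!$ preserves local equivalences and descends to a monoidal functor $L_\mD i_!:\mP_{\mathrm{red}}(\mC)\to\mP_{\mathrm{red}}(\mD)$. This functor is the left adjoint to restriction, so it is the restricted $i_!$.

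For full faithfulness, it suffices that $i_!$ already lands in reduced presheaves, so that no further localization occurs. For $X\in\mP_{\mathrm{red}}(\mC)$ we have $i_!X(\emptyset)=\colim_{(c,\,\emptyset\to ic)}X(c)$. The indexing category has the initial object $c=\emptyset$, so this colimit is actually computed at the terminal object of $(\emptyset\downarrow i)^{\op}$. Since $\emptyset$ is initial in $\mD$, the comma category is $\mC$ itself, and $\colim_{\mC^{\op}}X=X(\emptyset)=\ast$ because $\emptyset$ is terminal in $\mC^{\op}$. Hence $i_!X$ is reduced, and $i_!$ restricts to a fully faithful monoidal embedding. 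The main obstacle is verifying compatibility of the reduction localization with convolution, which is the \cref{joinconv} computation applied to one variable reduced.
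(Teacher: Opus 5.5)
The step you call the ``main obstacle'' is not merely delicate; it is false. The reduction localization $L_\mD\colon\mP(\mD)\to\mP_{\mathrm{red}}(\mD)$ is \emph{not} compatible with Day convolution.

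Write $u$ for the unit (the initial object) and $\emptyset$ for the empty presheaf. Your own test map $(\emptyset\to\iota(u))\otimes\iota(c)$ is the map $\emptyset\to\iota(u\otimes c)\simeq\iota(c)$, and this is not a local equivalence unless $c$ is initial. Concretely, $L(\emptyset\otimes\iota(c))\simeq L(\emptyset)\simeq\iota(u)$, whereas $L(\emptyset)\otimes L(\iota(c))\simeq\iota(u)\otimes\iota(c)\simeq\iota(c)$.

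Your alternative argument fails for the same reason. If only $X$ is reduced, then $(X\otimes Y)(u)\simeq Y(u)$, not $\ast$. Likewise, the internal hom $H$ from $Y$ into a reduced $Z$ has $H(u)\simeq\Map(\iota(u)\otimes Y,Z)\simeq\Map(Y,Z)$, which is not contractible in general. So local objects are not closed under internal homs. As written, the sentence ``$i_!$ \dots descends to a monoidal functor $L_\mD i_!$'' therefore has no justification: $L_\mD$ is only oplax monoidal, being left adjoint to the strong monoidal inclusion given by \cref{joinconv}.

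Your last paragraph makes the localization irrelevant, and with a reorganization it gives a complete proof. There you show that $i_!$ sends reduced presheaves to reduced presheaves. A quicker route: $i_!X(u_\mD)\simeq i_!X(i(u_\mC))\simeq X(u_\mC)$, since $i$ is fully faithful and preserves the unit.

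By \cref{joinconv}, both inclusions $\mP_{\mathrm{red}}\subset\mP$ are closed under $\otimes$ and contain the unit. Hence the monoidal functor $i_!\colon\mP(\mC)\to\mP(\mD)$ restricts to a monoidal functor between these monoidal full subcategories. This restriction is the left adjoint $L_\mD\, i_!|_{\mathrm{red}}$, because $L_\mD$ is the identity on reduced presheaves. Full faithfulness is inherited from $i_!$.

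The paper argues differently. It uses only that $L_\mD i_!$ is oplax monoidal, then checks that the structure maps are equivalences by reducing, via closure under the initial object and weakly contractible colimits, to representables. Your repaired route is shorter: the computation that $i_!$ preserves reducedness removes the need for any colimit-closure argument.
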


\begin{proof}
The monoidal embedding $i: \mC \subset \mD$ uniquely extends to a left adjoint monoidal embedding $i_!: \mP(\mC) \to \mP(\mD)$.
This functor sends the unique morphism $\emptyset \to \iota_\mC(\emptyset)$ to the unique morphism $\emptyset \to \iota_\mD(\emptyset)$ and so descends to a functor
$i_!: \mP_{\mathrm{red}}(\mC) \to \mP_{\mathrm{red}}(\mD)$.
This functor factors as $\mP_{\mathrm{red}}(\mC) \subset \mP(\mC) \xrightarrow{i_!} \mP(\mD) \to \mP_{\mathrm{red}}(\mD)$, where the last functor is the localization functor. By \cref{joinconv} the embeddings $\mP_{\mathrm{red}}(\mC) \subset \mP(\mC), \mP_{\mathrm{red}}(\mD) \subset \mP(\mD)$ are monoidal. Hence the localization functor
$\mP(\mD) \to \mP_{\mathrm{red}}(\mD)$ is oplax monoidal as it is the left adjoint of the latter. 
Hence the composition $\phi: \mP_{\mathrm{red}}(\mC) \subset \mP(\mC) \xrightarrow{i_!} \mP(\mD) \to \mP_{\mathrm{red}}(\mD)$ is oplax monoidal.
We complete the proof by showing that this composition is monoidal.
For this we have to see that for every $A,B \in \mP(\mC)$
the structure morphism $\phi(A\ot B) \to \phi(A) \ot \phi(B)$ 
is an equivalence.
Let $A \in \mP(\mC)$. The full subcategory of $\mP(\mC)$ of all $B$ such that the structure morphism is an equivalence contains the initial object, which is the tensor unit, and is closed under small weakly contractible colimits using \cref{weakco}, and so is closed under small colimits.
Since $\mP_{\mathrm{red}}(\mC)$ is generated by the representables (\cref{gener}),
we can assume that $B$ is representable.
Let $B \in \mC$. The full subcategory of $\mP(\mC)$ of all $A$ such that the structure morphism is an equivalence contains the initial object, which is the tensor unit, and is closed under small weakly contractible colimits and so is closed under small colimits.
So we can assume that $A, B$ are representable.
In this case the structure morphism identifies with the one of $i: \mC \to \mD$ using \cref{yonemon} and so is an equivalence.
\end{proof}

Let $\mC $ be a full subcategory of $\infty\Cat^\Steiner$,
to which the join monoidal structure restricts, and which is dense
in $\infty\Cat$.
Since $\mC \subset \infty\Cat$ is dense,
the embedding $\mC \subset \infty\Cat$
induces a localization
$$ \mP(\mC) \rightleftarrows \infty\Cat $$
that restricts to a localization $ \mP_{\mathrm{red}}(\mC) \rightleftarrows \infty\Cat. $

\begin{theorem}\label{locmon3}Let $\mC $ be a full subcategory of $\infty\Cat_{\mathrm{fg}}^\Steiner$, to which the join monoidal structure restricts, and which is dense in $\infty\Cat$.
\begin{enumerate}[\normalfont(1)]\setlength{\itemsep}{-2pt}
\item The monoidal structure of \cref{joinconv} descends along the 
localization $$\mP_{\mathrm{red}}(\mC) \rightleftarrows \infty\Cat.$$
\item The monoidal structure of \cref{joinconv}
descends along the localization $$ \mP_{\mathrm{red}}(\mC)\otimes\Set \rightleftarrows \infty\Cat^{\mathrm{strict}}. $$
\end{enumerate}

\end{theorem}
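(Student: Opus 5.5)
The plan follows the pattern of the proof of \cref{locmon}, with the Gray tensor replaced by the join. I first treat the case $\mC=\infty\Cat^\Steiner_{\mathrm{fg}}$, where the join restricts by \cref{jointensor}. To say that the reduced convolution structure of \cref{joinconv} descends along $\mP_{\mathrm{red}}(\mC)\rightleftarrows\infty\Cat$ means that for every $Z\in\mP_{\mathrm{red}}(\mC)$ the functors $(-)\star Z$ and $Z\star(-)$ send local equivalences to local equivalences.

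The first step is to reduce to representable $Z$. The join convolution does not preserve all colimits in each variable: $(-)\star Z$ preserves weakly contractible colimits but sends $\emptyset$ to $Z$. So I argue as in \cref{joinconv2}. The class of $Z$ for which $(-)\star Z$ preserves local equivalences contains the unit $\iota(\emptyset)$, which is initial in $\mP_{\mathrm{red}}(\mC)$. It is also closed under weakly contractible colimits by \cref{weakco}, because local equivalences are closed under colimits in the arrow category. Hence the class is closed under all small colimits, and by \cref{gener} it suffices to take $Z=j(Z)$ representable. The same argument lets me take $X$ generating on the other side.

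The second step checks the generating local equivalences, which by \cref{folkloc} (restricted to reduced presheaves, where $\emptyset\to j(\emptyset)$ is already inverted) are the maps
\[
j(\partial\bD^n)\coprod_{j(\bD^n)}j(A)\to j\Bigl(\partial\bD^n\coprod_{\bD^n}A\Bigr)
\]
for Steiner pushouts. Since these pushouts are connected, $(-)\star j(Z)$ commutes with them. As in \cref{locmon}, it then suffices to show that
\[
\xymatrix{
\partial\bD^n\star Z\ar[r]\ar[d] & A\star Z\ar[d]\\
\bD^n\star Z\ar[r] & (\bD^n\coprod_{\partial\bD^n}A)\star Z
}
\]
is a pushout in $\infty\Cat$. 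The join $(-)\star Z$ from $\infty\Cat^\strict$ to $\infty\Cat^\strict_{Z/}$ is a left adjoint, so this square is a pushout in $\infty\Cat^\strict$. To promote it to $\infty\Cat$ I apply \cref{pasting1}. This requires that $A\star Z\to(\bD^n\coprod A)\star Z$, or rather $\partial\bD^n\star Z\to\bD^n\star Z$, lies in the saturated class generated by the boundary inclusions, and that the other leg is an inclusion. Both follow from the Ara--Lucas result \cite[Theorem 5.6]{ara.folkmodel}, or from its join analogue in \cite{ara.folkmodel}, which says that the join with a Steiner category preserves cofibrations of the folk model structure and inclusions. Alternatively I can use \cref{pasting2}, since all maps in the square preserve atoms on Steiner complexes. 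I expect this to be the main obstacle: one has to verify that the join analogue of the folk-cofibration preservation holds and is compatible with the non-unital behaviour at $\emptyset$.

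The third step passes to an arbitrary dense $\mC$. By the first case, the localization from $\mP_{\mathrm{red}}(\infty\Cat^\Steiner_{\mathrm{fg}})$ to $\infty\Cat$ is monoidal. Hence $\infty\Cat^\Steiner_{\mathrm{fg}}\subset\infty\Cat$ is a monoidal embedding for $\star$, and so is $\mC\subset\infty\Cat$. By density, together with \cref{joinconv2}, this embedding extends to a monoidal left adjoint $\mP_{\mathrm{red}}(\mC)\to\infty\Cat$ that refines the localization. Part (2) is proved the same way using set-valued presheaves and the strict pushout from the second step directly, with no appeal to \cref{pasting1}; it could alternatively be obtained from \cite{Dimitri_Ara_2020}.
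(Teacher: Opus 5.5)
Your proposal is correct and takes essentially the same route as the paper. You first treat $\mC=\infty\Cat^\Steiner_{\mathrm{fg}}$ and reduce to representable $Z$ using the unit and weakly contractible colimits. You then check the generating maps of \cref{folkloc} by promoting the strict pushout square to a pushout in $\infty\Cat$ via \cref{pasting1} and the Ara--Lucas result that join preserves folk cofibrations. Finally you pass to a general dense $\mC$ by writing the localization as the monoidal functor $i_!$ of \cref{joinconv2} followed by the localization for $\infty\Cat^\Steiner_{\mathrm{fg}}$.

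Your aside that \cref{pasting2} could replace this step does not work as stated. \cref{pasting2} only produces colimits in $\infty\Cat^\strict$, not in $\infty\Cat$, and the inclusion $\partial\bD^n\to A$ need not send atoms to atoms. The main argument does not depend on this aside.
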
 

\begin{proof}We prove (1). The proof of (2) is similar.
We first assume that $\mC = \infty\Cat_{\mathrm{fg}}^\Steiner.$
We have to see that for every $Z \in \mP_{\mathrm{red}}(\infty\Cat_{\mathrm{fg}}^\Steiner)$
the functors $$(-) \star Z, Z \star (-) : \mP_{\mathrm{red}}(\infty\Cat_{\mathrm{fg}}^\Steiner) \to \mP_{\mathrm{red}}(\infty\Cat_{\mathrm{fg}}^\Steiner)$$ preserve local equivalences.
We prove the first case, the second case is similar.
 
The full subcategory of $Z \in \mP_{\mathrm{red}}(\infty\Cat_{\mathrm{fg}}^\Steiner)$ such that $(-) \star Z:  \mP_{\mathrm{red}}(\infty\Cat_{\mathrm{fg}}^\Steiner) \to \mP_{\mathrm{red}}(\infty\Cat_{\mathrm{fg}}^\Steiner)$ preserves local equivalences, contains the empty category, which is the tensor unit of $\mP_{\mathrm{red}}(\infty\Cat_{\mathrm{fg}}^\Steiner)$, and is closed under small weakly contractible colimits and so is closed under small colimits.
By \cref{gener} the category $\mP_{\mathrm{red}}(\infty\Cat_{\mathrm{fg}}^\Steiner) $ is generated by the representables under small colimits.
Hence we can assume that $Z$ is representable. By \cref{folkloc} we have to prove that the map
$Z \simeq \emptyset \star Z \to j(\emptyset) \star Z \simeq j(\emptyset \star Z) \simeq Z $ 
is a local equivalence, which is the identity, and that for every finitely generated Steiner $\infty$-category $Y$ and inclusion $\bD^n \to Y$ such that the pushout $ \bD^n \coprod_{\partial \bD^n} Y $
in $\infty\Cat$ is a Steiner $\infty$-category, the following map is a local equivalence, where $j$ denotes the Yoneda embedding:
$$
\left(j(\bD^n)\coprod_{j(\partial\bD^n)}j(A)\right)\star j(Z) \simeq
j(\bD^n)\star j(Z) \coprod_{j(\partial\bD^n)\star j(Z)}j(A)\star j(Z)\simeq
j(\bD^n\star Z)\coprod_{j(\partial\bD^n\star Z)}j(A\star Z)$$
$$
\xrightarrow{\alpha} j\left(\bD^n\star Z\coprod_{\partial\bD^n\star Z}A\star Z\right)\xrightarrow{\beta} j((\bD^n\coprod_{\partial\bD^n} A)\star  Z)\simeq j(\bD^n\coprod_{\partial\bD^n} A)\star j(Z).
$$
The morphism $\alpha$ is evidently a local equivalence. 
So it suffices to see that $\beta$ is an equivalence.

The functor $(-)\star Z: \infty\Cat^\strict \to \infty\Cat^\strict $
preserves pushouts. So the following commutative square 
\[
\xymatrix{
\partial\bD^n \star  Z \ar[d]\ar[r] & A \star  Z \ar[d]\\
\bD^n \star  Z \ar[r] & (\bD^n\coprod_{\partial\bD^n} A)\star  Z
}
\]
in $\infty\Cat^\strict$ is a pushout square in $\infty\Cat^\strict$.
By \cref{pasting1} this commutative square is also a pushout square in $\infty\Cat$ because the functor $(-)\star Z: \infty\Cat^\strict \to \infty\Cat^\strict $ preserves the saturated class generated by the inclusions $\partial \bD^n \subset \bD^n$ for $n \geq 0$ by \cite[Proposition 7.5, Theorem 7.14]{ara.folkmodel}.

Now let $\mC$ be arbitrary and $i: \mC \subset \infty\Cat_{\mathrm{fg}}^\Steiner $ the monoidal embedding.
Since $\mC$ is dense in $\infty\Cat$, the restricted nerve functor
$ \infty\Cat \to \mP_{\mathrm{red}}(\infty\Cat_{\mathrm{fg}}^\Steiner)$
factors as the restricted nerve functor
$ \infty\Cat \to \mP(\mC) $ followed by the embedding $\mP(\mC) \to \mP(\infty\Cat_{\mathrm{fg}}^\Steiner)$ taking right Kan-extensions.
Hence by adjointness the localization functor
$\mP(\infty\Cat_{\mathrm{fg}}^\Steiner) \to \infty\Cat$ factors as
$i^*: \mP(\infty\Cat_{\mathrm{fg}}^\Steiner) \to \mP(\mC)$ followed by the
localization functor $\mP(\mC) \to \infty\Cat$.
The functor $i^*: \mP(\infty\Cat_{\mathrm{fg}}^\Steiner) \to \mP(\mC)$
restricts to a functor $i^*: \mP_{\mathrm{red}}(\infty\Cat_{\mathrm{fg}}^\Steiner) \to \mP_{\mathrm{red}}(\mC)$. Hence the localization functor
$\mP_{\mathrm{red}}(\infty\Cat_{\mathrm{fg}}^\Steiner) \to \infty\Cat$, the restriction of the previous localization functor, factors as
$i^*: \mP_{\mathrm{red}}(\infty\Cat_{\mathrm{fg}}^\Steiner) \to \mP_{\mathrm{red}}(\mC)$ followed by the
localization functor $\mP_{\mathrm{red}}(\mC) \to \infty\Cat$.

Thus the functor $i_!: \mP_{\mathrm{red}}(\mC) \to \mP_{\mathrm{red}}(\infty\Cat_{\mathrm{fg}}^\Steiner)$ followed by the localization functor $\mP_{\mathrm{red}}(\infty\Cat_{\mathrm{fg}}^\Steiner) \to \infty\Cat$
factors as 
$i_!: \mP_{\mathrm{red}}(\mC) \to \mP_{\mathrm{red}}(\infty\Cat_{\mathrm{fg}}^\Steiner)$ followed by $i^*: \mP_{\mathrm{red}}(\infty\Cat_{\mathrm{fg}}^\Steiner) \to \mP_{\mathrm{red}}(\mC)$ followed by the
localization functor $\mP_{\mathrm{red}}(\mC) \to \infty\Cat$, which is the localization functor $\mP_{\mathrm{red}}(\mC) \to \infty\Cat$.
By the first part of the proof the localization functor $\mP_{\mathrm{red}}(\infty\Cat_{\mathrm{fg}}^\Steiner) \to \infty\Cat$ is monoidal.
By \cref{joinconv2} the embedding $i_!: \mP_{\mathrm{red}}(\mC) \to \mP_{\mathrm{red}}(\infty\Cat_{\mathrm{fg}}^\Steiner)$ is monoidal.
\end{proof}

The join monoidal structure on $\infty\Cat^\Steiner$ restricts to
$ \bDelta_{\geq -1}$ and $ \bDelta_{\geq -1}$ is a dense subcategory of $\infty\Cat.$
We obtain the following corollary:

\begin{corollary}\label{locmon4}
\begin{enumerate}[\normalfont(1)]\setlength{\itemsep}{-2pt}
\item The monoidal structure of \cref{joinconv} descends along the localization $\mP_{\mathrm{red}}(\bDelta^+) \rightleftarrows \infty\Cat. $

\item The monoidal structure of \cref{joinconv} descends along the localization $ \mP_{\mathrm{red}}(\bDelta^+)\otimes\Set \rightleftarrows \infty\Cat^{\mathrm{strict}}. $

\end{enumerate}

\end{corollary}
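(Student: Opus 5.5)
The plan is to deduce \cref{locmon4} directly from \cref{locmon3}, applied to $\mC=\bDelta_{\geq -1}$, the orientals together with the empty $\infty$-category. (The notation $\bDelta^+$ in the statement should be read as the join-closed subcategory $\bDelta_{\geq -1}$, as in the sentence preceding the corollary. The truncated orientals $\tau_{n-1}\bDelta^n$ are not Steiner, so \cref{locmon3} could not be applied to them anyway.)

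There are three hypotheses of \cref{locmon3} to check.

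First, $\bDelta_{\geq -1}\subset\infty\Cat^\Steiner_{\mathrm{fg}}$. Each $\bbDelta^n$ is Steiner with complex the normalized chains on the $n$-simplex, which is finitely generated and bounded. The empty category corresponds to the zero complex.

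Second, the join monoidal structure restricts to $\bDelta_{\geq -1}$. The unit $\emptyset$ lies in it, and $\bbDelta^n\star\bbDelta^m\simeq\bbDelta^{n+m+1}$ by associativity of the join on Steiner complexes (\cref{jointensor}), since $\bbDelta^n$ is by definition the $(n+1)$-fold join of $\bD^0$. Joins with $\emptyset$ are identities.

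Third, $\bDelta_{\geq -1}$ is dense in $\infty\Cat$. This follows from \cref{orientaldense}, because adding the initial object to a dense subcategory keeps it dense. Concretely, $\N(X)(\emptyset)=\ast$, so the nerve lands in $\mP_{\mathrm{red}}$, and the comparison colimit over $(\bDelta_{\geq -1})_{/X}$ agrees with the one over $\bDelta_{/X}$ up to an initial object that does not change the colimit.

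With these checked, part (1) is exactly \cref{locmon3}(1) for $\mC=\bDelta_{\geq -1}$, and part (2) is \cref{locmon3}(2).

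The only subtle point is the density step with the empty category adjoined, in particular that the localization $\mP(\mC)\rightleftarrows\infty\Cat$ restricts to $\mP_{\mathrm{red}}(\mC)$. To handle it, I would note that the left adjoint inverts $\emptyset\to j(\emptyset)$, so reduced presheaves are exactly the relevant local objects. This is the statement recorded just before \cref{locmon3}.
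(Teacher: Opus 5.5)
Your proposal is correct and follows the paper's argument: the paper likewise gets the corollary by applying \cref{locmon3} to $\bDelta_{\geq -1}$, so $\bDelta^+$ in the statement should indeed be read as $\bDelta_{\geq -1}$, and it justifies this only by noting that the join restricts to $\bDelta_{\geq -1}$ and that $\bDelta_{\geq -1}$ is dense in $\infty\Cat$. Your checks of the finitely generated Steiner hypothesis and of density after adjoining $\emptyset$ fill in details the paper leaves implicit.
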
 

We also define the antijoin:

\begin{definition}
Let $X, Y \in \infty\Cat.$ The antijoin of $X, Y$ is
$$X \bar{\star} Y := (X^{\co} \star Y^{\co})^{\co} .$$

\end{definition}

\begin{definition} Let $n \geq 0$.
The $n$-th antioriental is $ (\bDelta{^n})^\co $.

\end{definition}

\begin{remark}

Let $n \geq 0$.
The $n$-th antioriental is the $n$-fold antijoin of $\bD^0.$

\end{remark}

Next we define lax and oplax slice $\infty$-categories.
By \cref{locmon3} the categories $\infty\Cat, \infty\Cat^\strict$
carry monoidal structures compatible with small weakly contractible colimits.
In particular, for every $X \in \infty\Cat$ the induced functors
$$ (-) \star X,\quad X \star (-): \infty\Cat \to \infty\Cat$$
preserve small weakly contractible colimits and send the empty category, the tensor unit for the join, to $X.$ Thus the latter functors give rise to functors $$ (-) \star X, \quad X \star (-): \infty\Cat \to \infty\Cat_{X/ }$$ that preserve the initial object and small weakly contractible colimits and so preserve small colimits.
Presentability of $\infty\Cat$ implies that these functors admit a right adjoint:

\begin{definition}Let $X$ be a small $\infty$-category.
\begin{enumerate}[\normalfont(1)]\setlength{\itemsep}{-2pt}
\item The oplax slice, or oplax over $\infty$-category, functor is the right adjoint $$ \infty\Cat_{X/ } \to \infty\Cat, \qquad (F:X \to Y) \mapsto Y_{//^\oplax F}$$ of the functor $ (-) \star X: \infty\Cat \to \infty\Cat_{X/ }.$

\item The lax coslice, or lax under $\infty$-category, functor is the right adjoint $$ \infty\Cat_{X/ } \to \infty\Cat,\qquad (F:X \to Y) \mapsto Y_{F//^\lax }$$ of the functor $ X \star (-): \infty\Cat \to \infty\Cat_{\X / }.$

\item The lax slice, or lax over $\infty$-category, functor is the right adjoint $$ \infty\Cat_{\X / } \to \infty\Cat,\qquad (F:X \to Y) \mapsto Y_{//^\lax F} := (Y^\co_{//^\oplax F^\co})^\co $$
of the functor $ (-) \bar{\star} X: \infty\Cat \to \infty\Cat_{X/ }.$

\item The oplax coslice, or oplax under $\infty$-category, functor is the right adjoint $$ \infty\Cat_{X/ } \to \infty\Cat,\qquad (F:X \to Y) \mapsto Y_{F//^\oplax}:=(Y^\co_{F^\co//^\lax })^\co $$
of the functor $ X \bar{\star} (-): \infty\Cat \to \infty\Cat_{\X / }.$

\end{enumerate}

\end{definition}

\begin{remark}
If unspecified, the slice $\mC_{//F}$ will refer to the oplax slice, and the coslice $\mC_{F//}$ will refer to the oplax coslice.
\end{remark}

\begin{lemma}Let $F: X \to Y $ be a functor.
There is a canonical equivalence of $\infty$-categories
$$ (Y_{//^\oplax F})^\op \simeq (Y^\op)_{F^\op//^\lax }.$$
\end{lemma}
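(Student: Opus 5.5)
The plan is a Yoneda argument: compare the functors corepresented by both sides, using the adjunction that defines the oplax slice, the $(-)^\op$ involution, and its interaction with the join. For any $\infty$-category $B$ we have
\[
\Map_{\infty\Cat}\bigl(B,(Y_{//^\oplax F})^\op\bigr)\simeq \Map_{\infty\Cat}\bigl(B^\op,Y_{//^\oplax F}\bigr)\simeq \Map_{\infty\Cat_{X/}}\bigl(B^\op\star X,(Y,F)\bigr).
\]
Here the first equivalence holds because $(-)^\op$ is an involution of $\infty\Cat$. The second is the defining adjunction of the oplax slice.

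Applying the involution $(-)^\op$, which induces an equivalence $\infty\Cat_{X/}\simeq\infty\Cat_{X^\op/}$, identifies the last space with
\[
\Map_{\infty\Cat_{X^\op/}}\bigl((B^\op\star X)^\op,(Y^\op,F^\op)\bigr).
\]
The key input is a natural equivalence
\[
(A\star X)^\op\simeq X^\op\star A^\op,
\]
compatible with the canonical inclusion of $X^\op$. Given this, the space above becomes
\[
\Map_{\infty\Cat_{X^\op/}}\bigl(X^\op\star B,(Y^\op,F^\op)\bigr)\simeq \Map_{\infty\Cat}\bigl(B,(Y^\op)_{F^\op//^\lax}\bigr),
\]
using the adjunction defining the lax coslice. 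All the equivalences are natural in $B$, so the Yoneda lemma gives the claimed equivalence.

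The main obstacle is establishing $(A\star X)^\op\simeq X^\op\star A^\op$ naturally, as a monoidal equivalence $(\infty\Cat,\star)\simeq(\infty\Cat,\star)^\rev$ intertwined by $(-)^\op$. I would first prove it on Steiner complexes. There $(-)^\op$ corresponds to the involution reversing the sign of the differential in odd degrees, or equivalently swapping sources and targets in odd dimensions. The join complex is $A\oplus A\otimes B[1]\oplus B$, and the needed isomorphism $(A\diamond B)^\op\cong B^\op\diamond A^\op$ is given by the symmetry $a\otimes b\mapsto \pm b\otimes a$ with Koszul-type signs; I would check by direct computation that it is a chain map preserving bases. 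This yields a monoidal equivalence on $\bDelta_{\geq -1}$ (indeed $(\bDelta^n)^\op\simeq\bDelta^n$ with the vertex order reversed).

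From there I would extend the equivalence along Day convolution to $\mP_{\mathrm{red}}(\bDelta_{\geq-1})$. It then descends to $\infty\Cat$ by \cref{locmon3}, since $(-)^\op$ preserves the local equivalences, being an automorphism of $\infty\Cat$ that restricts to $\bDelta$. This is the same density argument used for \cref{dua} in the Gray case. Compatibility with the inclusion of $X$ follows because the equivalence restricts to the identification on the unit summands.
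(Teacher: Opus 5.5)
Your proof is correct and is essentially the paper's own argument. The paper runs exactly this Yoneda chain: the oplax-slice adjunction, then the involution $(-)^\op$ together with $(Z^\op\star X)^\op\simeq X^\op\star Z$, then the lax-coslice adjunction. It takes the join/op compatibility as given, resting on \cref{lemmcub2}, \cref{duacjoin} and density, whereas you propose to prove it.

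One small correction to your sketch of that input: the isomorphism $(A\diamond B)^\op\cong B^\op\diamond A^\op$ is the plain swap, with no signs at all. The parity twist in $(-)^\op$ exactly absorbs the Koszul sign, as in the computation of \cref{lemmcub}. It also must be sign-free, since a sign would send basis elements out of the distinguished submonoids.
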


\begin{proof}

There is a canonical equivalence 
$$ (X \star (-))^\op \simeq (-)^\op \star {X^\op}.$$

Hence there is the following canonical equivalence natural in $Z \in \infty\Cat:$
$$ \Map_{\infty\Cat}(Z, (Y_{//^\oplax F})^\op) \simeq  \Map_{\infty\Cat}(Z^\op, Y_{//^\oplax F}) \simeq \Map_{\infty\Cat_{X/}}(Z^\op \ast X, F)$$
$$ \simeq \Map_{\infty\Cat_{X^\op/}}(X^\op \ast Z, F^\op) \simeq  \Map_{\infty\Cat}(Z, (Y^\op)_{F^\op //^\lax }).$$
\end{proof}

\subsection{The join formula}

\begin{theorem}\label{cor:weakvsstrictjoin}
For every pair of $\infty$-categories $X$ and $Y$ there is a pushout square in $\infty\Cat:$
\[
\xymatrix{
X \boxtimes Y \coprod X \boxtimes Y \ar[r]\ar[d] & X\boxtimes \bD^1 \boxtimes Y \ar[d]\\
X \coprod Y \ar[r] & X\star Y.
}
\]
\end{theorem}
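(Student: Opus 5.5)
I would follow the template of the proof of \cref{thas2}. The strategy is to first produce the square naturally on Steiner $\infty$-categories, then extend it to all $\infty$-categories by density, and finally verify that it is a pushout on generators.

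\textbf{Step 1: the square on Steiner complexes.} For Steiner $\infty$-categories $X,Y$ with complexes $A,B$, the definition of the join complex gives
\[
A\diamond B = A\ot\lambda(\bD^1)\ot B \coprod_{A\ot\lambda(\partial\bD^1)\ot B} A\oplus B .
\]
This is a pushout of augmented directed complexes, and it yields a natural commutative square in $\infty\Cat^\strict$
\[
X\boxtimes Y\coprod X\boxtimes Y \to X\boxtimes\bD^1\boxtimes Y \to X\star Y \leftarrow X\coprod Y.
\]
Here the two maps $X\boxtimes Y\to X$ and $X\boxtimes Y\to Y$ are the projections to the tensor units. I would show that this square is a pushout in $\infty\Cat^\strict$, and then in $\infty\Cat$. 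For the passage to $\infty\Cat$ I would use \cref{pasting2}: the functors involved are quotients that send atomic generators to atomic generators or identities. Alternatively, \cref{pasting1} applies, since $X\boxtimes\partial\bD^1\boxtimes Y\to X\boxtimes\bD^1\boxtimes Y$ is an inclusion lying in the saturated class by \cite[Theorem 5.6]{ara.folkmodel}.

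\textbf{Step 2: extension to all $\infty$-categories.} Define
\[
\mathfrak{J}(X,Y):=(X\coprod Y)\coprod_{X\boxtimes Y\coprod X\boxtimes Y} X\boxtimes\bD^1\boxtimes Y .
\]
I would fix $X$ and compare $\mathfrak{J}(X,-)$ with $X\star(-)$ as functors $\infty\Cat\to\infty\Cat_{X/}$. By \cref{locmon3}, $X\star(-)$ preserves small colimits into $\infty\Cat_{X/}$. The functor $\mathfrak{J}(X,-)$ also preserves weakly contractible colimits, because $\boxtimes$ preserves colimits in each variable and pushouts commute with colimits. It sends $\emptyset$ to $X$, so it too preserves all small colimits into $\infty\Cat_{X/}$. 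By density of $\infty\Cat^\Steiner_{\mathrm{fg}}$, the natural map $\rho\colon\mathfrak{J}(X,Y)\to X\star Y$ obtained from Step 1 extends to all $Y$, and symmetrically to all $X$.

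\textbf{Step 3: reduction to disks.} Since both sides preserve colimits in each variable separately, it suffices to show that $\rho$ is an equivalence for $X=\bD^n$ and $Y=\bD^m$, by \cref{theta}. These are Steiner $\infty$-categories, so Step 1 already covers them.

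\textbf{Main obstacle.} The hardest step is the strict-to-weak comparison in Step 1, namely verifying that the hypotheses of \cref{pasting1} or \cref{pasting2} hold. The left vertical map $X\boxtimes Y\coprod X\boxtimes Y\to X\coprod Y$ collapses generators rather than preserving them, so \cref{pasting2} may not apply directly. I would first try \cref{pasting1}, writing the square with the inclusion as the top map. If that fails, I would induct on cells as in the proof of \cref{thas2}. In that induction I would attach the top cell of $\bD^n$ using \cref{cello} and the pasting law, reducing to the boundary $\partial\bD^n$, which is a colimit of lower-dimensional disks.
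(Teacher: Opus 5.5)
There is a genuine gap, at the point you yourself flag. Your Steps 2 and 3 follow the paper's reduction, but Step 1 carries the whole content of the theorem, and the tools you cite do not establish it.

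\cref{pasting1} requires one leg to be an inclusion and the other to lie in the saturated class generated by $\partial\bD^k\subset\bD^k$. The leg $X\boxtimes Y\coprod X\boxtimes Y\to X\coprod Y$ collapses cells, so it is neither. Hence one of the two hypotheses fails in either orientation of the square; your ``alternative'' only checks the other leg. \cref{pasting2} needs every functor to preserve atomic generators, which the projections do not. In any case it only says that a colimit is computed in $\infty\Cat^{\strict}$, not in $\infty\Cat$. Consequently, the remark in Step 3 that disks are ``already covered by Step 1'' is circular: for $X=\bD^n$, $Y=\bD^m$ the claim is exactly what must be proved.

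Your fallback induction is also not ``as in \cref{thas2}'', since that proof has only one varying argument. With two disks, \cref{cello} strips the unique top cell off $\bD^n\boxtimes\bD^1\boxtimes\bD^m$ and off $\bD^n\star\bD^m$. You would then need their $(n+m)$-skeleta to be the colimits in $\infty\Cat$ of the pieces built from $\partial\bD^n$, $\partial\bD^1$, $\partial\bD^m$. These are Leibniz-type boundary decompositions for a three-fold Gray tensor and a two-variable join. The available lemmas, \cref{cell} and \cref{cell2}, only handle $\mO\star\bD^0$ and $\bD^1\boxtimes\mO$.

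The paper sidesteps this by reducing to orientals rather than disks. Since $\bDelta^n\simeq\bDelta^0\star\bDelta^{n-1}$, compatibility of $\rho$ with associativity reduces everything to the cone case $X=\bDelta^0$. An induction on $m$ then needs only one-variable input:
\begin{itemize}
\item $\partial\bDelta^m$ is a colimit of lower orientals (\cref{orientdecom}), so $\rho$ is an equivalence on $\bDelta^0\diamond\partial\bDelta^m$;
\item \cref{cell}, \cref{cell2}, \cref{cello} and \cref{orientdec}, together with the pasting law, attach the remaining cells.
\end{itemize}

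To repair your route, either adopt this reduction, or state and prove the two-variable boundary decompositions your disk induction needs. Those can be obtained by applying \cref{pasting1} and \cref{pasting2} to generator-preserving inclusions.
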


\begin{proof}

For every Steiner $\infty$-categories $X, Y$ there is a natural commutative square in $\infty\Cat^\strict:$

\begin{equation}\label{u}
\xymatrix{
X \boxtimes Y \coprod X \boxtimes Y \ar[r]\ar[d] & X\boxtimes \bD^1 \boxtimes Y \ar[d]\\
X \coprod Y \ar[r] & X\star Y,
}
\end{equation}
where $X \star Y$ denotes the join of Steiner $\infty$-categories that we extended to the join of $\infty$-categories. 

Let $\diamond:\infty\Cat\times\infty\Cat\to\infty\Cat$
be the functor sending $(X,Y)$ to 
$$ X \diamond Y := X \coprod Y \coprod_{X \boxtimes Y \coprod X \boxtimes Y} X\boxtimes \bD^1 \boxtimes Y. $$

For any $\infty$-category $Y$ the functors \begin{equation}\label{func1}
(-) \ast Y, (-) \diamond Y, \infty\Cat \to \infty\Cat_{Y /} \end{equation} preserve small colimits because they preserve weakly contractible colimits and the initial object.
Hence by density of Steiner $\infty$-categories the natural commutative square \ref{u} extends to a natural commutative square in $\infty\Cat$ for $X$ an $\infty$-category and $Y$ a Steiner $\infty$-category.
For any $\infty$-category $X$ the functors \begin{equation}\label{func2}
X \ast (-), X \diamond (-), \infty\Cat \to \infty\Cat_{X /} \end{equation} preserve small colimits.
Hence by density of Steiner $\infty$-categories the latter extension further extends to a natural commutative square in $\infty\Cat$ for $X, Y \in \infty\Cat.$ This natural commutative square gives rise to a functor
$$\rho:  X \diamond Y \to X \ast Y, $$ which is natural in $X, Y \in \infty\Cat. $
We like to see that $\rho$ is an equivalence.
Since the functors \ref{func1}, \ref{func2} preserve small colimits, by generation of the orientals we can reduce to the case that $X, Y $ are orientals.

So we have to see that for every $n,m \geq 0$ the canonical functor
$$\rho: \bDelta^n \diamond \bDelta^m \to \bDelta^n \ast \bDelta^m = \bDelta^{n+m+1}$$ is an equivalence. We first reduce to the case that $n=0$.
The functor $$\bDelta^0 \diamond (\bDelta^n \diamond \bDelta^{m}) \simeq (\bDelta^0 \diamond \bDelta^n) \diamond \bDelta^{m} \xrightarrow{\rho \diamond \bDelta^{m} } (\bDelta^0 \ast \bDelta^n) \diamond \bDelta^{m} \xrightarrow{\rho} (\bDelta^0 \ast \bDelta^n) \ast \bDelta^{m} $$ factors as
$$\bDelta^0 \diamond (\bDelta^n \diamond \bDelta^{m}) \xrightarrow{\bDelta^0 \diamond \rho}
\bDelta^0 \diamond (\bDelta^{n} \ast \bDelta^m) \xrightarrow{\rho} \bDelta^0 \ast (\bDelta^{n} \ast \bDelta^m) \simeq (\bDelta^0 \ast \bDelta^{n}) \ast \bDelta^m .$$
Hence it follows by induction on 
$ n \geq 0$ that $\rho $ is an equivalence for any $n, m \geq 0$
if $\rho $ is an equivalence for any $m \geq 0$ and $n=0$.
So we can assume that $n=0$.

We prove by induction on $ n \geq 0$ that the functor $$\rho: \bDelta^0 \diamond \bDelta^n \to \bDelta^0 \ast \bDelta^n = \bDelta^{n+1}$$ is an equivalence.
For $n = 0$ the functor $\rho$ is the identity.
We prove the statement for $n$ assuming that the functor $$\rho: \bDelta^0 \diamond \bDelta^m  \to \bDelta^0 \ast \bDelta^m = \bDelta^{m+1}$$ is an equivalence for every $ m < n$.
Recall that $\partial\bDelta^n=\iota_{n-1}\bDelta^n$ is the maximal $n-1$-category in $\bDelta^n$.
Since $\partial\bDelta^n$ is the colimit of orientals of dimension smaller $n$ and the functors \ref{func2} for $X=\bDelta^0$ preserve small colimits, the functor $$\rho: \bDelta^0  \diamond \partial\bDelta^n \to \bDelta^0 \ast \partial\bDelta^n$$ is an equivalence.

Consider the commutative diagram
\begin{equation}\label{o1}
\xymatrix{
\partial\bDelta^n \coprod \partial\bDelta^n \ar[r]\ar[d] & \bDelta^1 \boxtimes \partial\bDelta^n \ar[d]\\
\bDelta^0 \coprod \partial\bDelta^n \ar[r] \ar[d] & \bDelta^0 \ast \partial\bDelta^n \ar[d] 
\\
\bDelta^0 \coprod \bDelta^n \ar[r] & \iota_n(\bDelta^0 \ast \bDelta^n).}
\end{equation}
The top commutative square is a pushout square since the functor $$\rho: \bDelta^0  \diamond \partial\bDelta^n \to \bDelta^0 \ast \partial\bDelta^n$$ is an equivalence.
The bottom commutative square is a pushout square by \cref{cell}.
Thus also the outer commutative square is a pushout square.
The outer commutative square agrees with the outer square of the following commutative diagram:
\begin{equation}\label{o2}
\xymatrix{
\partial\bDelta^n \coprod \partial\bDelta^n \ar[r]\ar[d] & \bDelta^1 \boxtimes \partial\bDelta^n \ar[d]\\
\bDelta^n \coprod \bDelta^n \ar[r] \ar[d] & \iota_n(\bDelta^1 \boxtimes \bDelta^n) \ar[d] 
\\
\bDelta^0 \coprod \bDelta^n \ar[r] & \iota_n(\bDelta^0 \ast \bDelta^n).}
\end{equation}
The top commutative square is a pushout square by \cref{cell2}.
Thus by the pasting law also the bottom commutative square is a pushout square.

Now consider the following commutative square:
\begin{equation}\label{o3}
\xymatrix{
\partial\bD^{n+1} \ar[r]\ar[d] & \bD^{n+1} \ar[d] 
\\\iota_n(\bDelta^1 \boxtimes \bDelta^n) \ar[d] \ar[r] & \bDelta^1 \boxtimes \bDelta^n \ar[d]
\\
\iota_n(\bDelta^0 \ast \bDelta^n)\ar[r] & \bDelta^0 \ast \bDelta^n.}
\end{equation}
The outer commutative square is a pushout square by \cref{orientdec}.
The top commutative square is a pushout square by \cref{cello}.
So by the pasting law also the bottom commutative square is a pushout square. Consequently, the outer square in the following commutative square is a pushout square:
\begin{equation}\label{o}
\xymatrix{
\bDelta^n \coprod \bDelta^n \ar[r] \ar[d] & \iota_n(\bDelta^1 \boxtimes \bDelta^n) \ar[r]\ar[d] & \bDelta^1 \boxtimes \bDelta^n \ar[d] 
\\
\ast \coprod \bDelta^n \ar[r] & \iota_n(\bDelta^0 \ast \bDelta^n) \ar[r] & \bDelta^0 \ast \bDelta^n.}
\end{equation}
\end{proof}

\begin{corollary}
There is a canonical equivalence
$$Y \bar{\star} X \simeq  X \boxtimes \bD^1 \boxtimes Y \coprod_{X \boxtimes \{0\} \boxtimes Y \coprod X \boxtimes \{1\} \boxtimes Y} Y \coprod X, $$
where the functors $ X \boxtimes \{0\} \boxtimes Y \to Y, X \boxtimes \{1\} \boxtimes Y \to X $ are the projections.
\end{corollary}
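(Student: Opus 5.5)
The plan is to deduce this from \cref{cor:weakvsstrictjoin} by applying the involution $(-)^\co$. By definition $Y \bar{\star} X = (Y^\co \star X^\co)^\co$. Applying \cref{cor:weakvsstrictjoin} to the pair $(Y^\co, X^\co)$ gives a pushout square
\[
\xymatrix{
Y^\co \boxtimes X^\co \coprod Y^\co \boxtimes X^\co \ar[r]\ar[d] & Y^\co\boxtimes \bD^1 \boxtimes X^\co \ar[d]\\
Y^\co \coprod X^\co \ar[r] & Y^\co\star X^\co.
}
\]
Since $(-)^\co$ is an involution of $\infty\Cat$, hence an equivalence, it preserves pushouts and coproducts. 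Applying it yields a pushout square whose terminal vertex is $Y\bar{\star}X$, whose lower left corner is $Y \coprod X$, and whose upper row has entries $(Y^\co \boxtimes X^\co)^\co$ and $(Y^\co\boxtimes \bD^1 \boxtimes X^\co)^\co$.

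The key step is to rewrite these Gray tensor terms using \cref{dua}. There $(-)^\co$ is a monoidal equivalence $(\infty\Cat,\boxtimes)\simeq(\infty\Cat,\boxtimes)^\rev$, which gives
\[
(A\boxtimes B)^\co\simeq B^\co\boxtimes A^\co .
\]
Iterating for three factors, together with $(\bD^1)^\co\simeq \bD^1$ (there are no even-dimensional nontrivial cells), we get $(Y^\co\boxtimes \bD^1\boxtimes X^\co)^\co \simeq X\boxtimes \bD^1\boxtimes Y$ and $(Y^\co\boxtimes X^\co)^\co\simeq X\boxtimes Y$.

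Next we identify the maps. In the join formula the two copies of $Y^\co\boxtimes X^\co$ include as $Y^\co\boxtimes\{0\}\boxtimes X^\co$ and $Y^\co\boxtimes\{1\}\boxtimes X^\co$, and map to $Y^\co$ and $X^\co$ respectively by projection. Naturality of the monoidal equivalence in \cref{dua} carries $\{i\}\subset\bD^1$ to $\{i\}\subset (\bD^1)^\co=\bD^1$, and carries the projections (induced by $X^\co\to *$, $Y^\co\to *$) to the projections $X\boxtimes\{0\}\boxtimes Y\to Y$ and $X\boxtimes\{1\}\boxtimes Y\to X$. This gives exactly the stated formula.

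The main obstacle is this endpoint bookkeeping. One must check that the equivalence $(\bD^1)^\co\simeq\bD^1$ fixes the endpoints rather than swapping them, as $(-)^\op$ would, and that the reversal of factor order is compatible with the inclusion at $\{0\}$ and $\{1\}$. I would verify both on Steiner complexes, where $\lambda(\bD^1)$ with its basis is visibly fixed by the co-involution, which only changes signs in even degrees. Density then transports the identification to all of $\infty\Cat$, using that both sides are built from natural squares.
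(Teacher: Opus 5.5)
Your proposal is correct and follows the paper's own argument: apply the join formula of \cref{cor:weakvsstrictjoin} to $(Y^\co, X^\co)$, apply the involution $(-)^\co$, and rewrite the Gray tensor terms using $(A\boxtimes B)^\co\simeq B^\co\boxtimes A^\co$ from \cref{dua}. Your additional check that $(\bD^1)^\co\cong\bD^1$ fixes the endpoints, so that the projections land as stated, is exactly the bookkeeping the paper leaves implicit.
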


\begin{proof}
There is a canonical equivalence
$$Y \bar{\star} X \simeq (Y^\co \boxtimes \bD^1 \boxtimes X^\co \coprod_{Y^\co \boxtimes \{0\} \boxtimes X^\co \coprod Y^\co \boxtimes \{1\} \boxtimes X^\co} Y^\co \coprod X^\co)^\co $$$$ \simeq X \boxtimes \bD^1 \boxtimes Y \coprod_{X \boxtimes \{0\} \boxtimes Y \coprod X \boxtimes \{1\} \boxtimes Y} Y \coprod X, $$
where the functors $ X \boxtimes \{0\} \boxtimes Y \to Y, X \boxtimes \{1\} \boxtimes Y \to X $ are the projections.
\end{proof}

\begin{corollary}

The join monoidal structure descends along the 
localization $$\infty\Cat \rightleftarrows \infty\Cat^\univ.$$

\end{corollary}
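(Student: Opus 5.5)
Since the localization $L\colon \infty\Cat \to \infty\Cat^\univ$ is a left adjoint, the plan is to follow the strategy used above for the Gray tensor product. We show that for every $\infty$-category $X$ the functors $(-)\star X$ and $X\star(-)$ send local equivalences, i.e.\ $L$-equivalences, to local equivalences. We treat $(-)\star X$; the other case is symmetric, or follows by applying $(-)^\op$ together with the equivalence $(X\star Y)^\op\simeq Y^\op\star X^\op$ (the op involution preserves univalent $\infty$-categories).

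The first reduction uses \cref{locmon3}: $(-)\star X\colon \infty\Cat\to\infty\Cat_{X/}$ preserves small colimits. Local equivalences are closed under colimits in the arrow category, and pushouts along the identity of $X$ preserve them. So for a fixed $X$, the class of maps $f$ with $f\star X$ a local equivalence is closed under colimits, and it suffices to check the generating local equivalences $S^n(J)\to S^n(*)=\bD^n$, $n\ge 0$. The dependence on $X$ can be reduced in the same way: $X\mapsto f\star X$ preserves colimits in $\infty\Cat_{(-)/}$ in the relevant sense. More concretely, we avoid that bookkeeping altogether by invoking the join formula \cref{cor:weakvsstrictjoin}, which writes
\[
f\star X \;\simeq\; (f\sqcup X)\coprod_{f\boxtimes X\,\sqcup\, f\boxtimes X} \bigl(f\boxtimes \bD^1\boxtimes X\bigr)
\]
as a pushout of arrows. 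Each corner is obtained from $f$ by applying $(-)\boxtimes Z$ for some $Z$, or by a coproduct with an identity.

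The key input is the preceding proposition: the localization $\infty\Cat\rightleftarrows\infty\Cat^\univ$ is monoidal for $\boxtimes$. Consequently, $f\boxtimes Z$ and $Z\boxtimes f$ are local equivalences whenever $f$ is. Hence $f\boxtimes X$ and $f\boxtimes\bD^1\boxtimes X$ are local equivalences, and so are $f\sqcup X$ and $(f\boxtimes X)\sqcup(f\boxtimes X)$. Since the join formula is natural in both variables, $f\star X$ is the pushout, formed in $\Fun(\bD^1,\infty\Cat)$, of a span of local equivalences. Local equivalences are closed under colimits in the arrow category, so $f\star X$ is a local equivalence. This argument in fact works for an arbitrary local equivalence $f$, which makes the generator reduction above unnecessary.

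The main obstacle is to make naturality precise: the pushout square of \cref{cor:weakvsstrictjoin} must be functorial in the pair $(X,Y)$, so that it yields a pushout diagram in $\Fun(\bD^1,\infty\Cat)$. The proof of that theorem constructs $\rho\colon X\diamond Y\to X\star Y$ as a natural transformation, by density extension from Steiner $\infty$-categories, so this naturality is available. Once naturality is secured, the compatibility of $\star$ with the localization gives the descended monoidal structure on $\infty\Cat^\univ$ by the standard criterion for monoidal localizations.
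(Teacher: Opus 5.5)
Your proposal is correct and is essentially the paper's argument: the paper also deduces the corollary directly from the join formula of \cref{cor:weakvsstrictjoin}, the compatibility of the Gray tensor product with univalent completion (the proposition immediately preceding it), and closure of local equivalences under colimits. As you note yourself, the reduction to the generators $S^n(J)\to\bD^n$ in your first step is unnecessary, and your explicit check that the join-formula pushout is natural, and hence a pushout in $\Fun(\bD^1,\infty\Cat)$, spells out a point the paper leaves implicit.
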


\begin{proof}

This follows immediately from the join formula of \cref{cor:weakvsstrictjoin}, that the Gray tensor product on $\infty\Cat $ descends along the 
localization $$\infty\Cat \rightleftarrows \infty\Cat^\univ,$$
which holds by \cref{locmon}, and that the collection of local equivalences is closed under small colimits.
\end{proof}

\begin{remark}
There is a canonical equivalence
$$(Y^{\op} \star X^{\op})^{\op} \simeq X \boxtimes \bD^1 \boxtimes Y \coprod_{X \boxtimes \{1\} \boxtimes Y \coprod X \boxtimes \{0\} \boxtimes Y} X \coprod Y \simeq X \star Y, $$
where the functors $ X \boxtimes \{1\} \boxtimes Y \to Y, X \boxtimes \{0\} \boxtimes Y \to X $ are the projections.
Consequently, there is a canonical equivalence
$$ X \bar{\star} Y=(X^{\co} \star Y^{\co})^{\co} \simeq (Y^{\co\op} \star X^{\co\op})^{\co\op}. $$

\end{remark}

\begin{remark}

By the join formula of \cref{cor:weakvsstrictjoin} there are canonical equivalences
$$ \Fun_*^\lax(\bD^0 \star \mB, (\mC, X)) \simeq \Fun^\lax(\mB, \mC_{X//^\lax}), $$
$$ \Fun_*^\oplax(\mB \star \bD^0, (\mC, X)) \simeq \Fun^\oplax(\mB, \mC_{//^\oplax X }). $$

There are canonical equivalences 
$$ \Fun_*^\oplax(\bD^0 \bar{\star} \mB, (\mC, X)) \simeq \Fun^\oplax(\mB, \mC_{X//^\oplax}), $$
$$ \Fun_*^\lax(\mB \bar{\star} \bD^0, (\mC, X)) \simeq \Fun^\lax(\mB, \mC_{//^\lax X }). $$

\end{remark}

\subsection{The bicone and the orthoplex}
\begin{definition}\label{orientedbicone}
Let $X$ be an $\infty$-category.
The {\em antioriented bicone} $X^\abd$ is the pushout
\[
\xymatrix{
& X\ar[rd]\ar[ld] & \\
\bD^0\,\bar{\star}\,X\ar[rd] & & X\star\bD^0\ar[ld]\\
& X^\abd &
}
\]
of the left anticone and right cone on $X$, glued along the two inclusions from $X$.
\end{definition}
\begin{proposition}
The antioriented bicone functor $(-)^\abd:\infty\Cat\to\infty\Cat_{\partial\bD^1/}$, viewed as a functor from $\infty$-categories to bipointed $\infty$-categories, preserves colimits.
\end{proposition}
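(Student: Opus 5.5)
The plan is to build the bicone out of the two cone functors that the paper has already shown to be colimit-preserving into slice categories, and then assemble them with a pushout. First I will make the bipointing precise. The cone point of $X\star\bD^0$ and the cone point of $\bD^0\,\bar{\star}\,X$ give a map $\partial\bD^1 = \bD^0\coprod\bD^0 \to X^\abd$. So $(-)^\abd$ is a functor
\[
\infty\Cat\to\infty\Cat_{\partial\bD^1/}.
\]

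Next I use the discussion before the definition of the slices. There the paper shows that $(-)\star Y$ and $Y\star(-)$ preserve weakly contractible colimits and send $\emptyset$ to $Y$. Taking $Y=\bD^0$, the functor $(-)\star\bD^0:\infty\Cat\to\infty\Cat_{\bD^0/}$ preserves all small colimits: it preserves weakly contractible ones, and it preserves the initial object because $\emptyset\star\bD^0=\bD^0$ is initial in $\infty\Cat_{\bD^0/}$.

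The same holds for $\bD^0\,\bar{\star}\,(-)=(\bD^0\star(-)^\co)^\co$. This is because $(-)^\co$ is an involution of $\infty\Cat$ and hence preserves colimits. Refining these functors to land in $\infty\Cat_{X/}$-type data, the functor
\[
X\mapsto (X\to X\star\bD^0,\ \bD^0\to X\star\bD^0)
\]
is a functor $\infty\Cat\to\Fun(\Lambda,\infty\Cat)$-valued diagram data, and similarly for the anticone.

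The core step is to express the bicone as a pushout of colimit-preserving functors into $\infty\Cat_{\partial\bD^1/}$. Define
\[
F(X)=(\bD^0\coprod X)\to X\star\bD^0,\qquad G(X)=(X\coprod\bD^0)\to\bD^0\,\bar{\star}\,X,\qquad H(X)=\bD^0\coprod X\coprod\bD^0 .
\]
Then
\[
X^\abd\simeq (\bD^0\,\bar{\star}\,X)\coprod_{\bD^0\coprod X\coprod\bD^0}(X\star\bD^0),
\]
with the gluing done in $\infty\Cat_{\partial\bD^1/}$. Here each of the three terms is viewed as a functor
\[
\infty\Cat\to\infty\Cat_{\partial\bD^1/}.
\]
For the middle term $H$, the pointing is the two outer copies of $\bD^0$. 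This functor is $X\mapsto \partial\bD^1\coprod X$, which is left adjoint to the forgetful functor and hence preserves colimits. For the outer terms, the functor $X\mapsto X\star\bD^0$ with the extra disjoint basepoint, namely $\bD^0\coprod(X\star\bD^0)$ pointed at the new point and the cone point, preserves colimits. This is because coproduct with a fixed object, taken under that object, preserves colimits. The anticone is handled the same way.

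Colimits commute with pushouts, so the pushout of colimit-preserving functors is colimit-preserving. It remains to check that the pushout in $\infty\Cat_{\partial\bD^1/}$ agrees with the one in $\infty\Cat$. Pushouts in an undercategory are computed underneath, and the pointings match, so this holds.

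I expect the main obstacle to be bookkeeping the bipointings correctly. In particular, the maps from $H$ to the two outer terms must be natural transformations of colimit-preserving functors into $\infty\Cat_{\partial\bD^1/}$. I would handle this by working with the functors
\[
\infty\Cat\to\infty\Cat_{\bD^0/}
\]
for each cone separately and only combining them at the end. If needed, I would reduce instead to checking preservation of weakly contractible colimits and of the initial object separately. The initial object case is $\emptyset^\abd=\bD^0\coprod\bD^0=\partial\bD^1$, which is initial in $\infty\Cat_{\partial\bD^1/}$. Weakly contractible colimits are preserved in $\infty\Cat$, since they commute with pushouts and with the constant $\bD^0$ terms, and they are created by the forgetful functor from $\infty\Cat_{\partial\bD^1/}$.
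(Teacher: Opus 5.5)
The paper states this proposition without proof, so there is no argument to compare against line by line; your proposal is correct. Your fallback paragraph is exactly the reasoning the paper uses for $(-)\star X$ and $X\star(-)$ just before defining the slices, and for $S$ and $\mathfrak{S}$ in the suspension formula: preservation of weakly contractible colimits plus preservation of the initial object. That argument already suffices on its own, as follows.
\begin{itemize}
\item As a functor to $\infty\Cat$, $(-)^\abd$ is a pointwise pushout of $\bD^0\,\bar{\star}\,(-)$, $\mathrm{id}$ and $(-)\star\bD^0$. All three preserve weakly contractible colimits, so $(-)^\abd$ does too.
\item The forgetful functor $\infty\Cat_{\partial\bD^1/}\to\infty\Cat$ creates weakly contractible colimits, because the bipointing is natural in $X$.
\item $\emptyset^\abd\simeq\bD^0\coprod\bD^0=\partial\bD^1$ is initial in $\infty\Cat_{\partial\bD^1/}$.
\end{itemize}

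Your main route repackages this. You write $(-)^\abd$ as a pushout in $\Fun(\infty\Cat,\infty\Cat_{\partial\bD^1/})$ of three functors, each of which preserves all colimits:
\begin{itemize}
\item the free bipointed functor $\partial\bD^1\coprod(-)$;
\item the two cone functors into $\infty\Cat_{\bD^0/}$, each followed by a left adjoint $\infty\Cat_{\bD^0/}\to\infty\Cat_{\partial\bD^1/}$ that adds a disjoint basepoint.
\end{itemize}
This avoids the separate initial-object check, at the cost of more bookkeeping.

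That bookkeeping is where your write-up is loose. The displayed formula $X^\abd\simeq(\bD^0\,\bar{\star}\,X)\coprod_{\bD^0\coprod X\coprod\bD^0}(X\star\bD^0)$ does not typecheck as written. $\bD^0\,\bar{\star}\,X$ has no natural second basepoint, and sending the extra $\bD^0$ to its cone point would glue the two cone points together. The formula becomes correct only with the enlarged outer terms $(\bD^0\,\bar{\star}\,X)\coprod\bD^0$ and $\bD^0\coprod(X\star\bD^0)$, which you introduce a few lines later. With those terms you should state the one-line check rather than "the pointings match": in the pushout, each stray point is identified with the opposite cone point. The result is therefore $(\bD^0\,\bar{\star}\,X)\coprod_X(X\star\bD^0)$, bipointed by the anticone point and the cone point. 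The definitions of $F$, $G$ and the "$\Fun(\Lambda,\infty\Cat)$-valued diagram data" sentence are garbled and unused, and can be dropped.
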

\begin{corollary}
The antioriented bicone functor $(-)^\abd:\infty\Cat\to\infty\Cat_{\partial\bD^1/}$ admits a right adjoint which sends a bipointed $\infty$-category $(X;s,t)$ to the $\infty$-category
\[
{X_{s//^\co}}\underset{X}{\times} {X_{//t}}
\]
of objects of $X$ equipped with a morphism from $s$ and a morphism to $t$.
\end{corollary}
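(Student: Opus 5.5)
The plan is to compute the right adjoint by mapping out of the pushout. The preceding proposition shows that $(-)^\abd:\infty\Cat\to\infty\Cat_{\partial\bD^1/}$ preserves colimits, and $\infty\Cat$ is presentable, so a right adjoint $R$ exists; the work is to identify it. Fix a bipointed $\infty$-category $(X;s,t)$ and a test $\infty$-category $B$. The bipointing of $B^\abd$ is given by the cone point $\bD^0$ of $\bD^0\bar{\star}B$ (sent to $s$) and the cone point of $B\star\bD^0$ (sent to $t$). Since $B^\abd$ is the pushout of $\bD^0\bar{\star}B\leftarrow B\rightarrow B\star\bD^0$, we get a pullback of mapping spaces
\[
\Map_{\infty\Cat_{\partial\bD^1/}}(B^\abd,(X;s,t))\simeq \Map_{\infty\Cat_{\bD^0/}}(\bD^0\bar{\star}B,(X,s))\underset{\Map_{\infty\Cat}(B,X)}{\times}\Map_{\infty\Cat_{\bD^0/}}(B\star\bD^0,(X,t)).
\]

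Next I will rewrite each factor using the adjunctions from the definition of lax and oplax slices. The functor $B\star\bD^0$, viewed under $\bD^0$, is the left adjoint defining the oplax slice $X_{//^\oplax t}$, which is the default slice $X_{//t}$. So the right factor is $\Map_{\infty\Cat}(B,X_{//t})$. The left factor is $\bD^0\bar{\star}B$ under $\bD^0$, which is the left adjoint defining the oplax coslice $X_{s//^\oplax}=(X^\co_{s//^\lax})^\co$, written $X_{s//^\co}$ in the statement. So the left factor is $\Map_{\infty\Cat}(B,X_{s//^\co})$.

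I must also check that the maps to $\Map_{\infty\Cat}(B,X)$ are induced by the source and target projections $X_{s//^\co}\to X$ and $X_{//t}\to X$. Both projections are adjoint to restricting along the inclusions $B\to\bD^0\bar{\star}B$ and $B\to B\star\bD^0$. Equivalently, they are the maps corepresented by the counit, applied with $B$ replaced by the constant case $B\mapsto B\star\emptyset$; naturality of the adjunction equivalences in the slice variable gives this. With this in hand, Yoneda and the fact that $\Map(B,-)$ preserves pullbacks yield $R(X;s,t)\simeq X_{s//^\co}\times_X X_{//t}$, naturally in $(X;s,t)$.

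The main obstacle is this identification of the structure maps. Concretely, I need that the adjunction equivalence for $(-)\star\bD^0$ is compatible with restriction to $B$, that is, that the natural map $B\to B\star\bD^0$ corresponds to the forgetful functor $X_{//t}\to X$. I would verify this on orientals $B=\bDelta^n$ using the join formula \cref{cor:weakvsstrictjoin}, which exhibits $B\to B\star\bD^0$ as a pushout leg, and then extend by density.

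The interpretation "objects of $X$ with a morphism from $s$ and to $t$" then follows by evaluating at $B=\bD^0$.
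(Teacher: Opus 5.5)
Your argument is correct and matches the argument the paper leaves implicit (it states the corollary without proof, right after asserting that $(-)^\abd$ preserves colimits): the right adjoint exists by colimit preservation plus presentability, and the formula comes from decomposing maps out of the pushout $B^\abd=(\bD^0\,\bar{\star}\,B)\coprod_B(B\star\bD^0)$ and applying the adjunctions that define $X_{s//^\oplax}$ and $X_{//t}=X_{//^\oplax t}$. The step you call the main obstacle is purely formal. By Yoneda, the projections $X_{s//^\oplax}\to X$ and $X_{//t}\to X$ are by definition the maps representing restriction along the natural inclusions $B\to\bD^0\,\bar{\star}\,B$ and $B\simeq B\star\emptyset\to B\star\bD^0$, so no check on orientals via the join formula and no density argument is needed.
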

\begin{definition}
For any bipointed $\infty$-category $(X;s,t)$ the {\em antioriented bislice} is the $\infty$-category
\[
{X_{s//^\oplax}}\underset{X}{\times} {X_{//^\lax t}}.
\]
\end{definition}
\begin{remark}
Applying the $(-)^\co$ involution to the antioriented bicone functor results in the {\em oriented bicone} functor $(-)^\bd:\infty\Cat\to\infty\Cat_{\partial\bD^1/}$, which is also a colimit preserving functor from $\infty$-categories to bipointed $\infty$-categories.
It determines the {\em oriented bislice} functor, which sends a bipointed $\infty$-category $(X;s,t)$ to the $\infty$-category
\[
{X_{s//^\lax}}\underset{X}{\times} {X_{//^\oplax t}}.
\]
\end{remark}

\begin{proposition}\label{whiskering}
For an $\infty$-category $Y$, there are bipointed whiskering maps $S(Y)\to\bD^1\vee S(Y)$ and $S(Y)\to S(Y)\vee\bD^1$.
Moreover, these maps are functorial in maps $Y'\to Y$.  
\end{proposition}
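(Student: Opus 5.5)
We construct the whiskering maps from the universal property of the suspension (\cref{suspi}), which reduces each map to a functor into a morphism $\infty$-category. Write $\bD^1\vee S(Y)$ for the bipointed wedge: objects $0,1,2$ with bipoints $0$ and $2$. By \cref{susp} and its corollary on wedges, $\Mor_{\bD^1\vee S(Y)}(0,1)\simeq *$, $\Mor_{\bD^1\vee S(Y)}(1,2)\simeq Y$, and composition gives an equivalence
\[
\Mor_{\bD^1\vee S(Y)}(0,2)\simeq \Mor(1,2)\times\Mor(0,1)\simeq Y\times *\simeq Y .
\]
A bipointed map $(S(Y);0,1)\to(\bD^1\vee S(Y);0,2)$ is, by \cref{suspi}, the same as a functor $Y\to\Mor_{\bD^1\vee S(Y)}(0,2)$. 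We take the functor corresponding to the identity of $Y$ under the equivalence above. Concretely, this is precomposition with the generating $1$-cell $0\to1$, i.e.\ whiskering, which justifies the name. The map $S(Y)\to S(Y)\vee\bD^1$ is defined symmetrically, using $\Mor(0,2)\simeq *\times Y\simeq Y$ and postcomposition with the arrow $1\to2$.

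Functoriality in $Y'\to Y$ comes from naturality of every ingredient. The construction $Y\mapsto S(Y)\vee\bD^1$ is a functor $\infty\Cat\to\infty\Cat_{\partial\bD^1/}$. The identification $\Mor(0,2)\simeq Y$ is natural in $Y$, since it is the composition map and is built from the free-algebra description in \cref{susp}. The equivalence of \cref{suspi} is natural in both variables. Together these give a natural transformation $S\Rightarrow (\bD^1\vee S(-))$ of functors $\infty\Cat\to\infty\Cat_{\partial\bD^1/}$.

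A cleaner way to obtain this naturality is the Yoneda lemma. For a bipointed $(\mD;a,b)$, the space $\Map(\bD^1\vee S(Y),(\mD;a,b))$ is the space of pairs consisting of an object $c$, an arrow $f\colon a\to c$, and a functor $Y\to\Mor_\mD(c,b)$. Sending such a pair to the composite $Y\to\Mor_\mD(c,b)\xrightarrow{f^*}\Mor_\mD(a,b)$ is natural in $\mD$ and in $Y$, so it is induced by a natural map $S(Y)\to\bD^1\vee S(Y)$.

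The main obstacle is making the precomposition map $f^*$ coherently natural in the triple $(\mD,c,f)$ at the $\infty$-categorical level. I would handle this through the enriched-category description $\infty\Cat\simeq\infty\Cat\text{-}\Cat$ of \cref{fix}, where composition maps are part of the coherent structure. An alternative is to check the statement on Steiner $\infty$-categories, where $Y$ is a disk and the maps are explicit chain maps, and then extend by density.
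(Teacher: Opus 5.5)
Your construction is essentially the paper's: both define the bipointed map in adjoint form via the universal property of $S$, sending $Y$ into $\Mor(0,2)$ of the wedge by composing with the generating arrow. The paper phrases this as a special case of a map $S(X\times Y\times Z)\to S(X)\vee S(Y)\vee S(Z)$ and asserts functoriality in $Y'\to Y$ no more formally than your naturality and Yoneda argument does. You do not actually need the composition map $\Mor(1,2)\times\Mor(0,1)\to\Mor(0,2)$ to be an equivalence, since the composite $Y\simeq Y\times *\to\Mor(1,2)\times\Mor(0,1)\to\Mor(0,2)$ already defines the map.
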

\begin{proof}
The whiskering maps are special cases of the bipointed maps
\[
S(X\times Y\times Z)\to S(X)\vee S(Y)\vee S(Z)
\]
defined in adjoint form as the composite
\begin{align*}
X\times Y\times Z&\to\Mor_{S(X)}(0,1)\times\Mor_{S(Y)}(1,2)\times\Mor_{S(Z)}(2,3)\\
&\to\Mor_{S(X)\vee S(Y)\vee S(Z)}(0,1)\times\Mor_{S(X)\vee S(Y)\vee S(Z)}(1,2)\times\Mor_{S(X)\vee S(Y)\vee S(Z)}(2,3)\\
&\to\Mor_{S(X)\vee S(Y)\vee S(Z)}(0,3).
\end{align*}
Here we have translated the names of the objects so that the above maps are induced by wedge sum inclusions and composition in $S(X)\vee S(Y)\vee S(Z)$.
The construction of these maps is functorial in maps $S(X')\to S(X)$, $S(Y')\to S(Y)$, $S(Z')\to S(Z)$ induced by maps $X'\to X$, $Y'\to Y$, $Z'\to Z$.
\end{proof}
\begin{proposition}\label{suspensiontobicone}
There is a canonical natural transformation of functors
\[
S(-)\to (-)^\abd:\infty\Cat\to\infty\Cat_{\partial\bD^1/}.
\]
\end{proposition}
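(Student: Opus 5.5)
We build the transformation on the dense family $\bDelta$ (or on disks) and extend by colimits. Both $S(-)$ and $(-)^\abd$ are colimit-preserving functors $\infty\Cat\to\infty\Cat_{\partial\bD^1/}$: the first because it preserves weakly contractible colimits and sends $\emptyset$ to $\partial\bD^1$, the second by the preceding proposition. Natural transformations between colimit-preserving functors out of $\infty\Cat$ are therefore determined by their restriction to a dense full subcategory. By \cref{theta} it suffices to produce a transformation between the restrictions to $\Theta$, and by density of $\bDelta$ (\cref{orientaldense}) we may use $\bDelta$ instead. In fact there is a more direct route that avoids choosing generators, which we take.

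Concretely, by the universal property of the suspension (\cref{suspi}), a bipointed functor $S(X)\to (X^\abd;s,t)$ is the same as a functor $X\to\Mor_{X^\abd}(s,t)$, naturally in $X$. So we construct a natural map $X\to \Mor_{X^\abd}(s,t)$, where $s$ is the cone point of $\bD^0\bar\star X$ and $t$ that of $X\star\bD^0$. The join formula \cref{cor:weakvsstrictjoin} and its antijoin corollary give the structure maps
\[
X\boxtimes\bD^1\to X\star\bD^0,\qquad X\boxtimes\bD^1\to \bD^0\bar\star X .
\]
By adjunction these correspond to functors $X\to\Fun^{\oplax}(\bD^1,X\star\bD^0)$ and $X\to\Fun^{\oplax}(\bD^1,\bD^0\bar\star X)$, that is, to natural families of morphisms $x\to t$ and $s\to x$ indexed by $x\in X$, together with their higher coherences. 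Pushing these into $X^\abd$ along the two legs of the defining pushout, which agree on $X$, yields a functor
\[
X\to \Mor_{X^\abd}(s,-)\times_{\iota_0}\ldots
\]
More cleanly, it yields a map $X\to X_{s//}\times_X X_{//t}$ into the antioriented bislice of $(X^\abd;s,t)$. This is exactly the unit of the adjunction of the preceding corollary, evaluated at $X$.

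Composing with the composition functor of $X^\abd$, which sends an arrow $s\to x$ and an arrow $x\to t$ to an arrow $s\to t$, gives $X\to\Mor_{X^\abd}(s,t)$. The expected obstacle is that ``composition'' out of a bislice of (op)lax slices is not a priori a functor $X_{s//}\times_X X_{//t}\to\Mor(s,t)$, since the lax and oplax directions interact.

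To handle this we first check it in the universal case $X=\bD^n$ using Steiner complexes. There $(\bD^n)^\abd$ is Steiner, and the needed map $S(\bD^n)=\bD^{n+1}\to(\bD^n)^\abd$ is given by a chain map sending the top generator to the composite $n+1$-cell obtained by pasting the two cone cells. This is the pasting the whiskering maps of \cref{whiskering} describe at the level of cells. We then verify naturality in maps of $\Theta$, which is a check that generators are preserved. Finally we extend to all $X$ by the colimit-preservation argument of the first paragraph. The Steiner-level verification that this pasting is well defined and natural is the step where the real work lies.
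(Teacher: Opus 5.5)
\textbf{There is a genuine gap.} The step that actually produces the transformation is never carried out. You note yourself that the ``composition'' out of the bislice is not available, and then defer to a Steiner computation that ``is the step where the real work lies.'' The fallback is also mis-described, in three ways.
\begin{itemize}
\item Disks alone are not dense, so you must establish naturality for \emph{all} maps of $\Theta$. That is not a check that generators are preserved. The composition maps $\bD^n\to\bD^n\coprod_{\bD^k}\bD^n$ and the collapses $\bD^n\to\bD^k$ in $\Theta$ do not preserve generators. Compatibility with them is precisely the interchange coherence of your pasting of whiskered cone cells, which is the whole content of the statement.
\item Even to reduce naturality to commuting squares, you need each $\theta^{\abd}$ to be gaunt (for instance Steiner, via \cref{pasting1} and loop-freeness of the glued complex). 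You assert this for disks without proof.
\item A chain map out of $\lambda(\bD^{n+1})$ is not determined by the image of the top generator. You would have to exhibit the full table of iterated sources and targets and check that it is a cell.
\end{itemize}

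\textbf{The missing idea} is that the composition you want exists, just not pointwise in $X^{\abd}$. Both structure maps you wrote down have the \emph{same} source $X\boxtimes\bD^1$:
\begin{itemize}
\item the antijoin quotient sends $X\boxtimes\{0\}$ to $s$ and restricts to the inclusion of $X$ on $X\boxtimes\{1\}$;
\item the join quotient restricts to the inclusion of $X$ on $X\boxtimes\{0\}$ and sends $X\boxtimes\{1\}$ to $t$.
\end{itemize}
This matching of handedness is exactly why the bicone uses $\bar\star$ on the left, so the lax/oplax interaction you feared does not occur. After pushing into $X^{\abd}$, the two maps are composable $1$-cells $\mathrm{const}_s\Rightarrow\mathrm{incl}\Rightarrow\mathrm{const}_t$ in the $\infty$-category $\Fun^{\lax}(X,X^{\abd})$, and one simply composes them there.

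Concretely, they glue to a map
$X\boxtimes(\bD^1\vee\bD^1)\simeq (X\boxtimes\bD^1)\coprod_X (X\boxtimes\bD^1)\to (\bD^0\,\bar\star\, X)\coprod_X (X\star\bD^0)=X^{\abd}$.
Restricting along the composite edge $\bD^1\to\bD^1\vee\bD^1$ gives a map $X\boxtimes\bD^1\to X^{\abd}$ that is constant at $s$ and at $t$ on the two ends. By the suspension pushout of \cref{thas2}, this is the same as a bipointed map $S(X)\to X^{\abd}$.

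Every step is functorial in $X$, so naturality comes for free, and no density argument or Steiner computation is needed; this is the paper's proof. The key is to use the description of $S(X)$ as a quotient of $X\boxtimes\bD^1$, rather than the adjunction between maps $S(X)\to Z$ and maps $X\to\Mor_Z(s,t)$.
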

\begin{proof}
Let $X$ be an $\infty$-category.
By construction of the bicone, there are inclusions $\bD^0\bar{\ast}\,X\to X^\abd$ and $X\ast\bD^0\to X^{\abd}$ compatible with the inclusion of $X$.
The units of the (anti)join and (co)slice adjunctions yield maps
\[
X\to (\bD^0\bar{\ast}\, X)_{0{//^\oplax}}\qquad\textrm{and}\qquad X\to (X\ast\bD^0)_{//^\lax 1},
\]
where $0$ and $1$ denote the first and last objects of $X^{\abd}$.
By adjunction, these maps are equivalent to the diagram
\[
\xymatrix{
\{0\}\ar[rr] & \ar@{=>}[d] & \bD^0\bar{\ast}\, X\ar[d]\\
X\ar[u]\ar[d]\ar[rr] & \ar@{=>}[d] & X^{\abd}\\
\{1\}\ar[rr] & & X\ast\bD^0\ar[u]
}
\]
in which the $2$-cells are the $1$-cells in $\Fun^{\lax}(X,X^{\abd})$ corresponding to the quotient maps
\[
\bD^1\bar{\boxtimes}\, X\to\bD^0\bar{\ast}\, X\qquad\text{and}\qquad X\boxtimes\bD^1\to X\ast\bD^0.
\]
Since the target of the top $2$-cell is equal to the source of the bottom $2$-cells, we may compose these $2$-cells to obtain a diagram
\[
\xymatrix{
& \{0\}\ar[rd]\ar@{=>}[dd] &\\
X\ar[ru]\ar[rd] & & X^{\abd}\\
& \{1\}\ar[ru] &
}
\]
which is adjoint to a bipointed map $S(X)\to X^{\abd}$.
\end{proof}

\begin{remark}
Evidently, the construction of the functor $S(X)\to X^{\abd}$ is natural in $1$-morphisms $X\to Y$.
We do not assert that the resulting natural transformation is natural in $n$-morphisms for $n>1$, although if one keeps track of the orientations it seems plausible that some such statement could be made.
\end{remark}

\begin{remark}
The functor $S(\bD^n)\to(\bD^n)^{\abd}$ of \cref{suspensiontobicone} is compatible with both source and target inclusions $\bD^m\to\bD^n$ for any $m<n$.
That is, the square
\[
\xymatrix{
S(\bD^m)\ar[r]\ar[d] & (\bD^m)^{\abd}\ar[d]\\
S(\bD^n)\ar[r] & (\bD^n)^{\abd}}
\]
commutes, where the vertical maps are included by one of the inclusions.
\end{remark}

\begin{notation}
Set ${{\abd}}^0=\bD^0$, so that $\partial{{\abd}}^0=\emptyset$, and inductively define
\[
\partial{\abd}^n=(\partial{\abd}^{n-1})^{\abd}.
\]
\end{notation}

\begin{notation}\label{attachingmap}
Suppose given a map $\partial\bD^{n}\to\partial{\abd}^{n}$.
Using \cref{suspensiontobicone}, we obtain a composite morphism
\[
\partial\bD^{n+1}=S(\partial\bD^{n})\to S(\partial{\abd}^{n})\to (\partial{\abd}^{n})^{\abd}=\partial{\abd}^{n+1}.
\]
Starting with the evident isomorphism
\[
\alpha_1:\partial\bD^1\to\partial{\abd}^1,
\]
applying this procedure results in {\em attaching maps} $\alpha_n:\partial\bD^n\to\partial{\abd}^n$, for all positive integers $n$. 
\end{notation}

\begin{definition}
The {\em antioriented orthoplex} ${\abd}^n$ is the pushout
\[
\xymatrix{
\partial\bD^{n}\ar[r]^{\alpha_n}\ar[d] & \partial{\abd}^{n}\ar[d]\\
\bD^{n}\ar[r] & {\abd}^{n}
}
\]
in which the unique $n$-cell is attached by the attaching map $\alpha_n$ constructed in \cref{attachingmap} above.
\end{definition}

\begin{remark}
Applying the $(-)^\co$ involution, this also defines the {\em oriented orthoplex} $\bd^n$ for each $n\geq 0$.
\end{remark}
\begin{remark}
We suspect that the full subcategory $\bd\subset\infty\Cat$ spanned by the oriented orthoplexes $\bd^n$ is dense, though this seems somewhat more challenging to show than for the orientals and the cubes.
\end{remark}

\appendix

\section{\mbox{Chain models of strict $\infty$-categories}}

\subsection{The Gray tensor product of Steiner $\infty$-categories}

\begin{definition}\label{invo} For every augmented directed chain complex $(\A,\B, \partial, \epsilon)$ let
\begin{enumerate}[\normalfont(1)]\setlength{\itemsep}{-2pt}
\item $(\A,\B, \partial, \epsilon)^\op$ be the  augmented directed complex $(\A,\B, \partial', \epsilon),$	where $\partial'_\n= (-1)^{\n} \partial_\n$ for $\n \geq 0.$

\item $(\A,\B, \partial, \epsilon)^\co$ be the  augmented directed complex $(\A,\B, \partial'', \epsilon),$	where $\partial''_\n= (-1)^{\n+1}\partial_\n$ for $\n \geq 0.$
\end{enumerate}	

\end{definition}

\begin{remark}By construction of $\nu$ for every augmented directed complex $\A$ there are canonical isomorphisms
$$ \nu(\A^\op) \cong \nu(\A)^\op, \ \nu(\A^\co) \cong \nu(\A)^\co.$$

By construction of $\lambda$ for every strict $\infty$-category $\mC$ there are canonical isomorphisms
$$ \lambda(\mC^\op) \cong \lambda(\mC)^\op, \ \lambda(\mC^\co) \cong \lambda(\mC)^\co.$$

\end{remark}

\begin{lemma}\label{lemmcub} Let $\A,\B$ be augmented directed complexes.
\begin{enumerate}[\normalfont(1)]\setlength{\itemsep}{-2pt}
\item There is a canonical isomorphism of augmented directed complexes $$\A^\op \ot \B^\op \cong (\B\ot\A)^\op$$
that sends $\X \ot \Y$ to $\Y \ot \X$ for $\X \in \A_\bi, \Y \in \B_\bj$ and $\bi,\bj \geq 0.$
The latter isomorphism makes the functor $(-)^\op: \mathrm{ADC}^\rev \to \mathrm{ADC}$ to a monoidal involution.

\item There is a canonical isomorphism of augmented directed complexes $$ \A^\co \ot \B^\co \cong (\B\ot\A)^\co $$
that sends $\X \ot \Y$ to $\Y \ot \X$ for $\X \in \A_\bi, \Y \in \B_\bj$ and $\bi,\bj \geq 0.$
The latter isomorphism makes the functor $(-)^\co: \mathrm{ADC}^\rev \to \mathrm{ADC}$ to a monoidal involution.

\end{enumerate}		
\end{lemma}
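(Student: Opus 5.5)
We check directly that the stated map is an isomorphism of augmented directed complexes, using the explicit descriptions of \cref{invo} and of the tensor product. The plan is to define the map degreewise on the underlying graded abelian groups, check it preserves the submonoids and augmentations, and then check it is a chain map. The only real content is a sign computation with the Koszul sign in the tensor differential.

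First, as graded abelian groups, $(\A\ot\B)_\n=\bigoplus_{\bi+\bj=\n}\A_\bi\ot\B_\bj$. The map we use is the swap $\sigma:\X\ot\Y\mapsto \Y\ot\X$ from $\A^\op\ot\B^\op$ to $(\B\ot\A)^\op$, with no sign. We take this literally, as in the statement, and do not insert a Koszul sign, since a sign would break compatibility with the submonoids. This $\sigma$ is an isomorphism of graded abelian groups. It preserves the submonoids, since both are generated by the elementary tensors $\X\ot\Y$ with $\X,\Y$ in the respective submonoids, and $(-)^\op$ does not change submonoids. It also preserves augmentations, since in degree $0$ the augmentation is $\epsilon\ot\epsilon$.

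It remains to compare differentials. For $\X\in\A_\bi$ and $\Y\in\B_\bj$ with $\n=\bi+\bj$, the differential in $\A^\op\ot\B^\op$ is
$$\X\ot\Y\mapsto (-1)^\bi\partial\X\ot\Y+(-1)^\bi\X\ot(-1)^\bj\partial\Y=(-1)^\bi\partial\X\ot\Y+(-1)^{\n}\X\ot\partial\Y.$$
In $(\B\ot\A)^\op$ the differential of $\Y\ot\X$ is
$$(-1)^{\n}\bigl(\partial\Y\ot\X+(-1)^\bj\Y\ot\partial\X\bigr)=(-1)^\n\partial\Y\ot\X+(-1)^{\bi}\Y\ot\partial\X.$$
The two differentials match under $\sigma$, so $\sigma$ is a chain isomorphism. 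The $\co$ case is identical, since each differential acquires an extra global factor of $-1$ on both sides.

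Monoidality then comes from associativity compatibility. Namely, $\sigma$ on triple tensors is the reversal $\X\ot\Y\ot\Z\mapsto\Z\ot\Y\ot\X$ on both bracketings, so the coherence diagrams for a monoidal functor $\mathrm{ADC}^\rev\to\mathrm{ADC}$ commute on elementary tensors. Unitality is clear because the unit $\bZ$ satisfies $\bZ^\op=\bZ$. The involutivity $(\A^\op)^\op=\A$ holds on the nose and is compatible with $\sigma\circ\sigma=\id$. The main thing to get right is the sign bookkeeping in the differential check; everything else is formal.
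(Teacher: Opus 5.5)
Your proposal is correct and takes essentially the paper's approach. Both verify directly that the unsigned swap $X\otimes Y\mapsto Y\otimes X$ intertwines the two differentials, with the $(-)^\co$ case differing by a global sign on both sides, and then check that the associativity and unit coherence diagrams commute on elementary tensors. You also check compatibility with the submonoids and augmentations, which the paper leaves implicit.
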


\begin{proof}

Let $\bi,\bj,\bk \geq 0$ and $\bi+\bj=\bk$ and $\X \in \A_\bi, \Y \in \B_\bj.$

(1): $$ \partial^{\A^\op \ot \B^\op}_{\bk}(\X\ot \Y)  = \partial_\bi^{\A^\op}(\X) \ot \Y +
(-1)^{\bi} \X \ot \partial_\bj^{\B^\op}(\Y) = $$$$(-1)^{\bi} \partial_\bi^{\A}(\X) \ot \Y +
(-1)^{\bi} \X \ot (-1)^{\bj} \partial_\bj^{\B}(\Y)= (-1)^{\bi} \partial_\bi^{\A}(\X) \ot \Y + (-1)^{\bk} \X \ot \partial_\bj^{\B}(\Y),$$
$$\partial^{(\B \ot \A)^\op}_{\bk}(\Y\ot \X)= (-1)^{\bk} \partial^{\B \ot \A}_{\bk}(\Y\ot \X)= (-1)^{\bk}(\partial^{\B}_{\bj}(\Y) \ot \X + (-1)^{\bj}\Y \ot \partial^\A_\bi(\X))= $$
$$(-1)^{\bk}\partial^{\B}_{\bj}(\Y) \ot \X + (-1)^{\bi+2\bj}\Y \ot \partial^\A_\bi(\X)).$$	
This isomorphism makes the functor $(-)^\op: \mathrm{ADC}^\rev \to \mathrm{ADC}$ into a monoidal involution since for every $\A,\B,\C \in \mathrm{ADC}$ the following diagrams commute:
$$\begin{xy}
\xymatrix{
(\A^\op \ot \B^\op) \ot \C^\op \ar[d] \ar[r] & (\A \ot \B)^\op \ot \C^\op \ar[r] & ((\A \ot \B) \ot \C)^\op \ar[d]
\\ 
\A^\op \ot (\B^\op \ot \C^\op) \ar[r] &  \A^\op \ot (\B \ot \C)^\op  \ar[r] & (\A \ot (\B \ot \C))^\op,
}
\end{xy}
$$
$$\begin{xy}
\xymatrix{
\A^\op \ot \bZ \ar[d]^\cong \ar[r]^\cong & \A^\op \ar[d]^\cong
\\ 
\A^\op \ot \bZ^\op \ar[r]^\cong &  (\A \ot \bZ)^\op,
}
\end{xy}
\begin{xy}
\xymatrix{
\bZ \ot \A^\op \ar[d]^\cong \ar[r]^\cong & \A^\op \ar[d]^\cong
\\ 
\bZ^\op \ot \A^\op \ar[r]^\cong &  (\bZ \ot \A)^\op.
}
\end{xy}
$$

(2): $$ \partial^{\A^\co \ot \B^\co}_{\bk}(\X\ot \Y)  = \partial_\bi^{\A^\co}(\X) \ot \Y +
(-1)^{\bi} \X \ot \partial_\bj^{\B^\co}(\Y) = $$$$(-1)^{\bi+1} \partial_\bi^{\A}(\X) \ot \Y +
(-1)^{\bi} \X \ot (-1)^{\bj+1} \partial_\bj^{\B}(\Y)= (-1)^{\bi+1} \partial_\bi^{\A}(\X) \ot \Y + (-1)^{\bk+1} \X \ot \partial_\bj^{\B}(\Y),$$
$$\partial^{(\B \ot \A)^\co}_{\bk}(\Y\ot \X)= (-1)^{\bk+1} \partial^{\B \ot \A}_{\bk}(\Y\ot \X)= (-1)^{\bk+1}(\partial^{\B}_{\bj}(\Y) \ot \X + (-1)^{\bj}\Y \ot \partial^\A_\bi(\X))= $$
$$(-1)^{\bk+1}\partial^{\B}_{\bj}(\Y) \ot \X + (-1)^{\bi+1+2\bj}\Y \ot \partial^\A_\bi(\X)).$$	
This isomorphism makes the functor $(-)^\co: \mathrm{ADC}^\rev \to \mathrm{ADC}$ into a monoidal involution since for every $\A,\B,\C \in \mathrm{ADC}$ the corresponding diagrams from above, with $(-)^\op$ replaced by $(-)^\co$, commute.
\end{proof}

\begin{corollary}\label{duac0}

There are canonical monoidal involutions
$$(-)^\op, (-)^\co : (\mathrm{ADC}, \otimes) \simeq (\mathrm{ADC}, \otimes)^\rev. $$

\end{corollary}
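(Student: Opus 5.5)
The plan is to deduce \cref{duac0} directly from \cref{lemmcub}, so everything reduces to packaging that lemma correctly. First I will check that $(-)^\op$ and $(-)^\co$ of \cref{invo} are functors $\mathrm{ADC}\to\mathrm{ADC}$ and involutions. A map of augmented directed complexes $f\colon A\to B$ satisfies $f\partial=\partial f$. Therefore it also commutes with $(-1)^n\partial_n$ and with $(-1)^{n+1}\partial_n$. The submonoids and augmentations are untouched, so $f$ is a map $A^\op\to B^\op$ and $A^\co\to B^\co$. The augmentation condition $\epsilon\partial_1=0$ is preserved because only a sign changes. Applying either operation twice multiplies each $\partial_n$ by $(-1)^{2n}$ or $(-1)^{2n+2}$, so it is the identity on the nose. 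Hence both are strict involutive automorphisms of the category $\mathrm{ADC}$.

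Next I will promote these to monoidal equivalences $(\mathrm{ADC},\otimes)\simeq(\mathrm{ADC},\otimes)^\rev$. The structure maps will be the swap isomorphisms of \cref{lemmcub}, written as $A^\op\otimes B^\op\cong(B\otimes A)^\op$ with $x\otimes y\mapsto y\otimes x$. The same lemma gives the unit isomorphism $\bZ^\op=\bZ$ and the coherence diagrams, namely the associativity hexagon and the two unit triangles. This is exactly the data of a strong monoidal functor $(\mathrm{ADC},\otimes)\to(\mathrm{ADC},\otimes^\rev)$. Because the underlying functor is an isomorphism of categories, a strong monoidal functor whose underlying functor is an equivalence is a monoidal equivalence.

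The one point deserving care, and the step I expect to be the main obstacle, is that the swap carries no Koszul sign. As recorded in \cref{lemmcub}, the swap $x\otimes y\mapsto y\otimes x$ is a chain isomorphism only after the differential signs have been twisted. I would re-verify the sign bookkeeping on $x\otimes y$ with $x\in A_i$ and $y\in B_j$: the term coming from $\partial x$ must acquire the sign $(-1)^{i}$ on both sides, and likewise for the term coming from $\partial y$. I would also check that the swap preserves the submonoids $B\boxtimes B'$, which it does since these are generated by elementary tensors of monoid elements. Given these checks, the corollary follows by taking the lemma's isomorphisms as the monoidal structure.
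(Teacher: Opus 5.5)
Your proposal is correct and is essentially the paper's argument: the paper obtains the corollary directly from \cref{lemmcub}, whose unsigned swap isomorphisms $A^\op\otimes B^\op\cong(B\otimes A)^\op$ and $A^\co\otimes B^\co\cong(B\otimes A)^\co$, together with the unit and associativity diagrams, constitute the monoidal structure. Your extra checks (functoriality, strict involutivity, preservation of the submonoids) are details the paper leaves implicit; one small correction is that for $(-)^\co$ the terms from $\partial x$ and $\partial y$ carry the signs $(-1)^{i+1}$ and $(-1)^{i+j+1}$ on both sides, rather than $(-1)^{i}$ and $(-1)^{i+j}$, but the verification is otherwise identical.
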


\begin{corollary}\label{selfdual} Let $\n \geq0$.
There are canonical isomorphisms $$(\cube^\n)^\op \cong \cube^\n \cong (\cube^\n)^\co.$$	

\end{corollary}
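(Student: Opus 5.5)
The plan is to work entirely on the level of Steiner complexes, using the equivalence $\infty\Cat^\Steiner \simeq \mathrm{ADC}^\Steiner$ of \cref{hh} together with the compatibilities $\lambda(\mC^\op)\cong\lambda(\mC)^\op$ and $\lambda(\mC^\co)\cong\lambda(\mC)^\co$ from the remark after \cref{invo}. Then it suffices to produce isomorphisms of augmented directed complexes $\lambda(\cube^\n)^\op\cong\lambda(\cube^\n)\cong\lambda(\cube^\n)^\co$. Since $\cube^\n=(\cube^1)^{\boxtimes\n}$ and the Gray tensor corresponds to $\ot$ on complexes, we have $\lambda(\cube^\n)\cong\lambda(\bD^1)^{\ot\n}$.

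First I treat $\n=1$. The complex $\lambda(\bD^1)$ has basis $\{0\},\{1\}$ in degree $0$ and $e$ in degree $1$, with $\partial e=\{1\}-\{0\}$. In $\lambda(\bD^1)^\op$ the differential in degree $1$ is negated, giving $\partial' e=\{0\}-\{1\}$. The map swapping $\{0\}\leftrightarrow\{1\}$ and fixing $e$ is an isomorphism $\lambda(\bD^1)\cong\lambda(\bD^1)^\op$. It preserves the submonoids because it permutes basis elements, and it preserves the augmentation. For $\co$ the degree-$1$ differential is multiplied by $(-1)^{2}=1$, and degree $0$ carries no differential, so $\lambda(\bD^1)^\co=\lambda(\bD^1)$ and the identity works. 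This matches $\bD^1$ being $\co$-invariant and $\op$-symmetric.

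For general $\n$ I use \cref{lemmcub}, or more precisely the monoidal involutions of \cref{duac0}. Iterating the isomorphism $\A^\op\ot\B^\op\cong(\B\ot\A)^\op$ gives
\[
\bigl(\lambda(\bD^1)^{\ot\n}\bigr)^\op\cong(\lambda(\bD^1)^\op)^{\ot\n}
\]
with the factors written in reversed order. Since all factors are equal, the order reversal is harmless. Composing with the $\n$-fold tensor power of the $\n=1$ isomorphism yields $\lambda(\cube^\n)^\op\cong\lambda(\cube^\n)$. The same argument with $\co$ gives the second isomorphism. Applying $\nu$ then gives the claimed isomorphisms of strict $\infty$-categories.

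The only point requiring care is that $(-)^\op$ and $(-)^\co$ are \emph{anti}-monoidal, so the reversal of tensor factors must be tracked correctly. Because the factors are identical, this reversal merely permutes them. One must still confirm that this produces an honest chain isomorphism, and it does, since the reversal is built into the isomorphism of \cref{lemmcub} and involves no sign-twisted swap of factors. I expect this bookkeeping to be the main, though modest, obstacle.
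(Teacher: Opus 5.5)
Your proposal is correct and is essentially the paper's argument. The corollary is stated, without separate proof, as a consequence of \cref{lemmcub}/\cref{duac0}: the anti-monoidal involutions on augmented directed complexes, together with the evident isomorphisms $\lambda(\bD^1)^\op\cong\lambda(\bD^1)=\lambda(\bD^1)^\co$, give the result. Your check that the factor-reversing isomorphism $x\ot y\mapsto y\ot x$ carries no sign twist is exactly the point that makes the iteration over $n$ factors go through.
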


\begin{corollary}\label{duac}
The monoidal involutions of \cref{duac0} restrict to monoidal involutions
$$(-)^\op, (-)^\co : (\cube, \boxtimes) \simeq (\cube, \boxtimes)^\rev. $$
\end{corollary}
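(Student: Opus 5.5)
The plan is to work on the chain-complex side via the equivalence $\lambda:\infty\Cat^\Steiner\simeq\mathrm{ADC}^\Steiner$ of \cref{hh}. Under this equivalence, $(\cube,\boxtimes)$ corresponds to the full monoidal subcategory of $(\mathrm{ADC},\otimes)$ spanned by the complexes $\lambda(\bD^1)^{\otimes n}$ for $n\geq 0$. By \cref{Graytensor} the Gray structure is transported from $\otimes$, and the unit $\bZ=\lambda(\bD^0)$ is $\lambda(\cube^0)$. This subcategory is closed under $\otimes$ and contains the unit, so it is a monoidal full subcategory.

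The argument then rests on a general principle. Let $F:(\mathcal A,\otimes)^\rev\to(\mathcal A,\otimes)$ be a monoidal functor, and let $\mathcal B\subset\mathcal A$ be a full monoidal subcategory with $F(\mathcal B)\subset\mathcal B$ up to isomorphism. Then $F$ restricts to a monoidal functor $\mathcal B^\rev\to\mathcal B$. The reason is that the structure maps $F(X)\otimes F(Y)\cong F(Y\otimes X)$ and the unit map automatically lie in $\mathcal B$ by fullness, and all coherence diagrams are inherited from $\mathcal A$. If moreover $F\circ F\simeq\id$ as monoidal functors on $\mathcal A$, this restricts as well, so the restriction is again a monoidal involution. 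Applied to the involutions of \cref{duac0}, this reduces the corollary to showing that $(-)^\op$ and $(-)^\co$ send each $\lambda(\cube^n)$ to a complex isomorphic to some $\lambda(\cube^m)$.

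That is exactly the content of \cref{selfdual}, which I would prove as follows. First, the base case $n=1$. The complex $\lambda(\bD^1)$ has $\bZ\langle e\rangle$ in degree $1$ and $\bZ\langle s,t\rangle$ in degree $0$, with $\partial e=t-s$, augmentation $s,t\mapsto1$, and monoids generated by the bases. By \cref{invo}, $(-)^\co$ multiplies $\partial_1$ by $(-1)^{2}=1$, so $\lambda(\bD^1)^\co=\lambda(\bD^1)$ on the nose. On the other hand, $(-)^\op$ multiplies $\partial_1$ by $-1$, giving $\partial e=s-t$. Swapping $s\leftrightarrow t$ and fixing $e$ is then an isomorphism of augmented directed complexes: it respects differentials, augmentation and the submonoids. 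For the inductive step, \cref{lemmcub} gives
\[
(\lambda(\bD^1)^{\otimes n})^\op\cong(\lambda(\bD^1)^\op)^{\otimes n}\cong\lambda(\bD^1)^{\otimes n},
\]
where the order of the factors is reversed, which is harmless since all factors are equal. The same argument works for $(-)^\co$. Transporting back along $\nu$, using the compatibility $\nu(A^\op)\cong\nu(A)^\op$ and $\nu(A^\co)\cong\nu(A)^\co$, identifies these with the involutions of Steiner $\infty$-categories.

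I do not expect a serious obstacle; the base-case isomorphism is a direct check. The one point needing care is coherence, namely that the restricted functor is a monoidal \emph{involution} and not merely a monoidal functor. Since everything is 1-categorical at the level of $\mathrm{ADC}$ and the subcategory is full, this follows formally from \cref{duac0}, so no new coherence data has to be constructed.
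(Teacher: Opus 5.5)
Your proposal is correct and follows the paper's route. The paper states \cref{duac} without a separate proof, as an immediate consequence of the monoidal involutions of \cref{duac0} together with the self-duality $(\cube^n)^\op\cong\cube^n\cong(\cube^n)^\co$ of \cref{selfdual}, which in turn comes from \cref{lemmcub}. Your explicit check on $\lambda(\bD^1)$ (with $(-)^\co$ acting trivially and $(-)^\op$ undone by swapping endpoints), followed by induction via the order-reversing isomorphism of \cref{lemmcub}, simply spells out what the paper leaves implicit.
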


\subsection{The join of Steiner $\infty$-categories}

\begin{remark}\label{suspmod3} Let $\A$ be an augmented directed complex.
There is an identity $$ \A \diamond \bZ = \A \oplus \A[1] \oplus \bZ $$
as graded abelian groups respecting the graded submonoids.
For every $\ell > 1$ the $\ell$-th differential is $$ \partial^\A_\ell+ ((-1)^{\ell-1}\id, \partial^\A_{\ell-1}): (\A \diamond \bZ)_\ell = \A_\ell \oplus \A_{\ell-1} \to \A_{\ell-1} \oplus \A_{\ell-2}.$$
The first differential $$ (\A \diamond \bZ)_1 = \A_0 \oplus \A_1 \to (\A \diamond \bZ)_0=\A_0 \oplus \bZ $$ is the map $(\id, -\epsilon^\A)+ \partial_1^\A.$
The augmentation is $$\epsilon^\A + \id_\bZ : (\A \diamond \bZ)_0 =\A_0 \oplus \bZ \to \bZ. $$
	
\end{remark}

\begin{lemma}\label{lemmcub2} Let $\A,\B$ be augmented directed complexes.
There is a canonical isomorphism of augmented directed complexes $$\A^\op \diamond \B^\op \cong (\B\diamond\A)^\op.$$
The latter isomorphism makes the functor $(-)^\op: \mathrm{ADC}^\rev \to \mathrm{ADC}$ to a monoidal involution.
	
\end{lemma}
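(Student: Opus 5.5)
The plan is to imitate the proof of \cref{lemmcub}, working directly with the explicit description of the join complex. By \cref{suspmod2}, as graded abelian groups with graded submonoids we have
\[
\A \diamond \B = \A \oplus (\A\ot\B)[1] \oplus \B .
\]
The differential is $\partial^\A$ on $\A$ and $\partial^\B$ on $\B$. On a shifted element $\X\ot\Y$ of degree $\bi+\bj+1$, with $\X\in\A_\bi$ and $\Y\in\B_\bj$, it is the Gray-type differential of $\lambda(\bD^1)$-direction terms, plus the two boundary contributions $\epsilon(\Y)\X$ (when $\bj=0$) and $\pm\epsilon(\X)\Y$ (when $\bi=0$), with the signs dictated by the pushout defining $\A\diamond\B$. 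The augmentation is $\epsilon^\A+\epsilon^\B$.

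I will define the candidate isomorphism $\A^\op\diamond\B^\op\to(\B\diamond\A)^\op$ by sending $\X\in\A$ to $\X$, $\Y\in\B$ to $\Y$, and $\X\ot\Y$ to $\Y\ot\X$ in the middle summand. Since this is just a relabelling of bases, it is bijective, it preserves the graded submonoids, and it commutes with augmentations. It could be that an extra sign such as $(-1)^{\bi\bj}$ is needed on the middle summand. That would destroy positivity, so I first check whether the unsigned swap already commutes with differentials; I expect it does, exactly as in \cref{lemmcub}.

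The main computation is to compare
$\partial^{\A^\op\diamond\B^\op}(\X\ot\Y)$ with $\partial^{(\B\diamond\A)^\op}(\Y\ot\X)=(-1)^{\bk}\partial^{\B\diamond\A}(\Y\ot\X)$, where $\bk=\bi+\bj+1$. The interior terms, those involving $\partial\X\ot\Y$ and $\X\ot\partial\Y$, are handled by the same sign bookkeeping as in \cref{lemmcub}(1). This uses $(-1)^{\bi}$ from $\A^\op$, $(-1)^{\bj}$ from $\B^\op$, and the Koszul sign. The boundary terms with $\bj=0$ or $\bi=0$ are where I expect the real obstacle. In $\A\diamond\B$ these terms come with opposite signs, $+$ for one cone end and $-$ for the other, and the swap exchanges the roles of $\A$ and $\B$. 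Hence one has to verify that the global factor $(-1)^{\bk}$ together with the $\op$-twists exactly converts the ``$+\epsilon(\Y)\X$'' term into the ``$-\epsilon(\X)\Y$'' term of the reversed join, and vice versa.

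I will check these edge cases first in the basic example $\bZ\diamond\bZ=\lambda(\bbDelta^1)$. There the statement reduces to $(\bbDelta^1)^\op\cong\bbDelta^1$ with its endpoints swapped, and I will use this example to pin down the sign conventions before doing the general computation.

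Finally, for monoidality I check the coherence diagrams: the associativity pentagon-type square and the two unit squares, with unit $0$ (the initial complex). On underlying graded groups all of these maps are the evident permutations of tensor factors, e.g.\ $\X\ot\Y\ot\Z\mapsto\Z\ot\Y\ot\X$ on the doubly shifted summand. The diagrams therefore commute on basis elements, as in \cref{lemmcub}.
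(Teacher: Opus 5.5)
Your route is correct and gives the same map as the paper. The unsigned swap is a chain map, and the boundary terms you flag do match.

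Take $\partial e=[1]-[0]$, and collapse the $[0]$-end onto $A$ and the $[1]$-end onto $B$. Then for $x\in A_i$ and $y\in B_j$,
\[
\partial^{A\diamond B}(x\ot e\ot y)=\partial x\ot e\ot y+(-1)^{i+1}x\ot e\ot \partial y+\delta_{i0}\,\epsilon(x)\,y+(-1)^{i+1}\delta_{j0}\,\epsilon(y)\,x .
\]
Hence $\partial^{A^{\op}\diamond B^{\op}}(x\ot e\ot y)$ carries the coefficients $(-1)^i$, $(-1)^{i+j+1}$, $1$, $(-1)^{i+1}$ on these four terms. A direct check shows these are exactly the coefficients of the swapped terms in $(-1)^{i+j+1}\partial^{B\diamond A}(y\ot e\ot x)$.

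Each boundary term is matched with itself, not converted into the other one: $\epsilon(y)x$ corresponds to $\epsilon(y)x$, and $\epsilon(x)y$ to $\epsilon(x)y$. In $B\diamond A$ the term $\epsilon(y)x$ sits at the other end of the cylinder, so it carries the opposite end-sign, and the global factor $(-1)^{k}$ from $(-)^{\op}$ compensates.

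The paper never writes down this differential. It starts from the defining pushout of $A^{\op}\diamond B^{\op}$ and replaces $\lambda(\bD^1)$ by $\lambda(\bD^1)^{\op}$. These are isomorphic via the endpoint swap, which also exchanges which end is collapsed onto which summand, so $A^{\op}\oplus B^{\op}$ becomes $B^{\op}\oplus A^{\op}$. It then applies \cref{lemmcub} to the triple tensor product. Finally it uses that $(-)^{\op}$ is an automorphism of $\mathrm{ADC}$ commuting with pushouts and with the collapse maps, which only involve augmentations.

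So the formal argument gets the boundary signs for free from \cref{lemmcub} and the self-duality of $\lambda(\bD^1)$. Your computation buys explicit formulas at the cost of the sign check above.

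For the coherence check, the zero complex is the right unit for $\diamond$. The unit squares in the paper's proof are displayed with $\bZ$, apparently carried over from \cref{lemmcub}, and should be read with $0$.
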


\begin{proof}

There is a canonical isomorphism
$$\A^\op \diamond \B^\op = \A^\op \ot \lambda(\bD^1) \ot \B^\op \coprod_{\A^\op \ot \lambda(\partial\bD^1)\ot\B^\op} \A^\op \oplus \B^\op \cong $$ 
$$ \A^\op \ot \lambda(\bD^1)^\op \ot \B^\op \coprod_{\A^\op \ot \lambda(\partial\bD^1)^\op \ot \B^\op} \B^\op \oplus \A^\op \cong (B\diamond A)^\op.$$

The latter makes the functor $(-)^\op: \mathrm{ADC}^\rev \to \mathrm{ADC}$ to a monoidal involution since for every $\A,\B,\C \in \mathrm{ADC}$ the following diagrams trivially commute:
$$\begin{xy}
\xymatrix{
(\A^\op \diamond \B^\op) \diamond \C^\op \ar[d] \ar[r] & (\A \diamond \B)^\op \diamond \C^\op \ar[r] & ((\A \diamond \B) \diamond \C)^\op \ar[d]
\\ 
\A^\op \diamond (\B^\op \diamond \C^\op) \ar[r] &  \A^\op \diamond (\B \diamond \C)^\op  \ar[r] & (\A \diamond (\B \diamond \C))^\op,
}
\end{xy}
$$
$$\begin{xy}
\xymatrix{
\A^\op \diamond \bZ \ar[d]^\cong \ar[r]^\cong & \A^\op \ar[d]^\cong
\\ 
\A^\op \diamond \bZ^\op \ar[r]^\cong &  (\A \diamond \bZ)^\op,
}
\end{xy}
\begin{xy}
\xymatrix{
\bZ \diamond \A^\op \ar[d]^\cong \ar[r]^\cong & \A^\op \ar[d]^\cong
\\ 
\bZ^\op \diamond \A^\op \ar[r]^\cong &  (\bZ \diamond \A)^\op.
}
\end{xy}
$$
\end{proof}

We obtain the following corollaries:

\begin{corollary}
Let $n \geq 0.$
There is a canonical equivalence of $\infty$-categories
$$(\bDelta^n)^\op \simeq \bDelta^n.$$

\end{corollary}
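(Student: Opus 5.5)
We reduce the statement to the chain-complex level and then pass back to $\infty$-categories using the Steiner equivalence of \cref{hh}. The key input is \cref{lemmcub2}: for augmented directed complexes $\A,\B$ there is a canonical isomorphism $\A^\op \diamond \B^\op \cong (\B \diamond \A)^\op$, compatible with the monoidal structure. We also use the isomorphism $\lambda(\mC^\op)\cong\lambda(\mC)^\op$ from the remark following \cref{invo}, together with its counterpart $\nu(\A^\op)\cong\nu(\A)^\op$.

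We argue by induction on $n$, working with $\lambda(\bDelta^n) = \lambda(\bD^0)^{\diamond(n+1)}$, i.e.\ $\bZ^{\diamond (n+1)}$ with $\bZ$ the final augmented directed complex of \cref{hhht}.

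The base case $n=0$ is direct. By \cref{invo}, $(\bZ)^\op$ has the same underlying groups, submonoid and augmentation as $\bZ$, and its differentials are the zero differentials twisted by signs. Hence $\bZ^\op = \bZ$.

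For the inductive step, apply \cref{lemmcub2} iteratively. Reversing the factors of an $(n+1)$-fold join of the same object $\bZ$ returns the same join. This gives
\[
(\bZ^{\diamond(n+1)})^\op \cong (\bZ^\op)^{\diamond(n+1)} = \bZ^{\diamond(n+1)}.
\]
Because $(-)^\op$ is a monoidal involution $\mathrm{ADC}^\rev \to \mathrm{ADC}$, the isomorphism is canonical: it is the coherence isomorphism of that monoidal functor, evaluated on the word $\bZ\diamond\cdots\diamond\bZ$ and composed with the identification $\bZ^\op = \bZ$.

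Applying $\nu$ and using \cref{hh} (both sides are Steiner complexes, and $\nu$ commutes with $(-)^\op$) gives $(\bDelta^n)^\op \cong \bDelta^n$ in $\infty\Cat^\Steiner$. The involution $(-)^\op$ on Steiner $\infty$-categories agrees with the one of \cref{ruik}, by the remark comparing the two involutions. Hence this is an equivalence in $\infty\Cat$.

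The only point that needs care is checking $\bZ^\op=\bZ$ as augmented directed complexes, and that the reversal of join factors is harmless here. This holds because all factors coincide, so no permutation of distinct objects occurs. I expect this bookkeeping, rather than any substantive computation, to be the main obstacle.

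On generators, the resulting isomorphism sends the face $[k]\hookrightarrow[n]$ to its image under the order-reversing automorphism $i\mapsto n-i$ of $[n]$. One can verify directly that this automorphism intertwines the alternating face differential with its sign twist $(-1)^k\partial_k$. This direct check can serve as an independent confirmation if the monoidal coherence argument is in doubt.
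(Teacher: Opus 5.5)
Your proposal is correct and follows the paper's route. The paper states the corollary as an immediate consequence of \cref{lemmcub2}: one iterates $\A^\op\diamond\B^\op\cong(\B\diamond\A)^\op$ on $\lambda(\bDelta^n)=\bZ^{\diamond(n+1)}$, uses $\bZ^\op=\bZ$, and transports the result back along the Steiner equivalence of \cref{hh}; your additional check via the order reversal $i\mapsto n-i$ of $[n]$, which intertwines $\partial_k$ with $(-1)^k\partial_k$, is also correct.
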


\begin{corollary}\label{duacjoin}

There is a canonical monoidal involution
$$(-)^\op: (\bDelta, \star) \simeq (\bDelta, \star)^\rev.$$

\end{corollary}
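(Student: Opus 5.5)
Since $\lambda$ and $\nu$ identify Steiner $\infty$-categories with Steiner complexes (\cref{hh}), and the orientals form a full subcategory of Steiner $\infty$-categories, I will argue entirely on the level of augmented directed complexes. The plan is to combine \cref{lemmcub2} with the definition of $\bDelta^n$ as the $(n+1)$-fold join of $\bD^0$. Concretely, I first check that the involution $(-)^\op$ of \cref{invo} sends $\lambda(\bDelta^n)$ to $\lambda(\bDelta^n)^\op\cong\lambda((\bDelta^n)^\op)$, using the remark following \cref{invo}. This reduces the claim to two things: showing that $(-)^\op$ preserves the full subcategory $\lambda(\bDelta)\subset\mathrm{ADC}^\Steiner$, and that it is compatible with the join up to reversal.

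For the first point, I apply \cref{lemmcub2} iteratively. Since $\lambda(\bD^0)=\bZ$ is the final complex and $\bZ^\op=\bZ$ (the sign change $(-1)^n\partial_n$ is vacuous in degree $0$), induction on $n$ gives
\[
\lambda(\bDelta^n)^\op=(\bZ\diamond\cdots\diamond\bZ)^\op\cong\bZ^\op\diamond\cdots\diamond\bZ^\op\cong\lambda(\bDelta^n).
\]
Here the order of the factors is reversed, but all the factors are equal. Applying $\nu$ then yields $(\bDelta^n)^\op\simeq\bDelta^n$, which in particular shows that $(-)^\op$ restricts to $\bDelta$.

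Next, \cref{lemmcub2} says that $(-)^\op:\mathrm{ADC}^\rev\to\mathrm{ADC}$ is a monoidal involution for $\diamond$. Restricting it to the full monoidal subcategory $\lambda(\bDelta_{\geq -1})$, which is closed under $\diamond$ by \cref{jointensor} and the definition of the orientals, and transporting along the monoidal equivalence $\lambda$ gives the asserted monoidal involution $(\bDelta,\star)\simeq(\bDelta,\star)^\rev$. Note that the unit $\emptyset$ must be included, or the statement should be read as a non-unital restriction.

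I expect the main obstacle to be checking that the coherence data of \cref{lemmcub2} survive the restriction. Since everything is a strict $1$-category of chain complexes and the subcategory is full, this should be formal. The remaining care is identifying the involution $(-)^\op$ on $\infty\Cat$ of \cref{ruik} with the chain-level one; this is exactly the content of the remark after \cref{invo}.
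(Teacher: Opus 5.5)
Your proposal is correct and matches the paper's implicit argument. The paper deduces the corollary directly from \cref{lemmcub2}: iterating that lemma on $\lambda(\bD^0)=\bZ$ gives the self-duality $(\bDelta^n)^\op\simeq\bDelta^n$, and the chain-level monoidal involution is then restricted along the Steiner equivalence. Your caveat about the unit is also apt: the join unit $\emptyset$ (the zero complex, not $\bZ$) lies outside $\bDelta$, so the statement is properly about $\bDelta_{\geq -1}$, or else it must be read as a non-unital restriction.
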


\begin{lemma}\label{cell}\label{cell2}
Let $n \geq 0$ and $\mO$ a $n$-dimensional Steiner $\infty$-category
that has a unique non-invertible $n$-morphism.
The following canonical commutative squares are pushout squares in $\infty\Cat$:
\[
\xymatrix{
\iota_{n-1}(\mO) \ar[d]\ar[r] & \iota_{n-1}(\mO) \star\bD^0\ar[d]\\
\mO \ar[r] & \iota_{n}(\mO \star \bD^0),
}\qquad
\xymatrix{
\iota_{n-1}(\mO) \coprod \iota_{n-1}(\mO) \ar[r]\ar[d] & \bD^1 \boxtimes \iota_{n-1}(\mO) \ar[d]\\
\mO \coprod \mO \ar[r] & \iota_n(\bD^1 \boxtimes \mO).}
\]

\end{lemma}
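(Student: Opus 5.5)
The plan is to prove both squares first in $\infty\Cat^\strict$ and then upgrade them to pushouts in $\infty\Cat$ using Campion's pasting theorem, \cref{pasting1}. Both $\mO\star\bD^0$ and $\bD^1\boxtimes\mO$ are Steiner $\infty$-categories, because the join and the Gray tensor product preserve Steiner complexes (\cref{jointensor}, \cref{Graytensor}). In each square the top horizontal functor is an inclusion: atomic generators of $\iota_{n-1}(\mO)$ are sent to atomic generators, and loop-freeness then gives injectivity on cells, exactly as in the proof of \cref{cello}. The left vertical functor $\iota_{n-1}(\mO)\to\mO$ (or its coproduct with itself) is a pushout of $\partial\bD^n\subset\bD^n$ by \cref{cello}. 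It therefore lies in the saturated class generated by the boundary inclusions. So \cref{pasting1} applies, and it suffices to check the pushout property in $\infty\Cat^\strict$.

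For the strict statement I would compare atomic generators. The atomic generators of $\mO\star\bD^0$ are read off from \cref{suspmod2}/\cref{suspmod3}, $\lambda(\mO)\diamond\bZ=\lambda(\mO)\oplus\lambda(\mO)[1]\oplus\bZ$:
\begin{itemize}
\item the generators of $\mO$;
\item the cone point;
\item one generator $x\star\bD^0$ of dimension $k+1$ for each generator $x$ of $\mO$ of dimension $k$.
\end{itemize}
Those of dimension $\le n$ are the generators of $\iota_{n-1}(\mO)\star\bD^0$ together with the unique top cell $e$ of $\mO$. The cone $e\star\bD^0$ has dimension $n+1$ and is discarded by $\iota_n$. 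Similarly, $\lambda(\bD^1\boxtimes\mO)=\lambda(\mO)\oplus\lambda(\mO)\oplus\lambda(\mO)[1]$. Its generators of dimension $\le n$ are those of $\bD^1\boxtimes\iota_{n-1}(\mO)$ plus the two copies $\{0\}\boxtimes e$ and $\{1\}\boxtimes e$; the generator $\bD^1\boxtimes e$ has dimension $n+1$.

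It follows that $\iota_n(\mO\star\bD^0)$ is obtained from $\iota_{n-1}(\mO)\star\bD^0$ by attaching a single $n$-cell. Its attaching map $\partial\bD^n\to\iota_{n-1}(\mO)\star\bD^0$ is the one of $e$ in $\mO$, composed with the inclusion. Concretely, I would apply the polygraph pushout of \cref{polygraph} to the Steiner $\infty$-category $\iota_n(\mO\star\bD^0)$ (a truncation of a Steiner $\infty$-category is again Steiner). This shows that
\[
\partial\bD^n\to\iota_{n-1}(\mO)\star\bD^0,\qquad \bD^n\to\iota_n(\mO\star\bD^0)
\]
is a strict pushout. One point needs care: the $(n-1)$-skeleton of $\iota_n(\mO\star\bD^0)$ is $\iota_{n-1}(\iota_{n-1}(\mO)\star\bD^0)$, not all of $\iota_{n-1}(\mO)\star\bD^0$, which has $n$-cells. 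So the attaching should be phrased as a cellular pushout along $\partial\bD^n\subset\bD^n$ over $\iota_{n-1}(\mO)\star\bD^0$. This holds because polygraph attachments can be performed in any order compatible with dimension. Pasting this with the square of \cref{cello} for $\mO$,
\[
\partial\bD^n\to\iota_{n-1}(\mO),\qquad \bD^n\to\mO,
\]
yields the first square as a strict pushout. The second square is handled the same way, with two top cells attached and the pushout of \cref{cello} taken twice.

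The main obstacle is the reordering of cell attachments just mentioned. In $\infty\Cat^\strict$ I would justify it via \cref{hh}. The strict pushout of Steiner $\infty$-categories along generator-preserving maps corresponds to the pushout of Steiner complexes, which here is visibly a direct sum decomposition of the chain groups. \cref{pasting2} then identifies this colimit computed in Steiner $\infty$-categories with the one in $\infty\Cat^\strict$. Once the strict pushout is established, the earlier observations (inclusion on top, boundary-cell attachment on the left) feed into \cref{pasting1} to conclude that both squares are pushouts in $\infty\Cat$.
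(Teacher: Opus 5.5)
Your argument is correct, but it is organized differently from the paper's.

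The paper applies \cref{pasting1} and \cref{pasting2} directly to the two squares of the lemma. All of their functors are inclusions preserving atomic generators, so it only has to check that they are pushouts in $\infty\Cat^\Steiner\simeq\mathrm{ADC}^\Steiner$. There they are visibly direct-sum pushouts, e.g.\ $\lambda(\iota_n(\mO\star\bD^0))=\bZ\oplus\lambda(\iota_{n-1}\mO)[1]\oplus\lambda(\mO)$.

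You reduce only to $\infty\Cat^\strict$ and obtain the strict pushout cellularly. You write $\iota_n(\mO\star\bD^0)$ as $\iota_{n-1}(\mO)\star\bD^0$ with the single cell $e$ attached, and paste with the polygraph square of \cref{cello} (twice for the Gray square). The paper's route is shorter. Yours gives the cell-by-cell picture and, properly justified, needs only polygraph combinatorics rather than the Ara--Lucas theorem.

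The one thing to fix is your justification of the reordering. \cref{pasting2} does not apply to the cell-attachment square itself. The functors $\partial\bD^n\to\iota_{n-1}(\mO)\star\bD^0$ and $\bD^n\to\iota_n(\mO\star\bD^0)$ send the $(n-1)$-atoms to the source and target of $e$, which are typically composites; for $\mO=\bDelta^2$ one of them is $0\to1\to2$. The direct-sum argument combined with \cref{pasting2} only works for atom-preserving squares such as the lemma's own. Applied there, it already proves the lemma outright, which makes your detour unnecessary.

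To keep your route, argue purely in $\infty\Cat^\strict$:
\begin{enumerate}
\item In the pushout of \cref{polygraph} for $\iota_n(\mO\star\bD^0)$, split $\coprod_{E_n}$ into the cells $x\star\bD^0$ and the cell $e$.
\item Observe that attaching the cells $x\star\bD^0$ to $\iota_{n-1}(\mO\star\bD^0)$ yields $\iota_{n-1}(\mO)\star\bD^0$. This holds because the inclusion $\iota_{n-1}(\mO)\star\bD^0\to\mO\star\bD^0$ is a bijection on atoms of dimension $<n$ and matches the $n$-atoms $x\star\bD^0$ together with their attaching maps.
\item Conclude by the pasting law.
\end{enumerate}
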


\begin{proof}
All squares consist of Steiner $\infty$-categories and inclusions preserving atomic generators.
Moreover by \cref{orientdec} the left vertical functors of all squares are in the saturated class generated by the inclusion $ \partial\bD^n \subset \bD^n$.
Hence by \cref{pasting1} and \cref{pasting2} it suffices to see that
all squares are pushout squares in $\infty\Cat^\Steiner$.

The left commutative square corresponds to the following pushout square of Steiner complexes:
\[
\xymatrix{
\lambda(\iota_{n-1}(\mO)) \ar[d]\ar[r] & \bZ \oplus \lambda(\iota_{n-1}(\mO))[1] \oplus \lambda(\iota_{n-1}(\mO)) \ar[d] \\ \lambda(\mO) \ar[r] & \bZ \oplus \lambda(\iota_{n-1}(\mO))[1] \oplus \lambda(\mO).
}
\]

The right commutative square corresponds to the following pushout square of Steiner complexes:
\begin{equation}\label{o5}
\xymatrix{
\lambda(\iota_{n-1}(\mO)) \oplus \lambda(\iota_{n-1}(\mO)) \ar[r]\ar[d] & \lambda(\iota_{n-1}(\mO)) \oplus \lambda(\iota_{n-1}(\mO)) \oplus \lambda(\iota_{n-1}(\mO)) \ar[d]\\
\lambda(\mO) \oplus \lambda(\mO) \ar[r] & \lambda(\mO) \oplus \lambda(\iota_{n-1}(\mO)) \oplus \lambda(\mO).}
\end{equation}
\end{proof}

\subsection{The suspension of Steiner $\infty$-categories}

\begin{definition}Let $A$ be an augmented directed complex.
\begin{enumerate}[\normalfont(1)]\setlength{\itemsep}{-2pt}
\item The suspension complex of $A$ is $$S(A):=
A \ot\lambda(\bD^1) \coprod_{A\ot\lambda(\partial\bD^1)}\lambda(\partial\bD^1).$$
		
\item The co-suspension complex of $A$ is $$\bar{S}(A):=S(A^\co)^\co.$$
		
\end{enumerate}
\end{definition}

\cref{lemmcub} and \cref{duac} imply the following corollary:

\begin{remark}\label{suspmod} Let $\A$ be an augmented directed complex.
There is an identity $$ S(\A)= \A[1]\oplus \bZ \oplus \bZ $$
as graded abelian groups respecting the graded submonoids.
For every $\ell > 1$ the differentials are $$ S(\A)_\ell = \A_{\ell-1}\to S(\A)_{\ell-1}= \A_{\ell-2}, \X \mapsto \partial^\A(\X). $$
The first differential is the map $$ (\epsilon, -\epsilon): S(\A)_1 = \A_0 \to S(\A)_0 = \mathbb{Z}\oplus \mathbb{Z}.$$
The augmentation is the sum $$+: S(\A)_0=\mathbb{Z} \oplus \mathbb{Z}.$$

\end{remark}

\begin{remark}\label{isomo} Let $A$ be an augmented directed complex.
There is an identity $$ S(A)= A[1]\oplus \bZ \oplus \bZ= \bar{S}(A)$$
as graded abelian groups respecting the graded submonoids.
For every $\ell > 1$ the differentials are $$ S(A)_\ell = A_{\ell-1}\to S(A)_{\ell-1}= A_{\ell-2}, X \mapsto \partial^A(X), \ \bar{S}(A)_\ell = A_{\ell-1}\to \bar{S}(A)_{\ell-1}= A_{\ell-2}, X \mapsto -\partial^A(X).$$
The differentials $$ S(A)_1 = A_0 \to S(A)_0 = \mathbb{Z}\oplus \mathbb{Z}, \ \bar{S}(A)_1 = A_0 \to \bar{S}(A)_0 = \mathbb{Z}\oplus \mathbb{Z}$$ are the map  $(\epsilon, -\epsilon).$
The augmentation is the sum $+: \mathbb{Z} \oplus \mathbb{Z}\to  \mathbb{Z}.$
	
In particular, we find that the identity of $A[1]\oplus \bZ \oplus \bZ $ determines an isomorphism \begin{equation}\label{sussp}
S(A)\cong \bar{S}(A^{\co\op})\end{equation}
of augmented directed complexes.
	
\end{remark}

\cref{lemmcub} and \cref{duac} imply the following corollary:

\begin{corollary}
Let $A$ be an augmented directed complex.
There is a canonical isomorphism $$\bar{S}(A)\cong\lambda(\bD^1) \ot A \coprod_{\lambda(\partial\bD^1)\ot A}\lambda(\partial\bD^1).$$	
	
\end{corollary}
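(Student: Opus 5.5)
The plan is to unwind the definition $\bar{S}(A)=S(A^\co)^\co$ and push the involution $(-)^\co$ through the pushout that defines $S(A^\co)$, using the monoidal involution of \cref{lemmcub}(2). By definition,
\[
\bar{S}(A)=\Big(A^\co\ot\lambda(\bD^1)\coprod_{A^\co\ot\lambda(\partial\bD^1)}\lambda(\partial\bD^1)\Big)^\co .
\]

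\textbf{Step 1: $(-)^\co$ commutes with the pushout.} By \cref{invo}, $(-)^\co$ is an involutive automorphism of the category $\mathrm{ADC}$. It only changes signs of differentials and leaves the underlying graded groups, the submonoids and the augmentation untouched. Being an isomorphism of categories, it preserves pushouts, so
\[
\bar{S}(A)\cong (A^\co\ot\lambda(\bD^1))^\co\coprod_{(A^\co\ot\lambda(\partial\bD^1))^\co}\lambda(\partial\bD^1)^\co .
\]

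\textbf{Step 2: swap the tensor factors.} \cref{lemmcub}(2), applied to the pair $(A^\co,\lambda(\bD^1))$ and using $(A^\co)^\co=A$, gives natural isomorphisms
\[
(A^\co\ot\lambda(\bD^1))^\co\cong\lambda(\bD^1)^\co\ot A,\qquad (A^\co\ot\lambda(\partial\bD^1))^\co\cong\lambda(\partial\bD^1)^\co\ot A,
\]
both given by $x\ot y\mapsto y\ot x$.

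\textbf{Step 3: $(-)^\co$ fixes $\lambda(\bD^1)$ and $\lambda(\partial\bD^1)$.} The complex $\lambda(\partial\bD^1)$ is concentrated in degree $0$, so the sign twist does nothing. The complex $\lambda(\bD^1)$ has only the differential $\partial_1$, and $\partial''_1=(-1)^{2}\partial_1=\partial_1$. Hence the identity maps give isomorphisms $\lambda(\bD^1)^\co\cong\lambda(\bD^1)$ and $\lambda(\partial\bD^1)^\co\cong\lambda(\partial\bD^1)$. These are compatible with the inclusion $\lambda(\partial\bD^1)\to\lambda(\bD^1)$, since it is the same map of underlying groups.

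\textbf{Step 4 (main obstacle): compatibility of the structure maps.} We must check that the two legs of the span are carried to the corresponding legs. For the leg $A^\co\ot\lambda(\partial\bD^1)\to A^\co\ot\lambda(\bD^1)$, this follows from naturality of the swap isomorphism in \cref{lemmcub} in the second variable. For the leg $A^\co\ot\lambda(\partial\bD^1)\to\lambda(\partial\bD^1)$, this map is induced by the augmentation $\epsilon\colon A^\co\to\bZ$ together with $\bZ\ot\lambda(\partial\bD^1)\cong\lambda(\partial\bD^1)$. Under $(-)^\co$ and the swap it becomes $\lambda(\partial\bD^1)\ot\epsilon$ composed with the unit isomorphism. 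This uses that $(-)^\co$ preserves augmentations and that the unit constraints of the monoidal involution are compatible, which is the unit square in the proof of \cref{lemmcub}.

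Assembling Steps 1--4 yields the asserted isomorphism $\bar{S}(A)\cong\lambda(\bD^1)\ot A\coprod_{\lambda(\partial\bD^1)\ot A}\lambda(\partial\bD^1)$. As a sanity check, one can compare both sides with the explicit description $A[1]\oplus\bZ\oplus\bZ$ in \cref{isomo}: the swap introduces exactly the sign $-\partial^A$ in degrees $\geq 2$ that appears there.
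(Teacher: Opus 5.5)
Your proposal is correct and follows the paper's route. The paper obtains this corollary in one line from the monoidal $(-)^\co$-involution of \cref{lemmcub} (together with \cref{duac}), and your Steps 1--4 spell out what it leaves implicit: that $(-)^\co$ is an automorphism of $\mathrm{ADC}$ and hence preserves pushouts, that it fixes $\lambda(\bD^1)$ and $\lambda(\partial\bD^1)$, and that the unsigned swap is compatible with both legs of the span (your sign check against \cref{isomo} is also right).
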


\begin{remark}\label{isomo1}
By definition of $\lambda$ for every strict $\infty$-category $\mC$
there is a canonical isomorphism 
$S(\lambda(\mC)) \cong \lambda(S(\mC))$ of augmented directed complexes.
Thus there is a canonical isomorphism 
$$\bar{S}(\lambda(\mC)) \cong S(\lambda(\mC)^\co)^\co \cong 
S(\lambda(\mC^\co))^\co \cong \lambda(S(\mC^\co))^\co \cong 
\lambda(S(\mC^\co)^\co) \cong \lambda(\bar{S}(\mC))$$ of augmented directed complexes.
\end{remark}

\begin{remark}By \cref{isomo1} the $\n$-disk $\bD^\n = S^\n(*)$ is a Steiner $\infty$-category and there is a canonical isomorphism of augmented directed complexes $$ \lambda(\bD^\n)= \lambda(S^\n(*))\cong S^\n(\bZ).$$
Thus $ \lambda(\bD^n) \cong S^\n(\bZ)$ is a $\n$-dimensional Steiner complex,
which arises by applying $S$ iterately:
$$ 	(... \to 0 \to ... \to 0 \to \bZ \xrightarrow{(\id,-\id)} \bZ \oplus \bZ \xrightarrow{(\id,-\id)} \bZ \oplus \bZ \xrightarrow{(\id,-\id)} ... \xrightarrow{(\id,-\id)} \bZ \oplus \bZ),$$
which is free with one generator in degree $\n$ and two generators in any lower degree, and the augmentation is the sum $+: \bZ \oplus \bZ \to \bZ.$ 

\end{remark}

\begin{example}\label{example_theta}
Every object of $\Theta$ is a Steiner $\infty$-category: the functors $ \bD^{j_\ell} \to \bD^{i_\ell},\bD^{j_\ell} \to \bD^{i_{\ell-1}}$
are the images under $\nu$ of augmented directed chain maps
$ \lambda(\bD^{j_\ell}) \to  \lambda(\bD^{i_\ell}), \lambda(\bD^{j_\ell}) \to  \lambda(\bD^{i_{\ell-1}})$
and the pushout 
$$ \lambda(\bD^{i_0}) \coprod_{ \lambda(\bD^{j_1})}  \lambda(\bD^{i_1}) \coprod_{\lambda(\bD^{j_2})} ... \coprod_{ \lambda(\bD^{j_n})}  \lambda(\bD^{i_n}) $$
in $\mathrm{ADC}$ is a Steiner complex and is preserved by $\nu: \mathrm{ADC} \to \infty\Cat$ by \cref{pasting1}, \cref{pasting2}.
\end{example}

\bibliographystyle{plain}
\bibliography{mainbib}
\end{document}